\newcommand{\draftspaced}{\onehalfspaced\normalfont}
\newcommand{\doublespaced}{\renewcommand{\baselinestretch}{2}\normalfont}
\newcommand{\singlespaced}{\renewcommand{\baselinestretch}{1}\normalfont}
\newcommand{\onehalfspaced}{\renewcommand{\baselinestretch}{1.5}\normalfont}
\newtheorem{thm}{Theorem}[chapter]
\newtheorem{cor}[thm]{Corollary}
\newtheorem{prop}[thm]{Proposition}
\newtheorem{lem}[thm]{Lemma}
\newtheorem{conj}[thm]{Conjecture}
\newtheorem{quest}[thm]{Question}
\theoremstyle{definition}
\newtheorem{defn}[thm]{Definition}
\newtheorem{exs}[thm]{Examples}
\theoremstyle{remark}
\newtheorem{rmk}[thm]{Remarks}
\numberwithin{equation}{chapter}
\newcommand{\ZZ}{\mathbb{Z}}
\newcommand{\QQ}{\mathbb{Q}}
\newcommand{\RR}{\mathbb{R}}
\newcommand{\CC}{\mathbb{C}}
\newcommand{\TT}{\mathbb{T}}
\newcommand{\SSS}{\mathbb{S}}
\newcommand{\PP}{\mathbb{P}}
\newcommand{\MM}{\mathcal{M}}
\newcommand{\BB}{\mathcal{B}}
\newcommand{\LL}{\mathcal{L}}
\newcommand{\OO}{\mathcal{O}}
\newcommand{\WW}{\mathcal{W}}
\newcommand{\eqt}{\begin{equation}}
\newcommand{\eqtn}{\end{equation}}
\newcommand{\trieq}{\triangleq}
\newcommand{\4}{\text{ , for }}
\def\theauthor{CHUNG, Shun Wai}
\begin{document}

\pagenumbering{roman}
\large\newlength{\oldparskip}\setlength\oldparskip{\parskip}\parskip=.3in
\thispagestyle{empty}
\begin{center}

\vspace*{\fill}

\Huge\textbf{SYZ Mirror Symmetry \\ for \\ Dirichlet Branes}

\vspace*{1.5cm}

\large\theauthor

\vspace*{1.5cm}


\large
\normalsize
\singlespaced
A Thesis Submitted in Partial Fulfillment \\
of the Requirements for the Degree of \\
Master of Philosophy \\
in\\
Mathematics

\vspace{4.5cm}

\normalsize
The Chinese University of Hong Kong \\
July 2016

\vspace{\fill}

\end{center}

\chapter*{ }
\draftspaced
\begin{center}

\vspace{20pt}

\Large\underline{\textbf{Thesis/Assessment Committee}}

\vspace{10pt}\normalsize

Professor LEUNG Nai Chung (Chair)\\
Professor CHAN Kwok Wai (Thesis Supervisor)\\
Professor WU Zhongtao (Committee Member)\\
Professor LI Weiping (External Examiner)

\end{center}

\thispagestyle{empty}

\newpage
\doublespaced
\singlespaced
\draftspaced

\chapter*{Abstracts}

\normalsize

\hspace{13pt} In this thesis, we study a class of special Lagrangian submanifolds of toric Calabi-Yau manifolds and construct their mirrors using some techniques developed in the SYZ programme.

It is conjectured in Aganagic-Vafa \cite{Aganagic Vafa} that such pair of Lagrangian subspace and Calabi-Yau manifold is mirror to a certain complex subvariety. It is also conjectured in Ooguri-Vafa \cite{Ooguri Vafa} that the counting of holomorphic discs is encoded in the mirror side, namely, by equating the open Gromov-Witten generating function with the Abel-Jacobi map on the mirror side up to a change of variables by the mirror map.

We present a justification on the conjecture on the mirror construction of D-branes in Aganagic-Vafa\cite{Aganagic Vafa}. We apply the techniques employed in Chan-Lau-Leung \cite{Chan Lau Leung} and Chan\cite{Chan}, which give the SYZ mirror construction for D-branes. Recently, Chan-Lau-Leung\cite{Chan Lau Leung} has given an explicit mirror construction for toric Calabi-Yau manifolds as complex algebraic varieties. Inspired by the SYZ transformation of D-branes in Leung-Yau-Zaslow\cite{Leung Yau Zaslow}, Chan\cite{Chan} has defined a generalised mirror construction for a larger class of Lagrangian subspaces, which is important to our situation.

More specifically, we investigate mirrors of D-branes of Aganagic-Vafa type. According to a recent paper, Fang-Liu \cite{Fang Liu} has given a proof to the mirror conjecture on the disc counting problem and Cho-Chan-Lau-Tseng \cite{Cho Chan Lau Tseng} has described explicitly the enumerative meaning of the inverse mirror map. These recent results enable us to derive explicit formulas for such mirror branes.

The results point towards a need for further quantum correction of the SYZ transform for Lagrangian subspaces in order to recover physicist’s mirror prediction of counting holomorphic discs in Ooguri-Vafa \cite{Ooguri Vafa}. When compare to physicists prediction of the Aganagic-Vafa mirror branes, such naive SYZ transform fails to coincide with the prediction. The main reasons lies in the fact that the SYZ transformation does not capture open Gromov-Witten invariants of the A-brane. Through the special case of Aganagic-Vafa A-branes, we are able to modify the SYZ transform that involves further quantum correction using discs counting of the A-brane. However it is not known how such a modified SYZ transform can be constructed and works in general. We believe that the explicit formulas and computations in the special case we consider here can give hints to the correct modification in general in future development of the SYZ programme.

\begin{CJK}{UTF8}{bkai}

\chapter*{摘要}
\textsf{}
\large

在本論文中，我們研究環面 Calabi-Yau 流形中的某類特殊 Lagrangian 子流形，我們更使用 SYZ 最近期的發展和技術來構建它的鏡對稱模型。

跟據 Aganagic-Vafa\cite{Aganagic Vafa} 的猜想，這對 Lagrangian 子空間和 Calabi-Yau 流形的鏡子模型能表示成某個代數複簇。另外，物理學家 Ooguri-Vafa\cite{Ooguri Vafa}　也提出了一個關於鏡對稱的推測，就是說全純光盤的計算，可通過它的開放 Gromov-Witten 函數與鏡子中的 Abel-Jacobi 函數的等式，而得知其數據。

我們會提出 Aganagic-Vafa 鏡膜猜想\cite{Aganagic Vafa}的數學跟據。以 D-膜的 SYZ 鏡建設概念，我們會使用於 Chan， Lau 和 Leung \cite{Chan Lau Leung,Chan}　中的技巧。 在近期的鏡對稱發展，Chan， Lau 和 Leung\cite{Chan Lau Leung} 對於環面 Calabi-Yau 代數複簇提出了一個明確的 SYZ 鏡面構作。 透過 Leung， Yau，Zaslow 為D-膜進行 SYZ 轉型 \cite{Leung Yau Zaslow} 所啟發，Chan\cite{Chan}定義了一個較為廣義的D-膜鏡模型構作，為更大類的 Lagrangian 子空間構作鏡對稱模型，這技巧對於本文章為非常重要。

更深入地，我們研究 Aganagic-Vafa 類的鏡膜。根據最近刊登的數學文章，Fang 和 Liu\cite{Fang Liu} 對於全純光盤的計算問題提出了證明，另一方面，Cho， Chan，Lau 和 Tseng\cite{Cho Chan Lau Tseng} 又運算了鏡子函數的反函數是能以全純光盤的數量表示出來。最後，我們運用這些結果，得出了Aganagic-Vafa 鏡膜代數複簇的定義公式。

以上的結論指出我們應對 SYZ 的 D-膜構作作進一步的量子修正，其構作應與物理學家 Ooguri-Vafa 對其 D-膜的推測的結果為一樣。當與物理學家對 Aganagic-Vafa 鏡膜的預測進行比較時 ，未修正的 SYZ 鏡膜構作未能與物理學家所預測一致。主要的原因在於該 SYZ 轉型未有包含A-膜的開放 Gromov-Witten 不變量數據在其中。在 Aganagic-Vafa A-膜這特殊情況下，我們可以修改 SYZ 鏡膜構作，透過涉及到 A-膜全純光盤的計算來對 SYZ 鏡膜構作作進一步量子修正。雖然現未能得知到，用這方法去修正的 SYZ 鏡膜構作能否在更一般情況下仍然能成功運用。我們認為，在本文章中所討論的 Aganagic-Vafa D-膜計算中，應能為 SYZ 未來的發展給予一些提示及指引如何去正確地修改一般情況下 SYZ 的 D-膜構作。


\end{CJK}

\chapter*{Acknowledgments}
\normalsize
\hspace{13pt} I wish to express my greatest gratitude to my supervisor Professor Chan Kwok Wai for his support and guidance throughout my post-graduate study. I am very grateful that he gave me this thesis problem to work on which enables me to dig deep into the amazing world of mirror symmetry. Also, I want to express my appreciation for his time and effort in teaching me and assisting my study. Nevertheless, I am in debt for his invaluable advice and guidance in discussing my future development in life.

I would like to thank a number of great teachers other than my supervisor who ignited my curiosity in mathematic, includes Professor Conan Leung, Professor Thomas Au, Professor Leung Chi Wai, Professor Mok Ngai Ming, Professor Ronald Lui, Professor Lau Siu Cheong and Doctor Cheung Leung Fu. I thank these teachers for their effort in preparing teaching materials and their kind response in answering students questions about future career development. Without these teachers, I will still be thinking like a kid without knowing my potential, talents and direction.

Also, I would like to thank my friends in CUHK for making my life enjoyable and fulfilling in my post-graduation years. They are Marco Suen, Lee Man Chun, Huang Shaochuang , Rex Lo,  Dr. Hansol Hong, Dr. Matthew Young, Matthew Man, Tang Sheung Ho, Patrick Kwok, Steve Hui,  Wong Ka Ho, Ken Lee, Chen Yuen, Zhang Yi and Jiang Qing Yuan. With their support and accompany, we are able to work hard together as researchers and grow mature in life as men(women).

Finally, and most importantly, I would like express my deepest appreciation to my parents, who are my first, and most important supporters of all time. Without their endless love and support, I would not be who I am and this master thesis would not have been completed.

\singlespaced
\tableofcontents

\newpage
\draftspaced
\pagenumbering{arabic}
\include{introdept}
\include{back}
\include{finitedept}
\include{infinitedept}

\doublespaced
\singlespaced
\draftspaced
\normalsize

\chapter{Introduction}

\section{Background}
Mirror symmetry is a surprising duality between pairs of Calabi-Yau manifolds which began as a phenomenon in string theory since 1980's. This suggests that Calabi-Yau manifolds should come in pairs. According to string theory, the world we are living in is a 10-dimensional manifold of the form $\RR^{3,1}\times X^{6}_{CY}$ where $X^{6}_{CY}$ is a compact Calabi-Yau manifold of complex dimension 3. String theorists also suggest that physical phenomena in
$\RR^{3,1}\times X$ is exactly the same as that in $\RR^{3,1}\times X^\vee$ if $X$ and $X^\vee$ are mirror to each other. In other words, computations on one-side can be done on the mirror side which makes mirror symmetry a fundamental and powerful computational tool in string theory. Moreover, such mirror pairs $(X,X^\vee)$ exhibit an exchange of Hodge numbers, that is, $h^{1,1}(X)=h^{1,2}(X^\vee)$ and $h^{1,2}(X)=h^{1,1}(X^\vee)$. Using string theory as guidelines, geometers would like to understand the philosophy and the mathematics underlying such unexpected phenomenon of mirror symmetry.

There are two principal approaches towards understanding the geometry underlying mirror symmetry, namely, Kontsevich's homological mirror symmetry (HMS) conjecture \cite{Kontsevich HMS} in 1994 and the Strominger-Yau-Zaslow (SYZ) conjecture \cite{SYZ} in 1996.

We briefly describe the statement of HMS conjecture which is expressed in the language of categories. Suppose we have a pair of Calabi-Yau manifolds $(X,\check{X})$ which are mirror to each other. Let $X$ be equipped with a symplectic structure $\omega$ and a complex structure $J$, and $\check{X}$ with $(\check{\omega},\check{J})$. Mirror symmetry suggests that $\omega$ should determine $\check{J}$ and vice versa $\check{\omega}$ should determine $J$. Therefore Kontsevich proposed the HMS conjecture that the symplectic geometric data of $(X, \omega)$ and complex geometric data of $(\check{X},\check{J})$ are exchanged under an equivalence of categories as follows:

\begin{conj}[HMS]
There is an equivalence of (triangulated) categories between the derived Fukaya category $\mathcal{DF}uk(X)$ of the symplectic manifold $X$ with symplectic structure $\omega$
and the derived category $\mathcal{D}^b\mathcal{C}oh(\check{X})$ of the complex manifold $\check{X}$ with complex structure $\check{J}$, that is,
$$\mathcal{DF}uk(X,\omega) \simeq \mathcal{D}^b\mathcal{C}oh(\check{X},\check{J}).$$
\end{conj}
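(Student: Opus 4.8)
The statement above is a \emph{conjecture} rather than a theorem, and a complete proof in full generality is not available; what follows is the strategy that the SYZ philosophy suggests and that the present thesis takes as its organizing principle in the special cases treated in later chapters. The plan is to realize the equivalence not as an abstract isomorphism of categories but as an \emph{explicit functor} built geometrically from a Lagrangian torus fibration. First I would fix an SYZ fibration $\mu\colon X \to B$ by special Lagrangian tori, with a discriminant locus $\Gamma \subset B$ over which the fibers degenerate, and construct $\check{X}$ as the moduli space of pairs $(L_b,\nabla)$ consisting of a fiber together with a flat $U(1)$-connection, corrected by holomorphic disc (instanton) contributions \`a la Fukaya--Oh--Ohta--Ono, Auroux and Gross--Siebert. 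This is precisely the mirror construction of Chan--Lau--Leung used later in the thesis; one checks that the corrected gluing of the semi-flat charts produces a complex manifold carrying the mirror complex structure $\check{J}$, and that its K\"ahler data matches $\omega$.

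Next I would define the functor $\mathcal{DF}uk(X,\omega) \to \mathcal{D}^b\mathcal{C}oh(\check{X},\check{J})$ by \emph{family Floer homology}: to a Lagrangian brane $(L,\nabla_L)$ in $X$ one associates the sheaf on $\check{X}$ whose stalk at $(L_b,\nabla)$ is the Floer cohomology $HF^\bullet\big((L,\nabla_L),(L_b,\nabla)\big)$, transporting the $A_\infty$ structure maps of the Fukaya category to a \v{C}ech--Dolbeault model of $R\mathrm{Hom}$ on $\check{X}$. The SYZ transform of Leung--Yau--Zaslow, extended by Chan to a larger class of Lagrangians, is exactly the semi-flat shadow of this functor, and the disc corrections are what promote it to a functor respecting the product structures. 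One must then verify (i) that the functor is well defined over the Novikov ring and descends to the derived/triangulated level, (ii) essential surjectivity, i.e. that the images of Lagrangian branes (fibers, sections, thimbles) split-generate $\mathcal{D}^b\mathcal{C}oh(\check{X})$, and (iii) full faithfulness, i.e. $HF^\bullet \cong \mathrm{Ext}^\bullet$, which for torus fibers reduces to a Fourier--Mukai computation on the dual torus and in general is propagated by a continuity/deformation argument along $B$.

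The hard part is twofold. The first obstacle is geometric-analytic: special Lagrangian fibrations are not known to exist, to have well-controlled singular fibers, or to be sufficiently regular for family Floer theory to be set up rigorously outside integrable or toric situations, so the construction of $\mu$, of $\check{X}$, and of the functor all rest on foundations that are themselves conjectural, and the wall-crossing corrections across $\Gamma$ require delicate convergence estimates in the Novikov variable. The second obstacle is categorical: generation of the Fukaya category is a deep problem, accessible only through Abouzaid-type split-generation criteria in favorable cases, and matching the full $A_\infty$ structure rather than merely cohomology demands transversality and orientation bookkeeping that is currently tractable only case by case. For these reasons the thesis does not attempt HMS in general; instead it isolates the toric Calabi-Yau setting and the Aganagic--Vafa A-branes, where the fibration, the disc counts, and the mirror map are all explicit, and uses the resulting formulas as evidence for, and as a guide to the correct quantum-corrected form of, the SYZ functor underlying this conjecture.
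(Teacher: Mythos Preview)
Your proposal correctly identifies that this is a conjecture, not a theorem, and the paper treats it exactly the same way: the statement is presented as Kontsevich's Homological Mirror Symmetry conjecture with no proof or proof sketch whatsoever---it serves purely as motivational background for the SYZ approach developed in later chapters. There is nothing to compare your argument against, because the paper offers none.

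That said, your strategic outline goes well beyond what the paper does. The paper never articulates a general plan for attacking HMS via family Floer theory, Abouzaid-type generation criteria, or wall-crossing analysis; it simply cites the conjecture and then restricts attention to the toric Calabi--Yau setting where explicit SYZ computations are possible. Your sketch of the family Floer functor, the role of instanton corrections in promoting the semi-flat transform to an $A_\infty$ functor, and the identification of generation and full faithfulness as the two main obstacles is a coherent and accurate summary of the current state of the field, but it is your own synthesis rather than something drawn from this thesis. The thesis's actual evidence for HMS is confined to the citation of Chan's result (Theorem~\ref{chan's thm on HMS} in the final chapter) for $A_n$-resolutions, where the naive SYZ transform already induces the derived equivalence.
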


On the other hand, the SYZ conjecture describes the phenomenon of mirror symmetry via more geometric construction. In 1996, inspired by new development in string theory, Strominger, Yau and Zaslow proposed that mirror symmetry is nothing but a $\TT$-duality, short for \textit{duality of torus}. Comparing to the HMS conjecture, the SYZ conjecture is a more constructive and more geometric approach towards understanding the geometry of mirror symmetry.

\begin{conj}[SYZ]
Both $X,\check{X}$ admit special Lagrangian torus fibrations over the same base $B$:
$$\begin{array}[c]{ccccc}
X&&&&\check{X}\\
&\mu\searrow&&\swarrow\check{\mu}\\
&&B
\end{array}$$
such that regular fibers $\mu^{-1}(b)\subset X$ and $\check{\mu}^{-1}(b)\subset \check{X}$ over the same base $b\in B$ are dual tori. Moreover, there exist a Fourier-type transformation which exchanges symplectic geometric and complex geometric data between $X$ and $\check{X}$.
\end{conj}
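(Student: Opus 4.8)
The plan is to follow the SYZ recipe, which upgrades the conjecture from a bare existence statement into an explicit construction whenever a special Lagrangian torus fibration is available. First I would isolate the smooth part of the fibration: let $B_0 \subseteq B$ be the open dense locus over which $\mu$ is a proper submersion, so that $X_0 := \mu^{-1}(B_0) \to B_0$ is a torus bundle. On $B_0$, McLean's theorem equips the base with a pair of integral affine structures — the \emph{symplectic} and the \emph{complex} affine structure — and the fibers $\mu^{-1}(b)$ inherit flat metrics. I would then define $\check X_0$ to be the total space of the fiberwise dual bundle $R^1\mu_*\RR / R^1\mu_*\ZZ$, equivalently the moduli of pairs $(b,\nabla)$ with $b \in B_0$ and $\nabla$ a flat $U(1)$-connection on $\mu^{-1}(b)$ up to gauge. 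By construction the fibers of $\check\mu : \check X_0 \to B_0$ are the tori dual to those of $\mu$, which is the geometric core of the statement.

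Second, I would produce the exchanged structures on $\check X_0$. The complex affine structure on $B_0$ together with fiberwise dualization canonically determines a complex structure $\check J$ (the semiflat complex structure), while the symplectic affine structure plus a choice of metric yields a symplectic form $\check\omega$ (the semiflat symplectic form); one checks that $(\check X_0, \check\omega, \check J)$ is Calabi-Yau away from the discriminant. The Fourier-type transformation demanded in the statement is then the family Floer / fiberwise Fourier--Mukai transform: to a Lagrangian section of $\mu$ carrying a flat connection one associates the corresponding holomorphic section of $\check\mu$, and more generally to a fiberwise-linear A-brane one associates a B-brane on $\check X_0$. At the semiflat level this already realizes the exchange of symplectic and complex data, and it is the version of the SYZ transform for D-branes that the rest of this thesis develops.

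Third — and here lies the real content — one must extend $(\check X_0, \check\omega, \check J)$ across the discriminant locus $\Gamma = B \setminus B_0$. The naive semiflat complex structure does not glue over $\Gamma$, because monodromy of the affine structure around $\Gamma$ forces nontrivial transition functions; the correct ones are obtained only after inserting \emph{quantum corrections} that count holomorphic discs in $X$ with boundary on the Lagrangian fibers. Concretely I would invoke the wall-crossing/scattering formalism (Kontsevich--Soibelman, Gross--Siebert) to assemble a corrected complex structure whose monodromy matches, then compactify back over all of $B$; for toric Calabi-Yau targets this is exactly the explicit construction of Chan--Lau--Leung \cite{Chan Lau Leung} that the thesis relies on. Finally one would verify that the resulting $\check X$ is genuinely Calabi-Yau and check the numerical predictions, e.g. the Hodge number exchange $h^{1,1}(X) = h^{1,2}(\check X)$, by comparing the Leray spectral sequences of $\mu$ and $\check\mu$ over $B$.

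The main obstacle is twofold, and it is the reason the conjecture remains open in general. The first difficulty is \emph{existence and regularity of the special Lagrangian torus fibration itself}: outside the toric case and a few others there is no construction of $\mu$, and the discriminant locus is typically of real codimension one rather than two, so $B$ fails to be a manifold in the naive sense and the dualization must be carried out very carefully near $\Gamma$. The second difficulty is \emph{analytic control of the instanton corrections}: proving that the disc-counting series defining the corrected gluing converge and patch into an honest complex manifold requires a priori estimates on open Gromov--Witten invariants that are available only in special cases. In this thesis I sidestep both issues by restricting to toric Calabi-Yau examples — where $\mu$ is an explicit Gross-type fibration and the corrections are computed directly — and I concentrate on the D-brane refinement of the transform rather than on the full bulk statement.
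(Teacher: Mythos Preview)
The statement you are addressing is labeled \emph{Conjecture} in the paper, and the paper does not supply a proof of it. It is stated in the introduction purely as background: the SYZ conjecture is the organizing philosophy, and the body of the thesis \emph{implements} that philosophy in the toric Calabi--Yau setting rather than establishing the conjecture in any generality. So there is no ``paper's own proof'' to compare against.

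That said, what you have written is not really a proof either, and you seem to know it: your final paragraph correctly flags the two genuine obstructions (existence/regularity of the special Lagrangian fibration, and convergence/gluing of instanton corrections) and explicitly retreats to the toric case. Read as a \emph{program} rather than a proof, your outline is accurate and in fact tracks the paper's structure closely --- the passage through the semiflat mirror $\MM_0 \simeq TB_0/\Lambda$, the identification with the moduli of flat $U(1)$-connections on fibers, the failure of the semiflat complex structure to extend across the discriminant, and the repair via open Gromov--Witten corrections are exactly the steps carried out in Chapter~2 following \cite{Chan Lau Leung}. The only mild overreach is invoking Kontsevich--Soibelman/Gross--Siebert scattering: the thesis does not use that machinery, but rather the direct disc-counting correction of \cite{Chan Lau Leung} specific to the Gross fibration on toric Calabi--Yau manifolds.

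In short: there is no gap to name because there is no proof to have a gap in. Your write-up functions as a correct informal roadmap for the SYZ program, which is the appropriate response to a conjecture stated without proof.
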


Dirichlet branes ( simply D-branes ) are important objects well-studied by string theorists. In string theory, as suggested by its name, point particles in classical physics are now viewed as strings of energy propagating through our ten dimensional world $\RR^{3,1}\times X$. As we consider open strings, its endpoints are required to lie on a D-brane. D-brane refers to the boundary condition that the endpoints have to satisfy which is also known as the Dirichlet boundary condition. Mathematically speaking, D-branes are special Lagrangian submanifolds in the A-model of topological string theory which we call A-branes. In the B-model of topological string theory,  D-branes are complex submanifolds of a Calabi–Yau together with additional data that arise physically from having charges at the endpoints of strings which we denote it as B-branes. In the language of HMS, A-branes are objects in the Fukaya category and B-branes are objects in the derived category of coherent sheaves. On the other hand, in the setting of SYZ conjecture, we expect to have a Fourier-type transformation $\mathcal{F}_{SYZ}$ which carries an A-brane on one side to a B-brane on the mirror side as described in \cite{Leung Yau Zaslow,Arinkin Polishchuk,Chan}.

We now describe the problem that motivates the work of this thesis. We all agree that to reach proofs of both conjectures are still miles and miles away. However, in some special examples, such as the situation considered by Aganagic and Vafa in \cite{Aganagic Vafa,Fang Liu}, we can ask whether these conjectures can be verified. With the SYZ transform $\mathcal{F}_{SYZ}$ explicitly defined as in \cite{Leung Yau Zaslow,Arinkin Polishchuk,Chan} and a clear understanding of the mirrors of Aganagic-Vafa A-branes $L_{AV}$ as in \cite{Aganagic Vafa,Fang Liu}, a natural question is raised:
\begin{quest}
Does the SYZ transformation $\mathcal{F}_{SYZ}$ transforms the Aganagic-Vafa A-brane $L_{AV}$ into the desired mirror B-brane that matches the physicists' prediction?
\end{quest} In this thesis, we will answer this question and give a comparison between explicit expressions of the (naive) SYZ mirror and the predicted mirror. We first begin with an overview of the main results.

\section{Main results}
In this thesis, three main results will be presented. Throughout the thesis, we apply the SYZ principles to certain A-model $X$ and A-brane $L$. By A-models and A-branes, we mean symplectic manifolds and Lagrangian submanifolds respectively. Moreover, we give special attention to Aganagic-Vafa A-branes in toric Calabi-Yau threefolds as in \cite{Aganagic Vafa, Fang Liu, Lerche Mayr Warner}. And we are interested in the mirror construction of A-branes via the SYZ proposal which we expect to recover physicists' predictions.

First of all, we survey the SYZ construction of the B-model $X^\vee$ as a variety over the complex number field $\CC$ to obtain our first result which is the computation of $X^\vee$ via the SYZ construction as in \cite{Chan Lau Leung}. The SYZ proposal suggests that mirror symmetry is nothing but torus duality. By applying SYZ to toric Calabi-Yau manifolds $X_\Delta$ equipped with the Gross fibration $\mu_G$, we obtain the semi-flat mirror as some moduli $\MM_0$ of Lagrangians fibers $L_G$ with a connection $\nabla$ together with a naturally attached complex structure, the semi-flat complex structure $J_0$. To construct the mirror $X^\vee$, we modify $(\MM_0,J_0)$ via symplectic information of the A-model and A-brane, namely the open Gromov-Witten invariants $n_\beta$'s for $\beta\in H_2(X,L;\ZZ)$. After all the procedures of the SYZ program, we obtain the desired mirror $X^\vee$ as a complex variety defined by the following equation:
$$ uv=W(z_0,...,z_{n-1}):=(1+\delta_0)+\sum_{i=1}^{n-1}(1+\delta_i)z_i+\sum_{i=n}^{m-1}(1+\delta_i)Q_{i-n+1}\prod_{j=1}^{n-1}z_j^{<v_j^*,v_i>}$$
where $\delta_0,...,\delta_{m-1}$ are open Gromov-Witten generating functions of SYZ fibers defined in \eqref{definition of delta i}, $Q_1,...,Q_{m-n}$ are the complexified K$\ddot{a}$hler parameters and the $v_0,...,v_{m-1}$ are the rays that generate the fan $\Sigma=\Sigma(\Delta)$.

\begin{rmk}
According to the physical literature \cite{Hori Iqbal Vafa}, it is predicted based on physical consideration that the mirror of the toric Calabi-Yau manifold $X_\Delta$ is same as what we have computed above except that they ignored all the open Gromov-Witten generating function $\delta_0,...,\delta_{m-1}=0$ although the physicists realized that "quantum correction" terms are needed. According to the SYZ consideration, we now understand how the "quantum correction" is expressed in terms of open Gromov-Witten invariants of the A-side which is given explicitly by the generating functions $\delta_0,...,\delta_{m-1}$.
\end{rmk}

Secondly, we apply the SYZ transformation as appeared in \cite{Chan} to certain A-branes $(L,\nabla)$ of $X^n_\Delta $ to obtain the second result that the mirror $L_{SYZ}^\vee$ can be computed explicitly which is stated in Theorem \ref{main thm B} and Definition \ref{main def B}. Suppose that we now focus on certain A-branes $(L,\nabla)$ in $X^n_\Delta$ which is constructed using charges $l^{(1)},...,l^{(k)}\in \ZZ^m$ together with some constants $exp(c^{(1)}+i\phi^{(1)}),...,exp(c^{(k)}+i\phi^{(k)})\in \CC^\times$. In Chan \cite{Chan}, the author gave a definition of the SYZ transformation of A-branes which generalized that of Leung, Yau and Zaslow in \cite{Leung Yau Zaslow}. We directly apply such SYZ transformation to our A-brane $(L,\nabla)$ and we obtain that the (naive) SYZ mirror B-brane as a subvariety in $X^\vee$ which is given by the following equations:
$$ \prod_{j=1}^{m-1} z_j^{-l^{(a)}_j} 
=exp(c^{(a)}+i\phi^{(a)}) \4 a=1,...k $$
where $z_i=z_i(z_1,...,z_{n-1})=Q_{i-n+1}\prod_{j=1}^{n-1} z_j^{<v_j^*,v_i>}$ for $i=n,...,m-1$, and $l^{(a)}=(l^{(a)}_0,...,l^{(a)}_{m-1})\in\ZZ$, $\forall a=1,...k$. 


\begin{rmk}
According to the physical literature \cite[Chapter 3.1]{Aganagic Vafa}, it is expected and predicted that the mirror brane $L^\vee$ for certain Lagrangian A-brane $L\subset X_\Delta$ is exactly defined by the above equations which we derived based on SYZ consideration. In other words, the SYZ construction has verified physicists' predictions for such A-branes. Moreover, the term $exp(i\phi^{(a)})\in U(1)$ is considered as a phase term which can be combined with $exp(c^{(a)})$ to give it an imaginary part. However, the SYZ consideration shows us that the phase term $exp(i\phi^{(a)})$ is not arbitrary and has a geometric meaning which represents a choice of a flat $U(1)$-connection $\nabla$ over $L$. Furthermore, the SYZ construction provide us more information of the mirror than we need, that is a holomorphic $U(1)$-connection $\check{\nabla}$ over $L^\vee$ which we have not used yet in this thesis.
\end{rmk}

\begin{rmk}
Although the defining equations of the SYZ mirror subvariety recovers the physicists prediction, such SYZ mirror subvriety does not necessarily give the correct mirror which we will see in the special case of Aganagic-Vafa A-branes. For this reason we will denote it by $L_{naive}^\vee$, the naive SYZ mirror brane. The key reason for the failure is due to the fact that the SYZ construction have not yet captured the information of the holomorphic discs bounded by the A-brane itself.
\end{rmk}

Thirdly, we examine the defining equations of the well-known mirror B-brane $L^\vee$ of Aganagic-Vafa A-branes $(L,\nabla)$ in toric Calabi-Yau A-model $X^3_\Delta$ of complex dimension three as described in \cite{Aganagic Vafa} and obtain our third result by re-expressing them using just the data of the given toric Calabi-Yau A-model $X_\Delta^3$ and A-brane $(L,\nabla)$ which is stated in Theorem \ref{main theorem Lv=z1,z2}. In Fang and Liu's recent paper \cite{Fang Liu}, the authors successfully proved the mirror prediction described in \cite{Ooguri Vafa,Lerche Mayr Warner,Aganagic Vafa}. On the other hand, Cho,Chan, Lau and Tseng has proved in \cite{Cho Chan Lau Tseng} that the SYZ mirror map coincide with the inverse mirror map which encodes the counting of discs bounded by SYZ fibers. According to physicists prediction in \cite{Aganagic Vafa}, the mirror B-brane $L^\vee$ is given by the 2 equations:
$$z_1=q_0 \ \ \ and \ \ \ z_2=z_2(q_0)$$
where $q_0\in \CC^\times$ is the open complex structure parameter and $z_2(-)$ is the inverse function of the mirror curve equation $W(z_1,z_2)$ such that $W(q_0,z_2(q_0))=0$ for any $q_0\in \CC^\times$. By combining the results of Fang-Liu\cite{Fang Liu} and Cho-Chan-Lau-Tseng\cite{Cho Chan Lau Tseng}, we compute in terms of A-side data that the mirror B-brane $L^\vee$ is given by the following 2 equations:
$$z_1=Q_0(1+O(Q_1,...,Q_{m-n})) \ \ \ and \ \ \ z_2=1+O(Q_1,...,Q_{m-n})$$
where $Q_0\in \CC^\times$ is the open K$\ddot{a}$hler parameter and $Q_1,...,Q_{m-n}\in \CC^\times$ is the closed K$\ddot{a}$hler parameters. We refer to Theorem \ref{main theorem Lv=z1,z2} for the explicit form of the series. We observe that the coefficients of the higher order terms involves data of the open Gromov-Witten invariants which is desired since we would like to know how the B-brane $L^\vee$ depends on the invariants of $(X_\Delta,L,\nabla)$ on the A-side.

\begin{rmk}
The mirror $L^\vee$, being a B-brane subvariety, is expressed in terms of B-side information which is the complex structure parameters $q_0,...,q_k$ and the mirror curve equation $W\in\CC[z_1^\pm,z_2^\pm]$. In recent development of mirror symmetry, we are able to express $L^\vee$ solely in terms of the A-side information. The advantage is two-fold. We now understand how the B-brane $L^\vee$ depends on deformation of the triplet $(X_\Delta,L,\nabla_L)$ on the A-side. In other words, we are able to express immediately the defining equation of the B-brane $L^\vee$ once we are given all the open Gromov-Witten invariants $n_\beta$ and (open/closed) K$\ddot{a}$hler parameters $Q_0,...,Q_{m-n}$ of certain $(X_\Delta,L,\nabla_L)$. And also, we are now able to compare $L^\vee$ with SYZ transformation of $(L,\nabla)$ since we notice that the (naive) SYZ mirror brane is expressed in terms of A-side information only. Hence we can verify or modify the SYZ transformation accordingly.
\end{rmk}

Finally, we are able to modify the naive SYZ transformation by introducing further quantum correction to the SYZ mirror brane construction using the discs counting data of the A-brane. With the defining equations of the predicted Aganagic-Vafa mirror brane and the SYZ mirror brane at hand, we are able compare them directly. Unfortunately, they do not coincide as we have expected. However, we observe that their difference is not too far apart as we compare their defining equations. More precisely, they coincide up to higher order term. Combing all the results, we are now ready to define the SYZ mirror construction for D-branes with further quantum correction which involves the discs counting data of the Aganagic-Vafa A-brane. By construction, such 'further quantum corrected' SYZ transformation works well on Aganagic-Vafa type A-branes. More generally, it would be fascinating to investigate how well such transformation works in more general cases. This issue will be addressed in more detail in the last chapter of the thesis together with some comments on future development.






\section{Organization}
The thesis is organised as follows: In the first chapter, we briefly review the background of mirror symmetry and overview the main results. In the second chapter, we survey SYZ construction for toric Calabi-Yau manifolds based on the paper \cite{Chan Lau Leung}. In the third chapter, we introduce the SYZ transformation for A-branes and apply it to certain class of A-branes which was studied in \cite{Aganagic Vafa} to recover physicist prediction. In the forth chapter, 
we compute and compare the predicted mirror of Aganagic-Vafa A-branes and its naive SYZ transform and therefore we naturally modify the naive SYZ transform by introducing further quantum correction. Finally, in the last chapter, we give some possible direction for future development.

\chapter{SYZ mirror symmetry for toric Calabi-Yau manifolds}
Let $X$ be a toric Calabi-Yau manifold. The goal of this chapter is to give a review on the construction of its mirror $X^\vee$ in Chan-Lau-Leung \cite{Chan Lau Leung} as a variety over $\CC$ which is motivated by the SYZ conjecture. According to the famous paper "Mirror Symmetry is T-duality" \cite{SYZ} by Strominger, Yau and Zaslow in 1994, the SYZ conjecture suggests that mirror symmetry is nothing but a duality of torus. First of all, we discuss the basic idea behind the SYZ program which was well surveyed and clearly illustrated by Auroux in \cite{Auroux} . Then we construct and study the properties of the Gross fibration $\mu_G$ on the toric Calabi-Yau $X$. And finally, we give the geometric interpretation of the mirror moduli constructed under the SYZ program which gives us the desired mirror manifold $X^\vee$ as a variety over $\CC$.
\section{T-duality}
In this section, we introduce the basic philosophy of the SYZ program, that is "Mirror symmetry is T-duality". Naively, the SYZ proposal suggests that the mirror of a compact symplectic manifold $X$ is the total space of the fiberwise torus-dual of a special lagrangian fibration
$$\mu:X\longrightarrow B,$$which can be represented as the quotient
$$\frac{TB}{\Lambda}$$
equipped with a canonical complex structure where $\Lambda\subset TB$ is a lattice sub-bundle. Geometrically, the mirror can be viewed as a moduli space of pairs $(L,\nabla)$ where $L$ is a Lagrangian fiber of $\mu$ and $\nabla$ is a flat-$U(1)$-connection over the Lagrangian $L$.
\par First of all, we define the dual of torus and its correspondence with the moduli of flat-$U(1)$-connections.

\begin{defn}
Let $\Lambda$ be a lattice isomorphic to $\mathds{Z}^n$, $V:=\Lambda \otimes_\ZZ \RR$, and $T=\frac{V}{\Lambda}$.
We define the dual to $T$ to be
$$T^\vee := \frac{V^*}{\Lambda^*}$$
where $\Lambda^*$ is embedded in $V^*$ by
$$\Lambda^*\cong\{v^*\in V^*|<v^*,\lambda>\in\ZZ,\forall\lambda\in\Lambda\}\subset V^*.$$
\end{defn}

On the other hand, suppose that $G\curvearrowright P\longrightarrow M$ be a flat principle $G$-bundle over a smooth manifold $M$.
We have a well-known correspondence between flat connections and monodromy representation as follows:

\begin{prop}[Proposition 1.18 of \cite{Abad}]
\label{G bundle is Hom}
Let $G\curvearrowright P\longrightarrow M$ be a flat principle $G$-bundle over a smooth manifold $M$. Then
$$Hom(\pi_1(M),G)^G \longleftrightarrow \{ Isomorphism\ class\ of\ flat\ G\ bundle\ over\ M \}$$ is a one-to-one correspondence.
\end{prop}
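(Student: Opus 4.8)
The plan is to construct both directions of the correspondence by hand and verify they are mutually inverse. Fix a basepoint $x_0\in M$, a point $p_0$ in the fiber over $x_0$, and write $\widetilde{M}\to M$ for the universal cover. A flat principal $G$-bundle carries a flat connection, which I prefer to package as a $G$-invariant foliation $\mathcal{F}$ of the total space $P$ that is transverse and complementary to the fibers; each leaf of $\mathcal{F}$ is then a covering space of $M$. The \emph{monodromy map} sends a loop $\gamma$ based at $x_0$ to the unique $g\in G$ for which $p_0\cdot g$ is the endpoint of the horizontal lift of $\gamma$ starting at $p_0$ (uniqueness uses freeness of the $G$-action on fibers). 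The crucial input is that flatness makes this endpoint depend only on $[\gamma]\in\pi_1(M,x_0)$: one lifts a homotopy of loops to a horizontal homotopy in $P$, using integrability of the horizontal distribution. A short concatenation check — with care about the right-action convention — then shows this assignment is a group homomorphism $\rho\colon\pi_1(M,x_0)\to G$.

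Next I would show that the dependence on choices is exactly conjugation: replacing $p_0$ by $p_0\cdot h$ replaces $\rho$ by its conjugate $h^{-1}\rho h$, and any isomorphism of flat $G$-bundles intertwines horizontal lifts and hence carries the monodromy representation to a conjugate one. This makes $[P,\mathcal{F}]\mapsto[\rho]$ a well-defined map from isomorphism classes of flat $G$-bundles to $\mathrm{Hom}(\pi_1(M),G)^G$.

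For the inverse, given $\rho$ I would form the associated bundle $P_\rho:=(\widetilde{M}\times G)/\pi_1(M,x_0)$, where $\pi_1(M,x_0)$ acts by $\gamma\cdot(\tilde x,g)=(\gamma\cdot\tilde x,\rho(\gamma)g)$ and $G$ acts by right translation on the $G$-factor; the images of the slices $\widetilde{M}\times\{g\}$ descend to a foliation transverse to the fibers, giving $P_\rho$ a canonical flat structure, and one checks $\rho\mapsto[P_\rho]$ respects conjugacy. Finally I would verify the two constructions are mutually inverse: the monodromy of $P_\rho$ at the basepoint $[(\tilde x_0,e)]$ is visibly $\rho$ again, while for a general flat bundle $P$ the leaf of $\mathcal{F}$ through $p_0$ covers $M$, and pulling $P$ back along $\widetilde{M}\to M$ trivializes it compatibly with the foliation, producing an isomorphism $P_\rho\xrightarrow{\ \sim\ }P$.

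The step I expect to be the main obstacle is the homotopy-invariance of parallel transport — equivalently, the statement that the leaves of the horizontal foliation are genuine covering spaces of $M$. This is the only place the hypothesis ``flat'' is used essentially: vanishing of the curvature is exactly what allows a homotopy of loops to be lifted to a horizontal homotopy (a Frobenius/integrability argument), which forces null-homotopic loops to have trivial holonomy. Once this is established, the rest is careful bookkeeping with left versus right actions and with the change of basepoint.
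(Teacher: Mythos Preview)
Your outline is correct and is the standard argument for this classical correspondence. Note, however, that the paper does not actually supply a proof of this proposition: it is simply quoted as Proposition~1.18 of \cite{Abad} and used as a black box to derive the corollary on dual tori (Proposition~\ref{T-dual to connection}). So there is nothing in the paper to compare against; your proposal is precisely the kind of argument one would find in the cited lecture notes --- monodromy in one direction, the associated-bundle construction $P_\rho=(\widetilde{M}\times G)/\pi_1(M)$ in the other, with flatness entering exactly where you flag it, namely in the homotopy-invariance of parallel transport.
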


When we restrict ourself to the trivial bundle case $U(1)\curvearrowright T\times U(1)\longrightarrow T$, we get the following correspondence between the dual torus and some moduli of flat connections.

\begin{prop}
\label{T-dual to connection}
Let $T=\frac{V}{\Lambda}$ be a torus as above. Then
\eqt  T^\vee \longleftrightarrow \{Isomorphism\ class\ of\ flat\ U(1)\ connection\ \nabla\  over\ T\}\eqtn is a one-to-one correspondence.
\end{prop}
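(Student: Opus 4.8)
The plan is to deduce this from Proposition \ref{G bundle is Hom} together with the standard description of the Pontryagin dual of a lattice. First I would specialize Proposition \ref{G bundle is Hom} to $G=U(1)$ and $M=T$. Since $U(1)$ is abelian the conjugation action is trivial, so $\mathrm{Hom}(\pi_1(T),U(1))^{U(1)}=\mathrm{Hom}(\pi_1(T),U(1))$, and the proposition identifies this set with the isomorphism classes of flat $U(1)$-bundles over $T$; equipping each such bundle with its canonical flat connection, this is the same as the set of gauge-equivalence classes of flat $U(1)$-connections over $T$. Next, because $T=V/\Lambda$ is the quotient of the contractible space $V$ by the free translation action of $\Lambda$, we have a canonical identification $\pi_1(T)\cong\Lambda$. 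So the problem is reduced to producing a natural bijection $T^\vee=V^*/\Lambda^*\longleftrightarrow\mathrm{Hom}(\Lambda,U(1))$.

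To establish that last bijection I would write $U(1)=\RR/\ZZ$ via $t\mapsto e^{2\pi i t}$ and apply $\mathrm{Hom}_\ZZ(\Lambda,-)$ to the exponential exact sequence $0\to\ZZ\to\RR\to U(1)\to 0$. Since $\Lambda\cong\ZZ^n$ is free, $\mathrm{Ext}^1_\ZZ(\Lambda,\ZZ)=0$, so the induced sequence $0\to\mathrm{Hom}(\Lambda,\ZZ)\to\mathrm{Hom}(\Lambda,\RR)\to\mathrm{Hom}(\Lambda,U(1))\to 0$ is exact. Now $\mathrm{Hom}_\ZZ(\Lambda,\RR)=\mathrm{Hom}_\ZZ(\Lambda,\ZZ)\otimes_\ZZ\RR=\Lambda^*\otimes_\ZZ\RR=V^*$, and under this identification the image of $\mathrm{Hom}_\ZZ(\Lambda,\ZZ)$ is precisely the sublattice $\{v^*\in V^*:\langle v^*,\lambda\rangle\in\ZZ\ \text{for all}\ \lambda\in\Lambda\}=\Lambda^*$ appearing in the definition of $T^\vee$. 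Hence $\mathrm{Hom}(\Lambda,U(1))\cong V^*/\Lambda^*=T^\vee$, completing the chain of bijections.

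It is also worth recording the correspondence concretely, since it will be used later: a class $[\xi]\in V^*/\Lambda^*$ should be sent to the flat connection $\nabla_\xi:=d-2\pi i\,\xi$ on the trivial bundle $T\times\CC\to T$, where $\xi\in V^*$ is regarded as a translation-invariant $1$-form on $T$. Its holonomy around the loop determined by $\lambda\in\Lambda$ is $\exp\!\big(2\pi i\langle\xi,\lambda\rangle\big)$, which simultaneously shows that $\nabla_\xi$ depends only on $[\xi]$ mod $\Lambda^*$ and that its monodromy representation is exactly the homomorphism $\Lambda\to U(1)$ produced above. For surjectivity, rather than checking by hand that every flat $U(1)$-connection on $T$ is gauge-equivalent to some $\nabla_\xi$, I would invoke Proposition \ref{G bundle is Hom}: the monodromy representation is already a complete invariant of the gauge class, so it suffices that every homomorphism $\Lambda\to U(1)$ be of the form $\exp(2\pi i\langle\xi,-\rangle)$, which is precisely the surjection established in the previous paragraph.

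I expect the only genuine subtlety to be keeping conventions consistent: the factor $2\pi$ (versus $1$) in the exponential map and in the definition of $\Lambda^*$, the sign/orientation in the holonomy, and the harmless choice between $\mathrm{Hom}(\Lambda,U(1))$ and its complex-conjugate identification. None of this is deep — the substantive content is carried entirely by Proposition \ref{G bundle is Hom} and the freeness of $\Lambda$ — so the write-up is mostly a matter of fixing notation once and propagating it.
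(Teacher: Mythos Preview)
Your proof is correct and follows the same overall strategy as the paper: both reduce via Proposition~\ref{G bundle is Hom} (with $G=U(1)$ abelian, so the conjugation action is trivial) and the identification $\pi_1(T)\cong\Lambda$ to the statement $V^*/\Lambda^*\cong\mathrm{Hom}(\Lambda,U(1))$.

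The difference lies in how that last isomorphism is established. The paper argues by hand: it writes down the map $f:V^*\to\mathrm{Hom}(\Lambda,U(1))$, $f(v^*)(\gamma)=\exp(v^*(\gamma))$, computes $\ker f=\Lambda^*$ directly, and proves surjectivity by choosing a basis $\{e_i\}$ of $\Lambda$ and lifting a given $h\in\mathrm{Hom}(\Lambda,U(1))$ via $v^*(\sum v_ie_i)=\sum v_i\,\mathrm{Arg}_0(h(e_i))$. Your route via the exponential sequence $0\to\ZZ\to\RR\to U(1)\to 0$ and the vanishing of $\mathrm{Ext}^1_\ZZ(\Lambda,\ZZ)$ is cleaner and basis-free; it makes both the kernel identification and surjectivity automatic consequences of exactness. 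The paper's argument is more elementary in that it avoids any homological machinery, but your version makes the role of freeness of $\Lambda$ transparent. Your remark on the $2\pi$ convention is apt: the paper itself is slightly inconsistent between the definition of $\Lambda^*$ (pairing in $\ZZ$) and the proof (pairing in $2\pi\ZZ$), so your care there is well placed.
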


\begin{proof}
According to Proposition \ref{G bundle is Hom}, we have the one-to-one correspondence $$\{Isomorphism\ class\ of\ flat\ U(1)\ connection\ \nabla\  over\ T\} \longleftrightarrow Hom(\pi_1(T),U(1))^{U(1)} .$$
Notice that $U(1)$ is an abelian group and $\pi_1(T)$ is exactly $\Gamma$. Hence we have
$$Hom(\pi_1(T),U(1))^{U(1)} =Hom(\Gamma,U(1))  .$$
It is suffice if we obtain the group isomorphism $$\frac{V^*}{\Gamma^*} \simeq Hom(\Gamma,U(1)),$$ since $\frac{V^*}{\Gamma^*}$ is exactly the definition of the dual torus $T^\vee$.
Consider the group homomorphism $$f:V^*\longrightarrow Hom(\Gamma,U(1))$$ given by
$$f(v^*)(\gamma)=exp(v^*(\gamma))$$ for any $v^*\in V^*$ and $\gamma\in\Gamma$.
We then have its kernel computed as
\eqt \begin{split}
Ker(f)
&=\{v^*\in V^*|f(v^*)(\gamma)=1\in U(1),\forall \gamma\in\Gamma\} \\
&= \{v^*\in V^*|exp(v^*(\gamma)=1\in U(1),\forall \gamma\in\Gamma\} \\
&= \{v^*\in V^*|v^*(\gamma)\in 2\pi\ZZ,\forall \gamma\in\Gamma\} \\
&= \Gamma^* .\\
\end{split} \eqtn
To see that the map $f$ is surjective, for any $h\in Hom(\Gamma,U(1))$, and we pick $v^*\in V^*$ to be defined by
$$v^*(v)=\sum_i v_i Arg_0(h(e_i)) $$
for any $v=\sum_i v_i e_i$ where the $e_i's$ generates the lattice $\Gamma\in V$ and $Arg_0(u)\in [0,2\pi),\forall u\in U(1)$.
Then we have
\eqt \begin{split}
f(v^*)(\gamma)
&= exp(v^*(\gamma)) \\
&= exp(v^*( \sum_i n_i e_i)) \\
&= exp(\sum_i n_i Arg_0 h( e_i)) \\
&= \prod_i exp( Arg_0 (h( e_i)^n_i)    ) \\
&= \prod_i h( e_i)^n_i) \\
&=  h( \sum_i n_i e_i ) \\
&=  h( \gamma ) \\
\end{split} \eqtn
for any $\gamma = \sum_i n_i e_i \in \Gamma,n_i\in\ZZ$.
By the first isomorphism theorem, we obtain that $\frac{V^*}{\Gamma^*} \simeq Hom(\Gamma,U(1))$. Thus the proof is completed.
\end{proof}

Let $\mu:X\longrightarrow B$ be a special Lagrangian fibration of a compact symplectic manifold. We define
$$B_0:=\{b\in B|\mu^{-1}(b)\cong\TT^n\}\subset B$$
and
$$X_0:=\mu^{-1}(B_0).$$
By a theorem of Duistermaat \cite{Duistermaat}, we have the isomorphism
$$X_0\cong \frac{T^*B_0}{\Lambda^*}$$
where $\Lambda^*$ is a $\ZZ$-lattice bundle of the vector bundle $T^*B_0$ over $B_0$.

\par According to the SYZ proposal, we are going to define the mirror of $X_0$ and its underlying geometric meaning as a moduli connections.

\begin{defn}
\label{SYZ definition of M_0}
Let $\mu:X\longrightarrow B$ be a special Lagrangian fibration of a compact symplectic manifold. We define $\check{X_0}:=\frac{TB_0}{\Lambda}$ to be the mirror of $X_0\cong\frac{T^*B_0}{\Lambda^*}$, that is
$$ X_0\cong\frac{T^*B_0}{\Lambda^*}\underset{mirror}{\overset{SYZ}{\longleftrightarrow}}\frac{TB_0}{\Lambda}=\check{X_0}\ .$$
\end{defn}
According to Proposition \ref{T-dual to connection}, the mirror geometrically corresponds to a moduli of connections as follows:
\begin{prop}
Let $\mu:X\longrightarrow B$ be a special Lagrangian fibration of a compact symplectic manifold. Then the SYZ mirror of $X_0$ is
\eqt X^\vee_0 := \mathcal{M}_0 \eqtn
where $\mathcal{M}_0$ is the moduli space of the pair $(L,\nabla)$ with $L$ being a regular fiber of $\mu$ and $\nabla$ represents an isomorphism class of flat $U(1)$ connection over the fiber $L$.
\end{prop}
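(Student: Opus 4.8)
The plan is to deduce the proposition from the fiberwise dual-torus correspondence already established in Proposition~\ref{T-dual to connection}, applied over each point of the smooth base $B_0$, and then to check that these pointwise bijections assemble into a single global identification.

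First I would unwind the two relevant geometric pictures over a regular value $b\in B_0$. On the one hand, Definition~\ref{SYZ definition of M_0} presents the SYZ mirror as $\check{X}_0=TB_0/\Lambda$, with the obvious projection $\check{\mu}\colon\check{X}_0\to B_0$ whose fiber over $b$ is the torus $T_bB_0/\Lambda_b$. On the other hand, by Duistermaat's theorem \cite{Duistermaat} the fiber $L_b:=\mu^{-1}(b)$ is identified with $T_b^*B_0/\Lambda_b^*$, so that $\pi_1(L_b)\cong\Lambda_b^*$.

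Next I would apply Proposition~\ref{T-dual to connection} fiberwise. Setting $V:=T_b^*B_0$ and $\Lambda:=\Lambda_b^*$ in the notation of that proposition, we have $L_b\cong V/\Lambda$, and its dual torus is $(V/\Lambda)^\vee=V^*/\Lambda^*$, which — using $V^{**}\cong V$ and $(\Lambda_b^*)^*\cong\Lambda_b$ — is exactly the fiber $T_bB_0/\Lambda_b$ of $\check{X}_0$ over $b$. Proposition~\ref{T-dual to connection} therefore furnishes a canonical bijection between that fiber and the set of isomorphism classes of flat $U(1)$ connections over $L_b$. Letting $b$ range over $B_0$, a point of $\check{X}_0$ lying over $b$ is the same datum as a flat $U(1)$ connection $\nabla$ on the regular fiber $L_b$, i.e. a pair $(L_b,\nabla)$; conversely every regular fiber of $\mu$ is some $L_b$. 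This yields a bijection $\check{X}_0\longleftrightarrow\MM_0$ commuting with the projections to $B_0$, so one sets $X_0^\vee:=\MM_0$ and, transporting the dual-torus-bundle structure of $\check{X}_0$, regards it as the SYZ mirror of $X_0$.

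I expect the only point requiring care to be the global, rather than fiberwise, nature of the correspondence: one must verify that the identification of Proposition~\ref{T-dual to connection} varies smoothly with $b$, i.e. that it is compatible with the vector-bundle structures on $TB_0$ and $T^*B_0$ and with the smoothly varying lattices $\Lambda\subset TB_0$ and $\Lambda^*\subset T^*B_0$. Because the group isomorphism $V^*/\Lambda^*\cong\operatorname{Hom}(\Lambda,U(1))$ constructed in the proof of Proposition~\ref{T-dual to connection} is natural in the pair $(V,\Lambda)$, this smoothness is inherited from that of the lattice subbundle, so there is no genuine obstacle; the substance of the statement is simply the reinterpretation of the dual torus bundle $TB_0/\Lambda$ as the moduli space $\MM_0$ of Lagrangian fibers equipped with a flat $U(1)$ connection.
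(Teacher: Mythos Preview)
Your proposal is correct and follows essentially the same approach as the paper: decompose $\check{X}_0=TB_0/\Lambda$ fiberwise and apply Proposition~\ref{T-dual to connection} at each $b\in B_0$ to identify $T_bB_0/\Lambda_b$ with the set of flat $U(1)$ connections on $L_b\cong T_b^*B_0/\Lambda_b^*$. The paper's proof is considerably terser and does not address the global smoothness point you raise; your more careful discussion of naturality is a welcome addition but not a genuinely different route.
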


\begin{proof}
According to Definition \ref{SYZ definition of M_0} and using the result in Proposition \ref{T-dual to connection}, we obtain that
\eqt \begin{split}
X_0^\vee
&= \frac{TB_0}{\Lambda_0} = \bigcup_{b\in B_0} \frac{T_bB_0}{\Lambda_{0,b}} = \bigcup_{b\in B_0} (\frac{T_b^*B_0}{\Lambda_{0,b}^*})^\vee \\
&= \bigcup_{b\in B_0} \{Isomorphism\ class\ of\ flat\ U(1)\ connection\ \nabla\  over\ \frac{T_b^*B_0}{\Lambda_{0,b}^*}\}
\end{split} \eqtn
Thus $X_0^\vee = \MM_0$ as desired and the proof is competed.
\end{proof}

At this moment, we have only constructed the mirror of $X_0$ to be $X^\vee_0=\mathcal{M}_0$. In order to construct the SYZ mirror of $X$, we will need to compactify $X^\vee_0$. We will need some symplectic information about the holomorphic disc counting of $(X,\omega)$ that allow us to define the partially compactified mirror $X^\vee$ together with a integrable complex structure attached to it. This procedure will be investigated in later chapters.

\section{Toric Calabi-Yau manifolds}
We construct Calabi-Yau manifolds, $X$, by toric data with Calabi-Yau condition, $\Delta$. The manifold $X_\Delta$ has a canonical symplectic structure $(X,\omega)$, by Delzant construction and a canonical complex structure as a scheme over $\mathds{C}$. The symplectic and complex structure on $X$ are compatible, therefore $X$ carries a canonical K$\ddot{a}$hler structure, $(X,\omega,J)$. Calabi-Yau condition on $X$ is equivalent to the Calabi-Yau condition on the fan. According to some well known results in Toric Geometry \cite{Guillemin} , the two descriptions are equivalent, that is,
$$(X=X_\Delta \text{ as toric variety}) \longleftrightarrow (X=\CC^m\sslash_t G \text{ as symplectic reduction})$$

Let $\Sigma$ be a fan with $m$ rays $v_0,...,v_{m-1}\in N:=\ZZ^n$. We obtain a pair of dual short exact sequences
\begin{equation} \label{toric exact sequence}
0\longrightarrow G\mathop{\longrightarrow}^\iota \mathds{Z}^m \mathop{\longrightarrow}^ \beta N\longrightarrow 0;
\end{equation}
\begin{equation}
0\longrightarrow M\mathop{\longrightarrow}^{\beta^\vee} {\mathds{Z}^m}^\vee  \mathop{\longrightarrow}^{\iota^\vee}G^\vee\longrightarrow 0.
\end{equation}

\begin{defn}
Let $\Delta=\{\nu\in M:(\nu,v_i)\geq -\lambda_i \forall i\}$ be a polytope.
We define $(X_\Delta,\omega)$ as the symplectic reduction of $\mathds{T}^m \curvearrowright (\mathds{C}^m,\omega_{std})$ by the sub-torus action $G \otimes_\mathds{Z} \mathds{R} \subset \mathds{T}^m$ at the point $\iota^\vee(\lambda)\in G^\vee\otimes_\mathds{Z} \mathds{R}$.
\end{defn}

\begin{defn}
Let $\Sigma$ be a fan.
We define $X_\Sigma$ as a toric variety covered by $V_\sigma:=Spec \mathds{C}[\chi^\nu:\nu\in \sigma^\vee]$ running over $\sigma \in \Sigma$.
\end{defn}

\begin{lem}
Let $\Sigma$ be a fan with $m$ rays $v_0,...,v_{m-1}\in N:=\ZZ^m$. We obtain a pair of dual short exact sequences over $\RR$
\begin{equation}
0\longrightarrow G_\RR\mathop{\longrightarrow}^\iota \mathds{R}^m \mathop{\longrightarrow}^ \beta N_\RR\longrightarrow 0;
\end{equation}
\begin{equation}
0\longrightarrow M_\RR\mathop{\longrightarrow}^{\beta^\vee} {\mathds{R}^m}^\vee  \mathop{\longrightarrow}^{\iota^\vee}G_\RR^\vee\longrightarrow 0.
\end{equation}
Suppose the $\lambda\in{\mathds{R}^r}^\vee $ and $t\in G_\RR^\vee$ such that $\iota^\vee(\lambda)=t$ and $\Delta=\{\nu\in M:(\nu,v_i)\geq -\lambda_i \forall i\}$.
Then the symplectic toric manifold $(X_\Delta,\omega_\Delta)$ is symplectomorphic the symplectic manifold $(\CC^m\sslash_t G_\RR,\omega_{Red})$ which is the symplectic reduction of $\CC^m$ by the sub-torus action of $\frac{G_\RR}{G} \curvearrowright \CC^m$ at the point $t\in G_\RR^\vee$.
\end{lem}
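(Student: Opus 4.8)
The natural route is to realise both sides as symplectic toric manifolds carrying the \emph{same} moment polytope and then appeal to the Delzant-type uniqueness theorem; since the preceding Definition already realises $(X_\Delta,\omega_\Delta)$ as the symplectic reduction of $(\CC^m,\omega_{std})$, the substance of the lemma is twofold: (i) to pin down that the ``subtorus $G\otimes_\ZZ\RR\subset\TT^m$'' means the torus $G_\RR/G$ (whose Lie algebra is $G_\RR$) acting on $\CC^m$, reduced at $t=\iota^\vee(\lambda)$, and (ii) to identify the moment polytope of the reduced space with $\Delta$. This is the classical Delzant correspondence, and (ii) is the real content.

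First I would write down the standard moment map $\mu_{\CC^m}\colon\CC^m\to(\RR^m)^\vee$, $\mu_{\CC^m}(z_1,\dots,z_m)=\tfrac12\sum_i|z_i|^2\,e_i^*$, for the Hamiltonian $\TT^m$-action, and observe that $\mu_G:=\iota^\vee\circ\mu_{\CC^m}$ is a moment map for the restricted $G_\RR/G$-action. Because $\Delta$ is a Delzant polytope — equivalently, the fan $\Sigma$ is smooth — one checks that $t$ is a regular value of $\mu_G$ and that $G_\RR/G$ acts freely on $\mu_G^{-1}(t)$, so the Marsden–Weinstein theorem produces a genuine smooth symplectic manifold $(\CC^m\sslash_tG_\RR,\omega_{Red})=(\mu_G^{-1}(t)/(G_\RR/G),\omega_{Red})$.

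Next I would identify the residual Hamiltonian torus and compute its moment polytope. From $0\to G\to\ZZ^m\xrightarrow{\beta}N\to0$ one gets a residual Hamiltonian action of $\TT^m/(G_\RR/G)\cong N_\RR/N$ on the reduced space, with moment map valued in $M_\RR=\ker(\iota^\vee)=\mathrm{im}(\beta^\vee)$. Concretely, for $z\in\mu_G^{-1}(t)$ we have $\mu_{\CC^m}(z)-\lambda\in\ker(\iota^\vee)$, hence $\mu_{\CC^m}(z)=\beta^\vee(\nu)+\lambda$ for a unique $\nu\in M_\RR$, and this $\nu$ is precisely the residual moment value. Since the $i$-th coordinate of $\beta^\vee(\nu)$ is $\langle\nu,v_i\rangle$, the constraint $\tfrac12|z_i|^2=\langle\nu,v_i\rangle+\lambda_i\ge0$ shows that, as $z$ ranges over $\mu_G^{-1}(t)$, the value $\nu$ ranges over exactly $\{\nu\in M_\RR:\langle\nu,v_i\rangle\ge-\lambda_i\ \forall i\}=\Delta$. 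Thus $\CC^m\sslash_tG_\RR$ is a symplectic toric manifold with moment polytope $\Delta$.

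Finally, as $(X_\Delta,\omega_\Delta)$ is by construction the symplectic toric manifold attached to the Delzant polytope $\Delta$, the Delzant-type classification (see \cite{Guillemin}) — a symplectic toric manifold is determined up to $\TT^n$-equivariant symplectomorphism by its moment polytope — yields the desired symplectomorphism $(X_\Delta,\omega_\Delta)\cong(\CC^m\sslash_tG_\RR,\omega_{Red})$. I expect the only delicate points to be bookkeeping: keeping the normalisation of $\mu_{\CC^m}$ fixed throughout, deducing freeness of the subtorus action on $\mu_G^{-1}(t)$ from the Delzant/smoothness hypothesis (so the quotient is a manifold, not an orbifold), and — if $\Delta$ is unbounded, as for toric Calabi–Yau examples — invoking the noncompact refinement of Delzant's theorem rather than the classical compact statement. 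The conceptual heart is simply that reducing at level $\iota^\vee(\lambda)$ converts the ambient inequalities $|z_i|^2\ge0$ into precisely the half-space conditions cutting out $\Delta$.
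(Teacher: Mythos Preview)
Your proposal is correct and in fact gives considerably more detail than the paper itself. The paper states this lemma without proof, referring to it as a ``well known result in Toric Geometry'' and citing Guillemin's book \cite{Guillemin}; the only substantive content the paper adds is the subsequent Remark, which writes down the explicit identification
\[
(\beta^\vee)^*:\frac{\mu^{-1}(t)}{G_\RR}\xrightarrow{\ \sim\ }X_\Sigma
\]
chart by chart and asserts that it preserves the canonical symplectic structures.

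Your Delzant-style argument --- compute the residual moment polytope of the reduced space, identify it with $\Delta$, then invoke uniqueness --- is exactly the standard proof one finds in Guillemin, so you are on the same page as the paper's cited reference. One small point worth noting: since the paper's preceding Definition already \emph{defines} $(X_\Delta,\omega_\Delta)$ to be the symplectic reduction at $\iota^\vee(\lambda)$, the lemma as stated is close to tautological; the genuine content (as you correctly isolate) is the moment-polytope identification and, implicitly, the equivalence with the fan-theoretic $X_\Sigma$ appearing elsewhere. Your caution about the noncompact refinement of Delzant's theorem is apt here, since the toric Calabi--Yau examples are never compact.
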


\begin{rmk}
To be more specific, we have
\begin{equation}
\begin{split}
\CC^m\sslash_t G_\RR &\trieq
\frac{\mu^{-1}(t)}{G_\mathds{R}} \\
&=\{(z_0,...,z_{m-1})\in \CC^m| \iota^\vee(r_0^2,...,r_{m-1}^2)=t\in \mathds{R}^k \cong G_\mathds{R})^\vee \ \\
&\ \ \ \ \ \ \ \ \ \ \ \ \ \ \ and\  (\theta_0^2,...,\theta_{m-1}^2)\equiv 0 \in \mathds{R}^k (mod\ G_\mathds{R}) \}.
\end{split}
\end{equation}
And the map between $(X_\Delta,\omega_\Delta)$ and $(\CC^m\sslash_t G_\RR,\omega_{Red})$ is given by
$$ {\beta^\vee}^* : \frac{\mu^{-1}(t)}{G_\mathds{R}} \mathop{\longrightarrow}^\simeq X_\Sigma,$$
$${\beta^\vee}^*:\CC^m_\sigma=Spec\CC[\beta^\vee(\sigma)]\longrightarrow
Spec\CC[\sigma]=V_\sigma$$
for any $\sigma\in\Sigma$, where
$$X_\Delta \subset \bigcup_\sigma V_\sigma\ \ \, \ \ \   \mu^{-1}(t) \subset \bigcup_\sigma \CC^m_\sigma\ \ \ and$$
$$\CC^m_\sigma\trieq\{(z_0,...,z_{m-1})\in\CC^m|z_i\neq 0 \text{ if } v_i\notin \sigma(1) \}$$


This identification preserves the canonical symplectic structure on them and thus gives us the desired symplectomorphism, that is,
$$
(X_\Delta,\omega_\Delta) \overset{Symplectomorphic}{\simeq} (\CC^m\sslash_t G_\RR,\omega_{Red}.)
$$
\end{rmk}

\begin{defn}
A Calabi-Yau manifold is defined to a K$\ddot{a}$hler manifold with trivial canonical line bundle.
\end{defn}

\begin{lem} \label{CY condition on polytope}
Let $\Sigma$ be a fan with $r$ rays $v_0,...,v_{m-1}$.
If there exist $u\in M$ such that$(u,v_i)=1$ for all $i$.
Then $X_\Delta$ is Calabi-Yau.
\end{lem}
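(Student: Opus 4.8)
The plan is to use the standard description of the canonical class of a toric variety together with the fact, already recorded above, that $X_\Delta$ carries a canonical K\"ahler structure; it therefore remains only to show that its canonical line bundle is holomorphically trivial.

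First I would recall the correspondence between the rays $v_0,\dots,v_{m-1}$ of $\Sigma$ and the torus-invariant prime divisors $D_0,\dots,D_{m-1}$ of $X_\Sigma$, where $D_i$ is the Zariski closure of the codimension-one orbit associated with the ray $\RR_{\geq 0}\,v_i$. Two standard facts from toric geometry enter here: (i) the canonical divisor is $K_{X_\Sigma}=-\sum_{i=0}^{m-1}D_i$, and (ii) for any character $u\in M$ the associated principal divisor is $\mathrm{div}(\chi^u)=\sum_{i=0}^{m-1}\langle u,v_i\rangle\,D_i$. Both can be extracted from the toric geometry references used in this chapter.

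Next I would combine (i) and (ii) with the hypothesis. If $u\in M$ satisfies $\langle u,v_i\rangle=1$ for all $i$, then $\mathrm{div}(\chi^u)=\sum_i D_i=-K_{X_\Sigma}$, so $K_{X_\Sigma}$ is linearly equivalent to $0$; equivalently the canonical line bundle $\OO_{X_\Sigma}(K_{X_\Sigma})$ is isomorphic to $\OO_{X_\Sigma}$. Concretely, writing $w_1,\dots,w_n$ for the coordinates on the dense open torus $N\otimes_\ZZ\CC^\times\subset X_\Sigma$ coming from a $\ZZ$-basis of $M$, the meromorphic volume form $\chi^u\,\frac{dw_1}{w_1}\wedge\cdots\wedge\frac{dw_n}{w_n}$ has divisor $\sum_i(\langle u,v_i\rangle-1)D_i=0$ on $X_\Sigma$, hence it is a nowhere-vanishing holomorphic $n$-form, which trivializes $K_{X_\Sigma}$. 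Together with the canonical K\"ahler structure on $X_\Delta$ recorded earlier, this exhibits $X_\Delta$ as a Calabi-Yau manifold in the sense of the definition above.

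The computation itself is routine; the only points requiring a little care are the bookkeeping of the divisor of $\frac{dw_1}{w_1}\wedge\cdots\wedge\frac{dw_n}{w_n}$ along the boundary divisors $D_i$ — that is, checking that it acquires exactly a simple pole along each $D_i$, which is what produces the $-\sum_i D_i$ in (i) — and the standing assumption that the fan is smooth, so that $X_\Delta$ is a genuine manifold and $K_{X_\Sigma}$ is an honest line bundle rather than merely a Weil-divisor class. I do not expect any serious obstacle beyond invoking these standard facts correctly.
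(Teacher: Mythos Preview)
Your proposal is correct and follows essentially the same argument as the paper: both use the standard facts that $K_{X_\Sigma}=-\sum_i D_i$ and $\mathrm{div}(\chi^u)=\sum_i\langle u,v_i\rangle D_i$, and conclude from the hypothesis that $\chi^u$ trivializes the canonical bundle. Your version is a bit more careful with signs and spells out the explicit holomorphic volume form, but the method is the same.
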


\begin{proof}
Notice that $\mathcal{K}_X=\mathcal{O}(\sum_i \mathcal{D}_i )$, and $$Div(\chi^u:X\rightarrow \mathds{C})=\sum_{i=0}^{m-1} <u,v_i>\mathcal{D}_i=\sum_{i=0}^{m-1} \mathcal{D}_i.$$
We have that $\chi^u$ can be viewed as a nowhere vanishing chapter on the canonical line bundle $\mathcal{K}_X$. Thus $X_\Delta$ is Calabi-Yau.
\end{proof}

\section{Gross fibration}
In order to perform the SYZ program to the toric Calabi-Yau manifold $X_\Delta$ we will need special lagrangian fibration on $X_\Delta$. Due to a result of Gross and Goldstein in \cite{Gross,Goldstein}, we construct the Gross fibration
$\mu_G:X_\Delta\longrightarrow B$
which is the desired special lagrangian fibration. 
In order to perform the SYZ transformation, we first remove the singular Gross fibers of $X$ before constructing its corresponding mirror.
Moreover, we compute some homology groups of the regular Gross fibers at the end of this section.


\par Firstly, for any toric Calabi-Yau manifold $X_\Delta$, we define the Gross fibration as follows:

\begin{defn}[Gross\cite{Gross},Goldstein\cite{Goldstein}]
Let $\epsilon>0$ and $B:=\frac{M}{\mathds{R}u} \times \mathds{R}_{\geq 0}$.
We define the Gross fibration, $$\mu_{G}:X_\Delta \longrightarrow B,$$by
\begin{equation}
\mu_{G}(x):= ([\mu_\Delta(x)],|\chi^u(x)-\epsilon|^2).
\end{equation}
Moreover, we denote the first projection of $\mu_{G}$ by $\mu_{G}^{(1)}:X_\Delta \longrightarrow \frac{M}{\mathds{R}u}$ and the second projection of $\mu_{G}$ by $\mu_{G}^{(2)}:X_\Delta \longrightarrow \mathds{R}_{\geq 0}$.
\end{defn}

With the definition of Gross fibration above, we are going to compute it by computing the holomorphic function $\chi^u:X_\Delta \longrightarrow \mathds{C}$ explicitly.

\begin{prop} \label{Proposition compute CHI is product of zs }
Let $0\longrightarrow M \overset{\beta^\vee}{\longrightarrow} \mathds{Z}^m \overset{\iota^\vee}{\longrightarrow} G^\vee\longrightarrow 0$ be the dual exact sequence. Suppose $t\in G^\vee$ and $u=(0,...,0,1)\in \mathds{Z}^n \cong M$ and $<u,v_i>=1$ for all $i=0,...,{m-1}$. Then the holomorphic function 
$\chi^u:\CC^m\sslash_t G \cong X_\Delta \longrightarrow \mathds{C}$ is given by
\begin{equation}
\chi^u([z_0,...,z_{m-1}])=\prod_{i=0}^{m-1} z_i.
\end{equation}
\end{prop}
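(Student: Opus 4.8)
The plan is to unwind the identification $\CC^m\sslash_t G \cong X_\Delta \cong X_\Sigma$ and track what the monomial $\chi^u$ becomes under the coordinates coming from the symplectic reduction. Recall from the dual exact sequence $0\to M \xrightarrow{\beta^\vee} \ZZ^m \xrightarrow{\iota^\vee} G^\vee \to 0$ that a character $\chi^u$ on the torus $N\otimes\CC^\times$ pulls back, along the quotient map $\CC^m \supset \mu^{-1}(t) \to \CC^m\sslash_t G$, to the Laurent monomial $z^{\beta^\vee(u)} = \prod_i z_i^{(\beta^\vee(u))_i}$ on $\CC^m$; this is precisely the content of the ring map ${\beta^\vee}^*$ described in the Remark following Lemma~\ref{CY condition on polytope}. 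So the first step is simply to compute the integer vector $\beta^\vee(u) \in \ZZ^m$.

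The key observation is that $\beta^\vee$ is dual to $\beta\colon \ZZ^m \to N$, which is the map sending the standard basis vector $e_i$ to the ray generator $v_i$. Hence for any $u \in M$ the $i$-th component of $\beta^\vee(u)$ is $\langle \beta^\vee(u), e_i\rangle = \langle u, \beta(e_i)\rangle = \langle u, v_i\rangle$. By the Calabi-Yau hypothesis of Lemma~\ref{CY condition on polytope} (equivalently, the normalization $u=(0,\dots,0,1)$ with $\langle u,v_i\rangle=1$ for all $i$, which is given), every component equals $1$, so $\beta^\vee(u) = (1,1,\dots,1)\in\ZZ^m$. Therefore $\chi^u$ pulls back to $\prod_{i=0}^{m-1} z_i$, which is exactly the claimed formula once we pass to the quotient and write the class $[z_0,\dots,z_{m-1}]$.

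The only genuine subtlety — and the step I would be most careful about — is checking that the monomial $\prod_i z_i$ is actually $G$-invariant, so that it descends to a well-defined function on $\CC^m\sslash_t G$, and that this descended function is the holomorphic function $\chi^u$ in the sense intended by the Gross fibration. Invariance is automatic from the construction: $\beta^\vee(u)$ lies in the image of $\beta^\vee$, which is exactly $\ker(\iota^\vee)$, i.e. the annihilator of $G$, so the weight of the monomial $z^{\beta^\vee(u)}$ under the $G$-action is zero; concretely, for $g = \iota(\gamma)\in G\subset\ZZ^m$ acting on $\CC^m$, the monomial picks up the factor $\prod_i g_i = \langle \mathbf{1}, \iota(\gamma)\rangle = \langle \beta^\vee(u), \iota(\gamma)\rangle = \langle u, (\beta\circ\iota)(\gamma)\rangle = \langle u, 0\rangle$ since $\beta\circ\iota=0$ by exactness of \eqref{toric exact sequence}. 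This confirms descent, and combined with the chart-by-chart identification ${\beta^\vee}^*\colon \CC^m_\sigma \to V_\sigma$ from the Remark, it identifies the descended monomial with $\chi^u$ on $X_\Sigma \cong X_\Delta$. I expect no other obstacles: everything else is bookkeeping with the two dual exact sequences.
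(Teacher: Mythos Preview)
Your proof is correct and follows essentially the same approach as the paper: both compute $\beta^\vee(u)=(1,\dots,1)$ from the Calabi--Yau condition $\langle u,v_i\rangle=1$ and identify $\chi^u$ with $\prod_i z_i$ via the ring map $(\beta^\vee)^*$. The paper phrases the last step as working on the open torus $V_0=\mathrm{Spec}\,\CC[M]$ and then extending by density and holomorphicity to all of $X_\Delta$, whereas you instead verify $G$-invariance of $\prod_i z_i$ directly; these are two ways of saying the same thing.
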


\begin{proof}
Recall that the toric variety $X_\Delta$ can be represented by the symplectic reduction of
$$\CC^m \overset{\mu_{\CC^m}}{\longrightarrow} (\RR^m)^\vee \overset{\iota^\vee}{\longrightarrow} G^\vee :z_i\mapsto |z_i|^2 \mapsto \iota^\vee (|z_i|^2) $$ at $t\in G^\vee$ which is denoted by
$\CC^m\sslash_t G := \mu^{-1}(t)/G_\RR.$
Consider the dense open complex torus $(\CC^\times)^n \subset X_\Delta$, we have the identification
$$ \frac{\mu^{-1}(t) \cap (\CC^\times)^m }{G_\CC} \overset{\sim}{\longrightarrow} V_0:=Spec\CC[M]\subset X_\Delta$$ by
the map $$(\beta^\vee)^*:(\CC^\times)^, = Spec\CC[(\ZZ^m)^\vee] 
{\longrightarrow} Spec\CC[M] = V_0.$$
According to the definition of $\chi^u:X_\Delta\longrightarrow\CC^*$, the map $\chi^u:V_0\longrightarrow\CC^*$ is given by canonical map
$Spec\CC[M] \mapsto Spec\CC[\ZZ u]$ since $u\in M$.
Notice that $\beta^\vee(u)=\sum_{i=0}^{m-1}e_i^*\in (\ZZ^m)^\vee $ since we assume the Calabi-Yau condition that $<u,v_i>=1,\forall i=0,...,m-1$, we then have
$$ \beta^\vee (\chi^u) = \chi^{\beta^\vee(u)} = \chi^{   \sum_{i=0}^{m-1}  e_i^*  }
=\prod_{i=0}^{m-1} \chi^{e_i^*}  .$$
Hence the map $(\beta^\vee)^*$
 is explicitly given by
$$
(\beta^\vee)^*[z_0,...,z_{m-1}]=\prod_{i=0}^{m-1} z_i.
$$
We recall that the subset $V_0\subset X_\Delta$ is a dense and the map $\chi^u:X_0\longrightarrow\CC$ is holomorphic, and hence the map $\chi^u$
extends naturally from $V_0$ to $X_\Delta$ using the same explicit form. Thus the proof is completed.

\end{proof}

Before we examine the discriminant locus of the Gross fibration and construct the contractible subset $U\subset B_0$, we first give a notation for each face of the polytope $\Delta$.

\begin{defn} \label{definition of F I polytope faces}
Let $\Delta=\{\nu\in M|<\nu,v_i>\geq -\lambda_i,\forall i=0,...,m-1\}$ be a polytope in $M$. For each non-trivial index set $I\subset \{0,...,m-1 \}$, we define
$$F_I\triangleq \{\nu\in \Delta|<\nu,v_i>=\lambda_i,\forall i\in I \}$$ which is a face in $\partial \Delta$ of codimension $(|I|-1)$.
\end{defn}

We are now ready to describe the discriminant locus of $\mu_G$ as follows:
\begin{prop}
Let $\mu_G:X_\Delta\longrightarrow B$ as defined above.Then the discriminant locus of the fibration is
$$\Gamma=\partial B \cup ( (\bigcup_{|I|=2}\frac{F_I}{\RR u})\times\{|\epsilon|^2\}).$$
\end{prop}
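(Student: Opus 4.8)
The plan is to analyze the fibration $\mu_G = (\mu_G^{(1)}, \mu_G^{(2)})$ by examining separately where each of its two projections degenerates, and then combining the information. Recall $\mu_G^{(2)}(x) = |\chi^u(x) - \epsilon|^2$ with $\chi^u = \prod_{i=0}^{m-1} z_i$ by Proposition \ref{Proposition compute CHI is product of zs }. The second projection takes values in $\RR_{\geq 0}$, and its only critical value as a map to $\RR_{\geq 0}$ is $0$, attained exactly where $\chi^u(x) = \epsilon$; this contributes the hypersurface slice sitting over $\{|\epsilon|^2\}$ in $B$. Everything over the interior of $\RR_{>0}$ away from this slice should be a genuine torus bundle, because there $\chi^u - \epsilon$ is a nonvanishing holomorphic function and the level sets of $|\chi^u - \epsilon|^2$ are smooth.

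The first step is to identify $\partial B$. Since $B = \frac{M}{\RR u} \times \RR_{\geq 0}$, we have $\partial B = \frac{M}{\RR u} \times \{0\}$, and the preimage $\mu_G^{-1}(\partial B)$ consists of points with $\chi^u(x) = \epsilon$ — but more relevantly, the fibration degenerates over $\partial B$ because fibers there are lower-dimensional (the $S^1$ coming from the argument of $\chi^u - \epsilon$ collapses). So $\partial B \subset \Gamma$. The second step is to locate the degenerations coming from the first projection $\mu_G^{(1)} = [\mu_\Delta]$: the moment map $\mu_\Delta$ of the ambient toric variety degenerates over the boundary $\partial \Delta$, specifically over the faces $F_I$. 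Passing to the quotient $\frac{M}{\RR u}$ and intersecting with the locus $\chi^u = \epsilon$... no — rather, the toric fiber of $\mu_\Delta$ drops dimension over $F_I$ when $|I| \geq 2$ (a face of codimension $\geq 1$ in $\partial\Delta$ where at least two of the toric divisors meet), and this should manifest in $\mu_G$ as a degeneration over $\left(\bigcup_{|I|=2} \frac{F_I}{\RR u}\right) \times \{|\epsilon|^2\}$. The key point is that codimension-one faces $F_I$ with $|I| = 1$ do \emph{not} contribute: the single $S^1$ that degenerates there is precisely compensated by the structure of the Gross fibration, so only $|I| \geq 2$ (equivalently, the deepest non-torus strata, whose union is captured by $|I|=2$) survives. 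One must check carefully that $|I| = 2$ suffices to index the whole singular set, i.e. that higher-codimension faces $F_I$ with $|I| > 2$ lie in the closure of the $|I|=2$ pieces.

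Concretely I would: (i) write $X_\Delta$ via the symplectic reduction description and coordinates $[z_0,\dots,z_{m-1}]$; (ii) compute $d\mu_G$ and find its rank at a general point, showing it is full rank away from the asserted locus — this is where the Calabi-Yau condition $\langle u, v_i\rangle = 1$ for all $i$ enters, guaranteeing $\chi^u$ behaves uniformly; (iii) over $\partial B$, show the fiber is $\TT^{n-1}$ rather than $\TT^n$, hence singular; (iv) over the slice $\mu_G^{(2)} = |\epsilon|^2$, show that a fiber is singular precisely when the point additionally lies over a face $F_I$ with $|I| \geq 2$, by examining which toric-divisor strata intersect the hypersurface $\{\chi^u = \epsilon\}$ — note $\{\chi^u = \epsilon\}$ avoids each individual divisor $\{z_i = 0\}$ but meets pairwise intersections only in the limit, which forces $|I| = 2$ as the indexing bound; (v) conversely verify that every such point is indeed a critical point of $\mu_G$.

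The main obstacle I anticipate is step (iv): carefully matching the combinatorics of the faces $F_I \subset \partial\Delta$ (Definition \ref{definition of F I polytope faces}) with the actual singular fibers of $\mu_G$ over the hypersurface slice, and in particular pinning down why $|I| = 2$ is the correct and exhaustive bound — showing both that $|I|=2$ faces give singular fibers and that no singular fibers are missed (the $|I|\geq 3$ faces being contained in closures). This requires a local model of $X_\Delta$ near a codimension-$(|I|-1)$ toric stratum, intersected with the Gross hypersurface, and a rank computation of $d\mu_G$ there; the bookkeeping between the $M/\RR u$ quotient and the faces is the delicate part. I would lean on the explicit local coordinate descriptions in Gross \cite{Gross} and Goldstein \cite{Goldstein} to organize this.
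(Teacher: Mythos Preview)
Your overall strategy matches the paper's: decompose $\mu_G$ into its two projections, identify the critical locus of each, and take images. The paper simply asserts $\mathrm{Crit}(\mu_G^{(1)}) = \{\text{codimension-}2\text{ toric strata}\}$ and $\mathrm{Crit}(\mu_G^{(2)}) = \{\chi^u = \epsilon\}$, then computes $\mu_G(\mathrm{Crit}(\mu_G^{(1)})) = \bigl(\bigcup_{|I|=2} F_I/\RR u\bigr) \times \{|\epsilon|^2\}$ and $\mu_G(\mathrm{Crit}(\mu_G^{(2)})) = \partial B$. Your plan to compute the rank of $d\mu_G$ directly and verify both inclusions would in fact be more thorough than what the paper writes.

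However, there is a genuine confusion in your step (iv) that would derail the argument. You propose to locate the singular fibers at height $|\epsilon|^2$ ``by examining which toric-divisor strata intersect the hypersurface $\{\chi^u = \epsilon\}$.'' But for $\epsilon \neq 0$ the hypersurface $\{\chi^u = \epsilon\}$ meets \emph{no} toric divisor whatsoever, since $\chi^u = \prod_i z_i$ vanishes identically on every $D_i$. The mechanism is the reverse: because $\chi^u \equiv 0$ on the entire toric boundary, every toric stratum has $\mu_G^{(2)} = |0 - \epsilon|^2 = |\epsilon|^2$, so \emph{all} of them land in the $\{|\epsilon|^2\}$ slice. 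What singles out $|I| = 2$ is the first projection. On the open part of a single divisor $D_i$ the image of $d\mu_\Delta$ is the hyperplane $\ker(v_i) \subset M_\RR$, and since $\langle u, v_i\rangle = 1$ the quotient by $\RR u$ still maps this onto all of $M_\RR/\RR u$; hence $d\mu_G^{(1)}$ remains surjective there. On a codimension-two stratum $D_i \cap D_j$ the image of $d\mu_\Delta$ drops to $\ker(v_i)\cap\ker(v_j)$, which is $(n-2)$-dimensional and cannot surject onto $M_\RR/\RR u$. (The same slip appears in your first paragraph, where the critical value $0$ of $\mu_G^{(2)}$ is said to contribute ``the hypersurface slice sitting over $\{|\epsilon|^2\}$'' rather than over $\partial B$.) Once you replace your step (iv) with this correct mechanism, the rest of your outline goes through.
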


\begin{proof}
The critical points of $\mu_G$ is exactly where its differential is not surjective. Firstly, the critical points of $\mu_G^{(1)}$ is exactly the codimension-2 toric strata of $X_\Delta$. Secondly,
we consider the map $\mu_G^{(2)}:X_\Delta \longrightarrow \RR_{\geq 0}$ given by $\mu_G^{(2)}:x\mapsto |\chi^u(x)-\epsilon|^2.$
We then obtain the set of critical points of $\mu_G^{(2)}$, that is $Crit(\mu_G^{(2)})=\{x\in X_\Delta|\chi^u(x)-\epsilon \}.$
Under the image of $\mu_G$, we then have
\begin{equation} \label{eq1}
\begin{split}
\text{Discriminant locus of $\mu_G$}
& = \mu_G ( Crit( \mu_G^{(1)}  )  \cup \mu_G ( Crit( \mu_G^{(2)} ) )  \\
& = ( (\bigcup_{|I|=2}\frac{F_I}{\RR u})\times\{|\epsilon|^2\})  \cup  (  \frac{M}{\mathds{R}u}  \times  \{0\}   ) \\
& = ( (\bigcup_{|I|=2}\frac{F_I}{\RR u})\times\{|\epsilon|^2\})  \cup  \partial B.
\end{split}
\end{equation}
Thus the proof is completed.
\end{proof}

In the SYZ proposal, we discard singular Lagrangian fibers. We denote
$$B_0\trieq B-\Gamma$$
which can be viewed as the moduli of regular fibers of $\mu_G$. Notice that $B_0$ is not a necessarily a contractible. In order to trivialize the fibration, we define some open contractible subsets in $B_0$.

\begin{defn}
Let $B_0=B-\Gamma$ as defined above. We define
$$U_i\trieq B_0-\bigcup_{j\neq i,j=0}^{m-1}(\frac{F_{\{j\}}}{<u>}\times\{|\epsilon|^2\})$$
for $i=0,...,m-1$.
\end{defn}

\begin{rmk}
These subsets $U_0,...,U_{m-1}\subset B_0$ have different descriptions such as
$$U_i=\{(b_1,b_2)\in B_0|b_1\in\frac{F_{\{i\}}}{<u>}\ or\ b_2\neq |\epsilon|^2\}$$
$$U_i=B_+ \cup (\frac{F_{\{i\}}}{<u>}\times\{|\epsilon|^2\}) \cup B_-$$
where $B_+:=\{(b_1,b_2)\in B_0| b_2\geq |\epsilon|^2\}$ and $B_-:=\{(b_1,b_2)\in B_0| 0<b_2\leq |\epsilon|^2\}$. We observe that $U_0,...,U_{m-1}$ are contractible open subsets of $B_0$, using which we obtain trivialization of the Lagrangian fibers.
\end{rmk}

Without loss of generality, we assume that $$U:=U_0\subset B.$$ We have the following trivialization of $\mu_G^{-1}(U)$.

\begin{defn}
Let $\mu_G:X_\Delta \longrightarrow B$ be the Gross fibration and $U\subset B$.
Then we have the trivialization
$$\mu_G^{-1}(U) \xrightarrow{\sim} U\times 
\TT^n$$ by the map
$$x\mapsto (  [\mu_\Delta(x)]  ,
(  \frac{1}{2\pi}  arg(\chi^{{v'_1}^*}(x))
,...,
\frac{1}{2\pi}arg(\chi^{{v'_{n-1}}^*}(x)),
\frac{1}{2\pi}arg(\chi^u(x)-\epsilon)   )  ),$$
where $v'_i:=v_i-v_0$ and $\{v_0,v'_1,...,v'_{n-1}\}$ forms a basis for $N$ with the dual basis $\{v_0^*,{v'_1}^*,...,{v'_{n-1}}^*\}\subset M$.
\end{defn}

Using this trivialization, we obtain the result that almost every regular Gross fibers are isotopic to each other. Moreover these fibers are isotopic to some moment map fibers.

\begin{prop}
Let $b_0\in B_+ \subset U \subset B_0 $ be fixed and $L_{G,0}:=\mu_G^{-1}(b_0)$. Then for any $u\in U\subset B_0$ and $L_G=\mu_G^{-1}(u)$, we have $L_G\subset X$ is Lagrangian isotopic to $L_\Delta\subset X$ for some Lagrangian moment map fiber $L_\Delta$.
\end{prop}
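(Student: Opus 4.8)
The plan is to realise the required isotopy as a concatenation of two Lagrangian isotopies: first connect an arbitrary regular Gross fiber over $U$ to the reference fiber $L_{G,0}$, and then connect $L_{G,0}$ to a moment-map fiber by passing to a two-dimensional symplectic reduction in which all the relevant Lagrangians become embedded circles. (The reduction is what makes the second step work: a naive attempt to isotope $L_{G,0}$ directly in $X$, say by varying $\mu_G^{(2)}$, would run into singular fibers and would not stay Lagrangian, whereas any embedded loop in a surface is automatically isotropic.) For the first step: $\mu_G$ restricts, by the theorem of Gross and Goldstein, to a Lagrangian torus fibration over $B_0\supset U$, and $U$ is contractible, so using the trivialization $\mu_G^{-1}(U)\xrightarrow{\sim}U\times\TT^n$ proved above, any path $\gamma$ in $U$ from $u$ to $b_0$ gives a smooth family $f_t(\theta)=\Phi^{-1}(\gamma(t),\theta)$ of Lagrangian embeddings $\TT^n\hookrightarrow X$ interpolating between $L_G$ and $L_{G,0}$. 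Since any two points of $B_+$ are joined by a path inside $B_+\subset U$, it suffices to treat one convenient reference point, so I take $b_0=(b_1^0,b_2^0)$ with $b_2^0>|\epsilon|^2$ and $b_1^0$ generic; such a $b_0$ automatically lies in $B_0$.

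Next I pass to the reduction. Recall $\chi^u=\prod_{i=0}^{m-1}z_i$, so the codimension-one subtorus $K:=\ker\bigl(\chi^u\colon\TT^n\to U(1)\bigr)\cong\TT^{n-1}$ of the toric torus preserves $\chi^u$; since the annihilator of $\mathrm{Lie}(K)=u^\perp\subset N_\RR$ in $M_\RR$ is $\RR u$, the moment map of $K$ is exactly the first projection $\mu_G^{(1)}=[\mu_\Delta]\colon X_\Delta\to M_\RR/\RR u$. Fix $b_1^0$ and set $Z:=(\mu_G^{(1)})^{-1}(b_1^0)\cap(\CC^\times)^n$, where $(\CC^\times)^n=\{\chi^u\neq 0\}$ is the big orbit; on it $\TT^n$ acts freely and $\mu_G^{(1)}$ is a submersion, so $Z$ is a smooth $K$-free submanifold, the reduction $Y:=Z/K$ is a smooth symplectic surface, $q\colon Z\to Y$ is a principal $\TT^{n-1}$-bundle, and $\chi^u$ descends to $\bar\chi\colon Y\to\CC^\times$. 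Using the polar/Legendre identification $(\CC^\times)^n\cong N_\RR\times\TT^n$ one checks that $\bar\chi$ is a diffeomorphism of $Y$ onto $\CC^\times$: its modulus is a strictly monotone function of the position along the line $\ell=b_1^0+\RR u$ (this is where the Calabi--Yau condition $\langle u,v_i\rangle=1$ enters, making the relevant derivative $\sum_i 1/(\langle\mu_\Delta,v_i\rangle+\lambda_i)$ positive), and its argument is the $\TT^n/K\cong U(1)$ coordinate.

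Finally I carry out the isotopy in the surface. Since $|\chi^u-\epsilon|^2$ is $K$-invariant and equals $|\epsilon|^2\neq b_2^0$ wherever $\chi^u=0$, the fiber $L_{G,0}$ lies in $Z$ and equals $q^{-1}\bigl(\bar\chi^{-1}(S_\epsilon)\bigr)$ for the circle $S_\epsilon=\{w:|w-\epsilon|^2=b_2^0\}$; likewise, for each $c>0$ the set $\{x\in Z:|\chi^u(x)|=c\}$ is a moment-map fiber $\mu_\Delta^{-1}(\nu_c)$ (fixing $|\chi^u|$ fixes the position along $\ell$, hence the value of $\mu_\Delta$, with $\nu_c$ in the interior of $\Delta$) and equals $q^{-1}\bigl(\bar\chi^{-1}(S_c)\bigr)$ for $S_c=\{|w|=c\}$. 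Because $b_2^0>|\epsilon|^2$, the circle $S_\epsilon$ encircles $0$ and stays at distance $\ge\sqrt{b_2^0}-|\epsilon|>0$ from it, so within $\CC^\times$ one isotopes $S_\epsilon$ through embedded circles to $S_c$ (slide the centre from $\epsilon$ to $0$ at fixed radius, then change the radius to $c$, never reaching $0$); transporting through the diffeomorphism $\bar\chi$ gives an isotopy of embedded circles $C_t\subset Y$. Each $C_t$ is isotropic, being a curve in a surface, and since $\omega|_Z=q^*\omega_{red}$ the preimage $q^{-1}(C_t)$ is an isotropic submanifold of $X$ of dimension $(n-1)+1=\tfrac12\dim X$, hence Lagrangian; so $t\mapsto q^{-1}(C_t)$ is a Lagrangian isotopy from $L_{G,0}$ to the moment-map fiber $L_\Delta:=\mu_\Delta^{-1}(\nu_c)$, and concatenating with the first step finishes the proof. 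The main obstacle is the construction and identification of this reduced model — in particular verifying that $\mu_G^{(1)}$ is the moment map of the codimension-one subtorus $K$ and that $\chi^u$ descends to a diffeomorphism of the reduced surface onto $\CC^\times$; once that is in hand, the remaining ingredients (loops in a surface lifting through symplectic reduction to Lagrangians, and the circle isotopy that exists precisely because $b_2^0>|\epsilon|^2$) are elementary, as is the first step.
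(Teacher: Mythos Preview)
Your argument is correct, but the paper's proof is considerably more direct: it simply writes down the one-parameter family
\[
L_t:=\{x\in X_\Delta:\ [\mu_\Delta(x)]=b_1,\ |\chi^u(x)-t\epsilon|^2=b_2\},\qquad t\in[0,1],
\]
which interpolates between the Gross fiber $L_{G,0}=L_1$ and a moment-map fiber $L_\Delta=L_0$ (since fixing $|\chi^u|$ together with $[\mu_\Delta]=b_1$ pins down a single moment-map fiber). Each $L_t$ is a regular special Lagrangian torus by the same Gross--Goldstein construction, and the condition $b_2>|\epsilon|^2\ge t^2|\epsilon|^2$ keeps the family away from the singular locus. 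Your symplectic-reduction picture is a faithful reinterpretation of exactly this family: in your reduced surface $Y\cong\CC^\times$, the ``slide the centre from $\epsilon$ to $0$'' part of your circle isotopy is literally the image of $L_t$ under $\bar\chi$. So the reduction is not needed to make the isotopy Lagrangian --- your parenthetical that a direct approach ``would not stay Lagrangian'' is off, since varying the centre parameter $\epsilon$ (rather than the radius $b_2$) already gives Lagrangian tori without any reduction. What your approach buys is a conceptual explanation of \emph{why} such an isotopy exists (embedded loops in a surface), at the cost of setting up the reduction; the paper's approach trades that insight for a two-line computation.
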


\begin{proof}
Let $b_0=(b_1,b_2)$ and consider the Lagrangian isotopy
$$L_t:=\{ x\in X_\Delta | [\mu_\Delta(x)]=b_1 and |\chi^u(x)-t\epsilon|^2=b_2   \}$$
for $t\in[0,1]$. For the case $t=0$, we have
$$L_0:=\{ x\in X_\Delta | [\mu_\Delta(x)]=b_1 and |\chi^u(x)|^2=b_2   \}$$ which is the desired moment map fiber denoted by $L_\Delta$. And for the case $t=1$, we have
$$L_1:=\{ x\in X_\Delta | [\mu_\Delta(x)]=b_1 and |\chi^u(x)-\epsilon|^2=b_2   \}$$ which is a regular Gross fiber $L_{G,0}$. Hence we obtained that
$$L_{G,0}\sim  L_\Delta$$ is Lagrangian isotopic to each other through $L_t$.
On the other hand, the trivialization of $\mu_G^{-1}(U)$ implies that every fibers $L_G=\mu_G^{-1}(u)$ for $u\in U$ is isotopic to $L_{G,0}=\mu_G^{-1}(b_0)$, that is
$$L_{G,0}\sim  L_G.$$
Hence we have the Lagrangian isotopy as desired, that is
$$L_G \sim L_{G,0}\sim  L_\Delta.$$
\end{proof}

In the perspective of algebraic topology, we now relate the homology groups between the isotopic submanifolds, $L_G$ and $L_\Delta$, as follows:

\begin{lem}
If $L_0,L_1\subset X$ be isotopic submanifolds, then we have an isomorphism between the two long exact sequences, that is
\eqt \begin{CD}
... @>>> H_{p+1}(X) @>>> H_{p+1}(X,L_0) @>>> H_p(L_0) @>>> H_p(X)  @>>> ...  \\
 @.           @|             @VV\simeq V   @VV\simeq V     @|             \\
 ... @>>> H_{p+1}(X) @>>> H_{p+1}(X,L_1) @>>> H_p(L_1) @>>> H_p(X)  @>>> ...  \\
\end{CD} \eqtn
for any $p\geq 0$.
\end{lem}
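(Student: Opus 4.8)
The plan is to promote the given isotopy of submanifolds to an \emph{ambient} isotopy of $X$ and then to deduce the whole ladder from the naturality of the long exact sequence of a pair together with the homotopy invariance of homology.

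First I would invoke the isotopy extension theorem. An isotopy between $L_0$ and $L_1$ means, concretely, a smooth map $H\colon L_0\times[0,1]\to X$ with $H_0$ the inclusion, each $H_t$ an embedding onto $L_t$, and $H_1$ a diffeomorphism onto $L_1$; if its support is compact — which holds in the only situation we apply this to, namely the Lagrangian isotopy $L_t$ of the previous Proposition, all of whose members lie in a fixed compact region of the toric Calabi-Yau $X_\Delta$ — then $H$ extends to an ambient isotopy, i.e.\ a smooth family of diffeomorphisms $\Phi_t\colon X\to X$ with $\Phi_0=\mathrm{id}_X$ and $\Phi_t\circ H_0=H_t$. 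In particular $\Phi_1$ is a diffeomorphism of $X$ with $\Phi_1(L_0)=L_1$, hence a diffeomorphism of pairs $\Phi_1\colon(X,L_0)\to(X,L_1)$, and $\Phi_1$ is homotopic to $\mathrm{id}_X$ through the family $\Phi_t$.

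Next I would feed $\Phi_1$ into the functoriality of singular homology of pairs. A map of pairs induces a morphism of the associated long exact sequences, so the squares
\[
\begin{CD}
H_{p+1}(X) @>>> H_{p+1}(X,L_0) @>\partial>> H_p(L_0) @>>> H_p(X) \\
@V(\Phi_1)_*VV @V(\Phi_1)_*VV @V(\Phi_1)_*VV @V(\Phi_1)_*VV \\
H_{p+1}(X) @>>> H_{p+1}(X,L_1) @>\partial>> H_p(L_1) @>>> H_p(X)
\end{CD}
\]
all commute, the commutativity of the square involving $\partial$ being exactly the naturality of the connecting homomorphism and the others being naturality of the maps induced by inclusions. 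This is precisely the commuting ladder asserted in the Lemma.

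It remains to identify the columns. Since $\Phi_1$ is a diffeomorphism, $(\Phi_1)_*\colon H_p(L_0)\to H_p(L_1)$ and $(\Phi_1)_*\colon H_{p+1}(X,L_0)\to H_{p+1}(X,L_1)$ are isomorphisms, which are the two $\simeq$ columns. For the $H_\bullet(X)$ columns the relevant vertical arrow is $(\Phi_1)_*\colon H_p(X)\to H_p(X)$; but $\Phi_1\simeq\mathrm{id}_X$, so by homotopy invariance $(\Phi_1)_*=\mathrm{id}$, which gives the two identity columns $\|$. Assembling the squares yields the claimed isomorphism of long exact sequences. The one genuine point requiring care — and the main obstacle — is the first step: one must know that the notion of ``isotopic submanifolds'' in play really produces (or is equivalent to the existence of) an ambient isotopy. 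For compact $X$ this is the standard isotopy extension theorem; for the noncompact $X_\Delta$ it follows once the isotopy is proper or compactly supported, which is automatic in our application. Everything after that is formal.
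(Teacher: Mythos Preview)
Your argument is correct and, in fact, more complete than the paper's. The paper proceeds differently: it observes that $L_0$ isotopic to $L_1$ implies $L_0\cong L_1$, hence $H_p(L_0)\cong H_p(L_1)$, and then invokes the five lemma to conclude that $H_{p+1}(X,L_0)\cong H_{p+1}(X,L_1)$. This is brisk but leaves unspoken the construction of the vertical map on relative homology that one would feed into the five lemma, and why the resulting squares commute. Your route via the isotopy extension theorem supplies exactly that missing ingredient: the ambient diffeomorphism $\Phi_1$ gives a genuine map of pairs $(X,L_0)\to(X,L_1)$, naturality of the long exact sequence then yields the commuting ladder for free, and the homotopy $\Phi_t$ forces the outer columns to be the identity. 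The only extra hypothesis you need is compact support of the isotopy to run isotopy extension on the noncompact $X_\Delta$, which you correctly note holds in the intended application. In short, the paper's approach is terser, while yours is the honest way to produce the commutative diagram the lemma asserts.
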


\begin{proof}
For each of the submanifolds $L \subset X$, we have the exact sequence of relative simplicial chains,
$$0\longrightarrow C_p(L) \longrightarrow C_p(X) \longrightarrow C_p(X,L) \longrightarrow 0$$
for $p\geq 0$. By the zig-zag lemma, we have the long exact sequence
$$... \longrightarrow H_{p+1}(X) \longrightarrow H_{p+1}(X,L)
\longrightarrow H_p(L) \longrightarrow H_p(X) \longrightarrow ...$$
for $p\geq 0$. By assumption, $L_0$ is isotopic to $L_1$. In particular, $L_0$ is isomorphic to $L_1$. Hence we have $H_p(L_0)$ is isomorphic to $H_p(L_1)$ for any $p\geq 0$.
Hence we have
\eqt \begin{CD}
... H_{p+1}(L_0) @>>> H_{p+1}(X) @>>> H_{p+1}(X,L_0) @>>> H_p(L_0) @>>> H_p(X)  ...  \\
     @VV\simeq V         @|                @.             @VV\simeq V     @|             \\
 ...H_{p+1}(L_1) @>>> H_{p+1}(X) @>>> H_{p+1}(X,L_1) @>>> H_p(L_1) @>>> H_p(X)  ...  \\
\end{CD} \eqtn
for any $p\geq 0$. By the "theorem of five", we have an isomorphism between $H_{p+1}(X,L_0)$ and $H_{p+1}(X,L_1)$ for $p\geq 1$. Thus the proof is completed.
\end{proof}

In particular, we obtain
$$H_2(X,L_G;\ZZ)\simeq \ZZ^m.$$
for any $u\in U\subset B_0$ such that $L_G=\mu_G^{-1}(u)$.
We explicitly denote the basis of $H_2(X,L_G;\ZZ)$ to be
$\{\beta_0(L_G),...,\beta_{m-1}(L_G)\}$ which corresponds to the basis $\{e_0,...,e_{m-1}\}$ of $\ZZ^m$. under the above isomorphism. Hence we have
\begin{align} \label{define beta on gross fibers} H_2(X,L_G;\ZZ) = <\beta_0(L_G),...,\beta_{m-1}(L_G)>.\end{align}
These relative 2-cycles will be crucial in describing the complex structure of the mirror which we will discuss in later section.


\section{The Mirror Moduli}
Mirror symmetry asserts that if $(X,X^\vee)$ is a mirror pair, then there is a duality between the symplectic data of $(X,\omega)$ and the algebraic complex data of $(X^\vee,J)$. In previous chapters, we have constructed $X^\vee_0$ as a moduli space of connections $\mathcal{M}_0$ from the fibration of $X$,
$\mu:X \longrightarrow B.$
we can define a complex structure on the mirror moduli. More specifically, we are going construct the semi-flat complex structure on $\MM_0$ which comes with it naturally. Furthermore, we are going to define some complex-valued functions
$$\tilde{z}_i:\MM_0\longrightarrow\CC,\ \forall i=0,...,m-1$$ which involves the holomorphic disc counting invariants. We will also compute these functions explicitly in terms of the semi-flat coordinates and describe some of their properties. These functions will be important for us to construct the mirror with "quantum correction" in later sections.


\par To begin with, let $\Sigma$ be a fan with $r$ rays $v_0,...v_{m-1}\in N$, and $\Delta$ be the polytope associated to $\Sigma$, which is given by
$$\Delta = \{\nu\in M | <\nu,v_i>\geq -\lambda_i, \forall i\in 0,...,m-1 \}.$$
Let $$\mu_G:X_\Delta \longrightarrow B = \frac{M}{\RR u}\times\RR_{\geq0}$$ be the Gross fibration of the toric Calabi-Yau manifold. According to the SYZ mirror construction, we have the mirror of $X_0$ to be $X^\vee_0=\mathcal{M}_0$, where $\mathcal{M}_0$ is the moduli space of the pairs $(L,\nabla)$ with $L$ being a regular Gross fiber and $\nabla$ represents an isomorphism class of flat $U(1)$ connection on $L$.
\par We first define the toric modification of $X_\Delta$.

\begin{defn}
Let $K\in\RR$ be large enough and $\Delta = \{\nu\in M | <\nu,v_i>\geq -\lambda, \forall i\in 0,...,m-1 \}$ be a potytope in $M$.
We define
$$\Delta^{(K)} = \{\nu\in M | <\nu,v_i>\geq -\lambda_i,<\nu,v'_i>\geq -K ,\forall i\in 0,...,m-1 \}$$ where $v'_i:=v_i-v_0$ for $i=1,...,m-1$.
We call $(X_{\Delta^{(K)}},\omega^{(K)})$ the toric modification of $(X_\Delta,\omega)$.
\end{defn}

\begin{exs}
We have few simple examples such as the toric modification of $\mathcal{K}_{\mathbb{P}^1}$,$\mathcal{K}_{\mathbb{P}^2}$,$\OO_{\PP^1}(-1)\oplus\OO_{\PP^1}(-1)$,...,etc.
\end{exs}

In previous chapters, we have defined the Gross fibration of $X_\Delta$, now we define the toric modified Gross fibration for $X^{(K)}:=X_{\Delta^{(K)}}$.

\begin{defn}
Let $\epsilon\in\CC$, $K\in \RR$ be large enough and $\Sigma'(1)=\{v_0,v_1,...,v_{m-1},v'_1,...,v'_{m-1}\}$.
We define
$$\mu^{(K)}_G:X^{(K)}\longrightarrow B^{(K)}:=\frac{M^{(K)}}{\RR u}\times\RR_{\geq0}$$
by
$$\mu^{(K)}_G(x):=([\mu_\Delta(x)]_{\RR u} , |\chi^u(x)-\epsilon|^2 )$$
where
$$M^{(K)}:=\{\nu\in M| <\nu,v'_i>\geq -K ,\forall i\in 1,...,m-1 \}.$$
\end{defn}

\begin{rmk}
Recall that the discriminant locus of $\mu_G:X_\Delta\longrightarrow B$ is $\Gamma\subset B$. For the modified Gross fibration $\mu^{(K)}_G:X^{(K)}\longrightarrow B^{(K)}$, its discriminant locus is given by
$$\Gamma^{(K)}:=(\Gamma\cap B^{(K)})\cup \partial B^{(K)} .  $$
\end{rmk}

To simplify our notations, we suppress the dependence on $K$ and $\epsilon$ for
$$\mu^{(K)}_G:X^{(K)}\longrightarrow B^{(K)},$$ we simply denote the modified Gross fibration to be
$$\mu'_G:X'\longrightarrow B'.$$

\begin{defn}
Let $L\in X_\Delta$ be a regular Lagrangian torus fiber of the Gross fibration $\mu_G$ and $\beta\in \pi_2(X_\Delta,L)$ be a relative homotopy class with Maslov index 2. Suppose $\bar{\MM}_k(L;\beta)$ denotes the moduli space of stable maps from genus 0 bordered Riemann surfaces with connected boundary and $k$ boundary mark points respecting the cyclic order which represent the class $\beta$. Then we define the genus zero open Gromov-Witten invariant $n_\beta$ by
$$ n_\beta \overset{def}= ev_*([\bar{\MM}_1(L;\beta)]^{vir}) \in H_n(L;\QQ) \simeq \QQ $$
where $ev:\bar{\MM}_1(L;\beta)\longrightarrow L$ is the evaluation at the single boundary marked point.
\end{defn}

\begin{rmk}
The above definition is well-defined. It was shown in Fukaya-Oh-Ohta-Ono \cite{FOOO} that the moduli space $\bar{\MM}_k(L;\beta)$ admits a Kuranishi structure with virtual dimension $n+\mu(\beta)+k-3$ where $\mu(\beta)$ denotes the Maslow index of the class $\beta$. Moreover, a virtual fundamental chain $[\bar{\MM}_k(L;\beta)]^{vir}$ can be constructed via perturbations. The invariant $n_\beta$ is well-defined due to the fact that $[\bar{\MM}_1(L;\beta)]^{vir}$ can be shown to be a cycle in $H_n(L;\QQ)$ when the class $\beta$ is of Maslov index 2.
\end{rmk}


In order to correct the mirror $\mu_G^{-1}(B_0)$, the open Gromov-Witten invariants with respect to the Gross fibers $L_G$ are the most crucial data. According to a result in Chan-Lau-Leung \cite{Chan Lau Leung}, the invariants are computed as follows:

\begin{prop}
\label{n beta}
Let $L={\mu'}_G^{-1}(b_1,b_2)$ be a Gross fiber of $X'$ and $\beta\in H_2^{eff}(X',L;\ZZ)$.Then
\begin{enumerate}
\item
for $b_2 > |\epsilon|^2$, $n_\beta\neq 0$ only when $$\beta=\beta'_i,\forall i=1,...,m-1$$ or $$ \beta=\beta_i+\alpha,\forall i=0,...,m-1 $$
where $\alpha\in H_2^{eff}(X',\ZZ)$ is represented by rational curves of Chern number zero. Moreover, $n_\beta=1$ when $\beta=\beta_0,\beta_1,...,\beta_{m-1},\beta'_1,...,\beta'_{m-1}.$
\item for $b_2<|\epsilon|^2$,
$$n_\beta=1,for\ \beta=\beta_0,\beta'_1,...,\beta'_{m-1}.$$
and
$$n_\beta=0,for\ \beta\neq \beta_0,\beta'_1,...,\beta'_{m-1}.$$
\end{enumerate}
\end{prop}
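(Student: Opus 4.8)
The plan is to compute the Maslov-index-two open Gromov--Witten invariants $n_\beta$ of the Gross fibre $L={\mu'}_G^{-1}(b_1,b_2)$ of $X'$ by deforming $L$ to a toric moment-map fibre, for which the analogous invariants are already understood. Two structural facts are used. First, the toric modification $X'=X_{\Delta^{(K)}}$ is arranged (with $K$ large) so that $X'$ is a semi-Fano toric manifold on which the moduli spaces $\bar{\MM}_1(L;\beta)$ of stable discs of Maslov index two are compact, so the $n_\beta$ are well defined. Second, for a class $\beta$ of Maslov index two any limit in $\bar{\MM}_1(L;\beta)$ can only split off a Maslov-zero disc or a Chern-number-zero sphere in addition to a Maslov-two disc, so in the absence of Maslov-zero discs the invariant $n_\beta$ is unchanged along a Lagrangian isotopy of $L$, by the deformation theory of \cite{FOOO}. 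The dichotomy $b_2\gtrless|\epsilon|^2$ records exactly whether the natural isotopy of $L$ towards a toric moment fibre can be carried out without crossing the toric boundary $\{\chi^u=0\}=\bigcup_i D_i$, where $\chi^u=\prod_i z_i$ by Proposition \ref{Proposition compute CHI is product of zs }.

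For $b_2>|\epsilon|^2$ I would run the isotopy $L_t=\{x\in X'\ :\ [\mu_\Delta(x)]_{\RR u}=b_1,\ |\chi^u(x)-t\epsilon|^2=b_2\}$, $t\in[0,1]$. A fibre $L_t$ meets $\bigcup_i D_i$ only when $t|\epsilon|=\sqrt{b_2}$, which is impossible for $t\in[0,1]$ when $b_2>|\epsilon|^2$; hence every $L_t$ is a regular Lagrangian torus in the open orbit, none of which bounds a non-constant Maslov-zero disc (a non-constant disc meets $\bigcup_i D_i$ positively, forcing Maslov index at least two), and therefore $n_\beta(L)=n_\beta(L_\Delta)$ where $L_\Delta:=L_0$ (the $t=0$ fibre) is a toric moment-map fibre of $X'$. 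Then the toric computation of \cite{Chan Lau Leung} (after Cho--Oh and \cite{FOOO}) applies: a toric moment fibre bounds a Maslov-two holomorphic disc only in a class $\beta=\beta_i+\alpha$, where $\beta_0,\dots,\beta_{m-1},\beta'_1,\dots,\beta'_{m-1}$ are the $2m-1$ basic disc classes dual to the prime toric divisors of $X'$ — the transforms $D_i$ of the original divisors and the new divisors $D'_i$ coming from the rays $v'_i=v_i-v_0$ — and $\alpha$ is an effective curve class; the identity $2=\mu(\beta_i)+2c_1(\alpha)=2+2c_1(\alpha)$ forces $c_1(\alpha)=0$, and such $\alpha$ are represented by rational curves since they appear as sphere bubbles of genus-zero stable maps. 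For each basic class the moduli space is regular and the evaluation map is a diffeomorphism onto $L_\Delta$ (an explicit Blaschke-type disc through each boundary point), so $n_{\beta_i}=n_{\beta'_i}=1$; and inspecting the fan of $X'$ shows that the Chern-number-zero curves only attach to the classes $\beta_i$ and not to the $\beta'_i$, which gives part (1).

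For $b_2<|\epsilon|^2$ the same isotopy degenerates: at $t_*=\sqrt{b_2}/|\epsilon|\in(0,1)$ the fibre $L_{t_*}$ meets $\bigcup_i D_i$ and is singular, so the previous argument fails — and indeed the counts jump. I would instead study the holomorphic discs bounded by $L$ directly. Because $b_2<|\epsilon|^2$, the image of $\chi^u|_L$ is the circle $\{|w-\epsilon|=\sqrt{b_2}\}$, which does not wind around $0$; comparing this with the fact that the total intersection number of a holomorphic disc with $\bigcup_i D_i=\{\chi^u=0\}$ is read off from the winding of $\chi^u$ along the boundary, together with positivity of intersections with the toric divisors, restricts severely the classes that can carry a disc. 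Realizing $L$, by an isotopy that stays clear of the wall $\{\chi^u=0\}$, as a moment-map fibre of an auxiliary toric piece that is Fano-like in the relevant range, one then reads off $n_\beta=1$ for $\beta\in\{\beta_0,\beta'_1,\dots,\beta'_{m-1}\}$ and $n_\beta=0$ for all other $\beta$; in particular $\beta_1,\dots,\beta_{m-1}$ and every corrected class $\beta_i+\alpha$ with $\alpha\ne0$ now carry no disc, since on this side of the wall no Chern-number-zero curve can be attached.

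The main obstacle is the $b_2<|\epsilon|^2$ chamber, and within it the vanishing statements: one must rule out holomorphic discs in all the classes that do contribute when $b_2>|\epsilon|^2$, while keeping the Maslov-two moduli compact — which is exactly why the cutoff $K$ is built into $X'$. The winding and positivity inputs limit which classes can a priori occur, but proving that the surviving counts equal $1$ and that the corrections vanish requires the wall-crossing analysis of \cite{Auroux} adapted to the toric modification, which is the technical core of \cite{Chan Lau Leung} and the route I would follow.
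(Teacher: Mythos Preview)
The paper does not supply its own proof of this proposition; it simply attributes the computation to Chan--Lau--Leung \cite{Chan Lau Leung}. Your sketch is essentially the argument carried out there (with the prototype in Auroux \cite{Auroux}): for $b_2>|\epsilon|^2$ isotope the Gross fibre to a moment-map fibre without crossing the wall, invoke the Cho--Oh/FOOO classification of Maslov-two discs on toric fibres, and treat the $b_2<|\epsilon|^2$ chamber by a separate wall-crossing/direct analysis. So your approach is precisely the one the paper defers to.

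Two small points of care. First, your notation $\bigcup_i D_i$ for $\{\chi^u=0\}$ clashes with the paper's usage: in this thesis $D_i$ denotes the \emph{non-toric} boundary divisors $(\mu'_G)^{-1}((\partial B')_i)$, whereas you mean the toric prime divisors $\mathcal D_0\cup\cdots\cup\mathcal D_{m-1}$. Second, the parenthetical ``a non-constant disc meets $\bigcup_i D_i$ positively'' is not literally correct for that subset --- the basic discs $\beta'_i$ miss $\{\chi^u=0\}$ entirely. The right statement is that a non-constant holomorphic disc meets the full anticanonical divisor $\sum_i\mathcal D_i+\sum_i\mathcal D'_i$ positively, which is what forces Maslov index at least two; your conclusion (no Maslov-zero discs along the isotopy, hence invariance of $n_\beta$) is unaffected. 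Your claim that Chern-number-zero sphere bubbles attach only to the $\beta_i$ and not to the $\beta'_i$ is correct but deserves the one-line reason: all such rational curves in $X'$ lie in $\{\chi^u=0\}=\bigcup_i\mathcal D_i$, so a stable disc containing one must meet that locus. As you acknowledge, the $b_2<|\epsilon|^2$ chamber is where the real work lies, and that is exactly what \cite{Chan Lau Leung} supplies.
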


In the modified toric Calabi-Yau manifold $X'$, there are some well-studied toric divisors $\mathcal{D}_0,\mathcal{D}_1,...,\mathcal{D}_{m-1},\mathcal{D}'_1,...\mathcal{D}'_{m-1}$ which correspond to the rays $v_0,v_1,...,v_{m-1},v'_1,...,v'_{m-1}\in\Sigma'(1)$. In order to define holomorphic function on the mirror moduli, we will need another set of divisors that is introduced as follows:

\begin{defn}
Let $\mu'_G:X'\longrightarrow B'$ be the toric modified Gross fibration.
From the definition of $B'$,we observe that $\partial B'=(\partial B')_0\cup(\partial B')_1\cup...\cup(\partial B')_{m-1}$
where
$$(\partial B')_0=\{(b_1,b_2)\in B'| b_2=0 \}$$
$$(\partial B')_i=\{(b_1,b_2)\in B'| <b_1,v'_i>=-K \}$$
for $i=1,...,m-1$ and the pairing
$\frac{M}{\RR u}\times\{v'_1,...,v'_{m-1}\}\longrightarrow \RR$ is well-defined.
\par We define
$$D_i={(\mu')}^{-1}(\partial B')_i,for\ i=0,1,...,m-1$$
\end{defn}

\begin{rmk}
The $D_0,D_1,...,D_{m-1}\subset X'$ are divisors of $X'$. Notice that these divisors $D_i$ are NOT toric divisors.
\end{rmk}

We have defined the divisors $D_0,D_1,...,D_{m-1}\subset X'$ and the relative homology classes $\beta_0(b),\beta_1(b),...,\beta_{m-1}(b),\beta'_1(b),...,\beta'_{m-1}(b)\in H_2(X',(\mu')^{-1}(b);\ZZ)$ for any $b\in B'$. We now compute the intersection number of these divisors and relative cycles.

\begin{prop}
\label{intersention}
Let $$D_0,D_1,...,D_{m-1}\subset X',$$ $$ \beta_0(b),\beta_1(b),...,\beta_{m-1}(b),\beta'_1(b),...,\beta'_{m-1}(b)\in H_2(X',(\mu')^{-1}(b);\ZZ)$$
as defined above. Then the intersection number of the divisors and the relative cycles are computed and listed as follows:

\begin{center}
  \begin{tabular}{ | r || c | c | c | }
    \hline
	$\beta\cap D$ & $\beta_0$ & $\beta_i$ & $\beta'_i$ \\ \hline\hline
    $D_0$ & 1 & 1 & 0 \\ \hline
    $D_j$ & 0 & 0 & $\delta_{ij}$ \\ \hline
  \end{tabular}
\end{center}
for $i,j=1,...,m-1$.
\end{prop}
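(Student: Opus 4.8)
The plan is to compute the relevant intersection numbers directly from the geometry of the modified Gross fibration $\mu'_G : X' \to B'$ and the trivialization of $(\mu')^{-1}(U)$ over a contractible chart. First I would recall that, by the earlier trivialization result, a regular fiber $L = (\mu')^{-1}(b)$ is a torus $\TT^n$ and that $H_2(X', L; \ZZ) \simeq \ZZ^m$ with basis $\{\beta_0(b), \beta_1(b), \dots, \beta_{m-1}(b)\}$ corresponding to $\{e_0, \dots, e_{m-1}\}$; together with the auxiliary classes $\beta'_1(b), \dots, \beta'_{m-1}(b)$ coming from the toric modification. The key point is that each $\beta_i$ is represented by a Maslov-index-$2$ holomorphic disc whose intersection pattern with the toric-type divisors is governed by $\chi^u$ and the defining ray data, exactly as in the setup of Proposition~\ref{Proposition compute CHI is product of zs }.

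The main steps, in order, are: (1) identify the divisor $D_0 = (\mu')^{-1}((\partial B')_0)$ with $\{\chi^u = \epsilon\}$ cut out by the second projection $\mu_G^{(2)}$, and identify $D_j = (\mu')^{-1}((\partial B')_j)$ with the locus $\langle [\mu_\Delta(x)], v'_j\rangle = -K$ coming from the toric modification parameter; (2) for the discs representing $\beta_0$ and $\beta_i$, observe that they are precisely the discs on which $\chi^u$ extends holomorphically and vanishes to order one (hence meeting $\{\chi^u = \epsilon\}$, i.e. $D_0$, transversally in one point), while they lie in the "interior" with respect to each $v'_j$-wall, so $\beta_0 \cdot D_0 = \beta_i \cdot D_0 = 1$ and $\beta_0 \cdot D_j = \beta_i \cdot D_j = 0$; (3) for the auxiliary classes $\beta'_i$, which are the basic discs associated to the modification rays $v'_i$, note that $\chi^u$ is nowhere zero along them so they miss $D_0$ (giving $\beta'_i \cdot D_0 = 0$), whereas by construction the disc for $\beta'_i$ meets exactly the $j=i$ modification wall once and no other, yielding $\beta'_i \cdot D_j = \delta_{ij}$. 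Assembling (2) and (3) gives the stated table.

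The hard part will be step (2)--(3): pinning down the precise geometric representatives of $\beta_i$ and $\beta'_i$ as holomorphic (or at least Maslov-index-$2$) discs and verifying transversality of the intersections with the non-toric divisors $D_j$. The subtlety is that, as remarked in the text, the $D_j$ are \emph{not} toric divisors, so one cannot simply read off intersections from the fan; instead one must use the explicit fiberwise description $\mu'_G(x) = ([\mu_\Delta(x)]_{\RR u}, |\chi^u(x) - \epsilon|^2)$ together with the identification of $\beta_i$ with the classes $\beta_i(L_\Delta)$ of moment-map-fiber discs under the Lagrangian isotopy $L_t$ constructed earlier, and then track how that isotopy moves the boundary across the walls $(\partial B')_j$. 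Once the representatives are fixed, the actual intersection counts are a routine local computation, and the $\delta_{ij}$ in the lower-right entry falls out of the defining inequality $\langle \nu, v'_i \rangle \geq -K$ distinguishing the walls.
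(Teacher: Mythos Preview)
The paper does not give a proof of this proposition; it is stated without argument and is taken over from Chan--Lau--Leung \cite{Chan Lau Leung}, which this chapter surveys. So there is no ``paper's own proof'' to compare against here.

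That said, your outline is the standard way to establish this result and matches what is done in the source. A couple of points worth sharpening: for step~(2), the claim that the discs representing $\beta_0,\beta_i$ meet $D_0=\{\chi^u=\epsilon\}$ exactly once is really the statement that along such a disc the function $\chi^u=\prod_j z_j$ is a degree-one map onto a disc in $\CC$ whose boundary circle (of radius determined by $b_2$) encloses $\epsilon$; this uses that the fiber sits in the region where $|\chi^u|$ sweeps from $0$ (at the toric divisor $\mathcal{D}_i$) outward. For step~(3), it helps to note that the boundary divisor $D_j$ for $j\ge 1$ is precisely the toric divisor $\mathcal{D}'_j$ of the modification, so $\beta'_i\cdot D_j=\delta_{ij}$ really is the standard toric intersection of basic discs with facet divisors (as in Cho--Oh \cite{Cho Oh}); you do not need to worry about the non-toric nature of $D_0$ for that entry. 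The only genuinely non-toric computation is the $D_0$ column, and there your identification $D_0=\{\chi^u=\epsilon\}$ together with Proposition~\ref{Proposition compute CHI is product of zs } is exactly the right tool.
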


With the results of the intersection numbers, we are going to define $r$ complex valued functions on the mirror moduli $\MM'_0$, which is the moduli space of the pair $(L,\nabla)$ with $L\subset X'$ being a regular fiber of $\mu'$ and $\nabla$ represents an isomorphism class of flat $U(1)$ connection over the fiber $L$.

\begin{defn}
Let $K\in \RR$ be fixed and $\mu'_G:X'\longrightarrow B'$ be the modified Gross fibration and $\MM'_0$ be the mirror moduli of $X'_0$. We define
\begin{align}
Z_\beta &:\MM'_0\longrightarrow \CC \\ &: (L,\nabla)\longmapsto  Z_\beta(L,\nabla) := exp(-\int_\beta \omega_{X'})Hol_\Delta(\partial\beta)
\end{align}
which is called the semi-flat complex coordinates for each $\beta\in H_2(X',L;\ZZ)$. For $i=0,1,...,m-1$, we define
\begin{align} \label{z tilda}
\tilde{z}'_i &:\MM'_0\longrightarrow \CC \\ &:(L,\nabla)\longmapsto \tilde{z}_i(L,\nabla) := \sum_\beta (\beta\cap D_i)n_\beta Z_\beta(L,\nabla)
\end{align}
where the summation runs over $\beta\in H_2^{eff}(X',L;\ZZ)$.
\end{defn}

\begin{rmk}
\label{rmk on holo of z-tilde}
These complex valued functions $\hat{z}'_i$ are not global holomorphic functions on $\MM'_0$ with respect to its semi-flat complex structure. However the functions $\hat{z}'_i$ are holomorphic when restricted in $\MM'_+,\MM'_-\subset \MM'_0$ where
$$\MM'_+\simeq (\mu'_G)^{-1}(B_+)$$
$$\MM'_-\simeq (\mu'_G)^{-1}(B_-).$$
\end{rmk}

Before expressing $\hat{z}'_i$ in terms of the semi-flat complex coordinates $Z_\beta$, we are not able to determine whether they are holomorphic on $\MM_0$ yet.
Using the result of the intersection number in Proposition \ref{intersention} and result on computation of the open Gromov-Witten invariants in Proposition \ref{n beta}, we are ready to compute the functions $\tilde{z}'_0(L,\nabla),...,\tilde{z}'_{m-1}(L,\nabla)$ explicitly in terms of the semi-flat complex coordinates $Z_\beta$ as follows:

\begin{thm} \label{thm z tilde with defn of delta}
Let $\tilde{z}'_i:\MM'_0\longrightarrow \CC$ for $i=0,...,m-1$ as defined above and $L=(\mu'_G)^{-1}(b_1,b_2)\subset X'$.
Then we have
\begin{enumerate}
\item
for $b_2<|\epsilon|^2$ and $i=1,...,m-1$,
\eqt \tilde{z}'_i(L,\nabla)=Z_{\beta_i}(L,\nabla), \eqtn
\eqt \tilde{z}'_0(L,\nabla)=Z_{\beta_0}(L,\nabla);\eqtn
\item
for $b_2>|\epsilon|^2$ and $i=1,...,m-1$,
\eqt \tilde{z}'_i(L,\nabla)=Z_{\beta_i}(L,\nabla), \eqtn
\eqt \tilde{z}'_0(L,\nabla)=\sum_{i=0}^{m-1}(1-\delta_i)Z_{\beta_i}(L,\nabla).\eqtn
where \eqt \label{definition of delta i}\delta_i:=\sum_{\alpha>0} n_{\beta_i+\alpha}exp(-\int_\alpha\omega) \eqtn
which sums over $\alpha\in H_2(X';\ZZ)$ that can be represented by non-trivial rational curves of Chern number zero.

\end{enumerate}
\end{thm}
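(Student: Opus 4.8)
The plan is to combine the two explicit inputs supplied earlier in the chapter: the enumeration of the nonvanishing open Gromov-Witten invariants in Proposition \ref{n beta} and the intersection-number table in Proposition \ref{intersention}. The defining formula \eqref{z tilda} expresses $\tilde z'_i(L,\nabla) = \sum_\beta (\beta\cap D_i)\,n_\beta\,Z_\beta(L,\nabla)$, so the strategy is simply to substitute these two pieces of data into that sum and collect terms, treating the two chambers $b_2<|\epsilon|^2$ and $b_2>|\epsilon|^2$ separately exactly as in the statement.

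First I would handle the chamber $b_2<|\epsilon|^2$. Here Proposition \ref{n beta}(2) says $n_\beta=1$ precisely for $\beta\in\{\beta_0,\beta'_1,\dots,\beta'_{m-1}\}$ and $n_\beta=0$ otherwise, so the sum in \eqref{z tilda} collapses to at most these $m$ terms. For $i=1,\dots,m-1$ I would read off from the table in Proposition \ref{intersention} that $D_j\cap\beta_0=0$ and $D_j\cap\beta'_i=\delta_{ij}$, hence only the $\beta'_i$ term survives and $\tilde z'_i(L,\nabla)=Z_{\beta'_i}(L,\nabla)$ — wait, I should be careful here: the statement writes $Z_{\beta_i}$, so I would note that in this inner chamber the class that bounds a holomorphic disc is the one the paper has chosen to label $\beta_i$ (the relevant basic disc class through $D_i$), matching the normalisation in \eqref{define beta on gross fibers}; the point is that exactly one basic term with coefficient $1$ contributes. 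For $i=0$, the table gives $D_0\cap\beta_0=1$ and $D_0\cap\beta'_i=0$, so only the $\beta_0$ term survives with coefficient $1$, giving $\tilde z'_0(L,\nabla)=Z_{\beta_0}(L,\nabla)$.

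Next I would treat the chamber $b_2>|\epsilon|^2$. By Proposition \ref{n beta}(1) the nonvanishing invariants are now $n_{\beta'_i}=1$ for $i=1,\dots,m-1$, together with $n_{\beta_i+\alpha}$ for $i=0,\dots,m-1$ and $\alpha$ an effective Chern-number-zero class (with $n_{\beta_i}=1$ when $\alpha=0$). For $i=1,\dots,m-1$ the divisor $D_i$ pairs to $1$ only with $\beta'_i$ (and to $0$ with every $\beta_j$, hence with every $\beta_j+\alpha$, since $\alpha\in H_2(X')$ has zero intersection with the fibre-class divisors $D_j$), so again $\tilde z'_i(L,\nabla)=Z_{\beta'_i}(L,\nabla)$. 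For $i=0$, the divisor $D_0$ pairs to $1$ with each $\beta_j$ and $0$ with each $\beta'_j$; since $\alpha\cdot D_0=0$ for the Chern-number-zero curve classes, we get $\beta_j+\alpha$ also pairs to $1$ with $D_0$. Therefore
\eqt
\tilde z'_0(L,\nabla)=\sum_{j=0}^{m-1}\sum_{\alpha\geq 0} n_{\beta_j+\alpha}\,Z_{\beta_j+\alpha}(L,\nabla).
\eqtn
Finally I would use the multiplicativity $Z_{\beta_j+\alpha}=Z_{\beta_j}\cdot\exp(-\int_\alpha\omega)$, which follows from the defining formula $Z_\beta=\exp(-\int_\beta\omega)\,\mathrm{Hol}_\Delta(\partial\beta)$ together with $\partial(\beta_j+\alpha)=\partial\beta_j$ (since $\alpha$ is a closed class), to pull $Z_{\beta_j}$ out of the inner sum. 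Splitting off the $\alpha=0$ term ($n_{\beta_j}=1$) and recognising the remaining sum as $\delta_j$ from \eqref{definition of delta i} gives $\tilde z'_0(L,\nabla)=\sum_{j=0}^{m-1}(1+\delta_j)Z_{\beta_j}(L,\nabla)$, which matches the claimed formula up to the sign convention in $\delta$ (the statement writes $1-\delta_i$, so I would adopt whichever sign convention for $\delta_i$ is consistent with \eqref{definition of delta i} as written). The main obstacle is bookkeeping rather than conceptual: keeping the labels $\beta_i$ versus $\beta'_i$ straight across the two chambers, verifying that the Chern-number-zero classes $\alpha$ indeed have vanishing intersection with all the $D_j$ (which I would justify from the fact that $D_j=(\mu')^{-1}(\partial B')_j$ is pulled back from the base and $\alpha$ is represented by fibre rational curves), and checking that no stable-map bubbling contributes extra $\beta$ with $n_\beta\neq 0$ beyond those listed — but all of this is already packaged into Propositions \ref{n beta} and \ref{intersention}, so the proof is essentially a substitution.
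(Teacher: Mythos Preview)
Your proposal is correct and follows essentially the same route as the paper: both proofs substitute the data from Propositions \ref{n beta} and \ref{intersention} into the defining sum \eqref{z tilda} and collect terms chamber by chamber, using the multiplicativity $Z_{\beta_j+\alpha}=Z_{\beta_j}\exp(-\int_\alpha\omega)$ to extract the generating functions $\delta_j$ in the outer chamber. The discrepancies you flagged ($\beta_i$ versus $\beta'_i$ in the statement, and $1-\delta_i$ versus $1+\delta_i$) are genuine typos in the paper's statement; the paper's own proof in fact arrives at $Z_{\beta'_i}$ and $(1+\delta_i)$, and later uses of the result (e.g.\ Proposition \ref{R injection to O(M)}) confirm the $+$ sign.
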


\begin{proof}
According to the computation of the open Gromov-Witten in Proposition \ref{n beta} and the intersection number in Proposition \ref{intersention}. We compute $\tilde{z}'_i(L,\nabla)$ for $i=0,1,...,m-1$ as defined in \eqref{z tilda}.
\par To prove part (1), we compute that
\begin{equation}
\begin{split}
\tilde{z}'_i(L,\nabla) 	& = \sum_\beta (\beta\cap D_i)n_\beta Z_\beta(L,\nabla) \\
 						& = (\beta_0\cap D_i)n_{\beta_0} Z_{\beta_0}(L,\nabla) + \sum_{j=1}^{m-1} (\beta'_j\cap D_i)n_{\beta'_j} Z_{\beta'_j}(L,\nabla) \\
                        & = Z_{\beta'_i}(L,\nabla) \\
\end{split}
\end{equation}
for $i=1,...,m-1$. And we have
\begin{equation}
\begin{split}
\tilde{z}'_0(L,\nabla) 	& = \sum_\beta (\beta\cap D_0)n_\beta Z_\beta(L,\nabla) \\
 						& = (\beta_0\cap D_0)n_{\beta_0} Z_{\beta_0}(L,\nabla) + \sum_{j=1}^{m-1} (\beta'_j\cap D_0)n_{\beta'_j} Z_{\beta'_j}(L,\nabla) \\
                        & = Z_{\beta_0}(L,\nabla) \\
\end{split}
\end{equation}
This completes the proof of part (1).
\par To prove part (2), we compute that
\begin{equation}
\begin{split}
\tilde{z}'_i(L,\nabla) 	& = \sum_\beta (\beta\cap D_i)n_\beta Z_\beta(L,\nabla) \\
 						& = (\beta_0\cap D_i)n_{\beta_0} Z_{\beta_0}(L,\nabla) + \sum_{j=1}^{m-1} (\beta_j\cap D_i)n_{\beta_j} Z_{\beta_j}(L,\nabla) \\
                        & \ \ \ \ + \sum_{j=1}^{m-1} (\beta'_j\cap D_i)n_{\beta'_j} Z_{\beta'_j}(L,\nabla) +
                         \sum_{j=0}^{m-1} \sum_\alpha((\beta_j+\alpha)\cap D_i)n_{\beta_j+\alpha} Z_{\beta_j+\alpha}(L,\nabla)  \\
                        & = Z_{\beta'_i}(L,\nabla) \\
\end{split}
\end{equation}
for $i=1,...,m-1$. And we have
\begin{equation}
\begin{split}
\tilde{z}'_0(L,\nabla) 	& = \sum_\beta (\beta\cap D_0)n_\beta Z_\beta(L,\nabla) \\
 						& = (\beta_0\cap D_0)n_{\beta_0} Z_{\beta_0}(L,\nabla)
                        + \sum_{j=1}^{m-1} (\beta_j\cap D_0)n_{\beta_j} Z_{\beta_j}(L,\nabla) \\
                        & \ \ \ \ + \sum_{j=1}^{m-1} (\beta'_j\cap D_0)n_{\beta'_j} Z_{\beta'_j}(L,\nabla) +
                         \sum_{j=0}^{m-1} \sum_\alpha((\beta_j+\alpha)\cap D_0)n_{\beta_j+\alpha} Z_{\beta_j+\alpha}(L,\nabla)  \\
                        & = (1+\delta_0)Z_{\beta_0}(L,\nabla) + \sum_{j=1}^{m-1}(1+\delta_i)Z_{\beta_i}(L,\nabla)\\
                        & = \sum_{i=0}^{m-1}(1+\delta_i)Z_{\beta_i}(L,\nabla) \\
\end{split}
\end{equation}
Thus the proof is completed.
\end{proof}

The above computation verifies the remark made in Remarks \ref{rmk on holo of z-tilde}. When we are restricted to the disjoint union of subspaces $\MM_+ \sqcup \MM_-\subset \MM_0$, we obtain immediately that
$$\tilde{z}^{(K)}_0,...,\tilde{z}^{(K)}_{m-1} \in \OO(\MM_+ \sqcup \MM_-).$$
The construction of mirror is not done yet. In order to construct the mirror, our aim is get rid of the dependence of $K$ on
$$\tilde{z}^{(K)}_0,...,\tilde{z}^{(K)}_{m-1}:\MM_0^{(K)}\longrightarrow\CC$$
to obtain
$$\tilde{z}_0,...,\tilde{z}_{m-1}:\MM_0\longrightarrow\CC$$
which will be studied in the next section.



\section{Geometric realization}
In this chapter, our aim is to associate the semi-flat mirror moduli to an algebraic variety over $\CC$.

The main difficulty for SYZ proposal to work is that the semi-flat mirror $$\MM_0\simeq \frac{TB_0}{\Lambda_0}$$ does not behave as good as we desire. Since the natural semi-flat complex structure on $\MM_0$ cannot be extended further to any partial compactification as the monodromy of the complex structure around the discriminant locus $\Gamma$ is non-trivial. To obtain the correct and partially compactified mirror $X^\vee$, we modify the complex structure on $\MM_0$ by quantum correction involving instantons.

In the language of algebraic geometry, instead of studying the complex geometry of the mirror moduli $\MM_0$, we study the coordinate ring $\mathcal{O}(\MM_0)$. The main argument is to construct a subring of holomorphic functions on $\MM_+\sqcup \MM_-$, that is,
$$R \overset{\varphi}{\subset} \OO(\MM_+\sqcup \MM_-).$$
This immediately give us the geometric realization of the mirror moduli $\MM_+\sqcup \MM_-$ to be the mapped into the variety $X^\vee$, a partial compactification of $\text{Spec}R$, via the pullback of the ring homomorphism $\varphi$, that is,
$$\MM_+\sqcup \MM_- \overset{\varphi^*}{\longrightarrow} \text{Spec}R \subset X^\vee .$$
Then for any toric Calabi-Yau manifolds $X$, the corresponding mirror variety $X^\vee$ constructed according to the above construction is called the SYZ mirror with quantum correction which is the mirror we desired that matches the prediction given in the physics literature.

To begin with, we modify the functions $\tilde{z}_0^{(K)},...,\tilde{z}_{m-1}^{(K)}$ by a change of coordinates. In order to construct the mirror variety, the functions $\tilde{z}_0^{(K)},...,\tilde{z}_{m-1}^{(K)}$ are as not suitable as we desire. We define some functions $\hat{z}_0^{(K)},...,\hat{z}_{m-1}^{(K)}$ and relate them to $\tilde{z}_0^{(K)},...,\tilde{z}_{m-1}^{(K)}$ as follows:

\begin{prop} \label{z hat definition}
Let $K\in\RR$ be fixed we define $\hat{z}_0^{(K)},...,\hat{z}_{m-1}^{(K)}:\MM_0^{(K)}\longrightarrow\CC$ by
\begin{align}
\hat{z}_0^{(K)}(L,\nabla) & = \tilde{z}_0^{(K)}(L,\nabla) \\
\hat{z}_i^{(K)}(L,\nabla) & = \frac{Z_{\beta_i}(L,\nabla)}{Z_{\beta_0}(L,\nabla)}
\end{align}
for any $i=1,...,m-1$ and $(L,\nabla)\in\MM_0$.
Then we have $$\hat{z}_i^{(K)}(L,\nabla)=C_i\tilde{z}_i^{(K)}(L,\nabla)\ \ , \forall i=0,...,m-1$$
for any $i=1,...,m-1$ and $(L,\nabla)\in\MM_0$ where the $C_i$'s are constants such that $C_0=1$ and $C_i=exp(\int_{\alpha_i} \omega)$ for some $\alpha_i\in H_2(X^{(K)},\ZZ)$ and for $i=1,...,m-1$.
\end{prop}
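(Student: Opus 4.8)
The case $i=0$ requires nothing: by definition $\hat z_0^{(K)}(L,\nabla)=\tilde z_0^{(K)}(L,\nabla)$, so the asserted identity holds with $C_0=1$. For $i=1,\dots,m-1$ the plan is to rewrite the ratio $Z_{\beta_i}/Z_{\beta_0}$ as a single semi-flat coordinate $Z_{\beta_i-\beta_0}$, to recognise $\beta_i-\beta_0$ as $\beta_i'$ twisted by an absolute curve class, and then to invoke Theorem \ref{thm z tilde with defn of delta}, which identifies $\tilde z_i^{(K)}$ with $Z_{\beta_i'}$.

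First I would record the multiplicativity of the semi-flat coordinates: from the definition $Z_\beta(L,\nabla)=\exp(-\int_\beta\omega_{X'})\,\mathrm{Hol}_\Delta(\partial\beta)$, together with the linearity of $\beta\mapsto\int_\beta\omega_{X'}$ on $H_2(X',L;\ZZ)$ and the fact that $\mathrm{Hol}_\Delta$ is a homomorphism on $H_1(L;\ZZ)$ (the holonomy of a flat $U(1)$-connection is a character of $\pi_1(L)$), one gets $Z_{\beta+\beta'}=Z_\beta Z_{\beta'}$ and $Z_{-\beta}=Z_\beta^{-1}$. In particular $\hat z_i^{(K)}(L,\nabla)=Z_{\beta_i}(L,\nabla)/Z_{\beta_0}(L,\nabla)=Z_{\beta_i-\beta_0}(L,\nabla)$.

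Next I would locate the class $\beta_i-\beta_0$. Using the explicit description of the basic Maslov-index-two disc classes attached to the rays $v_0,\dots,v_{m-1},v_1',\dots,v_{m-1}'$ of the modified fan $\Sigma'$ --- so that the connecting homomorphism $\partial\colon H_2(X',L;\ZZ)\to H_1(L;\ZZ)\cong N$ carries $\beta_j$ to $v_j$ and $\beta_i'$ to $v_i'$ --- and the defining relation $v_i'=v_i-v_0$, one obtains $\partial(\beta_i-\beta_0)=v_i-v_0=v_i'=\partial\beta_i'$. Hence $\beta_i-\beta_0-\beta_i'\in\ker\partial=\mathrm{im}\,(H_2(X';\ZZ)\to H_2(X',L;\ZZ))$, so there is a class $\alpha_i\in H_2(X^{(K)};\ZZ)$ with $\beta_i-\beta_0=\beta_i'-\alpha_i$ (the sign chosen so that the resulting constant has the stated form). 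For such an absolute class one has $\partial\alpha_i=0$, whence $\mathrm{Hol}_\Delta(\partial\alpha_i)=1$ and $Z_{\alpha_i}=\exp(-\int_{\alpha_i}\omega)$.

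Assembling the pieces, multiplicativity gives $\hat z_i^{(K)}=Z_{\beta_i-\beta_0}=Z_{\beta_i'}\,Z_{-\alpha_i}=\exp(\int_{\alpha_i}\omega)\,Z_{\beta_i'}$, while Theorem \ref{thm z tilde with defn of delta} gives $\tilde z_i^{(K)}(L,\nabla)=Z_{\beta_i'}(L,\nabla)$ for every $i=1,\dots,m-1$, on both components $b_2<|\epsilon|^2$ and $b_2>|\epsilon|^2$. Therefore $\hat z_i^{(K)}=C_i\,\tilde z_i^{(K)}$ with $C_i=\exp(\int_{\alpha_i}\omega)$, as claimed. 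I expect the homological bookkeeping in the third paragraph to be the only real obstacle: one must pin down exactly how the disc classes $\beta_i,\beta_i'$ sit inside $H_2(X',L;\ZZ)$ and how $\partial$ acts on them, so that $\beta_i-\beta_0-\beta_i'$ is legitimately represented by an honest curve class $\alpha_i$ on the toric modification; this relies on the combinatorics of $\Sigma'$ and the identity $v_i'=v_i-v_0$, after which everything is the formal multiplicativity of $Z_\beta$ together with an application of Theorem \ref{thm z tilde with defn of delta}.
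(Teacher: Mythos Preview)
Your proposal is correct and follows essentially the same route as the paper: both identify the class $\alpha_i:=\beta_i'-\beta_i+\beta_0$ as lying in $\ker\partial=H_2(X^{(K)};\ZZ)$ via the relation $v_i'=v_i-v_0$, and then use multiplicativity of $Z_\beta$ together with $\tilde z_i^{(K)}=Z_{\beta_i'}$ to extract the constant $C_i=\exp(\int_{\alpha_i}\omega)$. The paper carries out the computation starting from $\tilde z_i'=Z_{\beta_i'}$ and unwinding, whereas you start from $\hat z_i=Z_{\beta_i-\beta_0}$ and invoke Theorem~\ref{thm z tilde with defn of delta} at the end; these are the same calculation read in opposite directions.
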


\begin{proof}
Let $K\in\RR$ be fixed, $\Sigma(1)'=\{v_0,v_1,...,v_{m-1},v'_1,...,v'_{m-1}\}$ and $(L_G,\nabla)\in\MM'_0$.

To suppress our notation, we assume that $\ZZ$ is the coefficient ring of all the homology groups.
We identify $H_2(X',L_G)$ with $\ZZ^m\times\ZZ^{m-1}$ by
identifying the basis $\{ \beta_0,\beta_1,...,\beta_{m-1},\beta'_1,...,\beta'_{m-1} \}$ with $\{ e_0,e_1,...,e_{m-1},e'_1,...,e'_{m-1} \}$. Similarly we can therefore identify $H_1(L_G)$ with $N=\ZZ^n$ by identifying the basis $\{ \partial\beta_0,...,\partial\beta_{n-1} \}$ with $\{ v_0,...,v_{n-1} \}$.

Consider, for $i=1,...,m-1$, we compute that $\beta'(e'_i-e_i+e_0)=v'_i-v_i+v_0=0$.
Geometrically, it means that $\alpha_i:=\beta'_i-\beta_i+\beta_0\in Ker(\partial) = H_2(X')$ has no boundary since
$\partial\alpha_i:=\partial\beta'_i-\partial\beta_i+\partial\beta_0=0$ for each $i=1,...,m-1$.
Let us define $C_i:=exp(-\int_{\alpha_i}\omega)\neq 0$ for $i=1,...,m-1$, we compute that
\begin{equation}
\begin{split}
\tilde{z}'_i(L_G,\nabla)
& \triangleq Z_{\beta'_i}(L_G,\nabla)
  = exp(-\int_{\alpha_i+\beta_i-\beta_0}\omega) Hol_\nabla(\partial\beta_i-\partial\beta_0)\\
 & = exp(-\int_{\alpha_i}\omega)\frac{exp(-\int_{\beta_i}\omega)Hol_\nabla(\partial\beta_i)}{exp(-\int_{\beta_0}\omega)Hol_\nabla(\partial\beta_0)}
  = C_i \hat{z}'_i(L_G,\nabla)
\end{split}
\end{equation}
Notice that $C_i:=exp(\int_{\alpha_i}\omega)$ for $i=1,...,m-1$ is independent of the choice of $(L_G,\nabla)\in\MM'_0$. Equivalently, $C_i:\MM'_0\longrightarrow \CC$ can be considered as a constant map.

For $i=0$ in particularly, we have
$\hat{z}'_0(L_G,\nabla)=\tilde{z}'_0(L_G,\nabla) =C_0 \tilde{z}'_0(L_G,\nabla)$ since $C_0:=1$ according to definition. Thus the proof is completed.

\end{proof}

The aim of this section is to construct a subring of function in $\OO(\MM'_+\sqcup \MM'_-)$. Recall that the natural complex structure on $\MM'_0$ is given by the functions $Z_\beta(L,\nabla)=exp(-\int_\beta \omega)Hol_\nabla(\partial \beta)$ for $\beta\in H^{eff}_2(X',L;\ZZ)$. Before constructing the subring, we give a representation of the ring $\OO(\MM'_+\sqcup \MM'_-)$ as follows:

\begin{prop}\label{definition of g = g+ x g-}
Let $\MM'_0$ be the toric modified mirror moduli of $X_\Delta$ with the natural complex structure attached to it. Then we have the ring isomorphism:
\begin{align}
\CC[z_0^\pm,...,z_{n-1}^\pm]\times\CC[z_0^\pm,...,z_{n-1}^\pm] &\overset{g}{\longrightarrow} \OO(\MM'_+\sqcup \MM'_-)\text{  given by  }\\
(z_i,z_j)&\overset{g_+\times g_-}{\longmapsto} ( Z_{\beta_i}|_{\MM'_+} ,Z_{\beta_j}|_{\MM'_-} ).
\end{align}
\end{prop}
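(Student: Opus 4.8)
The claim is that evaluating the semi-flat complex coordinates $Z_{\beta_i}$ on the two components $\MM'_+$ and $\MM'_-$ realizes the coordinate ring $\OO(\MM'_+ \sqcup \MM'_-)$ as a product of two Laurent polynomial rings $\CC[z_0^\pm,\dots,z_{n-1}^\pm]$. Since $\MM'_+ \sqcup \MM'_-$ is a disjoint union, $\OO(\MM'_+ \sqcup \MM'_-) \cong \OO(\MM'_+) \times \OO(\MM'_-)$, so it suffices to establish the isomorphism $\CC[z_0^\pm,\dots,z_{n-1}^\pm] \xrightarrow{\,g_\pm\,} \OO(\MM'_\pm)$ on each factor separately and then take the product map $g = g_+ \times g_-$. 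The first step is to pin down the complex structure: by Remark~\ref{rmk on holo of z-tilde} and the preceding discussion, on each of $\MM'_+$ and $\MM'_-$ the semi-flat complex structure is genuinely integrable, and the functions $Z_\beta(L,\nabla) = \exp(-\int_\beta \omega)\,Hol_\nabla(\partial\beta)$ for $\beta \in H_2^{\mathrm{eff}}(X',L;\ZZ)$ are holomorphic there.

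**Key steps.** First I would recall that over the contractible base pieces $B_+$ and $B_-$ the fibration $\mu'_G$ is trivialized, so that $\MM'_\pm$ is biholomorphic to $(\CC^\times)^n$ via the semi-flat coordinates; this is where the trivialization $\mu_G^{-1}(U) \cong U \times \TT^n$ and Duistermaat's isomorphism $X_0 \cong T^*B_0/\Lambda^*$ enter, giving $\MM'_\pm \cong TB_\pm/\Lambda \cong (\CC^\times)^n$. Consequently $\OO(\MM'_\pm) = \CC[w_0^\pm,\dots,w_{n-1}^\pm]$ for the standard semi-flat coordinates $w_k$. Second, I would use the identification of $H_2(X',L_G;\ZZ)$ with $\ZZ^m \times \ZZ^{m-1}$ and of $H_1(L_G;\ZZ)$ with $N = \ZZ^n$ from the proof of Proposition~\ref{z hat definition}: under $\partial$, the classes $\partial\beta_0,\dots,\partial\beta_{n-1}$ form a basis of $H_1(L_G)$, so that $Z_{\beta_0},\dots,Z_{\beta_{n-1}}$ are, up to the nonvanishing symplectic-area constants $\exp(-\int_{\beta_k}\omega)$, precisely the monodromy/holonomy coordinates $w_0,\dots,w_{n-1}$. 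Hence $Z_{\beta_0},\dots,Z_{\beta_{n-1}}$ and their inverses generate $\OO(\MM'_\pm)$, giving surjectivity of $g_\pm$; and since they are algebraically independent (the area constants are fixed and the holonomies range freely over $(\CC^\times)^n$), $g_\pm$ is injective. Third, I would assemble $g = g_+ \times g_-$ and note it is a ring homomorphism because each $g_\pm$ is, and bijective because each factor is, completing the proof.

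**Main obstacle.** The one point requiring genuine care — rather than bookkeeping — is justifying that $\OO(\MM'_+ \sqcup \MM'_-)$ is exactly the \emph{Laurent polynomial} ring and not something larger: a priori, holomorphic functions on $(\CC^\times)^n$ need not be Laurent polynomials, so one must use that the relevant ring $\OO$ here is the ring of functions generated by the $Z_\beta$'s (equivalently, functions with at worst finite-order poles/zeros along the toric boundary strata), which is how $\OO(\MM_0)$ was implicitly defined via the semi-flat coordinates in the discussion preceding Proposition~\ref{z hat definition}. Once that convention is made explicit, the identification is immediate. The rest — tracking the dual exact sequence $0 \to M \to \ZZ^m \to G^\vee \to 0$, the boundary map $\partial$, and the constants $C_i$ from Proposition~\ref{z hat definition} — is routine and already essentially carried out in the preceding proof, so I would cite that proof rather than repeat the linear algebra.
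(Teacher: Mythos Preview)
Your proposal is correct and follows essentially the same route as the paper: factor $g = g_+ \times g_-$ via $\OO(\MM'_+ \sqcup \MM'_-) \cong \OO(\MM'_+) \times \OO(\MM'_-)$ and then verify each $g_\pm$ is an isomorphism using that $Z_{\beta_0},\dots,Z_{\beta_{n-1}}$ serve as global semi-flat coordinates. The only cosmetic differences are that the paper proves injectivity by fixing $L$, varying $\nabla$ over the full torus of flat $U(1)$-connections, and invoking Cauchy's integral formula on the resulting distinguished boundary of a polydisc, and proves surjectivity by the explicit computation $Z_{\beta_j} = Q_{j-n+1}\prod_{i=0}^{n-1} Z_{\beta_i}^{\langle v_i^*,v_j\rangle}$ for $j\ge n$ (which is later reused), whereas you argue both points more abstractly via the identification $\MM'_\pm \cong (\CC^\times)^n$; your flagged ``main obstacle'' about the meaning of $\OO$ is exactly the implicit convention the paper is using.
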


\begin{proof}
Notice that 
the ring
$ \OO(\MM'_+\sqcup \MM'_-)$
is nothing but a Cartesian product of rings $\OO(\MM'_+) \times \OO(\MM'_-)$.
To prove that the Cartesian product of maps $g=g_+\times g_-$ is an isomorphism is equivalent to prove that $g_+$ and $g_-$ are both isomorphisms. Without loss of generality, we just show that $g_+$ is both injective and surjective.

To show that the map is injective, we assume that there is a polynomial $P\in \CC[z_0^\pm,...,z_{n-1}^\pm]$ such the $g_+(P)=0$. We want to claim that $P$ vanishes. We now have $g_+(P)(L,\nabla)=0$ for any $(L,\nabla)\in \MM'_+$. In particularly, suppose we fix $L$, we have $g_+(P)(L,\nabla)=0$ for any flat $U(1)$ connection over $L$. In other words, $P$ vanishes on the distinguished boundary of some polydisc in $\CC^n$. By Cauchy's integral formula in several complex variables, $P$ vanishes.

We now show that the map $g_+$ is surjective. Firstly, we let $\{v_0^*,...,v_{n-1}^*\}\in M$ be the dual basis of $\{v_0,...,v_{n-1}\}\in N$. 
We have
$ \partial\beta_j=\sum_{i=0}^{n-1}<v_i^*,v_j>\partial\beta_i $  and
$ \beta_j=\sum_{i=0}^{n-1}<v_i^*,v_j>\beta_i + \alpha_j, \exists\alpha_j\in H_2(X;\ZZ)$ since $ v_j=\sum_{i=0}^{n-1}<v_i^*,v_j>v_i $ for $j=n,...,m-n$.
Let $Q_1,...,Q_{m-n}$ be the K$\ddot{a}$hler parameters which are defined by $Q_j:=exp(-\int_{\alpha_{j+n-1}\omega_X)}$ for $j=1,...,m-n$. We compute that
\begin{align} \label{compute zr-1 in terms of z1 to zn-1}
Z_{\beta_j}
&= exp(-\int_{\sum_{i=0}^{n-1}<v_i^*,v_j>\beta_i + \alpha_j}\omega)Hol_\nabla(\sum_{i=0}^{n-1}<v_i^*,v_j>\partial\beta_i)\\
&= exp(-\int_{\alpha_j}\omega)\prod_{i=0}^{n-1}[exp(-\int_{\beta_i}\omega)Hol_\nabla(\partial\beta_i)]^{<v_i^*,v_j>}
=Q_j \prod_{i=0}^{n-1}Z_{\beta_i}^{<v_i^*,v_j>}
\end{align}

Immediately, we have that the map $g_+$ is surjective since we can check that
$g_+(q_j \prod_{i=0}^{n-1}z_i^{<v_i^*,v_j>})=
Q_j \prod_{i=1}^{n-1}Z_{\beta_i}^{<v_i^*,v_j>}=Z_{\beta_j}$ for any $j=n,...,m-1$.

Overall, we have showed that the map $g_+$ is both injective and surjective. In other words, we obtained the isomorphism $g_+$ between the rings $\CC[z_0^\pm,...,z_{n-1}^\pm]$ and $\OO(\MM'_+)$.
By the exact same argument, we can also show that $g_-$ is also an isomorphism. Hence we obtain the result that $g=g_+\times g_-$
is an isomorphism. And thus the proof is completed.
\end{proof}

With the modified coordinates $\hat{z}'_i$ and ring representation of $\OO(\MM'_+\sqcup \MM'_-)$ at hand, we are ready to construct the subring of function as follows:

\begin{prop} \label{well defined in R} \label{R injection to O(M)}
Let $\hat{z}'_0,...,\hat{z}'_{m-1}:\MM'_0\longrightarrow\CC$ be functions as defined in Proposition \ref{z hat definition}. Then, when the functions are restricted to $\MM'_+\sqcup\MM'_-\subset \MM'_0$, we have
$$ \hat{z}'_0,...,\hat{z}'_{m-1} \in \OO(\MM'_+\sqcup\MM'_-).$$

Moreover, there is a representation of the subring $R$ in $ \OO(\MM'_+\sqcup\MM'_-)$ which is defined to be finitely generated by $ \{ \hat{z}'_0,...,\hat{z}'_{m-1} \}$, that is,

$$ R \simeq \CC[z_0^\pm,...,z_{n-1}^\pm]\times_f\CC[z_0^\pm,...,z_{n-1}^\pm]\simeq \frac{\CC[u^\pm,v^\pm,z_1^\pm,...,z_{n-1}^\pm]}{<uv=W>}$$

where the localization $f=(f_1,f_2)$ is given by
\begin{align*}
f_1:\CC[z_0^\pm,...,z_{n-1}^\pm]&\longrightarrow\CC[z_0^\pm,...,z_{n-1}^\pm]\\
z_0&\longmapsto z_0\\
&\vdots \\
z_{n-1}&\longmapsto z_{n-1}
\end{align*}
and
\begin{align*}
f_2:\CC[z_0^\pm,...,z_{n-1}^\pm]&\longrightarrow\CC[z_0^\pm,...,z_{n-1}^\pm]\\
z_0&\longmapsto z_0W(\frac{z_1}{z_0},...,\frac{z_{n-1}}{z_0})\\
z_1&\longmapsto z_1W(\frac{z_1}{z_0},...,\frac{z_{n-1}}{z_0})\\
&\vdots \\
z_{n-1}&\longmapsto z_{n-1}W(\frac{z_1}{z_0},...,\frac{z_{n-1}}{z_0}).
\end{align*}
for some $W\in\CC[z_1^\pm,...,z_{n-1}^\pm]$, such that,
$$W\simeq(1+\delta_0)+\sum_{i=1}^{n-1}(1+\delta_i)z_i + \sum_{i=n}^{m-1}Q_{i-n+1}(1+\delta_i)\prod_{j=1}^{n-1}z_j^{<v^*_i,v_j>}$$
where the $\delta_i$'s are the disc generating function as defined in \eqref{definition of delta i}.
\end{prop}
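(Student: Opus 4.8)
The plan is to prove the two assertions of the proposition separately, reducing the ring-theoretic statement to the explicit coordinate description already furnished by Proposition~\ref{definition of g = g+ x g-}. The first assertion is essentially immediate: by Theorem~\ref{thm z tilde with defn of delta} the functions $\tilde z'_0,\dots,\tilde z'_{m-1}$ are, on each of the two chambers $\MM'_+$ and $\MM'_-$, \emph{finite} Laurent expressions in the semi-flat coordinates $Z_{\beta_0},\dots,Z_{\beta_{m-1}}$, which are holomorphic on all of $\MM'_0$; since by Proposition~\ref{z hat definition} the $\hat z'_i$ differ from the $\tilde z'_i$ only by the nonzero constants $C_i$, their restrictions to $\MM'_+\sqcup\MM'_-$ lie in $\OO(\MM'_+\sqcup\MM'_-)$. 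This also confirms Remark~\ref{rmk on holo of z-tilde}.

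For the second assertion I would carry out three steps. First, \emph{compute the images of the generators under $g=g_+\times g_-$}. For $i=1,\dots,n-1$ one has $\hat z'_i=Z_{\beta_i}/Z_{\beta_0}$ on both chambers. For $i=n,\dots,m-1$, the relation \eqref{compute zr-1 in terms of z1 to zn-1} together with the Calabi--Yau identity $\sum_{j=0}^{n-1}\langle v_j^*,v_i\rangle=\langle u,v_i\rangle=1$ (Lemma~\ref{CY condition on polytope}) shows $\hat z'_i$ is the Laurent \emph{monomial} $Q_{i-n+1}\prod_{j=1}^{n-1}(\hat z'_j)^{\langle v_j^*,v_i\rangle}$ in $\hat z'_1,\dots,\hat z'_{n-1}$. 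For $i=0$, on $\MM'_-$ one has $\hat z'_0=Z_{\beta_0}$, while on $\MM'_+$, by Theorem~\ref{thm z tilde with defn of delta}(2), $\hat z'_0=\sum_{i=0}^{m-1}(1+\delta_i)Z_{\beta_i}$; since each summand is homogeneous of degree one in $Z_{\beta_0},\dots,Z_{\beta_{n-1}}$ (again by the Calabi--Yau identity), one may factor $Z_{\beta_0}$ out to get $\hat z'_0=Z_{\beta_0}\cdot W(\hat z'_1,\dots,\hat z'_{n-1})$ with $W$ exactly the stated Laurent polynomial, the $\delta_i$'s being \eqref{definition of delta i}. Second, \emph{identify the gluing}. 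Because $\hat z'_0,\dots,\hat z'_{m-1}$ are globally defined on $\MM'_0$ whereas the chambered coordinates $Z_{\beta_0},\dots,Z_{\beta_{n-1}}$ are not, comparing the two expressions for $\hat z'_0$ and $\hat z'_i$ across the wall $\{b_2=|\epsilon|^2\}$ forces $Z_{\beta_i}^{(-)}=Z_{\beta_i}^{(+)}\cdot W$ for $i=0,\dots,n-1$ --- which is precisely the map $f_2$ of the statement, $f_1$ being the tautological localization. Hence the subring $R$ generated by the $\hat z'_i$ inside $\OO(\MM'_+\sqcup\MM'_-)\cong\CC[z_0^\pm,\dots,z_{n-1}^\pm]\times\CC[z_0^\pm,\dots,z_{n-1}^\pm]$ is the fibre product over $f$.

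Third, \emph{rewrite the fibre product}. Grading $\CC[z_0^\pm,\dots,z_{n-1}^\pm]$ by the exponent of $z_0$ and writing $\zeta_i=z_i/z_0$, a pair $(\sum_k z_0^k p_k(\zeta),\sum_k z_0^k q_k(\zeta))$ lies in the fibre product iff $p_k=W^k q_k$ for $k\ge 0$ and $q_k=W^{-k}p_k$ for $k<0$; matching graded pieces exhibits the fibre product as generated over $\CC[z_1^\pm,\dots,z_{n-1}^\pm]$ (with $z_i\leftrightarrow\hat z'_i$) by $v:=(z_0W,z_0)\leftrightarrow\hat z'_0$ and $u:=(z_0^{-1},z_0^{-1}W)$, subject only to $uv=W$, giving $R\simeq\CC[u^\pm,v^\pm,z_1^\pm,\dots,z_{n-1}^\pm]/\langle uv=W\rangle$. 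The step deserving the most care --- and the one I expect to be the main obstacle --- is showing that $R$ is \emph{exactly} the fibre product rather than merely contained in it: containment of the generators is the computation above, but the reverse inclusion (that the $\hat z'_i$ already produce all of $u,v$ and the units $z_i^{\pm 1}$) relies on the toric geometry, namely that the rays $v_0,\dots,v_{m-1}$ positively span $N_\RR$, so that the monomials $\hat z'_n,\dots,\hat z'_{m-1}$ render $\hat z'_1,\dots,\hat z'_{n-1}$ invertible in $R$; once this and the graded description are in place the identification is routine.
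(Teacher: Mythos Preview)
Your argument for the first assertion matches the paper's exactly: both invoke Theorem~\ref{thm z tilde with defn of delta} and Proposition~\ref{z hat definition} to express the $\hat z'_i$ as Laurent polynomials in the semi-flat coordinates on each chamber.

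For the second assertion, the paper's proof is in fact only a reference: it simply writes ``we refer to chapter 4.6.3 of Chan--Lau--Leung \cite{Chan Lau Leung}.'' You have therefore supplied what the paper omits, and your three-step outline is essentially the correct argument (indeed it parallels a longer proof the thesis originally contained and then commented out). Two small remarks. In Step~2 your phrasing ``forces $Z_{\beta_i}^{(-)}=Z_{\beta_i}^{(+)}\cdot W$'' is slightly misleading: the $Z_{\beta_i}$ are globally defined semi-flat coordinates and do not themselves jump; it is $\hat z'_0$ whose expression in those coordinates differs by the factor $W$ across the wall, and that discrepancy is what encodes $f_2$. In Step~3 your identification of $u=(z_0^{-1},z_0^{-1}W)$ and $v=(z_0W,z_0)$ with $uv=(W,W)$ is correct; the graded argument you give (matching $p_k=W^kq_k$ degree by degree) is exactly the clean way to see that the fibre product is freely generated by $u,v,z_1^{\pm},\dots,z_{n-1}^{\pm}$ modulo $uv=W$. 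Your caution about the reverse inclusion---that one must check the $\hat z'_1,\dots,\hat z'_{n-1}$ are units in $R$ via the extra monomials $\hat z'_n,\dots,\hat z'_{m-1}$---is well placed, and the completeness of the fan is indeed what makes this work.
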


\begin{rmk}
The previous theorem requires localization of rings. To recall the localization of rings, suppose $A,B,C$ are rings and $$f_1:A\longrightarrow C \ \ \ ;\ \ \  f_2:B\longrightarrow C $$ are ring homomorphisms. Denote $f=(f_1,f_2)$. We have
$$A\times_f B \trieq \{(a,b)\in A\times B\ \  | \ f_1(a)=f_2(b)\in C\}$$
\end{rmk}

\begin{proof}
According to Theorem \ref{thm z tilde with defn of delta} and Proposition \ref{z hat definition}, we have
\begin{align}
	\hat{z}'_0(L,\nabla)&=
    	\begin{cases}
			Z_{\beta_0}(L,\nabla), \text{for $(L,\nabla)\in \MM'_-$},\\
			\sum_{i=0}^{m-1}(1-\delta_i)Z_{\beta_i}(L,\nabla) \text{for $(L,\nabla)\in\MM'_+$}
		\end{cases}	\\
	 \hat{z}'_j(L,\nabla)  &= \frac{Z_{\beta_j}(L,\nabla)}{Z_{\beta_0}(L,\nabla)}  \text{,for any $(L,\nabla)\in$ and $j=1,...,m-1$}
\end{align}
which are all expressed in terms of the semi-flat coordinates $Z_\beta(L,\nabla)$ when restricted to the subspace $\MM'_+\sqcup\MM'_-$. Hence the first part is proved, that is, $\hat{z}'_0,...,\hat{z}'_{m-1}\in\OO(\MM'_+\sqcup\MM'_-)$.

For the proof of the second part of the theorem, that is to show that
$$ R \simeq \CC[z_0^\pm,...,z_{n-1}^\pm]\times_f\CC[z_0^\pm,...,z_{n-1}^\pm]\simeq \frac{\CC[u^\pm,v^\pm,z_1^\pm,...,z_{n-1}^\pm]}{<uv=W>},$$ we refer to chapter 4.6.3 of Chan-Lau-Leung\cite{Chan Lau Leung}.


\end{proof}

We now have the geometric realization of Spec$R$ as
$$ \text{Spec$R$} = \{(u,v,z_1,...,z_{n-1})\in (\CC^\times)^2 \times (\CC^\times)^{n-1} | uv=W(z_1,...,z_{n-1}) \} .$$

As motivated and supported by the physics literature and the SYZ construction, we give the following definition.

\begin{defn} \label{main def A}
Let $X_\Delta$ be a toric Calabi-Yau manifold. We define
the mirror variety $X^\vee$ of $X_\Delta$ be the partial compactification of Spec$R$ as
$$X^\vee = \{(u,v,z_1,...,z_{n-1})\in \CC^2 \times (\CC^\times)^{n-1} | uv=W(z_1,...,z_{n-1}) \} .$$
\end{defn}

According to the ring homomorphism defined in Proposition \ref{R injection to O(M)}, that is,
$$ R \overset{\varphi}{\hookrightarrow} \OO(\MM_+\sqcup\MM_-),$$
we have the pull-back map
$$\MM_+\sqcup\MM_- \overset{\varphi^*}{\longrightarrow} \text{Spec$R$} \subset X^\vee.$$
By carefully tracing the definition of the ring homomorphism
$$\varphi : \CC[u^\pm,v^\pm,z_1^\pm,...,z_{n-1}^\pm] \longrightarrow R \longrightarrow \OO(\MM_+\sqcup \MM_-).$$
we can easily and directly express its pull-back map $\varphi^*$ explicitly as follows:

\begin{thm} \label{varphi map moduli to X check} \label{main thm A}
Let $\MM_0$ be the mirror moduli of $X_\Delta$ and $\varphi:R\longrightarrow\OO(\MM_+\sqcup\MM_-)$ be the ring homomorphism as defined in Proposition \ref{R injection to O(M)}.
Then the pull-back map $$\MM_+\sqcup\MM_- \overset{\varphi^*}{\longrightarrow} \text{Spec$R$} \subset X^\vee,$$ is given by
$$(L,\nabla) \overset{\varphi^*}{\longrightarrow} (u,v,z_1,...,z_{n-1})$$
where
\begin{align}
u&=\hat{z}_0(L,\nabla)\\
v&=\frac{W(\hat{z}_1(L,\nabla),...,\hat{z}_{n-1}(L,\nabla))}{\hat{z}_0(L,\nabla)}\\
z_i&=\hat{z}_i(L,\nabla),\ \textrm{ for } i=1,...,n-1.
\end{align}
for any $(L,\nabla)\in\MM_+\sqcup\MM_-$.
\end{thm}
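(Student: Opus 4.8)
The plan is to compute $\varphi^*$ by tracing $\varphi$ through the two ring identifications out of which it is built. By Proposition \ref{R injection to O(M)}, $\varphi$ is the composite of the isomorphism $\CC[u^\pm,v^\pm,z_1^\pm,\dots,z_{n-1}^\pm]/\langle uv-W\rangle\simeq R$ with the embedding $R\hookrightarrow\OO(\MM_+\sqcup\MM_-)$, the latter being the restriction of the isomorphism $g=g_+\times g_-$ of Proposition \ref{definition of g = g+ x g-}, which sends a pair $(P_1,P_2)$ of Laurent polynomials to the function that equals $P_1(Z_{\beta_0},\dots,Z_{\beta_{n-1}})$ on $\MM_+$ and $P_2(Z_{\beta_0},\dots,Z_{\beta_{n-1}})$ on $\MM_-$. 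Under the first isomorphism the coordinates $z_1,\dots,z_{n-1}$ of $X^\vee$ correspond to the generators $\hat z_1,\dots,\hat z_{n-1}$ of $R$, the coordinate $u$ corresponds to $\hat z_0$, and, since $uv=W(z_1,\dots,z_{n-1})$ in the source ring, $v$ corresponds to $W(\hat z_1,\dots,\hat z_{n-1})/\hat z_0$. So the content of the theorem is already visible at the level of $R$, and it remains to identify these generators, as elements of $\OO(\MM_+\sqcup\MM_-)$, with the functions in the statement.

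This is where the earlier computations enter. First, by Proposition \ref{z hat definition} one has $\hat z_i=Z_{\beta_i}/Z_{\beta_0}$ on all of $\MM_0$, so $\hat z_i$ is regular and equals $g(z_i/z_0,\,z_i/z_0)$. Next, $\hat z_0=\tilde z_0$ by Proposition \ref{z hat definition}, and Theorem \ref{thm z tilde with defn of delta} computes $\tilde z_0$ on the two components: $\tilde z_0=Z_{\beta_0}$ on $\MM_-$, while $\tilde z_0=\sum_{i=0}^{m-1}(1+\delta_i)Z_{\beta_i}$ on $\MM_+$. Feeding in the expansions of $Z_{\beta_i}$ for $i\ge n$ in terms of $Z_{\beta_0},\dots,Z_{\beta_{n-1}}$ from \eqref{compute zr-1 in terms of z1 to zn-1} and using the Calabi--Yau relation $\sum_{j=0}^{n-1}\langle v_j^*,v_i\rangle=1$, this sum rewrites as $Z_{\beta_0}\,W\!\big(Z_{\beta_1}/Z_{\beta_0},\dots,Z_{\beta_{n-1}}/Z_{\beta_0}\big)=Z_{\beta_0}\,W(\hat z_1,\dots,\hat z_{n-1})$. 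Hence $\hat z_0=g(z_0W_0,\,z_0)$ with $W_0:=W(z_1/z_0,\dots,z_{n-1}/z_0)$, which is again regular, and a one-line check gives $W(\hat z_1,\dots,\hat z_{n-1})/\hat z_0=g(1/z_0,\,W_0/z_0)$, equal to $1/Z_{\beta_0}$ on $\MM_+$ and to $W_0/Z_{\beta_0}$ on $\MM_-$. (Regularity of all of $\hat z_0,\dots,\hat z_{n-1}$ on $\MM_+\sqcup\MM_-$ is already recorded in Proposition \ref{R injection to O(M)}.)

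Finally I would pass from functions to points. A point $(L,\nabla)$ of $\MM_+\sqcup\MM_-$ is the evaluation homomorphism $\mathrm{ev}_{(L,\nabla)}\colon\OO(\MM_+\sqcup\MM_-)\to\CC$, the point $\varphi^*(L,\nabla)\in\text{Spec}\,R\subset X^\vee$ is $\mathrm{ev}_{(L,\nabla)}\circ\varphi$, and its $(u,v,z_1,\dots,z_{n-1})$-coordinates are the values of this composite on $u,v,z_1,\dots,z_{n-1}$, namely $\hat z_0(L,\nabla)$, $W(\hat z_1(L,\nabla),\dots,\hat z_{n-1}(L,\nabla))/\hat z_0(L,\nabla)$ and $\hat z_i(L,\nabla)$ by the previous paragraphs. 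This is exactly the claimed formula.

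The routine parts here are the fibre-product bookkeeping and the translation from ring maps to $\CC$-points; the one step with genuine content is the collapse of $\sum_{i=0}^{m-1}(1+\delta_i)Z_{\beta_i}$ into $Z_{\beta_0}W(\hat z_1,\dots,\hat z_{n-1})$ on $\MM_+$. This is precisely the quantum-correction input of Theorem \ref{thm z tilde with defn of delta} --- ultimately the open Gromov--Witten computation of Proposition \ref{n beta} --- and is what forces the single generator $u\in R$ to restrict correctly to both connected components $\MM_+$ and $\MM_-$.
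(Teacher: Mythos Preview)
Your proof is correct and follows exactly the approach the paper indicates: the paper states the theorem with essentially no proof beyond the remark that one obtains it ``by carefully tracing the definition of the ring homomorphism $\varphi : \CC[u^\pm,v^\pm,z_1^\pm,\dots,z_{n-1}^\pm]\to R\to\OO(\MM_+\sqcup\MM_-)$,'' and your argument is precisely this trace carried out in full. Your additional verification that $\sum_{i=0}^{m-1}(1+\delta_i)Z_{\beta_i}$ collapses to $Z_{\beta_0}W(\hat z_1,\dots,\hat z_{n-1})$ on $\MM_+$, and your explicit identification of $\hat z_0$ and $W(\hat z_1,\dots,\hat z_{n-1})/\hat z_0$ as elements of the fibre product via $g$, supply the details the paper leaves implicit.
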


\begin{rmk}
According to predictions in Hori-Iqbal-Vafa \cite{Hori Iqbal Vafa}, the mirror of a toric Calabi-Yau manifold $X_\Delta$ should be defined by the polynomial
$$uv=1+\sum_{i=1}^{n-1}z_i+\sum_{j=n}^{m-1}q_j\prod_{i=1}^{n-1}z_i^{<v_i^*,v_j>}$$
by physical considerations.
Although the prediction is made, we realize that the equation should be modified by instantons in term of the symplectic information of $X_\Delta$.
In the above SYZ construction, we have re-constructed the mirror.
Notice that the mirror we constructed via SYZ involves quantum correction using open Gromov-Witten invariants.
Thus the correct mirror variety modifying the mirror predicted by the physicists should be defined by the following polynomial:
$$ uv=(1+\delta_0)+\sum_{i=1}^{n-1}(1+\delta_i)z_i+\sum_{j=n}^{m-1}(1+\delta_j)Q_{j-n+1}\prod_{i=1}^{n-1}z_i^{<v_i^*,v_j>}.$$
\end{rmk}

Thus the construction of SYZ mirror variety for toric Calabi-Yau manifolds is completed. Next, we move on and study SYZ transformation for Dirichlet branes.

\chapter{Naive SYZ transform for D-branes}
In this chapter, we perform SYZ transform to certain class of special Lagrangians and turns out that the SYZ mirror brane recovers physicists' prediction for such class of A-branes. According to the philosophy of the SYZ programme that mirror symmetry is nothing but T-duality, there is a natural mirror construction for Lagrangians obtained via the conormal bundle construction as studied in \cite{Chan,Leung Yau Zaslow}. Thus we are ready to perform the SYZ mirror brane construction to certain class of special Lagrangians which is well studied among physicists as in \cite{Aganagic Vafa,Lerche Mayr Warner}. Moreover, the SYZ mirror subvariety we have constructed agrees with the prediction given by Aganagic and Vafa in \cite[Section 3.1]{Aganagic Vafa}

To begin with, we start by introducing the idea behind the SYZ mirror brane construction that is well described in \cite{Chan,Leung Yau Zaslow}.

\section{T-duality for D-branes}
In this section, we define the SYZ transformation for A-branes. Our goal is to recover physicists prediction using SYZ transformation. Motivated by the SYZ transformation for Lagrangian fibers and Lagrangian chapters in Leung Yau Zaslow\cite{Leung Yau Zaslow}, a generalised version of such transform is studied in Chan\cite{Chan}. Given some A-brane $(L,\nabla)$, the SYZ transformation provide an explicit construction for the B-brane $(L^\vee,\nabla^\vee)$, that is,
$$ (L,\nabla) \overset{\mathcal{F}^{SYZ}}\longmapsto (L^\vee,\nabla^\vee). $$
The SYZ transformation $\mathcal{F}^{SYZ}$ that we are going to construct have important consequences in homological mirror symmetry. The SYZ transformation $\mathcal{F}^{SYZ}$ induces an isomorphism between the objects of the triangulated categories:
$$ \mathcal{F}^{SYZ}:\mathcal{D}^b\mathcal{F}uk(X) \overset{\simeq}\longrightarrow \mathcal{D}^b\mathcal{C}oh(X^\vee) $$
for some mirror pair $(X,X^\vee)$ which was verified in Chan\cite{Chan}.

To begin with, we consider an n-dimensional integral affine manifold $B$ which transition maps are given by elements in $Aff(\ZZ^n):=GL(\ZZ,n)\times\ZZ^n$. Let $\{x_1,...,x_n\}$ be local affine coordinates of $B$ and $\Lambda\subset TB$ be the lattice sub-bundle generated by $\{\frac{\partial}{\partial x_1},...,\frac{\partial}{\partial x_n}\}$. On the cotangent bundle $T^*B$, we consider the lattice sub-bundle $\Lambda^\vee \subset T^*B$ generated by $\{dx_1,...,dx_n\}$. Based on the SYZ program, we consider the following pair of manifolds together with their fibrations on $B$:
\begin{align}
X &:=\frac{T^*B}{\Lambda^\vee} \label{X=T*B/L*} \overset{\mu}\longrightarrow B\\
X^\vee &:=\frac{TB}{\Lambda}\label{X=TB/L} \overset{\mu^\vee}\longrightarrow B
\end{align}

Notice that $X$ has a natural symplectic structure attached to its affine structure on the base $B$.

\begin{defn}
Let $X$ be defined as \eqref{X=T*B/L*}. Let $(x_1,...,x_n,\xi_1,...,\xi_n)$ be local coordinates on $T^*B$  denotes the cotangent vector $\sum_j \xi_j dx_j $ at the point $(x_1,...,x_n)\in B$. We define the canonical symplectic form on $X$ by
$$\omega := \sum_j dx_j \wedge d\xi_j .$$
\end{defn}

On the other hand, $X^\vee$ has a natural complex structure attached to its affine structure on the base $B$.

\begin{defn}
Let $X^\vee$ be defined as \eqref{X=TB/L}. Let $(x_1,...,x_n,y_1,...,y_n)$ be local coordinates on $T^B$  denotes the tangent vector $\sum_j y_j \frac{\partial}{\partial x_j} $ at the point $(x_1,...,x_n)\in B$. We define the canonical complex coordinates on $X^\vee$ by
$$z_i := exp( 2\pi(x_j + iy_j )) \ \ \ for\ j=1,...,n$$
\end{defn}

According to the construction in Chan\cite{Chan} and for the sake of the geometric case which we will consider in later sections, we restrict ourselves to some specific class of Lagrangians $L\subset X$ of the following form:
\begin{align} \label{Lagrangian L in T*B/L*}
L=\{(x_1,...,x_n,\xi_1,...,\xi_n) \in X |
&(x_1,...,x_k)=\in \RR^k  ;\\
&(x_{k+1},...,x_n)=(c_{k+1},...,c_n);\\
&(\xi_1,...,\xi_k)=(\xi_1(x_1,...,x_k),...,\xi_k(x_1,...,x_k));\\
&(\xi_{k+1},...,\xi_n)\in \RR^{n-k}\}
\end{align}
where $0\leq k\leq n$, $c_{k+1},...,c_n\in \RR$ be constants and $\xi_1(x_1,...,x_k),...,\xi_k(x_1,...,x_k)$ be smooth functions. Let us denote the affine subspace by $I\subset B$, defined by setting $(x_{k+1},...,x_n)=(c_{k+1},...,c_n)$ locally.

Moreover, we consider flat $U(1)$ connections $\nabla$ on such Lagrangian $L$, that is,
\begin{align}
\nabla = d + 2\pi i (\sum_{j=1}^k a_jdx_j + \sum_{l=k+1}^n b_ld\xi_l )  \label{a=a(x,...,x) definition}
\end{align}
where $a_j = a_j(x_1,...,x_k),j=1,...,k$ are smooth functions and $b_l\in \RR , l= k+1,...,n$ are constants.

The following SYZ transform requires the identification of the dual mirror manifold $X^\vee=\frac{TB}{\Lambda}$ with the moduli space of connections $\MM$ as Definition \ref{SYZ definition of M_0} in the previous chapter.

\begin{defn}\label{SYZ transformation of branes}
Let the A-brane $(L,\nabla)$ as defined in \eqref{Lagrangian L in T*B/L*} and \eqref{a=a(x,...,x) definition}, we define the SYZ transformation $(L^\vee,\nabla^\vee) = \mathcal{F}^{SYZ}(L,\nabla)$ as follows:
\begin{align}
L^\vee := \{ &(F,\nabla_F)\in \MM \simeq X^\vee | \mu(F)\in I \subset B ;\\
& \text{$\nabla_F$ twisted by $\nabla|_F$ is trivial when restricted to $F\cap L$}\}.
\end{align}
And the connection $\nabla^\vee$ on $L^\vee$ is constructed by reversing the construction process. Namely, we let the connection to be of the form
\begin{align}
\nabla^\vee = d + 2\pi i (\sum_{j=1}^k a_jdx_j + \sum_{l=k+1}^n \beta_ldy_l ) ,
\end{align}
where $a_j = a_j(x_1,...,x_k),j=1,...,k$ as in \eqref{a=a(x,...,x) definition} and $\beta_l = \beta_l(x_1,...,x_k),j=1,...,k$ be some smooth functions such that
$\nabla_{F^\vee}$ twisted by $\nabla^\vee|_{F^\vee}$ is trivial when restricted to $F^\vee = \mu^\vee(b)$ intersecting with $L^\vee$, for $b\in I\subset B$.
\end{defn}

The SYZ transform has an important consequence in Homological Mirror Symmetry. Moreover, the following remarks reflect that our construction for $\mathcal{F}^{SYZ}$ makes sense.

\begin{rmk}
According to Chan\cite{Chan,Chan Ueda,Chan Ueda Pomerleano}, consider the case where
$$X=\{(u,v,z)\in\CC\times\CC\times\CC^\times | uv=P(z) \}$$ for some polynomial $P(z)$ together with the fibration $\mu$
$$\mu : (u,v,z) \longmapsto (log|z| , (|u|^2 - |v|^2 ) ) .$$
And $X^\vee$ be the corresponding SYZ mirror.
It has been shown that the SYZ transformation induces an equivalence between the triangulated categories, that is,
$$ \mathcal{F}^{SYZ}:\mathcal{D}^b\mathcal{F}uk(X) \overset{\simeq}\longrightarrow \mathcal{D}^b\mathcal{C}oh(X^\vee).$$
\end{rmk}

Back to the A-brane $(L,\nabla)$ that we are considering as defined in \eqref{Lagrangian L in T*B/L*} and \eqref{a=a(x,...,x) definition}. Under the SYZ transformation in Definition \ref{SYZ transformation of branes}, we can explicitly compute the mirror brane $(L^\vee,\nabla^\vee)$ as follows:

\begin{lem} \label{toy model mirror brane}
Let $X$ be a symplectic manifold as in \eqref{X=T*B/L*}, $L$ be a Lagrangian as in \eqref{Lagrangian L in T*B/L*} and $\nabla$ be a flat $U(1)$ connection on $L$ as in \eqref{a=a(x,...,x) definition}. Then the SYZ transformation of the A-brane $(L,\nabla)$ is given by $(L^\vee,\nabla^\vee)=\mathcal{F}^{SYZ}(L,\nabla)$ which is locally given by
\begin{align}
L^\vee &= \{(z_1,...,z_n)\in (\CC^\times)^n|z_j=exp(2\pi (c_j-ib_j),j=k+1,...n\},\\
\nabla^\vee&=d+2\pi i(\sum_{j=1}^k a_jdx_j + \sum_{j=1}^k \xi_j(x_1,...,x_k)dy_j)
\end{align}
\end{lem}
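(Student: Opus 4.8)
The plan is to unwind Definition~\ref{SYZ transformation of branes} in the explicit affine coordinates of this section and match holonomy data fibrewise; everything is local on $B$, which is why the conclusion is stated locally. Throughout I use the identification $X^\vee = TB/\Lambda \simeq \MM$ from Definition~\ref{SYZ definition of M_0} and Proposition~\ref{T-dual to connection}: a pair $(F,\nabla_F)\in\MM$ with $F=\mu^{-1}(b)$, $b=(x_1,\dots,x_n)$, corresponds to the point of $X^\vee$ with base coordinates $(x_1,\dots,x_n)$ and fibre coordinates $(y_1,\dots,y_n)\bmod\ZZ^n$, where (via the pairing of Proposition~\ref{T-dual to connection}) $\exp(\pm 2\pi i y_j)$ is the holonomy of $\nabla_F$ around the $j$-th cycle of the torus $F$, i.e.\ the $\xi_j$-circle; in particular $z_j = \exp(2\pi(x_j+iy_j))$.

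\emph{Step 1: the underlying set $L^\vee$.} By \eqref{Lagrangian L in T*B/L*}, a fibre $F=\mu^{-1}(b)$ meets $L$ iff $b\in I$, i.e.\ $x_j=c_j$ for $j=k+1,\dots,n$, which on $X^\vee$ reads $|z_j|=\exp(2\pi c_j)$ for $j>k$. Fix such a $b$. Then $F\cap L$ is, again by \eqref{Lagrangian L in T*B/L*}, the subtorus of $F$ defined by $\xi_j=\xi_j(x_1,\dots,x_k)$ for $j\le k$, hence a translate of the $(n-k)$-torus spanned by the $\xi_{k+1},\dots,\xi_n$-circles. Restricting $\nabla$ from \eqref{a=a(x,...,x) definition} to $F\cap L$ annihilates the $dx_j$-terms (the $x_j$ are pinned over $b$) and leaves $d+2\pi i\sum_{l=k+1}^n b_l\,d\xi_l$, whose holonomy around the $\xi_l$-circle is $\exp(\mp 2\pi i b_l)$. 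Triviality of $\nabla_F$ twisted by $\nabla|_{F\cap L}$ along $F\cap L$ therefore forces $y_l\equiv -b_l\pmod{\ZZ}$ for $l=k+1,\dots,n$ and imposes nothing on $y_1,\dots,y_k$ (those cycles do not occur in $F\cap L$) nor on $x_1,\dots,x_k$. Together with $x_l=c_l$ this yields $z_l=\exp(2\pi(c_l-ib_l))$ for $l>k$ and $z_1,\dots,z_k\in\CC^\times$ free, which is the asserted description of $L^\vee$.

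\emph{Step 2: the connection $\nabla^\vee$.} Since $x_l,y_l$ with $l>k$ are locally constant on $L^\vee$, a connection on $L^\vee$ has the form $d+2\pi i(\sum_{j=1}^k a_j\,dx_j+\sum_{j=1}^k\beta_j\,dy_j)$ with $a_j,\beta_j$ functions of $x_1,\dots,x_k$; Definition~\ref{SYZ transformation of branes} prescribes the $dx_j$-coefficients to be the $a_j$ of \eqref{a=a(x,...,x) definition}, and fixes the $\beta_j$ by the ``reverse construction'', i.e.\ by requiring that applying $\mathcal{F}^{SYZ}$ to $(L^\vee,\nabla^\vee)$ in the direction $X^\vee\to X$ return $(L,\nabla)$. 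Running the computation of Step~1 verbatim with the roles of $(X,\mu,\xi)$ and $(X^\vee,\mu^\vee,y)$ exchanged, that transform is the subset of $X$ with $x_l=c_l$ for $l>k$, with $x_1,\dots,x_k$ and $\xi_{k+1},\dots,\xi_n$ free, and with $\xi_j$ ($j\le k$) determined by $\beta_j$; comparison with $L$ in \eqref{Lagrangian L in T*B/L*} forces $\beta_j=\xi_j(x_1,\dots,x_k)$, giving $\nabla^\vee=d+2\pi i(\sum_{j=1}^k a_j\,dx_j+\sum_{j=1}^k\xi_j(x_1,\dots,x_k)\,dy_j)$. As a sanity check one verifies that this $\nabla^\vee$, although not flat, has curvature of type $(1,1)$ for the complex structure $z_j=\exp(2\pi(x_j+iy_j))$ (using $\partial_i\xi_j=\partial_j\xi_i$, which holds because $L$ is Lagrangian), so it does define a B-brane structure on $L^\vee$.

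\emph{Main obstacle.} The bookkeeping of which torus cycles survive in $F\cap L$ and in $F^\vee\cap L^\vee$ is routine; the genuinely delicate point is pinning down the sign and normalisation in the dictionary between the $y$-coordinates on $X^\vee$ and the holonomies of flat $U(1)$-connections on the fibres of $\mu$ (and, dually, on the fibres of $\mu^\vee$) --- the ambiguities marked by $\pm$ above --- since the wrong choice would replace $-b_l$ by $b_l$ or $\xi_j$ by $-\xi_j$. I would fix this convention once and for all from Proposition~\ref{T-dual to connection} together with $z_j=\exp(2\pi(x_j+iy_j))$, and then confirm it is correct by checking that $\mathcal{F}^{SYZ}$ applied to $(L^\vee,\nabla^\vee)$ recovers $(L,\nabla)$ on the nose.
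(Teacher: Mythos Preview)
Your proof is correct and follows essentially the same approach as the paper: both unwind Definition~\ref{SYZ transformation of branes} in the affine coordinates by first reading off the base condition $x_l=c_l$ and then the fibre condition $y_l=\pm b_l$ from the twisting-triviality requirement, and both obtain $\nabla^\vee$ by reversing the construction to determine the $\beta_j$ (indeed your explicit flagging of the sign/normalisation ambiguity is warranted, as the paper's own proof writes $y_l=b_l$ while the statement has $-b_l$). The only cosmetic difference is that you phrase the fibre condition via holonomies around the $\xi_l$-circles whereas the paper manipulates the connection $1$-forms directly, but these are equivalent.
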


\begin{proof}
By the identification between $X^\vee$ and the moduli of connections as in Definition \ref{SYZ definition of M_0},we identify the point $(x_1,...,x_n,y_1,...,y_n)\in X^\vee $ with $(F,\nabla)$ where
\begin{align}
F&:=\mu^{-1}(x_1,...,x_n) \subset X,\\
\nabla &:= d+2\pi i (\sum_{j=1}^n y_j d\xi_j ).
\end{align}
According to the Definition \ref{SYZ transformation of branes}, we set $\mu(F)\in I$ and $\nabla_F$ twisted by $\nabla|_F$ is trivial when restricted to $F$.

Notice that $\mu(F)\in I$ is equivalent to set $(x_{k+1},...,x_n)=(c_{k+1},...,c_n)$ where there is no condition imposed on $(x_1,...,x_k)\in\RR^{k}$.

Moreover, by setting $\nabla_F$ twisted by $\nabla|_F$ is trivial when restricted to $F$ is equivalent to the condition that
\begin{align}
d &= \nabla_F - (\nabla|_F-d)\\
&= d+ 2\pi i ( \sum_{l=k+1}^n y_ld\xi_l) ) - (\sum_{l=k+1}^n b_ld\xi_l)\\
&=d+ 2\pi i ( \sum_{l=k+1}^n (y_l-b_l)d\xi_l).
\end{align}
This condition is equivalent to set $(y_{k+1},...,y_n)=(b_{k+1},...,b_n)$ with no condition imposed on $(y_1,...,y_k)\in\RR^{k}$.

Overall, we obtain that
$$L^\vee = \{(z_1,...,z_n)\in (\CC^\times)^n|z_j=exp(2\pi (c_j-ib_j),j=k+1,...n\}.$$

On the other hand, we apply similar argument to construct
$$\nabla^\vee=d+2\pi i(\sum_{j=1}^k a_jdx_j + \sum_{j=1}^k \xi_j(x_1,...,x_k)dy_j).$$ The condition that $\nabla^\vee_{F^\vee}$ twisted by $\nabla^\vee|_{F^\vee}$ is trivial when restricted to $F^\vee$ is equivalent to the condition that $\beta_l(x_1,...,x_k)=\xi_l(x_1,...,x_k)$ for $l=k+1,...,n$. Thus we have $$\nabla^\vee=d+2\pi i(\sum_{j=1}^k a_jdx_j + \sum_{j=1}^k \xi_j(x_1,...,x_k)dy_j)$$ as desired. Hence the proof is completed.


\end{proof}

With the explicit expression of $L,\nabla,L^\vee,\nabla^\vee$ at hand, we immediately obtain some exchange of information between symplectic geometric properties of the A-brane $(L,\nabla)$ and complex geometric properties of the B-brane $(L^\vee,\nabla^\vee)$.

\begin{prop}
Let the A-brane and B-brane be $(L,\nabla),(L^\vee,\nabla^\vee)$ as in the above Lemma \ref{toy model mirror brane}. 
Then $L$ be a Lagrangian is equivalent to the condition that
$$\frac{\partial\xi_j}{\partial x_l}=\frac{\partial\xi_l}{\partial x_j}$$
for all $j,l=1,...,k$ and $\nabla$ is a flat $U(1)$ connection is equivalent to the condition that
$$\frac{\partial a_j}{\partial x_l}=\frac{\partial a_l}{\partial x_j}$$ for all $j,l=1,...,k$.
Moreover, the $(0,2)$ part $F^{(0,2)}$ of the curvature 2-form $F$ of $\nabla^\vee$ is trivial which defines a holomorphic line bundle over $L^\vee$.

\end{prop}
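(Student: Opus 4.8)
The plan is to establish the three assertions independently, each by a short computation in the coordinates supplied by Lemma~\ref{toy model mirror brane}; in every case the decisive step is the vanishing of a symmetric $2$-tensor contracted against an antisymmetric wedge of $1$-forms.

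\emph{The Lagrangian condition.} The brane $L$ is parametrized by the $n$ free real parameters $(x_1,\dots,x_k,\xi_{k+1},\dots,\xi_n)$, so it is middle-dimensional in the $2n$-dimensional $X$ and is therefore Lagrangian precisely when $\omega|_L=0$. I would pull back $\omega=\sum_{j=1}^n dx_j\wedge d\xi_j$ to $L$: for $j\geq k+1$ the summand vanishes because $x_j\equiv c_j$ forces $dx_j|_L=0$; for $j\leq k$ one substitutes $\xi_j=\xi_j(x_1,\dots,x_k)$, so $d\xi_j|_L=\sum_{l=1}^k(\partial\xi_j/\partial x_l)\,dx_l$ and $\omega|_L=\sum_{j,l=1}^k(\partial\xi_j/\partial x_l)\,dx_j\wedge dx_l$. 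Reading off the coefficient of $dx_j\wedge dx_l$ for $j<l$ shows $\omega|_L=0$ if and only if $\partial\xi_j/\partial x_l=\partial\xi_l/\partial x_j$ for all $j,l\in\{1,\dots,k\}$.

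\emph{Flatness of $\nabla$.} The connection $1$-form $A=2\pi i\bigl(\sum_{j=1}^k a_j\,dx_j+\sum_{l=k+1}^n b_l\,d\xi_l\bigr)$ takes values in $i\RR$, so $\nabla$ is automatically a $U(1)$-connection, and being abelian it has curvature $F_\nabla=dA$. The $b_l\,d\xi_l$-terms contribute nothing since the $b_l$ are constant and $d(d\xi_l)=0$, whereas $d(a_j\,dx_j)=\sum_{l=1}^k(\partial a_j/\partial x_l)\,dx_l\wedge dx_j$ because $a_j=a_j(x_1,\dots,x_k)$. Hence $F_\nabla=2\pi i\sum_{j,l=1}^k(\partial a_j/\partial x_l)\,dx_l\wedge dx_j$, and reading off coefficients as before shows $F_\nabla=0$ exactly when $\partial a_j/\partial x_l=\partial a_l/\partial x_j$ for all $j,l\in\{1,\dots,k\}$.

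\emph{Type of $F_{\nabla^\vee}$.} Now I would invoke the two equivalences just obtained, so that $\partial a_j/\partial x_l$ and $\partial\xi_j/\partial x_l$ are both symmetric in $(j,l)$. By Lemma~\ref{toy model mirror brane} the connection form of $\nabla^\vee$ on $L^\vee$ is $A^\vee=2\pi i\bigl(\sum_{j=1}^k a_j\,dx_j+\sum_{j=1}^k\xi_j\,dy_j\bigr)$, so, again using that $a_j,\xi_j$ depend only on $x_1,\dots,x_k$, $F_{\nabla^\vee}=dA^\vee=2\pi i\bigl(\sum_{j,l}(\partial a_j/\partial x_l)\,dx_l\wedge dx_j+\sum_{j,l}(\partial\xi_j/\partial x_l)\,dx_l\wedge dy_j\bigr)$; the first sum vanishes by symmetry of $\partial a_j/\partial x_l$, leaving $F_{\nabla^\vee}=2\pi i\sum_{j,l=1}^k(\partial\xi_j/\partial x_l)\,dx_l\wedge dy_j$. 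From $z_j=\exp(2\pi(x_j+iy_j))$ one has $\tfrac{dz_j}{z_j}=2\pi(dx_j+i\,dy_j)$ and $\tfrac{d\bar z_j}{\bar z_j}=2\pi(dx_j-i\,dy_j)$, hence $dx_l=\tfrac{1}{4\pi}\bigl(\tfrac{dz_l}{z_l}+\tfrac{d\bar z_l}{\bar z_l}\bigr)$ and $dy_j=\tfrac{1}{4\pi i}\bigl(\tfrac{dz_j}{z_j}-\tfrac{d\bar z_j}{\bar z_j}\bigr)$; the $(0,2)$-part of $dx_l\wedge dy_j$ is therefore $-\tfrac{1}{16\pi^2 i}\,\tfrac{d\bar z_l}{\bar z_l}\wedge\tfrac{d\bar z_j}{\bar z_j}$, and so $F_{\nabla^\vee}^{(0,2)}=-\tfrac{1}{8\pi}\sum_{j,l=1}^k(\partial\xi_j/\partial x_l)\,\tfrac{d\bar z_l}{\bar z_l}\wedge\tfrac{d\bar z_j}{\bar z_j}$. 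Since $\partial\xi_j/\partial x_l$ is symmetric in $(j,l)$ while $\tfrac{d\bar z_l}{\bar z_l}\wedge\tfrac{d\bar z_j}{\bar z_j}$ is antisymmetric, this sum vanishes, i.e.\ $F_{\nabla^\vee}^{(0,2)}=0$. Consequently $\bar\partial_{\nabla^\vee}:=(\nabla^\vee)^{(0,1)}$ satisfies $\bar\partial_{\nabla^\vee}^2=F_{\nabla^\vee}^{(0,2)}=0$, so by the Koszul--Malgrange integrability theorem it endows the $C^\infty$-trivial line bundle over $L^\vee$ with a holomorphic structure. I anticipate no genuine obstacle here: each step is a bounded computation, the only delicate bookkeeping being the splitting of the real $1$-forms $dx_l,dy_j$ into types for the complex structure on $X^\vee$ when isolating the $(0,2)$-component.
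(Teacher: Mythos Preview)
Your proof is correct: each of the three computations is sound, and the final symmetry--antisymmetry argument for $F_{\nabla^\vee}^{(0,2)}=0$ is exactly the right mechanism. The paper itself does not give a proof at all --- it simply refers the reader to Propositions~2.1--2.3 of Chan~\cite{Chan} --- so your write-up in fact supplies more than the paper does, and is essentially the direct calculation one would expect to find in that reference.
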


\begin{proof}
The proof of the statement can be found in Proposition 2.1, 2.2 and 2.3 of Chan\cite{Chan}.
\end{proof}


\begin{rmk}
Notice that the SYZ transformation in Definition \ref{SYZ transformation of branes} concerns specific classes of A-brane $(L,\nabla)$. In fact, this SYZ transformation induces an SYZ transformation of an isomorphism class of $(L,\nabla)$, where $L$ is up to Hamiltonian isotopy and $\nabla$ is up to gauge equivalence, to a holomorphic line bundle $(L^\vee,\nabla^\vee)$.
\end{rmk}

With the SYZ transformation $\mathcal{F}^{SYZ}$ constructed as above, we are ready to apply this to explicit geometric examples which will be investigated in later sections.


\section{Construction of Lagrangians}
In this section, we introduce the class of Lagrangians $L\subset X_\Delta$ which was studied extensively in the literatures, such as Aganagic-Vafa\cite{Aganagic Vafa}, Lerche-Mayr-Warner\cite{Lerche Mayr Warner} and Fang-Liu\cite{Fang Liu}. These are the geometric cases which we will apply our SYZ transformation $\mathcal{F}^{SYZ}$ on in later sections.

Let us first consider Lagrangians on the most basic K$\ddot{a}$hler manifold $\CC^m$. We recall that the K$\ddot{a}$hler form on $\CC^m$ is given by $\omega_{\CC^m} \trieq \sum_{j=0}^{m-1} dz_j\wedge d\bar{z}_j.$
We also recall that $\CC^n$ has a natural fibration by
\begin{align}
\mu_{\CC^m}:\CC^m &\longrightarrow (\RR_{\geq 0})^m,\ \ by\\
(z_0,...,z_{m-1})&\longmapsto\frac{1}{2}(|z_0|^2,...,|z_{m-1}|^2)
\end{align}

Let $l^{(1)},...,l^{(k)}\in \ZZ^m$ be integer valued vectors. We construct the Lagrangian subspace $\tilde{L}\subset \CC^m$ using these vectors. These vectors are referred as "charges" and the coordinates $|z_j|^2$ are viewed as "fields" in the physics literatures \cite{Aganagic Vafa},\cite{Ooguri Vafa}.

\begin{defn} \label{define L tilde in C n}
Let $l^{(1)},...,l^{(k)},l_\perp^{(k+1)},...,l_\perp^{(m)}\in \ZZ^m$ be linearly independent and the set $\{l^{(1)},...,l^{(m)}\},\{l_\perp^{(k+1)},...,l_\perp^{(n)}\}$ are mutually orthogonal with respect to the standard inner product structure. We define the (Lagrangian) subspace $ \tilde{L}\subset \CC^m $ by
\begin{align}
\tilde{L}
= \left\{ (z_0,...,z_{m-1})\in\CC^m|
\begin{array}{ll}
\sum_{j=0}^{m-1} l^{(a)}_j\cdot r_j^2 = c^{(a)}, &\forall a=1,...,k \\
\sum_{j=0}^{m-1} (l^{(b)}_\perp)_j\cdot \theta_j = \phi^{(b)}, &\forall b=k+1,...,m
\end{array}
\right\}
\end{align}
for some constants $c^{(1)},...,c^{(k)},\phi^{(k+1)},...,\phi^{(m)}\in \RR$ where $z_j=r_je^{i\theta_j}$ for $j=0,...,m-1$.
\end{defn}

Indeed, the subspace $\tilde{L}$ is Lagrangian in $\CC^m$ with respect to its standard symplectic form $\omega$:

\begin{thm} \label{L is special lagrangian}
Let $\tilde{L}\subset \CC^m$ as in Definition \ref{define L tilde in C n}. Then $\tilde{L}$ is a Lagrangian. Moreover, $\tilde{L}$ is special if and only if $$l^{(a)}\cdot\mathds{1}=0 \4 a=1,...,k$$ where $\mathds{1}\trieq (1,...,1)\in \ZZ^m$
\end{thm}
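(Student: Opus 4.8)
The plan is to verify directly that the tangent space to $\tilde L$ at any point is isotropic and of half dimension, and then to compute the restriction of the holomorphic volume form $\Omega = dz_0 \wedge \cdots \wedge dz_{m-1}$ to $\tilde L$ to read off the special condition. First I would use the "action–angle" type coordinates $(r_j^2/2, \theta_j)$ adapted to the moment map $\mu_{\CC^m}$; in these coordinates the symplectic form is $\omega = \sum_j d(r_j^2/2) \wedge d\theta_j$, so the defining equations of $\tilde L$ — namely $\sum_j l_j^{(a)} r_j^2 = c^{(a)}$ and $\sum_j (l_\perp^{(b)})_j \theta_j = \phi^{(b)}$ — say exactly that $\tilde L$ is cut out by $k$ equations linear in the "position" variables $\rho_j := r_j^2/2$ and $m-k$ equations linear in the "angle" variables $\theta_j$, with the two families of covectors $\{l^{(a)}\}$ and $\{l_\perp^{(b)}\}$ mutually orthogonal. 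The tangent space at a point is therefore $T\tilde L = \{(\dot\rho, \dot\theta) : \langle l^{(a)}, \dot\rho\rangle = 0,\ \langle l_\perp^{(b)}, \dot\theta\rangle = 0\}$, which is $(m-k) + k = m$-dimensional.

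Next I would check isotropy: for two tangent vectors $(\dot\rho, \dot\theta)$ and $(\dot\rho', \dot\theta')$ in $T\tilde L$, we have $\omega((\dot\rho,\dot\theta),(\dot\rho',\dot\theta')) = \sum_j(\dot\rho_j \dot\theta_j' - \dot\rho_j' \dot\theta_j) = \langle \dot\rho, \dot\theta'\rangle - \langle \dot\rho', \dot\theta\rangle$. Now $\dot\rho$ lies in the span of $\{l_\perp^{(b)}\}^\perp$ — wait, more precisely $\dot\rho \perp l^{(a)}$ for all $a$, and since $\{l^{(a)}\}$ together with $\{l_\perp^{(b)}\}$ span $\RR^m$, the constraint $\langle l^{(a)}, \dot\rho\rangle = 0$ forces $\dot\rho \in \mathrm{span}\{l_\perp^{(b)}\}$; dually $\dot\theta \in \mathrm{span}\{l^{(a)}\}$. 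By the mutual orthogonality hypothesis $\langle l_\perp^{(b)}, l^{(a)}\rangle = 0$, so $\langle \dot\rho, \dot\theta'\rangle = 0$ and likewise $\langle\dot\rho',\dot\theta\rangle=0$. Hence $\omega|_{\tilde L} = 0$ and $\tilde L$ is Lagrangian.

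For the special condition I would compute $\mathrm{Im}(\Omega)|_{\tilde L}$, or equivalently track the phase of $\Omega$ along $\tilde L$. Writing $z_j = r_j e^{i\theta_j}$, one has $dz_0\wedge\cdots\wedge dz_{m-1} = (\text{positive factor}) \cdot e^{i(\theta_0 + \cdots + \theta_{m-1})}\,(\text{real volume form on }\tilde L) + \cdots$; the key point is that the phase of $\Omega|_{\tilde L}$ is governed by $\sum_j \theta_j = \langle \mathds{1}, \theta\rangle$. Along $\tilde L$ the angle variables move only in the directions $\dot\theta \in \mathrm{span}\{l^{(a)}\}$, so $\langle \mathds{1}, \theta\rangle$ is constant along $\tilde L$ precisely when $\langle \mathds{1}, l^{(a)}\rangle = 0$ for all $a = 1,\dots,k$, i.e. $l^{(a)}\cdot\mathds{1} = 0$. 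A constant phase is exactly the special Lagrangian condition (that $\mathrm{Im}(e^{-i\phi_0}\Omega)|_{\tilde L} = 0$ for some constant phase $\phi_0$), which gives the stated equivalence.

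The main obstacle I anticipate is making the phase computation of $\Omega|_{\tilde L}$ fully rigorous rather than heuristic: one must carefully express a basis of $T\tilde L$ in terms of the $l_\perp^{(b)}$ (for the $\dot\rho$ part) and the $l^{(a)}$ (for the $\dot\theta$ part), plug these into $dz_0\wedge\cdots\wedge dz_{m-1}$, and check that the wedge of the $m$ resulting $\CC^m$-vectors has argument equal to $\langle\mathds1,\theta\rangle$ up to a constant and a strictly positive real factor — the positivity of that factor (nonvanishing of the real volume) needs the linear independence hypothesis. I would also need to handle the locus where some $r_j = 0$ (the boundary strata of $\CC^m$) separately, or simply restrict the statement to the open part where all $z_j \neq 0$, which is the generic situation relevant to the Aganagic–Vafa construction in the sequel.
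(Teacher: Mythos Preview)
Your proposal is correct and follows essentially the same route as the paper. The Lagrangian argument is identical (tangent vectors have $\dot\rho\in\mathrm{span}\{l_\perp^{(b)}\}$ and $\dot\theta\in\mathrm{span}\{l^{(a)}\}$, which are orthogonal), and for the special condition the paper does exactly what you flag as the main obstacle: it writes down an explicit local parametrization $\varphi:(t_1,\dots,t_m)\mapsto \tilde L$ with $\theta_j=\Theta_j+\sum_{s=1}^k t_s l_j^{(s)}$ and $r_j^2=R_j^2+\sum_{s=k+1}^m t_s (l_\perp^{(s)})_j$, computes $\varphi^*\Omega$ directly, and finds it equals $i^{m-k}\,z_0\cdots z_{m-1}$ times a manifestly real factor, so the phase is $\sum_j\theta_j$ up to a constant --- hence constant along $\tilde L$ iff $l^{(a)}\cdot\mathds{1}=0$.
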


\begin{proof}
Let $V\in T\tilde{L}\subset T\CC^m$ be a vector field given by
$$ V=\sum_{j=0}^{m-1} (f_j \frac{\partial}{\partial r_j^2} +g_j \frac{\partial}{\partial \theta_j}). $$
Notice that $\tilde{L}$ is defined by equations, the coefficients $f_j$'s and $g_j$'s are under the following constrains:
$$ d(\sum_{j=0}^{m-1} l^{(a)}_j\cdot r_j^2)\cdot V=0\ \ ;\ \ d(\sum_{j=0}^{m-1} (l^{(b)}_\perp)_j\cdot \theta_j)\cdot V=0$$
which is equivalent to the constrain that
$ \sum_{j=0}^{m-1} l^{(a)}_j\cdot f_j=0\ \ ;\ \ \sum_{j=0}^{m-1} (l^{(b)}_\perp)_j\cdot g_j)=0$ for any $a=1,...,k$ and $b=k+1,...,m$.
In other words, we obtain that
$$ (f_0,...,f_{m-1}) \in \RR<l^{(k+1)}_\perp,...,l^{(m)}_\perp>\ \ ;\ \
(g_0,...,g_{m-1}) \in \RR<l^{(1)},...,l^{(k)}>.$$
Therefore, for any $V_1,V_2\in T\tilde{L}$, we can compute that
\begin{align}
&\omega_{\CC^m}(V_1,V_2) \\
&=(\sum_{j=0}^{m-1} dr_j^2 \wedge d\theta_j)( (\sum_{j=0}^{m-1} (f_j^{(1)} \frac{\partial}{\partial r_j^2} +g_j^{(1)} \frac{\partial}{\partial \theta_j}),(\sum_{j=0}^{m-1} (f_j^{(2)} \frac{\partial}{\partial r_j^2} +g_j^{(2)} \frac{\partial}{\partial \theta_j})) \\
&=\sum_{j=0}^{m-1} f_j^{(1)}\cdot g_j^{(2)} + \sum_{j=0}^{m-1} f_j^{(2)}\cdot g_j^{(1)}
=0
\end{align}
since $\{(f^{(0)}_1,...,f^{(1)}_{m-1}),((f^{(2)}_0,...,f^{(2)}_{m-1})\}$ and $\{(g^{(0)}_1,...,g^{(1)}_{m-1}),((g^{(2)}_0,...,g^{(2)}_{m-1})\}$ are mutually orthogonal. Thus $\tilde{L}$ is a Lagrangian in $\CC^m$ as desired.

We now show that $\tilde{L}$ is special.
Let $p=(R^2_0,...,R_{m-1}^2,\Theta_0,...,\Theta_{m-1})\in \tilde{L},$ and $U$ be a neighborhood of the origin in $\RR^m$ being the coordinate system of $p\in\tilde{L}$ given by
\begin{align}
\varphi':U\longrightarrow \tilde{L}:(t_1,...,t_m)&\longmapsto (r^2_0,...,r^2_{m-1},\theta_0,...,\theta_{m-1})\\
\theta_j &= \Theta_j+\sum_{s=1}^k t_sl_j^{(s)}\\
r^2_j &= R^2_j+\sum_{s=k+1}^m t_s(l_\perp^{(s)})_j
\end{align}
for $j=0,...,m-1$. Therefore we obtain the map
\begin{align}
\varphi:U\longrightarrow \tilde{L}&\longrightarrow \CC^m\\
(t_1,...,t_m)&\longmapsto (z_0(t_1,...,t_m),...,z_{m-1}(t_1,...,t_m)),\ by\\
z_j(t_1,...,t_m)&=\sqrt{R^2_j+\sum_{s=k+1}^m t_s(l_\perp^{(s)})_j} exp(i(\Theta_j+\sum_{s=1}^kt_sl_j^{(s)}))
\end{align}
for $j=0,...,m-1$. We are now ready to compute $\Omega|_{\tilde{L}}$ by $\varphi^*(\Omega)$:
\begin{align}
\varphi^*(\Omega)
&= \varphi^*(dz_0\wedge...\wedge dz_{m-1})\\
&= \bigwedge_{j=0}^{m-1} dz_j(t_1,...,t_m)\\
&= \bigwedge_{j=0}^{m-1} \sum_{s=1}^m  \frac{\partial z_j}{\partial t_s}dt_s\\
&= \sum_{\sigma\in S_m} Sgn(\sigma) \prod_{s=1}^m \frac{\partial z_{\sigma(s)}}{\partial t_s}dt_1\wedge\cdots\wedge dt_m\\
&= \sum_{\sigma\in S_m} Sgn(\sigma)
	(\prod_{s=1}^k (l_\perp^{(\sigma(s))} )_s \frac{z_{\sigma(s)}}{r^2_{\sigma(s)}}
    \prod_{s=k+1}^m il_s^{(\sigma(s))}z_{\sigma(s)})
dt_1\wedge\cdots\wedge dt_m\\
&= i^{m-k}z_0...z_{m-1} \sum_{\sigma\in S_m} Sgn(\sigma)
	\prod_{s=1}^k (l_\perp^{(\sigma(s))} )_s \frac{1}{r^2_{\sigma(s)}}
    \prod_{s=1}^m il_s^{(\sigma(s)})
dt_1\wedge\cdots\wedge dt_m\\
\end{align}
We observe that
  $$ \prod_{j=0}^{m-1} z_j= r_0...r_{m-1} e^{i\sum_{j=0}^{m-1}\Theta_j}exp(i(\sum_{s=k+1}^m t_s\sum_{j=0}^{m-1} l^{(s)}_j)).$$
Therefore we have that
$$ Im(i^{m-k}exp(-i(\sum_{s=k+1}^m t_s\sum_{j=0}^{m-1}l^{(s)}_j))e^{i\sum_{j=0}^{m-1} \Theta_j}\Omega|_{\tilde{L}})=0 $$
Thus we conclude that $\tilde{L}$ is special if and only if $ \sum_{j=0}^{m-1}l^{(s)}_j=0$ for all $s=1,..,k$.
\end{proof}

Construction of Lagrangian $L$ in toric Calabi-Yau manifolds $X_\Delta$ is quite similar to that of $\tilde{L}$ in $\CC^m$. In order to replicate the construction process, we briefly recall the geometric realization of $X_\Delta$ as a symplectic quotient $\CC^m\sslash_t G_\RR$.

Let $\Sigma$ be a fan with $m$ rays $v_0,...,v_{m-1}\in N:=\mathds{R}^n$. We obtain a pair of dual short exact sequences
\begin{equation}
0\longrightarrow G\mathop{\longrightarrow}^\iota \mathds{Z}^m \mathop{\longrightarrow}^ \beta N\longrightarrow 0;
\end{equation}
\begin{equation}
0\longrightarrow M\mathop{\longrightarrow}^{\beta^\vee} {\mathds{Z}^m}^\vee  \mathop{\longrightarrow}^{\iota^\vee}G^\vee\longrightarrow 0.
\end{equation}
And we let $\Delta=\{\nu\in M_\RR:(\nu,v_i)\geq -\lambda_i \forall i\}$ be a polytope in $M_\RR$. We then obtain a toric variety $X_\Delta$ as in the previous chapter (...). We recall that $X_\Delta$ can be viewed as a symplectic quotient, that is,
$$X_\Delta \simeq \CC^m\sslash_t G_\RR $$ for some $t\in G_\RR$ as in (...).
To be more explicit, we consider
\begin{align}
	X&\trieq X_\Delta \simeq \CC^m\sslash_t G_\RR\\
		&=\frac{\mu_{\CC^m}^{-1}(t)}{G_\RR}\\
		&=
			\left\{ (r_0e^{i\theta_0},...,r_{m-1}e^{i\theta_{m-1}})\in\CC^m|
				\begin{array}{ll}
					\iota^\vee (r_0^2,...,r_{m-1}^2) = t \in G_\RR \\
                    (\theta_0,...,\theta_{m-1})\in \RR^m
				\end{array}
			\right\} \Bigr/ G_\RR\\
		&=
			\left\{ (r_0e^{i\theta_0},...,r_{m-1}e^{i\theta_{m-1}})\in\CC^m|
				\begin{array}{ll}
					\iota^\vee (r_0^2,...,r_{m-1}^2) = t \in G_\RR \\
                    (\theta_0,...,\theta_{m-1})\in G_\RR^\perp \subset  \RR^m
				\end{array}
			\right\}
\end{align}

Let we consider the map $\iota: \ZZ^{m-n} \simeq G\longrightarrow \ZZ^m$ as a $(n\times (m-n))$ integer valued matrix, that is, $\iota=(\iota_{ij})_{i=0,...,m-1;j=1,...,m-n}$. We obtain a representation of $X_\Delta$ as follows:
\begin{align}
X_\Delta \simeq \left\{ (r_0e^{i\theta_0},...,r_{m-1}e^{i\theta_{m-1}})\in\CC^m|
				\begin{array}{ll}
					\sum_{i=0}^{m-1} \iota_{ij}r_i^2 = t_j  \\
                    \sum_{i=0}^{m-1} \iota_{ij}\theta_i = 0
				\end{array}
                \4 j=1,...,k
			\right\}
\end{align}

Notice that we construct $X_\Delta$ by symplectic quotient of $\CC^m$. Roughly speaking, we perform the similar symplectic quotient process on $\tilde{L}$ to obtain a Lagrangian $L$ in $X_\Delta$.

\begin{defn} \label{definition of L in X}
Let $\tilde{L}\subset \CC^m$ and $X\simeq \CC^m\sslash_t G_\RR$ as defined above. We define $L\subset X$ by
\begin{align}
L\trieq \frac{\mu^{-1}(t) \cap \tilde{L} }{ G_\RR } \subset \frac{\mu^{-1}(t)  }{ G_\RR } \simeq X.
\end{align}
\end{defn}

Observe that $L$ depends strongly on $\tilde{L}$. It is natural to expect that some properties of $\tilde{L}$ should descend to $L$ as follows:

\begin{thm}
Suppose $\tilde{L}$ is a special Lagrangian in $\CC^m$, then $L$ is a special Lagrangian in $X$.
\end{thm}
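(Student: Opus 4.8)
The plan is to realize $L$ as the Calabi--Yau (in particular symplectic) reduction of $\tilde{L}$ along the level set $\mu^{-1}(t)$, and to transport the two defining conditions of a special Lagrangian across the quotient $X\simeq\CC^m\sslash_t G_\RR=\mu^{-1}(t)/G_\RR$. Write $p\colon\mu^{-1}(t)\to X$ for the quotient map and $j\colon\mu^{-1}(t)\hookrightarrow\CC^m$ for the inclusion. The two facts I would use are: $p^{*}\omega_X=j^{*}\omega_{\CC^m}$; and --- this is where the Calabi--Yau hypothesis enters --- the condition $\langle u,v_i\rangle=1$ for all $i$ (Lemma~\ref{CY condition on polytope}; recall $\chi^u=\prod_i z_i$) is exactly equivalent to the columns of $\iota$ summing to zero, hence to $G_\CC$ preserving $\Omega_{\CC^m}=dz_0\wedge\cdots\wedge dz_{m-1}$. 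Therefore the contraction $\xi_1\lrcorner\cdots\lrcorner\xi_{m-n}\lrcorner\,\Omega_{\CC^m}$ by a basis $\xi_1,\dots,\xi_{m-n}$ of the holomorphic vector fields generating the $G_\CC$-action (each $\xi_l=\sum_k\iota_{kl}\,z_k\,\partial_{z_k}$) is basic and descends, so that $p^{*}\Omega_X=j^{*}\bigl(\xi_1\lrcorner\cdots\lrcorner\xi_{m-n}\lrcorner\,\Omega_{\CC^m}\bigr)$ up to a nonzero constant, where $\Omega_X$ is the holomorphic volume form of $X_\Delta$.

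The Lagrangian statement I would get essentially for free from the reduction: $L$ is obtained by collapsing the $G_\RR$-action on $\tilde{L}\cap\mu^{-1}(t)$ as in Definition~\ref{definition of L in X}, a dimension count (the $m-n$ equations defining $\mu^{-1}(t)$ traded against the quotient by the $(m-n)$-dimensional $G_\RR$) gives $\dim_\RR L=n=\tfrac12\dim_\RR X$, and since $\omega_{\CC^m}$ already vanishes on $\tilde{L}$ by Theorem~\ref{L is special lagrangian} it vanishes on $\tilde{L}\cap\mu^{-1}(t)\subseteq\tilde{L}$; hence $p^{*}\omega_X=j^{*}\omega_{\CC^m}$ vanishes there, and as $p$ is a submersion onto $L$ we conclude $\omega_X|_L=0$.

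The special condition is the heart of the argument and the main obstacle. I would pull $\Omega_X|_L$ back by $p$: for $\bar v_1,\dots,\bar v_n$ tangent to $L$ with lifts $w_1,\dots,w_n$ tangent to $\tilde{L}\cap\mu^{-1}(t)\subseteq\tilde L$, one has $\Omega_X|_L(\bar v_1,\dots,\bar v_n)=\Omega_{\CC^m}(w_1,\dots,w_n,\xi_1,\dots,\xi_{m-n})$ up to the overall constant. The proof of Theorem~\ref{L is special lagrangian} already computes, in the parametrization $\varphi$ used there, that $\Omega_{\CC^m}|_{\tilde L}$ equals $i^{\,m-k}\bigl(\prod_j z_j\bigr)$ times a determinant with real entries times $dt_1\wedge\cdots\wedge dt_m$, the phase being constant precisely because $\sum_j l^{(s)}_j=0$ removes the $t_s$-dependence of $\arg\bigl(\prod_j z_j\bigr)$; note that each column $\partial_{t_s}z_k$ there is a scalar multiple of $z_k$. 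I would rerun the same $S_m$-determinant computation with the extra columns $\xi_l(z_k)=\iota_{kl}z_k$ adjoined: these are again scalar multiples of $z_k$ with \emph{real} scalars, so $\prod_j z_j$ still factors out and the residual determinant is $i^{\,(\text{number of angular columns})}$ times a real determinant, whence $\operatorname{Im}\bigl(e^{i\psi}\,\Omega_X|_L\bigr)=0$ for the same constant phase $e^{i\psi}$ and $L$ is special. The fiddly points, in order of difficulty, are: (i) the phase bookkeeping in the enlarged determinant, in particular keeping track of which of the $\xi_l$ contribute angular versus radial columns once restricted to $\mu^{-1}(t)$ and which $t_s$-directions survive to $L$; (ii) establishing the reduction identity $p^{*}\Omega_X=j^{*}(\xi_1\lrcorner\cdots\lrcorner\xi_{m-n}\lrcorner\,\Omega_{\CC^m})$ cleanly from Lemma~\ref{CY condition on polytope}; and (iii) well-definedness and smoothness of $L$, namely that $\tilde{L}\cap\mu^{-1}(t)$ is nonempty, cut out transversally, and meets the $G_\RR$-orbits cleanly --- this is where one must assume that the charges $l^{(a)}$ are compatible with the toric data $\iota$. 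A fully robust alternative that avoids (i)--(ii) altogether is to push the constraints on $(|z_j|,\theta_j)$ directly down to the dense algebraic torus of $X$ and rerun the parametrization-and-determinant argument of Theorem~\ref{L is special lagrangian} verbatim there; I would fall back to this if the reduction bookkeeping becomes unwieldy.
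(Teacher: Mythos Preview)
Your proposal is correct, and the Lagrangian half is exactly the paper's argument. For the special condition the paper takes a slightly different, more pedestrian route than your primary one: rather than contracting $\Omega_{\CC^m}$ by the $G_\CC$-generators and invoking the reduction identity $p^{*}\Omega_X=j^{*}(\xi_1\lrcorner\cdots\lrcorner\xi_{m-n}\lrcorner\,\Omega_{\CC^m})$, it works with the explicit GIT/torus map $\psi\colon(z_j)\mapsto(\zeta_i)$ with $\zeta_i=\prod_j z_j^{v_j^{(i)}}$ and the logarithmic volume form $\Omega_X=\bigwedge_i d\zeta_i/\zeta_i$, expands $\psi^{*}\Omega_X$ as a real linear combination $\sum_I c_I\,\bigwedge_{j\in I} dz_j/z_j$, and then simply says ``by similar arguments as in Theorem~\ref{L is special lagrangian}'' to conclude $\operatorname{Im}(i^{\,n-k}\Omega_X|_L)=0$. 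In other words, the paper's proof is essentially your fallback plan, carried out on the dense torus with logarithmic coordinates rather than polynomial ones. Your contraction approach is the more canonical K\"ahler-reduction recipe and makes the Calabi--Yau hypothesis (columns of $\iota$ sum to zero) enter transparently; the paper's version trades that conceptual cleanliness for a shorter, more hands-on computation that sidesteps your difficulties (i) and (ii) entirely, at the cost of leaving the final determinant/phase step as an exercise.
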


\begin{proof}
We recall that $\CC^m$ has standard symplectic form $\omega_{\CC^m}$ and standard volume form $\Omega_{\CC^m}$. We also recall that $X_\Delta$ together with the symplectic form $\omega_X=\omega_{Red}$ is viewed as symplectic reduction of $(\CC^m,\omega_{\CC^m})$ .
To show that $L\subset X_\Delta$ is Lagrangian, we let $u,v\in TL \subset TX_\Delta$ with lift $\tilde{u},\tilde{v}\in T\tilde{L} \subset T\CC^m$. we consider
\begin{align}
\omega_X|_L(u,v) = \omega_{Red}(u,v) \trieq \omega_{\CC^m}(\tilde{u},\tilde{v})
= \omega_{\CC^m} |_{\tilde{L}}  (\tilde{u},\tilde{v}) =0
\end{align}

We recall that $X_\Delta$ together with the volume form $\Omega_X=\Omega_{GIT}$ is viewed as GIT quotient of $(\CC^m,\Omega_{\CC^m})$:
$$\psi:\CC^m \simeq Spec\CC[e_0,...,e_{m-1}] \overset{\beta^*}\longrightarrow Spec\CC[N] \simeq (\CC^\times)^n \overset{dense}\subset X_\Delta $$
where $\psi: (z_0,...,z_{m-1})\longmapsto (\zeta_1,...,\zeta_n)$ and $\zeta_i=\prod_{j=0}^{m-1} z_j^{v_j^{(i)}}$. By definition of the volume forms, we have
$$\Omega_{\CC^m}\trieq dz_0\wedge\cdots\wedge dz_{m-1}\ \ ,\ \ \Omega_X\trieq \frac{d\zeta_1}{\zeta_1}\wedge\cdots\wedge \frac{d\zeta_n}{\zeta_n}.$$

To show that $L\subset X_\Delta$ is special, we suppose that $V_1,...,V_n \in TL \subset TX_\Delta$ with lift $\tilde{V}_1,...,\tilde{V}_n\in T\tilde{L} \subset T\CC^m$. We consider
\begin{align}
(\Omega_X|_L)(V_1,...,V_n)
	&= \Omega_X(V_1,...,V_n) \\
    &= (\psi^*\Omega_X)(\tilde{V}_1,...,\tilde{V}_n)\\
    &= \bigwedge_{i=1}^n (\frac{d\prod_{j=0}^{m-1} z_j^{v_j^{(i)}}}{\prod_{j=0}^{m-1} z_j^{v_j^{(i)}}})(\tilde{V}_1,...,\tilde{V}_n)\\
    &= \sum_{0\leq i_1<...<i_n\leq (m-1)} c_{i_1,...,i_n} \frac{dz_{i_1}\wedge\cdots\wedge dz_{i_n}}{z_{i_1}...z_{i_n}} (\tilde{V}_1,...,\tilde{V}_n)\\
\end{align}
for some constants $c_{i_1,...,i_n}\in \RR$ for each $0\leq i_1<...<i_n\leq (m-1)$. Hence by similar arguments in Theorem \ref{L is special lagrangian}, we obtain that $Im(i^{n-k}\Omega_X|_L) =0$ for some and thus $L\subset X$ is special.

\end{proof}

With this explicit geometric example of A-model and A-brane $(X_\Delta,L,\nabla)$, together with the SYZ transform $\mathcal{F}^{SYZ}$ from the previous section, we are ready to perform $\mathcal{F}^{SYZ}$ on to $(X_\Delta,L,\nabla)$. It is an interesting question to ask what B-model and B-brane $(X_\Delta^\vee,L^\vee,\nabla^\vee)$ will be produced. We expect that the outcome will be the same as what physicists has predicted \cite{Aganagic Vafa} .


\section{Lagrangian image}
In this chapter, our goal is to compute the image of the Lagrangian $L\subset X_\Delta$ on the base $B$ under the Gross fibration $\mu_G$. At the end of this section, we will obtain the result that
$$ \mu_G (L) = \frac{\mu_\Delta(L)}{\RR\cdot u} \times \RR_{\geq 0}.$$
This piece of information of $L$ is an essential part of data as we construct its SYZ mirror $L^\vee$ in the next section.

We recall that we first start with a polytope $\Delta\subset M$ where there exist a vector $u\in M$ such that $<u,v_i>=1$ for each ray $v_i$. This is exactly the Calabi-Yau condition as in Lemma \ref{CY condition on polytope}. We also naturally have a moment map $\mu_\Delta:X_\Delta \longrightarrow \Delta \subset M$. Moreover, let $\epsilon\in\CC^\times$ be fixed, we have the Gross fibration
\begin{align}
\mu_G : X_\Delta &\longrightarrow B := \frac{M}{\RR\cdot u} \times \RR_{\geq 0} \ \ \ \ by\\
x &\longmapsto ( [\mu_\Delta (x)] , |\chi^u(x)-\epsilon|^2 )
\end{align}
and we denote the first and second projection of $\mu_G$ by  $\mu_G^{(1)}$ and $\mu_G^{(2)}$ respectively.

In order to compute the image of $L$ under the Gross fibration $\mu_G$, we first compute $\mu_G^{(1)}(L)$:

\begin{prop} \label{proposition 1st projection of lagrangian image}
Let $\mu_G$ be the Gross fibration, $\mu_\Delta$ be the moment map and $L\subset X_\Delta$ be the Lagrangian as defined before. Then we have $$\mu_G^{(1)}(L) =  \frac{\mu_\Delta(L)}{\RR\cdot u}.$$
\end{prop}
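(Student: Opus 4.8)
The plan is to observe that the first projection $\mu_G^{(1)}$ of the Gross fibration is, by its very definition, the composite of the moment map $\mu_\Delta$ with the canonical quotient map $\pi\colon M\to M/\RR\cdot u$: indeed $\mu_G(x)=([\mu_\Delta(x)],|\chi^u(x)-\epsilon|^2)$, so $\mu_G^{(1)}(x)=[\mu_\Delta(x)]=\pi(\mu_\Delta(x))$ for every $x\in X_\Delta$. Since by convention $\frac{\mu_\Delta(L)}{\RR\cdot u}$ denotes exactly the image $\pi(\mu_\Delta(L))$ of the subset $\mu_\Delta(L)\subset M$, the proposition reduces to the set-theoretic identity $\mu_G^{(1)}(L)=\pi(\mu_\Delta(L))$, which I would establish by the two obvious inclusions.

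For the inclusion $\subseteq$: any point of $\mu_G^{(1)}(L)$ is of the form $\mu_G^{(1)}(x)=\pi(\mu_\Delta(x))$ for some $x\in L$, and since $\mu_\Delta(x)\in\mu_\Delta(L)$ it lies in $\pi(\mu_\Delta(L))$. For $\supseteq$: any point of $\pi(\mu_\Delta(L))$ is $\pi(\nu)$ with $\nu=\mu_\Delta(x)$ for some $x\in L$, hence equals $\mu_G^{(1)}(x)\in\mu_G^{(1)}(L)$. This finishes the argument; there is genuinely no analytic or symplectic-geometric content beyond unwinding the definition of $\mu_G$.

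The only points that deserve a line of care are (i) checking that the $\mu_\Delta$ appearing in the definition of $\mu_G$ is literally the same induced moment map on the symplectic quotient $X_\Delta\simeq\CC^m\sslash_t G_\RR$ that is meant when we write $\mu_\Delta(L)$, so that the factorisation $\mu_G^{(1)}=\pi\circ\mu_\Delta$ holds on the nose and not merely up to identification; and (ii) that $\pi$ is a well-defined quotient map (of abelian groups, equivalently of real affine spaces), which is immediate since $\RR\cdot u\subset M_\RR$ is a line. I do not anticipate any real obstacle here: the substantive work — an explicit description of $\mu_\Delta(L)$ in terms of the charges $l^{(1)},\dots,l^{(k)}$, together with the companion computation $\mu_G^{(2)}(L)=\RR_{\geq 0}$ needed to conclude $\mu_G(L)=\frac{\mu_\Delta(L)}{\RR\cdot u}\times\RR_{\geq 0}$ — is precisely what the following results are designed to carry out, whereas this proposition isolates the purely formal half of that statement.
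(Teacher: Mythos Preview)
Your proposal is correct and follows essentially the same approach as the paper's proof: both observe that $\mu_G^{(1)}(x)=[\mu_\Delta(x)]$ by definition, so $\mu_G^{(1)}(L)=[\mu_\Delta(L)]=\frac{\mu_\Delta(L)}{\RR\cdot u}$. Your version is slightly more explicit in writing out the two inclusions and in flagging the identification of moment maps, but the paper's proof is equally terse and purely definitional.
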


\begin{proof}
The proof basically follows exactly from the definition of $\mu_G^{(1)}$. By definition of $$\mu_G^{(1)}=Proj_1 \circ \mu_G:X_\Delta\longrightarrow \frac{\mu_\Delta(L)}{\RR\cdot u}, $$ we have $\mu_G^{(1)}(x)= [\mu_\Delta (x)]$ for any $x\in X_\Delta$. As a result, we obtain that
$$\mu_G^{(1)}(L) = [\mu_\Delta (L)] = \frac{\mu_\Delta(L)}{\RR\cdot u}$$ as desired.
\end{proof}

Immediately, we obtain a direct consequence of the above proposition as follows:

\begin{cor} \label{corollary 1st projection of lagrangian image}
With the notations as in Proposition \ref{proposition 1st projection of lagrangian image}, we have the intersection of $(\mu_G^{(1)})^{-1}([p])$ and $L$ is non-empty for each $p\in\mu_\Delta(L)$ .
\end{cor}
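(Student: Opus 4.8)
The plan is to deduce this directly from Proposition \ref{proposition 1st projection of lagrangian image}, which identifies the image $\mu_G^{(1)}(L)$ with the quotient $\mu_\Delta(L)/(\RR\cdot u)$. The corollary is essentially an unwinding of what it means for a point to lie in this image, so the argument will be short.

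First I would fix $p \in \mu_\Delta(L)$ and pass to its class $[p] \in M/(\RR\cdot u)$ under the canonical quotient projection. Since $p$ lies in $\mu_\Delta(L)$, its class $[p]$ lies in the set $\mu_\Delta(L)/(\RR\cdot u)$; and by Proposition \ref{proposition 1st projection of lagrangian image} this set is precisely $\mu_G^{(1)}(L)$. Hence $[p] \in \mu_G^{(1)}(L)$.

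Next, by the very definition of the image of a map, there exists a point $x \in L$ with $\mu_G^{(1)}(x) = [p]$. Equivalently, $x \in (\mu_G^{(1)})^{-1}([p])$ while simultaneously $x \in L$, so $(\mu_G^{(1)})^{-1}([p]) \cap L \neq \varnothing$, which is exactly the assertion.

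There is no substantial obstacle: the statement is a formal consequence of the surjectivity of the restricted map $L \to \mu_\Delta(L)/(\RR\cdot u)$, $x \mapsto [\mu_\Delta(x)]$, established in the preceding proposition. The only point meriting a word of care is bookkeeping with the quotient: the hypothesis ``$p \in \mu_\Delta(L)$'' is a statement in $M$, whereas the fibres of $\mu_G^{(1)}$ are indexed by $M/(\RR\cdot u)$, so one must apply the projection $M \to M/(\RR\cdot u)$ when transporting the hypothesis into a statement about $\mu_G^{(1)}$ before invoking Proposition \ref{proposition 1st projection of lagrangian image}.
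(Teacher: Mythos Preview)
Your proposal is correct and matches the paper's treatment: the paper does not give an explicit proof but simply declares the corollary a direct consequence of Proposition \ref{proposition 1st projection of lagrangian image}, and your argument is precisely the one-line unwinding of that proposition via the definition of image.
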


Notice that $\mu_G=\mu_G^{(1)} \times \mu_G^{(2)} $ is just a Cartesian product of maps. To compute $\mu_G(L)$, we also need some result of the image of $L$ under the second projection of the Gross fibration as follows:

\begin{prop} \label{proposition 2nd projection of lagrangian image}
Let $\chi^u:X\longrightarrow\CC$ be the holomorphic function as in Proposition \ref{Proposition compute CHI is product of zs }. Then we have
$$ \chi^u ( (\mu_G^{(1)})^{-1}([p]) \cap L ) = \RR_{\geq 0} \cdot e^{i\Phi}$$
for some unit direction $e^{i\Phi}\in U(1) \subset \CC $.
\end{prop}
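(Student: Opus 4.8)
The plan is to work in the symplectic-quotient model $X_\Delta \simeq \CC^m\sslash_t G_\RR$, where by Proposition \ref{Proposition compute CHI is product of zs } the holomorphic function is $\chi^u([z_0,\dots,z_{m-1}]) = \prod_{j=0}^{m-1} z_j$, and where, writing $z_j = r_j e^{i\theta_j}$, a point of $L$ is represented by a pair $(r^2,\theta)$ with $r^2 = (r_0^2,\dots,r_{m-1}^2)\in(\RR_{\geq 0})^m$ subject to $\iota^\vee(r^2)=t$ and the radial constraints $\langle l^{(a)},r^2\rangle = c^{(a)}$ ($a=1,\dots,k$), and $\theta = (\theta_0,\dots,\theta_{m-1})$ subject to the angular constraints $\langle l^{(b)}_\perp,\theta\rangle = \phi^{(b)}$ ($b=k+1,\dots,m$), the whole thing taken modulo the translation action of $G_\RR$ on the angles. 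First I would record that $\mu_\Delta(x)$ is an affine function of $r^2$, so fixing $[p]=[\mu_\Delta(x)]\in M/\RR u$ pins $r^2$ down up to adding multiples of $\mathds{1} = \beta^\vee(u)\in(\ZZ^m)^\vee$; and since $\iota^\vee(\mathds{1}) = \iota^\vee\beta^\vee(u) = 0$ and $\langle l^{(a)},\mathds{1}\rangle = l^{(a)}\cdot\mathds{1} = 0$ (the special Lagrangian condition of Theorem \ref{L is special lagrangian}), the whole ray $r^2(s) := r^2_* + s\,\mathds{1}$ with $s\in J := [\,-\min_j (r^2_*)_j,\ \infty)$ lies in the constraint set — the lower endpoint coming from $r_j^2\ge 0$, and unboundedness above from the Calabi-Yau condition $\langle u,v_i\rangle = 1>0$, which forces $\Delta$ to be unbounded in the $+u$ direction. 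Hence $(\mu_G^{(1)})^{-1}([p])\cap L$ is exactly the set of points represented by $(r^2(s),\theta)$ with $s\in J$ and $\theta$ ranging over the (manifestly $[p]$-independent) angular part of $L$.

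Then I would evaluate $\chi^u$ there: $\chi^u = \big(\prod_{j} r_j(s)\big)\, e^{\,i\langle \mathds{1},\theta\rangle} = \big(\prod_j \sqrt{(r^2_*)_j + s}\,\big)\, e^{\,i\langle\mathds{1},\theta\rangle}$. For the argument, I would show $\langle\mathds{1},\theta\rangle \bmod 2\pi$ takes the same value for every $\theta$ in the angular part of $L$: any variation $\delta\theta$ preserving the angular constraints lies in $\mathrm{span}(l^{(b)}_\perp)^\perp = \mathrm{span}(l^{(1)},\dots,l^{(k)})$ (using linear independence and mutual orthogonality of the two families of charges), so $\langle\mathds{1},\delta\theta\rangle$ is a linear combination of the $l^{(a)}\cdot\mathds{1} = 0$, hence $0$; and the residual $G_\RR$-ambiguity contributes $\langle\beta^\vee(u),\iota(g)\rangle = \langle u,\beta\iota(g)\rangle = 0$ by exactness of \eqref{toric exact sequence}. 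This produces a well-defined constant $\Phi := \langle\mathds{1},\theta_*\rangle$, giving the unit direction $e^{i\Phi}$. For the modulus, $s\mapsto \prod_j\sqrt{(r^2_*)_j+s}$ is continuous on $J$, vanishes at the left endpoint (where the minimal coordinate is $0$), is strictly increasing afterwards, and tends to $+\infty$; by the intermediate value theorem its range is precisely $[0,\infty)$. Combining, $\chi^u\big((\mu_G^{(1)})^{-1}([p])\cap L\big) = \RR_{\ge 0}\cdot e^{i\Phi}$.

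The main obstacle will be the bookkeeping around the $G_\RR$-quotient: making rigorous that a $\mu_G^{(1)}$-fiber inside $L$ is exactly the one-parameter radial family $\{(r^2(s),\theta)\}$ and nothing more, i.e. that fixing $[\mu_\Delta(x)]$ together with $\theta$ leaves precisely the $s$-freedom within $L$, and checking that the endpoint $s=-\min_j(r^2_*)_j$ — where the Lagrangian meets a toric divisor — is genuinely attained, so that $0$ does belong to the image. This is where I would use that $\tilde L$ is cut out by equalities (hence closed in $\CC^m$, so $L$ is closed in $X_\Delta$) and that $[p]\in\mu_\Delta(L)$ by hypothesis, cf. Corollary \ref{corollary 1st projection of lagrangian image}.
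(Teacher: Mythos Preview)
Your proposal is correct and follows essentially the same route as the paper: parametrize the radial part of $(\mu_G^{(1)})^{-1}([p])\cap L$ by a single real parameter along $\beta^\vee(u)=\mathds{1}$, use continuity and unboundedness of $\prod_j r_j$ to get modulus range $[0,\infty)$, and use the special condition $l^{(a)}\cdot\mathds{1}=0$ to show the argument $\langle\mathds{1},\theta\rangle$ is constant. You are in fact slightly more careful than the paper in checking that the residual $G_\RR$-action on the angles does not alter $\langle\mathds{1},\theta\rangle$, a point the paper leaves implicit.
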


\begin{proof}
We recall that the Lagrangian has a representation that
\begin{align}
L\simeq  \left\{
	\begin{array}{lll}
		(r_0,...,r_{m-1})\in (\RR_{\geq 0})^m  & \ \sum_{j=0}^{m-1} l^{(a)}_j\cdot r_j^2 = c^{(a)} & \sum_{i=0}^{m-1} \iota_{ij}r_i^2 = t_j\\
        (\theta_0,...,\theta_{m-1})\in \RR^m    & :\sum_{j=0}^{m-1} (l^{(b)}_\perp)_j\cdot \theta_j = \phi^{(b)} &  \sum_{i=0}^{m-1} \iota_{ij}\theta_i = 0 \\
       & \ for\ a=1,...,k;\ b=k+1,...,m; &j=1,...,m-n
 	\end{array}
     \right\}
\end{align}
according to the definition of $L= (\mu^{-1}(t) \cap \tilde{L}) / G_\RR$ as in Definition \ref{definition of L in X}. Moreover, we can compute that
\begin{align} \label{eqt r square condition}
(\mu_G^{(1)})^{-1}[p]\cap L \simeq \{\overrightarrow{(r_ie^(i\theta_i)}\in L |
\overrightarrow{(r_i^2)} = \vec{\lambda} + \beta(p+tu) \in (\RR_{\geq 0})^m \exists t\in\RR \}.
\end{align}
According to the Proposition \ref{Proposition compute CHI is product of zs }, we have $$\chi^u(z)=\prod_{j=0}^{m-1} z_j=r_0...r_{m-1} exp(i(\theta_0+...+\theta_{m-1})).$$
According to equation \eqref{eqt r square condition}, we have
\begin{align}
|\chi^u(L)|
	&=\{\prod_{j=0}^{m-1} r_j | (r_0e^{i\theta_0},...,r_{m-1} e^{i\theta_{m-1}})\in L \} \\
	&=\{ \sqrt{\prod_{j=0}^{m-1} ((\lambda+\beta(p))_j + t )} \in \RR_{\geq 0} | t\in \RR\ and\  \vec{\lambda} + \beta(p+tu) \in (\RR_{\geq 0})^m \}
\end{align}
Notice that there exists some $t_0\in \RR$ such that $$\vec{\lambda} + \beta(p+t_0u) \in \partial (\RR_{\geq 0})^m.$$ In other words, $(\lambda+\beta(p))_j + t_0$ vanishes for some $j=0,...,m-1$. Thus
the product $\prod_{j=1}^r ((\lambda+\beta(p))_j + t_0 ) =0$. On the other hand, the product$\prod_{j=0}^{m-1} ((\lambda+\beta(p))_j + t_0 ) \longrightarrow +\infty$ as $t\longrightarrow +\infty$. We obtain that $$|\chi^u(L)|=\RR_{\geq 0}.$$

And for any $(r_0e^{i\theta_0},...,r_{m-1}e^{i\theta_{m-1}})\in L$, the angle vector satisfies the equation
$$ \sum_{j=0}^{m-1} (l^{(b)}_\perp)_j\cdot \theta_j = \phi^{(b)} $$ for each $b=k+1,...,m$. In other words, we have
$$ \vec{\theta} = \vec{\phi} + \RR<l^{(1)},...,l^{(k)}> $$ for some constant $\vec{\phi} =(\phi_0,...,\phi_{m-1})\in \RR^m $. We then have $$\mathds{1}\cdot \vec{\theta} = \mathds{1}\cdot \vec{\phi}=\phi_0+...+\phi_{m-1}=:\Phi $$ which is a constant since $\mathds{1}\cdot l^{(1)}=...=\mathds{1}\cdot l^{(k)}=0$ for special Lagrangian $L\in X_\Delta$.

Hence, overall, we obtain that the image of $L$ under $\chi^u$ is a half-line in $\CC$, that is, $$\chi^u(L) = |\chi^u(L)| e^{i\Phi} = \RR_{\geq 0} \cdot e^{i\Phi}.$$
Thus the proof is completed.

\end{proof}

Since $\mu_G^{(2)}$ is nothing but $|\chi^u-\epsilon|^2$, we immediately have the following consequence:

\begin{cor}\label{corollary 2nd projection of lagrangian image}
With the notations above, we have
$$ \mu_G^{(2)} ( (\mu_G^{(1)})^{-1}([p]) \cap L ) = \RR_{\geq \xi} $$
for some constant $\xi\in \RR$.
\end{cor}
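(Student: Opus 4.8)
The plan is to deduce the corollary directly from Proposition \ref{proposition 2nd projection of lagrangian image}, reducing everything to an elementary planar computation. That proposition already identifies
$$\chi^u\big((\mu_G^{(1)})^{-1}([p])\cap L\big)=\RR_{\geq 0}\cdot e^{i\Phi}\subset\CC,$$
the closed half-line emanating from the origin in the direction $e^{i\Phi}$. Since $\mu_G^{(2)}$ is by definition the composition of $\chi^u$ with the map $w\mapsto|w-\epsilon|^2$, we obtain
$$\mu_G^{(2)}\big((\mu_G^{(1)})^{-1}([p])\cap L\big)=\{\,|w-\epsilon|^2 : w\in\RR_{\geq 0}\cdot e^{i\Phi}\,\},$$
so it suffices to compute the set of squared distances from the fixed point $\epsilon\in\CC$ to the points of this ray.

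First I would parametrize the ray by $w=t\,e^{i\Phi}$, $t\in[0,\infty)$, and expand the quadratic
$$g(t):=|t\,e^{i\Phi}-\epsilon|^2=t^2-2t\,\mathrm{Re}(e^{i\Phi}\bar\epsilon)+|\epsilon|^2.$$
This is continuous on $[0,\infty)$ and coercive ($g(t)\to+\infty$ as $t\to+\infty$), so its image is a closed half-line $[\xi,\infty)=\RR_{\geq\xi}$ with $\xi=\min_{t\geq 0}g(t)$ attained (using that a connected domain has interval image together with coercivity). Setting $s:=\mathrm{Re}(e^{i\Phi}\bar\epsilon)$, the vertex of the parabola sits at $t=s$, whence $\xi=|\epsilon|^2-s^2$ if $s\geq 0$ — this is exactly the squared distance from $\epsilon$ to the line $\RR\cdot e^{i\Phi}$ — and $\xi=g(0)=|\epsilon|^2$ if $s<0$. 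In either case this yields $\mu_G^{(2)}\big((\mu_G^{(1)})^{-1}([p])\cap L\big)=\RR_{\geq\xi}$.

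I do not expect any real obstacle here; the statement is a direct corollary of the preceding proposition. The only points worth a sentence are: (i) justifying that the image of the coercive continuous map $g$ on the connected set $[0,\infty)$ is precisely the closed interval $[\xi,\infty)$ rather than merely contained in it; and (ii) observing that $\xi$ does not depend on $p$ — which is immediate, since $\epsilon$ is fixed and the angle $\Phi$ is determined by the defining constants $\phi^{(b)}$ of $L$ (as in the proof of Proposition \ref{proposition 2nd projection of lagrangian image}), not by $p$. In particular the same $\xi$ works uniformly over all $p\in\mu_\Delta(L)$, which, together with Proposition \ref{proposition 1st projection of lagrangian image}, is exactly what is needed to assemble $\mu_G(L)=\frac{\mu_\Delta(L)}{\RR\cdot u}\times\RR_{\geq\xi}$ in the following section.
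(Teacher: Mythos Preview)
Your proof is correct and follows essentially the same approach as the paper: compose the identification $\chi^u\big((\mu_G^{(1)})^{-1}([p])\cap L\big)=\RR_{\geq 0}\cdot e^{i\Phi}$ from the preceding proposition with $w\mapsto|w-\epsilon|^2$ and observe the resulting image is a closed half-line. You are in fact slightly more explicit than the paper (parametrizing the ray, expanding the quadratic, and pinning down $\xi$ in both cases), and your remark that $\xi$ is independent of $p$ is exactly what the paper uses immediately afterwards.
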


\begin{proof}
By the definition of $\mu_G^{(2)}:=Proj_2 \circ \mu_G$, we have $\mu_G^{(2)}(x)=|\chi^u(x)-\epsilon|^2$ for any $x\in X_\Delta$. In other words, we have
$$ \mu_G^{(2)} : X_\Delta \overset{\chi^u}\longrightarrow \CC \overset{f}\longrightarrow \RR $$
where $f(z):= |z-\epsilon|^2$, that is $\mu_G^{(2)} = f\circ \chi^u $. Therefore, according to the previous proposition, we have
\begin{align}
\mu_G^{(2)} ( (\mu_G^{(1)})^{-1}([p]) \cap L )
	&= f\circ \chi^u( (\mu_G^{(1)})^{-1}([p]) \cap L ) \\
	&= f (\RR_{\geq 0} \cdot e^{i\Phi}) \\
    &= ( \underset{z}{inf} |\epsilon-z|^2 , \underset{z}{sup} |\epsilon-z|^2 )
\end{align}
where $z$ runs through $z\in \RR_{\geq 0} \cdot e^{i\Phi} \subset \CC$. Let $\xi=\underset{z}{inf} |\epsilon-z|^2$, the minimum distance square between the point $\epsilon$ and the half line $\RR_{\geq 0} \cdot e^{i\Phi}$. And we see that $\underset{z}{sup} |\epsilon-z|^2 = +\infty $. We then have $$\mu_G^{(2)} ( (\mu_G^{(1)})^{-1}([p]) \cap L ) = [\xi,+\infty) = \RR_{\geq \xi}$$ and thus the proof is completed.
\end{proof}

With the images of the Lagrangian $L$ under the fibrations $\mu_G^{(1)}$ and $\mu_G^{(2)}$ at hand, we are finally ready to compute the image $\mu_G(L)$ which we desired.

\begin{thm}
Let $\mu_G$ be the Gross fibration, $\mu_\Delta$ be the moment map and $L\subset X_\Delta$ be the Lagrangian as defined before. Then the image of the Lagrangian $L$ on the base $B$ under the Gross fibration $\mu_G$ is
$$ \mu_G (L) = \frac{\mu_\Delta(L)}{\RR\cdot u} \times \RR_{\geq \xi}.$$
\end{thm}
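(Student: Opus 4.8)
The plan is to assemble the desired image from the two projection computations already in hand, using only that $\mu_G=\mu_G^{(1)}\times\mu_G^{(2)}$ is a Cartesian product of maps. For a point $b=(b_1,b_2)\in B=\frac{M}{\RR\cdot u}\times\RR_{\geq 0}$, observe that $b\in\mu_G(L)$ precisely when there is some $x\in L$ with $\mu_G^{(1)}(x)=b_1$ and $\mu_G^{(2)}(x)=b_2$; equivalently, when $b_2\in\mu_G^{(2)}\big((\mu_G^{(1)})^{-1}(b_1)\cap L\big)$. In other words, the ``slice'' of $\mu_G(L)$ over a fixed $b_1$ is exactly the set $\mu_G^{(2)}\big((\mu_G^{(1)})^{-1}(b_1)\cap L\big)$, and this slice is empty if and only if $(\mu_G^{(1)})^{-1}(b_1)\cap L=\emptyset$.

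First, by Proposition \ref{proposition 1st projection of lagrangian image} together with Corollary \ref{corollary 1st projection of lagrangian image}, the set of $b_1$ over which the slice is nonempty is $\mu_G^{(1)}(L)=\frac{\mu_\Delta(L)}{\RR\cdot u}$. Next, fix $b_1=[p]$ with $p\in\mu_\Delta(L)$; Corollary \ref{corollary 2nd projection of lagrangian image} gives $\mu_G^{(2)}\big((\mu_G^{(1)})^{-1}([p])\cap L\big)=\RR_{\geq\xi}$ for a constant $\xi$. The one point requiring a line of care is that $\xi$ is independent of the chosen $p$: tracing the proof of that corollary, $\xi=\inf\{\,|\epsilon-z|^2:z\in\RR_{\geq 0}\cdot e^{i\Phi}\,\}$, where the angle $\Phi=\phi_0+\dots+\phi_{m-1}$ arising from the defining equations $\sum_j (l^{(b)}_\perp)_j\theta_j=\phi^{(b)}$ of $L$ (and the special Lagrangian relations $\mathds{1}\cdot l^{(a)}=0$) is a global constant not depending on $p$; hence $\xi$ is too.

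Putting these together: every slice of $\mu_G(L)$ over a point of $\frac{\mu_\Delta(L)}{\RR\cdot u}$ equals $\RR_{\geq\xi}$, and all other slices are empty, which is precisely the assertion $\mu_G(L)=\frac{\mu_\Delta(L)}{\RR\cdot u}\times\RR_{\geq\xi}$. The main --- and quite mild --- obstacle here is simply the bookkeeping that establishes that $\xi$ does not vary with $p$; everything else follows formally from the product structure of $\mu_G$ and the two preceding corollaries.
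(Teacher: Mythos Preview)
Your argument is correct and follows essentially the same route as the paper: both proofs assemble $\mu_G(L)$ from Proposition~\ref{proposition 1st projection of lagrangian image}, Corollary~\ref{corollary 1st projection of lagrangian image}, and Corollary~\ref{corollary 2nd projection of lagrangian image}, with the paper writing it as two inclusions and you phrasing it in terms of fibers over $b_1$. Your explicit verification that $\xi$ is independent of $p$ is a worthwhile clarification that the paper leaves implicit in its use of Corollary~\ref{corollary 2nd projection of lagrangian image}.
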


\begin{proof}
To proof the equality, we divide the proof into 2 parts, which is the "$\subset$" part and the "$\supset$" part.

To show the "$\subset$" part, we let $x\in L$. By Proposition \ref{proposition 1st projection of lagrangian image}, we have that $\mu_G^{(1)}(x) = [p] \in \frac{M}{\RR\cdot u} $ for some $p\in \mu_\Delta (L)$. Notice that the point $x$ is in the intersection of $(\mu_G^{(1)})^{-1}([p])$ and $L$, we then have $\mu_G^{(2)}(x) = q \in \RR_{\geq \xi} $ according to the above Corollary \ref{corollary 2nd projection of lagrangian image}. Overall, we obtain that
$$ \mu_G(x)
=([p],q)\in \frac{\mu_\Delta (L)}{\RR\cdot u} \times \RR_{\geq \xi} .$$ Thus we have $$ \mu_G (L) \subset \frac{\mu_\Delta(L)}{\RR\cdot u} \times \RR_{\geq \xi}.$$

To show the "$\subset$" part, we let $([p],q)\in \frac{\mu_\Delta (L)}{\RR\cdot u} \times \RR_{\geq \xi}$. According to Corollary \ref{corollary 1st projection of lagrangian image}, we have
$$(\mu_G^{(1)})^{-1}([p]) \cap L  \neq \phi .$$ And according to Corollary \ref{corollary 2nd projection of lagrangian image}, we obtain that
$$ (\mu_G^{(2)})^{-1}(q)  \cap  (\mu_G^{(1)})^{-1}([p]) \cap L  \neq \phi .$$
Immediately, we obtained that there exist a point $x\in L$ such that $\mu_G^{(1)}(x)=[p]$ and $\mu_G^{(2)}(x)=q$. Thus we have $\mu_G(x) 
=([p],q)$ and $$ \mu_G (L) \supset \frac{\mu_\Delta(L)}{\RR\cdot u} \times \RR_{\geq \xi}.$$

Overall, we have the equality
as desired.

\end{proof}

\begin{rmk}
Notice that the constant $\xi\in\RR$ that appears in the Lagrangian image depends on the constant $\epsilon\in\CC$ that appears in the Gross fibration $\mu_G$. To simplify our computation in later sections, we choose $\epsilon\in\CC$ such that
$$ Argument ( \epsilon ) \in [\Phi-\pi,\Phi+\pi].$$ With this choice of $\epsilon$, we see that  minimum distance between $\epsilon$ and $\RR_{\geq 0} \cdot e^{i\Phi}$ is $|\epsilon|$ and thus $\xi=0$. This means that under such Gross fibration $\mu_G$, the image of Lagrangian becomes
$$ \mu_G (L) = \frac{\mu_\Delta(L)}{\RR\cdot u} \times \RR_{\geq 0}\subset B.$$
\end{rmk}

With the image of Lagrangian computed, we are ready to compute the mirror brane $L^\vee$ under the SYZ program in the next section.


\section{Semi-flat Mirror Brane}

In this section, we follow the SYZ transformation procedure introduced before to construct the semi-flat B-brane $\mathcal{B}$ as a subset in $\MM_0$. We will first transform the A-brane $(L,\nabla)$ in a fiberwise sense, then glue them together along the Lagrangian image $\mu_G(L)\subset B$ to obtain the semi-flat mirror brane $$ \BB = \{(L_G,\nabla)\in \MM_0 | \prod_{j=0}^{m-1} Z_{\beta_j}(L_G,\nabla)^{-l_j^{(a)}} = e^{c^{(a)}+i\phi^{(a)}}\4 a=1,...,k \} $$ which is the main result of this section.

Before we construct the fiberwise as mentioned, we first give an equivalence between properties of torus connection $\nabla$ and properties of its holonomy homomorphism $Hol_\nabla$ as follows:

\begin{lem}
Let $\TT$ be an $n_\RR$-torus and $S$ be a $k_\RR$ dimensional subtorus in it. Suppose that $\TT$ and $S$ are equipped with flat $U(1)$ connections $\nabla_\TT$ and $\nabla_S$ respectively, then $\nabla_\TT$ twisted by $\nabla_S$ is trivial when restricted to $S$ if and only if $Hol_{\nabla_\TT}$ coincide with $Hol_{\nabla_S}^{-1}$ in $Hom(\pi_1(S),U(1))$.
\end{lem}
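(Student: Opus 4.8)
The plan is to reduce the statement to the dictionary between flat $U(1)$-bundles and monodromy representations provided by Proposition~\ref{G bundle is Hom}, together with the multiplicativity of holonomy under tensoring line bundles. First I would make precise the two pieces of terminology. By ``$\nabla_\TT$ twisted by $\nabla_S$'' I mean the flat connection $\nabla_\TT|_S \otimes \nabla_S$ on the line bundle $L_\TT|_S \otimes L_S$ over $S$, equivalently the connection whose local connection $1$-form is the sum of those of $\nabla_\TT|_S$ and $\nabla_S$; and by this twisted connection being ``trivial'' I mean that the pair (bundle, connection) is isomorphic to $(S \times \CC, d)$, equivalently that after a gauge transformation the connection $1$-form vanishes. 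The key preliminary observation is that a flat $U(1)$-connection on $S$ is trivial in this sense if and only if its holonomy homomorphism $\pi_1(S) \to U(1)$ is the constant map $1$: this is exactly the triviality case of the correspondence in Proposition~\ref{G bundle is Hom}, using that $U(1)$ is abelian so that the conjugation action is trivial and $Hom(\pi_1(S),U(1))^{U(1)} = Hom(\pi_1(S),U(1))$.

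Next I would record the multiplicativity of holonomy. If $j \colon S \hookrightarrow \TT$ denotes the inclusion, then restricting $\nabla_\TT$ to $S$ has holonomy the composite $Hol_{\nabla_\TT} \circ j_* \colon \pi_1(S) \to \pi_1(\TT) \to U(1)$, since the holonomy of a flat connection depends only on the homotopy class of a loop. Tensoring two line bundles with connections adds the connection $1$-forms, and since $U(1)$ is abelian the holonomy of the tensor product is the pointwise product of the two holonomies; hence
$$Hol_{\nabla_\TT|_S \otimes \nabla_S} = (Hol_{\nabla_\TT}\circ j_*) \cdot Hol_{\nabla_S} \in Hom(\pi_1(S), U(1)).$$
Combining with the previous paragraph, the twisted connection is trivial on $S$ if and only if this product is the trivial homomorphism, i.e. if and only if $Hol_{\nabla_\TT} \circ j_* = (Hol_{\nabla_S})^{-1}$, which is the assertion once one identifies, as the paper does implicitly, $Hol_{\nabla_\TT}$ with its restriction $Hol_{\nabla_\TT}\circ j_*$ to $\pi_1(S) \subset \pi_1(\TT)$.

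There is no serious obstacle here; the only place demanding a little care is the bookkeeping in identifying $\pi_1(S)$ with a primitive sublattice of $\pi_1(\TT) \cong \ZZ^n$ and checking that ``restriction of the holonomy'' in the statement really means precomposition with $j_*$ rather than something else. I would therefore spell out, once and for all, the commuting square relating the inclusion of lattices $\pi_1(S) \hookrightarrow \pi_1(\TT)$ to the induced surjection $Hom(\pi_1(\TT), U(1)) \twoheadrightarrow Hom(\pi_1(S), U(1))$ of character groups, so that the equality $Hol_{\nabla_\TT} = Hol_{\nabla_S}^{-1}$ in $Hom(\pi_1(S), U(1))$ is unambiguous. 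This lemma will then feed directly into the fiberwise description of $L^\vee$, where $S = F \cap L$ sits inside the torus fiber $F$ of the Gross fibration.
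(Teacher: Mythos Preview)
Your proposal is correct and follows essentially the same logic as the paper's proof: both arguments reduce to the observation that the twisted connection has holonomy equal to the pointwise product $Hol_{\nabla_\TT|_S}\cdot Hol_{\nabla_S}$, so it is trivial precisely when this product is the identity. The only difference is packaging: the paper writes $\nabla_\TT = d + iA_\TT$, $\nabla_S = d + iA_S$ explicitly, interprets the twist as $A_\TT|_S + A_S = 0$, and computes $Hol_\nabla(\gamma) = \exp(i\int_\gamma A)$ by hand, whereas you invoke Proposition~\ref{G bundle is Hom} and the abstract multiplicativity of holonomy under tensor product. Your version is slightly cleaner in that it works directly with gauge-equivalence classes, which is what the surrounding SYZ construction actually needs.
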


\begin{proof}
Since $\nabla_\TT$ and $\nabla_S$ are flat $U(1)$ connections, we suppose that $\nabla_\TT=d+iA_\TT$ and $\nabla_S=d+iA_S$ for some 1-form $A_\TT \in \Omega^1(\TT,\RR)$ and $A_S \in \Omega^1(S,\RR)$ respectively. By definition, the condition that $\nabla_\TT$ twisted by $\nabla_S$ is trivial when restricted to $S$ is defined as $\nabla_\TT|_S + iA_S = d $ on $S$. Equivalently, we have $A_\TT|_S=-A_S$ as a 1-form in $\Omega^1(S,\RR)$.

Let $\gamma$ be a 1-cycle on $S$ represented by the closed path $\gamma(t):[0,1]\longrightarrow S$. We briefly recall the definition of the holonomy operator $Hol_\nabla(\gamma)$ as the evaluation of the flat chapter $s$ on the trivial bundle $U(1)\times S$ at base point $\gamma(1)$ with initial value $s(0)$ fixed to be $1\in U(1)$. Hence we can compute that $Hol_{\nabla_S}(\gamma)=exp(i\int_\gamma A_S)$, and similarly $Hol_{\nabla_\TT}(\gamma)=exp(i\int_\gamma A_\TT)$. This explicit relationships between the holonomy operators $Hol_{\nabla_S},Hol_{\nabla_\TT}$ and the 1 forms $A_S,A_\TT$ give us the equivalence that $A_\TT|_S=-A_S$ if and only if $Hol_{\nabla_\TT}=Hol_{\nabla_S}^{-1}$ since
$$Hol_{\nabla_S}(\gamma)=exp(i\int_\gamma A_S)=exp(-i\int_\gamma A_\TT)=Hol_{\nabla_\TT}(\gamma)^{-1}$$
\end{proof}

As inspired by the above lemma, we are required to compute the fundamental groups of the Lagrangian $L$ and the intersection of a Gross fiber $\mu^{-1}_G(b)$ with $L$. We express the intersection $\mu^{-1}_G(b) \cap L$ in terms of an orbit of the action $TB\curvearrowright X$ as follows:



\begin{lem}
Let $(L,\nabla)$ be an A-brane and $b\in \mu_G(L)$ is in the Lagrangian image. Then one of the followings is true.
\begin{enumerate}
\item$L\cap \mu_G^{-1}(b)$ has exactly 1 connected component with
$$L\cap \mu_G^{-1}(b)=\RR<l^{(1)},...,l^{(k)}>\cdot x_0$$ for some $x\in L\cap \mu_G^{-1}(b)$
\item$L\cap \mu_G^{-1}(b)$ has exactly 2 connected components with
$$L\cap \mu_G^{-1}(b)=\RR<l^{(1)},...,l^{(k)}>\cdot x_1 \sqcup \RR<l^{(1)},...,l^{(k)}>\cdot x_2$$ for some $x_1,x_2\in L\cap \mu_G^{-1}(b)$ in each components.
\end{enumerate}
\end{lem}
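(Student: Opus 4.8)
The plan is to analyze the intersection $L\cap\mu_G^{-1}(b)$ explicitly using the coordinate description of both the Lagrangian $L$ and the Gross fiber. Recall that a Gross fiber $\mu_G^{-1}(b)$ with $b=([p],q)$ is cut out by the conditions $[\mu_\Delta(x)]=[p]$ and $|\chi^u(x)-\epsilon|^2=q$, together with the angular constraints $\sum_i\iota_{ij}\theta_i=0$ coming from the symplectic quotient. The Lagrangian $L$ adds the constraints $\sum_j l^{(a)}_j r_j^2=c^{(a)}$ and $\sum_j(l^{(b)}_\perp)_j\theta_j=\phi^{(b)}$. So the first step is to write down the full system of equations cutting out $L\cap\mu_G^{-1}(b)$ in the $(r_0,\dots,r_{m-1},\theta_0,\dots,\theta_{m-1})$ coordinates (modulo $G_\RR$), separating the radial part from the angular part.

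Next I would treat the radial and angular parts separately. For the radial coordinates $(r_0^2,\dots,r_{m-1}^2)\in(\RR_{\geq 0})^m$, the conditions $\iota^\vee(r_j^2)=t$, $[\mu_\Delta(x)]=[p]$, $|\chi^u(x)-\epsilon|^2=q$, and $\sum l^{(a)}_j r_j^2=c^{(a)}$ are affine-linear constraints in the $r_j^2$, except that the equation $|\chi^u-\epsilon|^2=q$ (equivalently $|\,r_0\cdots r_{m-1}\,e^{i\Phi}-\epsilon|^2=q$, using $\chi^u(L)=\RR_{\geq0}e^{i\Phi}$ from Proposition \ref{proposition 2nd projection of lagrangian image}) is quadratic in the product $r_0\cdots r_{m-1}$. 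As established in the proof of that proposition, $|\chi^u(L)|$ ranges over $\RR_{\geq 0}$, so for a generic target value of $|\chi^u|^2$ there are two solutions $|\chi^u|=\pm\sqrt{\cdots}$ for the product (hence two sheets), and along the discriminant locus (where $b$ lies on the walls, e.g. $q=|\epsilon|^2$ or $b\in\partial B$) these collapse to one. I would argue that once the value of the product $r_0\cdots r_{m-1}$ is fixed, all remaining radial equations are affine and pin down a unique point $(r_0^2,\dots,r_{m-1}^2)$ — because the charges $l^{(a)}$ together with $\iota$ and $u$ span enough of $(\RR^m)^\vee$ by the linear independence hypothesis in Definition \ref{define L tilde in C n}. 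Thus the radial part contributes either one or two points.

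For the angular coordinates, the constraints $\sum_i\iota_{ij}\theta_i=0$ ($j=1,\dots,m-n$) and $\sum_j(l^{(b)}_\perp)_j\theta_j=\phi^{(b)}$ ($b=k+1,\dots,m$) cut out an affine subspace of $\RR^m/(2\pi\ZZ^m\text{-type lattice})$; modulo the $G_\RR$-action (which kills the $\iota$-directions) and modulo the lattice, the remaining freedom is exactly the directions orthogonal to all the $l^{(b)}_\perp$, i.e. the span $\RR\langle l^{(1)},\dots,l^{(k)}\rangle$, and the $\phi^{(b)}$ fix the offset. So the angular part is a single coset $\RR\langle l^{(1)},\dots,l^{(k)}\rangle\cdot x_0$ — a connected torus-orbit under the natural $TB$-action. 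Combining: $L\cap\mu_G^{-1}(b)$ is (one or two copies of) this orbit, which is precisely the dichotomy in the statement, with case (2) occurring away from the discriminant locus and case (1) on it.

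The main obstacle I anticipate is bookkeeping the linear algebra carefully enough to be certain the radial system has the claimed rank — i.e. that fixing the product $r_0\cdots r_{m-1}$ really does determine all $r_j^2$ uniquely. This requires using the mutual orthogonality and independence of $\{l^{(a)}\}$, $\{l^{(b)}_\perp\}$ together with the span of the rows of $\iota$ and the Calabi–Yau vector $u$, and checking these jointly span a complementary subspace. A secondary subtlety is being precise about the lattice quotients in the angular computation so that "one coset" is genuinely one connected component and not several; this is where one uses that the $l^{(b)}_\perp$ are part of an integral basis (or at least primitive enough) so that the quotient stays connected. Everything else is a routine unwinding of the definitions of $L$, $\mu_G$, and the symplectic reduction.
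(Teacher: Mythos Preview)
Your overall structure---separate the radial and angular variables, show the angular part is a single $\RR\langle l^{(1)},\dots,l^{(k)}\rangle$-orbit, and show the radial part contributes at most two points---is exactly the paper's approach, and your angular analysis is correct. The substantive issue is in the radial step, where your rank-counting heuristic misfires. You propose that once the product $\rho=r_0\cdots r_{m-1}$ is fixed, the remaining affine constraints (from $\iota^\vee$, from $[\mu_\Delta]=[p]$, and from the charges $l^{(a)}$) pin down $(r_0^2,\dots,r_{m-1}^2)$ because ``the charges $l^{(a)}$ together with $\iota$ and $u$ span enough of $(\RR^m)^\vee$.'' But the $l^{(a)}$-constraints contribute \emph{no} new equations here: since $b\in\mu_G(L)$ forces the base point $p$ to lie in $\mu_\Delta(L)$, and since $L$ is special (so $\langle l^{(a)},\mathds{1}\rangle=0$), the equations $\sum_j l^{(a)}_j r_j^2=c^{(a)}$ hold automatically for every point of the form $\vec{r^2}=\vec\lambda+\beta^\vee(p+tu)$. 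The paper checks exactly this redundancy. So the genuine affine system leaves a one-parameter family indexed by $t$, and no amount of linear algebra with the $l^{(a)}$ will close that gap.

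The paper's fix---and the one you need---is to parametrise that one-parameter family by $t$ and then impose $|\chi^u-\epsilon|^2=\eta$. You are in fact more careful than the paper in observing that this equation is quadratic in $\rho$, not in $t$; the paper's phrase ``quadratic equation in one variable $t$'' is loose. What actually bounds the number of solutions for $t$ is that $\rho(t)=\sqrt{\prod_j(c_j+t)}$ is monotone on the admissible range $t\ge -\min_j c_j$ (each factor is nonnegative there, and the derivative of the product is a sum of nonnegative terms), so each of the at most two values of $\rho$ corresponds to a unique $t$. Once you replace your spanning claim by this redundancy-plus-monotonicity argument, the proof is complete.
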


\begin{proof}
According to the definition of the Lagrangian $L$ in Definition \ref{definition of L in X}, we have
\begin{align*}
L
\simeq \left\{ (r_0,...,r_{m-1},\theta_0,...,\theta_{m-1})|
\begin{array}{ll}
\sum_{j=0}^{m-1} l^{(a)}_j\cdot r_j^2 = c^{(a)}, &\forall a=1,...,k \\
\sum_{j=0}^{m-1} (l^{(b)}_\perp)_j\cdot \theta_j = \phi^{(b)}, &\forall b=k+1,...,m
\end{array}
\right\}(mod\ G)
\end{align*}
Notice that the condition $\sum_{j=0}^{m-1} (l^{(b)}_\perp)_j\cdot \theta_j = \phi^{(b)}, \forall b=k+1,...,m$ is equivalent to $\vec{\theta}\in\vec{\Theta} + \bigoplus_{a=1}^k\RR\cdot l^{(a)}$ for some  $\vec{\Theta}=(\Theta_0,...,\Theta_{m-1})\in \RR^m$.

  On the other hand, we recall that the Gross fibration is given by $\mu_G(x)=([mu_\delta(x)],|\chi^u(x)-\epsilon|^2)$ for $x\in X$. Suppose that $b=([m],\eta)\in B= (M/\RR u)\times \RR_{\geq 0}$. By expressing $x\in X$ in term of $(r_0,...,r_{m-1},\theta_0,...,\theta_{m-1})$ coordinates, we have the condition $\mu_G^{(1)}(x)=[\mu_\delta(x)]=[m]$ is equivalent to the condition $\vec{r}=\vec{\lambda}+\beta(m+tu)$ for some $t\in \RR$. Also, by the computation in Proposition \ref{Proposition compute CHI is product of zs }, we have $\eta=|\chi^u(x)-\epsilon|^2=|r_0...r_{m-1}-\epsilon e^{-i\phi}|^2$ where $\phi:=\Theta_0+\cdots+\Theta_{m-1}\in \RR$ is a constant. Notice that when we substitute $\vec{r}=\vec{\lambda}+\beta(m+tu)$ into $\eta=|r_0...r_{m-1}-\epsilon e^{-i\phi}|^2$ we obtain a quadratic equation in 1 variable $t\in\RR$. Thus $t$ has at most 2 roots and
\begin{align*}
\mu_G^{-1}(b)
\simeq \left\{ (r_0,...,r_{m-1},\theta_0,...,\theta_{m-1})|
\begin{array}{ll}
\vec{r}=\vec{\lambda}+\beta(m+tu) \text{ where $t\in\RR$ is a root} \\
\vec{\theta}\in \RR^m
\end{array}
\right\}(mod\ G)
\end{align*}

We can check that if $\vec{r}=\vec{\lambda}+\beta(m+tu)$ for some $t\in\RR$ then $
\sum_{j=0}^{m-1} l^{(a)}_j\cdot r_j^2 = c^{(a)}, \forall a=1,...,k $ since $m\in \mu_\delta(L)$ and  $<l^{(a)},\vec{\lambda}+\beta(m+tu)>=<l^{(a)},\vec{\lambda}+\beta(m)>+0t= c^{(a)}$. And hence we are now ready to compute the intersection between $\mu_G^{-1}(b)$ and $L$ as follows:
For the case that $t$ has only 1 root $t_0$, then
\begin{align*}
\mu_G^{-1}(b) \cap L
&\simeq \left\{ 
\begin{array}{ll}
\vec{r}=\vec{\lambda}+\beta(m+t_0u) \\
\vec{\theta}\in\vec{\Theta} + \bigoplus_{a=1}^k\RR\cdot l^{(a)}
\end{array}
\right\}(mod\ G)\\&\simeq \RR<l^{(1)},...,l^{(k)}>\cdot x
\end{align*}
for some $x_0\in \mu_G^{-1}(b) \cap L$ where $\mu_G^{-1}(b) \cap L$ has only 1 connected component. For the case that $t$ has exactly 2 roots $t_1,t_2$, then
\begin{align*}
\mu_G^{-1}(b) \cap L
&\simeq \left\{ 
\begin{array}{ll}
\vec{r}=\vec{\lambda}+\beta(m+t_iu) \4 i=1,2. \\
\vec{\theta}\in\vec{\Theta} + \bigoplus_{a=1}^k\RR\cdot l^{(a)}
\end{array}
\right\}(mod\ G)\\&\simeq \RR<l^{(1)},...,l^{(k)}>\cdot x_1 \sqcup \RR<l^{(1)},...,l^{(k)}>\cdot x_2.
\end{align*}
for some $x_1,x_2\in \mu_G^{-1}(b) \cap L$ in different connected component of $\mu_G^{-1}(b) \cap L$ where $\mu_G^{-1}(b) \cap L$ has exactly 2 connected components. Thus the proof is completed.

\end{proof}

\begin{rmk}
Notice that the family of isomorphism class of flat $U(1)$ connection on $L$ is one-one correspondence to the group $Hom(\pi_1(L),U(1))$. By the above Lemma and the assumption that $\mu_\Delta(L)$ do not intersect dimension $k$ strata of the polytope $\Delta$, we obtain the Lagrangian $L$ that is diffeomorphic to a $k$-torus. Moreover, the generators of the fundamental group of $L$ are just $\beta(l^{(1)}),...,\beta(l^{(k)})$ according to the above Lemma.
\end{rmk}

With the 2 Lemmas above, we are ready to compute the fiberwise dual of $\mu^{-1}_G(b) \cap L$ in $\MM_0$:

\begin{prop}
Let $(L,\nabla_L)$ be an A-brane where $\nabla_L$ is an isomorphism class of flat $U(1)$ connection defined by $Hol_\nabla(\beta(l^{(a)}))=e^{i\phi^{(a)}}\in U(1)$ for each $a=1,...,k$. Then the fiberwise SYZ dual of $L\cap \mu_G^{(1)}(b)$ is given by
$$ \BB_b = \{(L_G,\nabla)\in \MM_0 |  Hol_\nabla(\beta(l^{(a)}))=e^{-i\phi^{(a)}} \4 a=1,...,k\} .$$
\end{prop}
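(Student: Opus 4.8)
The plan is to unwind Definition~\ref{SYZ transformation of branes} in the fiberwise situation and then convert the condition ``$\nabla$ twisted by $\nabla_L$ is trivial on $L\cap\mu_G^{-1}(b)$'' into a statement about holonomies, using the two lemmas and the remark established just above. Throughout I would write $L_G:=\mu_G^{-1}(b)$ for the regular Gross fiber over the fixed $b\in\mu_G(L)$; it is a torus $\TT^n$ with $\pi_1(L_G)\cong N$ under the identification $\partial\beta_i\leftrightarrow v_i$ fixed earlier.

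First I would spell out what the fiberwise SYZ dual of $L\cap L_G$ is: by Definition~\ref{SYZ transformation of branes}, a pair $(L_G,\nabla)\in\MM_0$, i.e.\ a flat $U(1)$ connection $\nabla$ on this particular fiber $L_G$, belongs to $\BB_b$ exactly when $\nabla$, twisted by $\nabla_L|_{L\cap L_G}$, is trivial when restricted to $L\cap L_G$. So the entire content of the proposition is to re-express this twisting condition.

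Next I would identify $L\cap L_G$ as a (possibly disconnected) subtorus of $L_G$ and read off its $\pi_1$. By the structure lemma just proved, $L\cap L_G$ equals $\RR\langle l^{(1)},\dots,l^{(k)}\rangle\cdot x_0$, or else a disjoint union $\RR\langle l^{(1)},\dots,l^{(k)}\rangle\cdot x_1\sqcup\RR\langle l^{(1)},\dots,l^{(k)}\rangle\cdot x_2$. Since the charges $l^{(a)}$ are integral, each orbit is a coset of the $k$-dimensional subtorus of $L_G$ swept out by the angle directions $l^{(1)},\dots,l^{(k)}$, and its inclusion into $L_G$ carries the loop $t\mapsto t\,l^{(a)}$ to the class $\beta(l^{(a)})=\sum_j l^{(a)}_j v_j\in N\cong\pi_1(L_G)$; by the preceding remark these same classes $\beta(l^{(1)}),\dots,\beta(l^{(k)})$ generate $\pi_1(L)$, on which $\nabla_L$ has holonomy $Hol_{\nabla_L}(\beta(l^{(a)}))=e^{i\phi^{(a)}}$ by hypothesis. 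I would then invoke the holonomy lemma with $\TT=L_G$, with $S$ a component of $L\cap L_G$, with $\nabla_\TT=\nabla$ and $\nabla_S=\nabla_L|_S$ (flat, being a restriction of $\nabla_L$): the twisting condition on that component is equivalent to $Hol_\nabla=Hol_{\nabla_L}^{-1}$ in $\mathrm{Hom}(\pi_1(S),U(1))$, i.e.\ to
$$ Hol_\nabla(\beta(l^{(a)}))=Hol_{\nabla_L}(\beta(l^{(a)}))^{-1}=e^{-i\phi^{(a)}},\qquad a=1,\dots,k. $$
In the two-component case both components are parallel translates of the same subtorus, hence impose identical constraints, so nothing new appears. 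Collecting over $a$ yields exactly $\BB_b=\{(L_G,\nabla)\in\MM_0\mid Hol_\nabla(\beta(l^{(a)}))=e^{-i\phi^{(a)}},\ a=1,\dots,k\}$.

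The one genuinely delicate point is the middle step: checking that the inclusion $L\cap L_G\hookrightarrow L_G$ induces on $\pi_1$ precisely the embedding of the sublattice generated by $\beta(l^{(1)}),\dots,\beta(l^{(k)})$ into $N$, and that these loops coincide with the generators of $\pi_1(L)$ carrying $\nabla_L$-holonomy $e^{i\phi^{(a)}}$. This is where the orthogonality of the charges to the $G$-directions and the standing assumption that $\mu_\Delta(L)$ avoids the low-dimensional strata of $\Delta$ (so that $L$ is genuinely a $k$-torus) enter. Handling the possibly disconnected intersection is only a minor wrinkle, and upgrading the holonomy lemma from subtori through the identity to their translates is routine; once these are in place the proposition follows by straight substitution into Definition~\ref{SYZ transformation of branes} and the two preceding lemmas.
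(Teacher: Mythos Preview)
Your proposal is correct and follows essentially the same approach as the paper: unwind the fiberwise SYZ definition to the twisting condition, apply the holonomy lemma to convert it to $Hol_\nabla = Hol_{\nabla_L}^{-1}$ on $\pi_1(L\cap L_G)$, then use the structure lemma and the hypothesis to read off the conditions on the generators $\beta(l^{(a)})$. You are more explicit than the paper about the two-component case and about the $\pi_1$ identification, but the logical skeleton is identical.
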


\begin{proof}
The proof is done by direct computation using the definition of SYZ transform and the Lemmas above.
Let $L_G = \mu_G^{(1)}(b)$ be fixed. According to the SYZ construction, the fiberwise dual of $L\cap L_G$ is given by
$$\BB_b
	:= \{ (L_G,\nabla)\in \MM_0 | \text{$\nabla$ twisted by $\nabla|_L$ is trivial when restricted to $L \cap L_G$}.\} $$
By the result of the above lemma, we have
\begin{align}
\BB_b&=  \{ (L_G,\nabla)\in \MM_0 | Hol_\nabla = Hol_{\nabla_L}^{-1} \ on\ Hom(\pi_1(L\cap L_G),U(1)) \}\\
    &= \{ (L_G,\nabla)\in \MM_0 | Hol_\nabla(\beta(l^{(a)}) = Hol_{\nabla_L}(\beta(l^{(a)})^{-1} \4 a=1,...,k \}
  \end{align}  By the assumption on $Hol_{\nabla_L}(\beta(l^{(a)})=e^{i\phi^{(a)}} \4 a=1,...,k$, we obtain that
$$  \BB_b=  \{ (L_G,\nabla)\in \MM_0 | Hol_\nabla(\beta(l^{(a)}) = e^{-i\phi^{(a)}} \4 a=1,...,k \}
$$
Thus the proof is completed.
\end{proof}


On the other hand, we now consider a Gross fiber $L_G=\mu_G^{-1}(b)$ for some $b=([m],\eta)\in B$. For each of $L_G$ we can define a moment map fiber $L_\delta$ that is Lagrangian isotopy of $L_G$. We now outline the construction. Recall that the Gross fiber $L_G$ is defined by the 2 equations $\mu_\delta(x)=m+cu$ for some $c\in \RR$ and $|\chi^u(x)-\epsilon|^2=\eta$. We now define the Lagrangian isotopy by the following continuous family of Lagrangians parameterized by $t\in [0,1]$:
\begin{align}
L_t := \left\{x\in X_\Delta :
	\begin{array}{lll}
    	\mu_\Delta(x) \in m+\RR\cdot u \\
        |\chi^u(x)-t\epsilon|^2 =\eta
	\end{array}
\right\}.
\end{align}
We observe that $L_1$ is nothing but the Gross fiber $L_G$, and $L_0$ is a moment map fiber which we denote it by $L_\Delta:=L_0$. Moreover, we can compute their relative $H_2$ groups. That is, $H_2(X_\Delta,L_G;\ZZ)=<\beta_0(L_G),...,\beta_{m-1}(L_G)>$ where the $\beta_j(L_G)$ are defined \eqref{define beta on gross fibers} and $H_2(X_\Delta,L_\Delta;\ZZ)=<\beta_0(L_\Delta),...,\beta_{m-1}(L_\Delta)>$ where $\beta_j(L_\Delta)$ are just the basic discs under the identification
$$\begin{CD}
	0	@>>>	G			@>\iota >>	\ZZ^m		@>\beta >>	N			@>>>	0	\\
	@.			@VV\simeq V			@VV\simeq V			@VV\simeq V				\\
   	0	@>>>	H_2(X_\Delta)@>>>	H_2(X_\Delta,L_\Delta)@>>>	H_1(,L_\Delta)	@>>>	0	
\end{CD}$$

\begin{lem}
With the notations above, we let $L_G$ be a Gross fiber with its corresponding moment map fiber $L_\Delta$. Then
$$ \int_{\beta_j(L_G)-\beta_0(L_G)}\omega_X = \int_{\beta_j(L_\Delta)-\beta_0(L_\Delta)}\omega_X $$
for each $j=1,...,m-1$.
\end{lem}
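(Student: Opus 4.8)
The plan is to realise $L_\Delta$ and $L_G$ as the two ends of the explicit Lagrangian isotopy $\{L_t\}_{t\in[0,1]}$ constructed just above, with $L_0=L_\Delta$ the moment-map fibre and $L_1=L_G$, and to show that the symplectic area of the relative class $\beta_j-\beta_0$ is constant along this isotopy, even though the areas of $\beta_j(L_t)$ and $\beta_0(L_t)$ individually are not. Since the $L_t$ are pairwise isotopic, the long-exact-sequence comparison established earlier identifies all the groups $H_2(X_\Delta,L_t;\ZZ)$ canonically and compatibly with the boundary maps, so $\beta_j(L_t)$ and $\beta_0(L_t)$ form smooth families of classes and $\partial\beta_j(L_t),\partial\beta_0(L_t)$ correspond under $H_1(L_t;\ZZ)\cong N$ to the fixed vectors $v_j,v_0$. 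First I would choose smooth families of maps $w_t,w_t^{(0)}\colon(D^2,\partial D^2)\to(X_\Delta,L_t)$ representing $\beta_j(L_t)$ and $\beta_0(L_t)$.

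Next I would run the standard flux computation. Let $v_t$ be the vector field along $L_t$ generating the isotopy, and extend $\partial_t w_t$ to a vector field $V_t$ along $w_t$ with $V_t|_{\partial D^2}=v_t\circ(w_t|_{\partial D^2})$. Cartan's formula together with $d\omega_X=0$ and Stokes gives $\frac{d}{dt}\int_{D^2}w_t^*\omega_X=\int_{\partial D^2}(w_t|_{\partial D^2})^*\sigma_t$, where $\sigma_t:=(\iota_{v_t}\omega_X)|_{L_t}$ is a $1$-form on $L_t$; because $L_t$ is Lagrangian, $d\sigma_t=(\mathcal L_{v_t}\omega_X)|_{L_t}=0$, so $\sigma_t$ is closed and its class $[\sigma_t]\in H^1(L_t;\RR)$ is independent of the auxiliary choices. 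Consequently $\frac{d}{dt}\int_{\beta_j(L_t)}\omega_X=\langle[\sigma_t],\partial\beta_j(L_t)\rangle$, and the identical computation for $\beta_0$ with $w_t^{(0)}$ gives the same with $\beta_0$ in place of $\beta_j$; subtracting yields $\frac{d}{dt}\int_{\beta_j(L_t)-\beta_0(L_t)}\omega_X=\langle[\sigma_t],\partial\beta_j(L_t)-\partial\beta_0(L_t)\rangle$.

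The crux is to pin down $[\sigma_t]$. The equations defining $L_t$ are $[\mu_\Delta(x)]=[m]$ and $|\chi^u(x)-t\epsilon|^2=\eta$, so the only $t$-dependence is through $|\chi^u-t\epsilon|^2$ while $[\mu_\Delta]$ is held fixed; hence $d[\mu_\Delta](v_t)=0$ and the isotopy may be taken to move each fibre point only in the $|\chi^u|$ direction, with angles unchanged. Tracing this through the symplectic-quotient description $X_\Delta\simeq\CC^m\sslash_tG_\RR$, using $\chi^u=\prod_{i}z_i$ from Proposition \ref{Proposition compute CHI is product of zs } and the Calabi--Yau relation $\langle u,v_i\rangle=1$ of Lemma \ref{CY condition on polytope} (so that scaling all $|z_i|^2$ uniformly stays on the reduced level set and shifts $\mu_\Delta$ only in $\RR u$), one gets $v_t=f(t)\,E$ for a scalar function $f(t)$, where $E$ is the vector field on the open orbit with $\iota_E\omega_X=d(\arg\chi^u)$. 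Therefore $[\sigma_t]=f(t)\,[\,d(\arg\chi^u)|_{L_t}\,]$, and since the winding number of $\arg\chi^u=\arg\prod_i z_i$ around the loop $\partial\beta_i(L_t)\leftrightarrow v_i$ equals $\langle u,v_i\rangle=1$, we obtain $\langle[\sigma_t],\partial\beta_j(L_t)-\partial\beta_0(L_t)\rangle=2\pi f(t)\big(\langle u,v_j\rangle-\langle u,v_0\rangle\big)=0$. Hence $t\mapsto\int_{\beta_j(L_t)-\beta_0(L_t)}\omega_X$ is constant, and comparing $t=0$ with $t=1$ gives the lemma.

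The main obstacle I anticipate is making the identification $[\sigma_t]=f(t)[d(\arg\chi^u)|_{L_t}]$ fully rigorous: one must choose the isotopy vector field with some care, control its behaviour where $L_t$ approaches the lower-dimensional toric strata of $X_\Delta$ (where the coordinates $\arg z_i$ degenerate), and check that $b$ being a regular value of $\mu_G$ makes each $L_t$ a genuine smooth Lagrangian torus for all $t\in[0,1]$. A more computational alternative, essentially the argument sketched in the commented-out passage above, is to write $\beta_i(L_G)=\beta_i(L_\Delta)+C_i$ for cylinders $C_i$ swept out by $\partial\beta_i$ under the isotopy, invoke the Cho--Oh area formula $\int_{\beta_i(L_\Delta)}\omega_X=2\pi(\langle m,v_i\rangle+\lambda_i)$, and observe $\int_{C_j-C_0}\omega_X=\int_0^1\langle[\sigma_t],\partial\beta_j-\partial\beta_0\rangle\,dt=0$; this again collapses, at the very end, to $\langle u,v_j-v_0\rangle=0$.
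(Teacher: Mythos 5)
Your argument is correct, and at bottom it is the same computation as the paper's: the key step in both is to show that the Lagrangian flux of the radial isotopy $L_t$ pairs trivially against the boundary class $\partial\beta_j-\partial\beta_0$. What differs is the packaging. The paper builds a two-parameter family $L_{t,s}$ (with $s$ tracking a path from a fixed basepoint $b_0$, which is how the classes $\beta_j(L_G)$ and $\beta_j(L_\Delta)$ are actually defined via transport), constructs the cylinders $H_j$ swept out by the $\RR\langle v_j-v_0\rangle$-orbit explicitly, uses Stokes together with $d\omega_X=0$ to reduce the area difference to the single $s=1$ edge, and then checks $\omega\bigl((H_j^{s=1})_*\partial_t,(H_j^{s=1})_*\partial_\phi\bigr)=0$ by a direct coordinate evaluation in which the isotopy vector field is $f'(t)\sum_{l}\partial/\partial r_l^2$. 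Your version does the same evaluation but names it: you observe that $\iota_{v_t}\omega_X$ is, up to a $t$-dependent scalar, $d(\arg\chi^u)$, and that its period against $\partial\beta_j\leftrightarrow v_j$ is $2\pi\langle u,v_j\rangle$; the cancellation then comes out explicitly from the Calabi--Yau relation $\langle u,v_j\rangle=\langle u,v_0\rangle=1$, which the paper uses only implicitly (the uniform shift $\sum_l\partial/\partial r_l^2$ is only available because $\beta^\vee(u)=(1,\dots,1)$). This is a genuine conceptual gain: it shows \emph{why} the flux vanishes. Two small caveats worth tightening: (i) the scalar in $v_t=f\cdot E$ actually depends on $\arg\chi^u$ as well as $t$ (the speed of the radial motion along the level set $|\chi^u-t\epsilon|^2=\eta$ varies with the angle), but $f(t,\Theta)\,d\Theta$ is still closed on the torus and its class is still a multiple of $[d\Theta]$, so the conclusion is unaffected; (ii) your appeal to a ``canonical'' identification of the $H_2(X_\Delta,L_t;\ZZ)$ along the isotopy is doing real work — it is exactly what the paper's basepoint $b_0$ and the $(t,s)$-square are keeping track of — and should be justified by the (simple) observation that the relevant parameter space is contractible, so transport is path-independent.
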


\begin{proof}
According to the definitions of $\beta_\cdot(L_G)$ in \eqref{define beta on gross fibers}, we have to fix a $b_0\in B_+$ first. Suppose that $L_G=\mu_G^{-1}(b)$ for some $b=([m],\eta)\in B$. We choose a smooth path $b(s):[0,1]\longrightarrow B$ such that $b(0)=b_0$ and $b(1)=b$. According to the above construction, for each Gross fiber $\mu_G^{-1}(b(s))$ we have the continuous family of Lagrangian
\begin{align}
L_{t,s} := \left\{x\in X_\Delta :
	\begin{array}{lll}
    	\mu_\Delta(x) \in \pi_1(b(s))+\RR\cdot u \\
        |\chi^u(x)-t\epsilon|^2 =\pi_1(b(s))
	\end{array}
\right\}.
\end{align}
for any $t,s\in[0,1]$. In particularly, we have $L_{1,1}=L_G$ for $t=1,s=1$ and $L_{0,1}=L_\Delta$ for $t=0,s=1$. We observe that $\RR<v_j-v_0>$ acts on each Lagrangian $L_{t,s}$ for each $j=1,...,m-1$ where the action is given by
$$\phi(v_j-v_0)\cdot (z_0,...,z_{m-1}) := (e^{-i\phi}z_0,z_1,...,z_{j-1},e^{i\phi}z_j,z_{j+1},...,z_{m-1}) .$$ The action is well-defined since we can check that $\chi^u(\vec{z})=\chi^u(\phi(v_j-v_0)\cdot\vec{z})$. We now fix a point in $L_G=L_{1,1}$ and move the point along the continuous family $L_{t,s}$, then we obtain the point $z(t,s)\in L_{t,s}$ for each $t,s\in[0,1]$. We define the map
\begin{align}
H_j:[0,1]\times[0,1]\times \SSS &\longrightarrow X_\Delta  \\
by \hspace{90pt} (t,s,[\phi])&\longmapsto \phi(v_j-v_0)\cdot z(t,s)
\end{align}
Hence we can now compute that
\begin{align*}
\beta_j(L_G)-\beta_0(L_G)
	&= \beta_j(L_{0,0})-\beta_0(L_{0,0}) + H_j([0,1]\times \{0\}\times\SSS) + H_j(\{1\}\times [0,1]\times\SSS)
    \\
\beta_j(L_\Delta)-\beta_0(L_\Delta)
	&= \beta_j(L_{0,0})-\beta_0(L_{0,0}) + H_j(\{0\}\times [0,1]\times\SSS)
\end{align*}
as an equation in $H_2(X_\Delta,L_G)$ and $H_2(X_\Delta,L_\Delta)$ respectively for any $j=1,...,m-1$.

With the 2 equations, we consider
\begin{align}
& \int_{\beta_j(L_G)-\beta_0(L_G)}\omega - \int_{\beta_j(L_\Delta)-\beta_0(L_\Delta)}\omega \\
    &= \int_{ H_j([0,1]\times \{0\}\times\SSS)}\omega +  \int_{H_j(\{1\}\times [0,1]\times\SSS)}\omega-\int_{H_j(\{0\}\times [0,1]\times\SSS)}\omega \\
    &= \int_{ \partial H_j([0,1]\times [0,1] \times\SSS)}\omega + \int_{ H_j([0,1]\times \{1\}\times\SSS)}\omega \\
    &= \int_{ H_j([0,1]\times [0,1] \times\SSS)} d\omega
    	+ \int_{t=0}^1 \int_{\phi=0}^{2\pi} (H_j^{s=1})^*\omega\\
    &= \int_{t=0}^1 \int_{\phi=0}^{2\pi} \omega((H_{j}^{s=1})_*(\frac{\partial}{\partial t}),(H_{j}^{s=1})_*(\frac{\partial}{\partial \phi }) ) dt \wedge d\phi
\end{align}
We now remain to compute the term $\omega((H_{j}^{s=1})_*(\frac{\partial}{\partial t}),(H_{j}^{s=1})_*(\frac{\partial}{\partial \phi }) )$.

By definition of $H_j$, we have $H_j(t,1,[\phi])=\phi(v_j-v_0)\cdot \vec{z}(t,1)$ where
\begin{align}
Arg(\vec{z}(t,1)) &= ((\theta_0-\phi),\theta_1,...,\theta_{j-1},(\theta_j-\phi),\theta_{j+1},...,\theta_{m-1})	\\
\vec{z}(t,1)|^2 &= \vec{\lambda} + \beta^\vee(\pi_1(b)) + f(t)(1,...,1)
\end{align}
for some smooth functions $f(t):[0,1]\longrightarrow\RR$. Thus, by chain rule, we have
\begin{align}
(H_{j}^{s=1})_*(\frac{\partial}{\partial t})
	&= \sum_{j=0}^{m-1} \frac{ \partial r_j^2}{\partial t} \frac{\partial}{\partial r_j^2} +
		\sum_{j=0}^{m-1} \frac{ \partial \theta_j}{\partial t} \frac{\partial}{\partial \theta_j} =  f'(t)\sum_{j=0}^{m-1}  \frac{\partial}{\partial r_j^2} \\
(H_{j}^{s=1})_*(\frac{\partial}{\partial \phi})
	&= \sum_{j=0}^{m-1} \frac{ \partial r_j^2}{\partial \phi} \frac{\partial}{\partial r_j^2} +
		\sum_{j=0}^{m-1} \frac{ \partial \theta_j}{\partial \phi} \frac{\partial}{\partial \theta_j} =  \frac{\partial}{\partial \theta_j}-\frac{\partial}{\partial \theta_0}
\end{align}
And hence we can compute that
$$\omega((H_{j}^{s=1})_*(\frac{\partial}{\partial t}),(H_{j}^{s=1})_*(\frac{\partial}{\partial \phi }) )
    = f'(t) \sum_{l,j=0}^{m-1}\delta_{lj}(\delta_{lj}-\delta_{l0})
    =0 $$
Overall, combining above computations, we obtain our desired result that
$$\int_{\beta_j(L_G)-\beta_0(L_G)}\omega - \int_{\beta_j(L_\Delta)-\beta_0(L_\Delta)}\omega=0$$
and thus completing the proof.
\end{proof}

In order to glue the fiberwise dual $\BB_b$ together along $\mu_G(L)$, we first give an equivalence statement of $L_G=\mu_G^{-1}(b)$ being a Gross fiber at the base point $b$ lying in $\mu_G(L)$, that is:

\begin{lem}
Let $L_G$ be a Gross fiber $\mu_G^{-1}(b)$ for some $b=([p],q)\in B$. Then $p\in \mu_\Delta(L)$ if and only if $$ \sum_{j=0}^{m-1} l^{(a)}_j \int_{\beta_j(L_G)}\omega = c^{(a)} \4 a=1,...,k$$
\end{lem}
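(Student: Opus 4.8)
The plan is to relate the symplectic areas $\int_{\beta_j(L_G)} \omega$ to the moment map coordinates, and then to translate the defining equations of $\mu_\Delta(L)$ into equations in these areas. The key computational input is the standard area formula for basic discs on moment-map fibers: for the moment map fiber $L_\Delta = \mu_\Delta^{-1}(p)$ in a toric manifold with polytope $\Delta = \{\nu \in M : \langle \nu, v_i \rangle \geq -\lambda_i\ \forall i\}$, one has $\int_{\beta_j(L_\Delta)} \omega_X = 2\pi(\langle p, v_j \rangle + \lambda_j)$ for $j = 0, \ldots, m-1$. This is exactly the Cho--Oh formula, which appears in commented-out form earlier in the excerpt and is used implicitly throughout. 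So first I would invoke the Lagrangian isotopy $L_t$ between $L_G$ and $L_\Delta$ constructed just above the statement, together with the preceding Lemma, to conclude that $\int_{\beta_j(L_G) - \beta_0(L_G)} \omega = \int_{\beta_j(L_\Delta) - \beta_0(L_\Delta)} \omega$; combining with a direct check that $\int_{\beta_0(L_G)}\omega = \int_{\beta_0(L_\Delta)}\omega$ (the path-of-Lagrangians argument gives this, or one computes $\int_{\beta_0}$ directly), we get $\int_{\beta_j(L_G)} \omega = 2\pi(\langle p, v_j\rangle + \lambda_j)$ for all $j$, where $b = ([p], q)$.

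Next I would substitute this into the left-hand side of the claimed identity. We compute
\[
\sum_{j=0}^{m-1} l^{(a)}_j \int_{\beta_j(L_G)} \omega = 2\pi \sum_{j=0}^{m-1} l^{(a)}_j \big(\langle p, v_j\rangle + \lambda_j\big) = 2\pi \Big\langle p, \sum_{j=0}^{m-1} l^{(a)}_j v_j \Big\rangle + 2\pi \sum_{j=0}^{m-1} l^{(a)}_j \lambda_j.
\]
Since $l^{(a)} \in G \subset \ZZ^m$ lies in the kernel of $\beta$ (this is the defining property of the charge vectors $l^{(a)}$, which generate $G$), we have $\sum_j l^{(a)}_j v_j = \beta(l^{(a)}) = 0$, so the first term vanishes and the expression equals $2\pi \sum_j l^{(a)}_j \lambda_j = 2\pi \langle \vec\lambda, l^{(a)}\rangle$, a constant independent of $p$. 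On the other hand, by Definition~\ref{definition of L in X} the Lagrangian $L$ is cut out inside $\mu^{-1}(t)$ by the equations $\sum_j l^{(a)}_j r_j^2 = c^{(a)}$; since $\mu_\Delta$ records $\tfrac12(|z_0|^2,\ldots)$ up to the shift by $\vec\lambda$, a point of $X_\Delta$ lies over $[p] \in \mu_\Delta(L)$ with $p$ actually in $\mu_\Delta(L)$ precisely when the $r_j^2 = (\vec\lambda + \beta^\vee(\text{lift of }p))_j$ satisfy these same linear equations, i.e. precisely when $\langle \vec\lambda + (\text{lift of }p), l^{(a)}\rangle$ equals the prescribed constant $c^{(a)}$ for all $a$. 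Chasing the normalization constants (a factor of $2\pi$ and the choice of how $c^{(a)}$ is normalized relative to $\omega$) identifies "$p \in \mu_\Delta(L)$" with "$\sum_j l^{(a)}_j \int_{\beta_j(L_G)}\omega = c^{(a)}$ for all $a$", which is the claim.

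The main obstacle I expect is bookkeeping rather than conceptual: getting the normalization constants to line up exactly — the factor $2\pi$ in the Cho--Oh area formula, the factor $\tfrac12$ in $\mu_{\CC^m}$, and the precise convention under which the constants $c^{(a)}$ in Definition~\ref{define L tilde in C n} are stated — so that $c^{(a)}$ on the nose equals $\sum_j l^{(a)}_j \int_{\beta_j(L_G)}\omega$ with no stray scalar. One must also be slightly careful that $\beta_j(L_G)$ is well-defined independently of the base point $b \in \mu_G(L)$ used (the previous Lemma handles the $\beta_j - \beta_0$ differences, and one checks $\beta_0$ separately), and that the argument works uniformly whether $\mu_G^{-1}(b) \cap L$ has one or two connected components, since the area computation only uses the Lagrangian isotopy class of $L_G$ and not of $L$ itself. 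Finally I would remark that this Lemma is exactly what is needed to rewrite the fiberwise condition $Hol_\nabla(\beta(l^{(a)})) = e^{-i\phi^{(a)}}$ together with the area constraint as the single complex equation $\prod_j Z_{\beta_j}(L_G,\nabla)^{-l^{(a)}_j} = e^{c^{(a)} + i\phi^{(a)}}$ defining $\BB$.
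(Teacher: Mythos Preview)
Your proposal contains a genuine error that breaks the argument. You assert that ``$l^{(a)} \in G \subset \ZZ^m$ lies in the kernel of $\beta$ (this is the defining property of the charge vectors $l^{(a)}$, which generate $G$)''. This is false: the charges $l^{(1)},\ldots,l^{(k)}$ from Definition~\ref{define L tilde in C n} are the \emph{Lagrangian}-defining charges, not the GIT charges generating $G$ (which the paper encodes via the inclusion $\iota$). In the Aganagic--Vafa case for instance $l^{(1)}=e_1-e_0$ gives $\beta(l^{(1)})=v_1-v_0\neq 0$. If your claim held, the quantity $\sum_j l^{(a)}_j\int_{\beta_j(L_G)}\omega$ would be a constant independent of $p$, and the ``if and only if'' in the Lemma would be vacuous. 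What is actually true is only the \emph{special Lagrangian} condition $\sum_j l^{(a)}_j=0$ from Theorem~\ref{L is special lagrangian}, which is weaker.

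There is a second gap: you need $\int_{\beta_j(L_G)}\omega=\int_{\beta_j(L_\Delta)}\omega$ for each $j$ individually, but the preceding Lemma only supplies equality of the \emph{differences} $\int_{\beta_j-\beta_0}\omega$. Your ``direct check that $\int_{\beta_0(L_G)}\omega=\int_{\beta_0(L_\Delta)}\omega$'' is neither carried out nor obviously true; there is no reason the basic-disc area for the Gross fiber should match that for the moment-map fiber on the nose. The paper avoids this entirely: using $\sum_j l^{(a)}_j=0$ it rewrites
\[
\sum_{j=0}^{m-1} l^{(a)}_j \int_{\beta_j(L_\Delta)}\omega \;=\; \sum_{j=1}^{m-1} l^{(a)}_j \int_{\beta_j(L_\Delta)-\beta_0(L_\Delta)}\omega,
\]
then invokes the preceding Lemma to swap $L_\Delta$ for $L_G$, and undoes the rewriting. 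So only the differences are ever needed, and Cho--Oh is applied once to $L_\Delta$ to identify $\sum_j l^{(a)}_j\int_{\beta_j(L_\Delta)}\omega$ with $\langle l^{(a)},\vec\lambda+\beta^\vee(\nu)\rangle$, which is $c^{(a)}$ exactly when $\nu\in\mu_\Delta(L)$. Your overall shape is right; you just need to replace the false $\beta(l^{(a)})=0$ by the correct $\mathds{1}\cdot l^{(a)}=0$ and use it as the paper does.
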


\begin{proof}
Let $(L,\nabla)$ be the A-brane and $L_G$ be a Gross fiber $\mu_G^{-1}(b)$ for some $b=([p],q)\in B$. Using the above construction of the corresponding moment map fiber $L_\Delta=\mu_\Delta(\nu)$, we have
$ \mu_G^{(1)}(L_G) = [p]=[\nu]=[\mu_\Delta(L_\Delta)] .$ This gives us the equivalence that $\mu_G^{(1)}(L_G)\in \mu_\Delta(L)(mod\ \RR u)$ if and only if $\mu_\Delta(L_\Delta) \in  \mu_\Delta(L)$. And by definition and construction of $L$, we have
\begin{align}
\vec{\lambda}+\beta^\vee(\mu_\Delta(L_\Delta))=\{y\in\RR^m | <l^{(a)},y>=c^{(a)} \4 a=1,...,k \}
\end{align}
And by the theorem of Cho-Oh\cite{Cho Oh}, we have $\int_{\beta_j(L_\Delta)} \omega = \vec{\lambda}+\beta^\vee(\nu) $ for $j=0,...,m-1$. Hence we obtain the equivalence that $\mu_\Delta(L_\Delta) \in  \mu_\Delta(L)$ if and only if $$\sum_{j=0}^{m-1}l^{(a)}_j\int_{\beta_j(L_\Delta)} \omega  =c^{(a)} \4 a=1,...,k.$$
Using the above lemma and the condition that $l^{(a)}_0+\cdots l^{(a)}_{m-1}=0$, we compute that
\begin{align}
c^{(a)}
	&= \sum_{j=0}^{m-1}l^{(a)}_j\int_{\beta_j(L_\Delta)} \omega
    = \sum_{j=1}^{m-1}l^{(a)}_j\int_{\beta_j(L_\Delta)-\beta_0(L_\Delta)} \omega
    = \sum_{j=0}^{m-1}l^{(a)}_j\int_{\beta_j(L_G)} \omega
\end{align}
for each $a=1,...,k$. And thus the equivalence is verified.
\end{proof}

According to the SYZ transformation, the mirror brane is given by
$$ \BB := \bigcup_{b\in I} \BB_b$$
where $I$ denote the image of Lagrangian $L$ under the Gross fibration $\mu_G$, that is $I:= \mu_G(L)$. Finally, we compute the semi-flat mirror brane $\BB$ as follows:

\begin{thm} \label{def thm of BB0 semiflat B brane}
Let $(L,\nabla_L)$ be an A-brane. Under the SYZ transformation, we obtain the semi-flat mirror brane $\BB_0$ as
$$ \BB_0 = \{(L_G,\nabla)\in \MM_0 | \prod_{j=0}^{m-1} Z_{\beta_j}(L_G,\nabla)^{-l_j^{(a)}} = e^{c^{(a)}+i\phi^{(a)}}\4 a=1,...,k \} $$
\end{thm}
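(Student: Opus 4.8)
The plan is to assemble the theorem from the pieces established in the preceding lemmas: the description of $\pi_1(L)$ and of the connected components of $L\cap \mu_G^{-1}(b)$, the fiberwise dual computation giving $\BB_b$ in terms of holonomy, the equivalence characterizing when $b=([p],q)$ lies over $\mu_\Delta(L)$ in terms of symplectic areas, and finally the definition $\BB_0 = \bigcup_{b\in I}\BB_b$ with $I=\mu_G(L)$. First I would recall that a point $(L_G,\nabla)\in\MM_0$ lies in $\BB_0$ precisely when (i) the base point $b=\mu_G(L_G)$ lies in $I=\mu_G(L)$, and (ii) $\nabla$ twisted by $\nabla_L$ is trivial when restricted to $L_G\cap L$. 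By the earlier proposition, condition (ii) is equivalent to $\mathrm{Hol}_\nabla(\beta(l^{(a)})) = e^{-i\phi^{(a)}}$ for $a=1,\dots,k$, using the hypothesis $\mathrm{Hol}_{\nabla_L}(\beta(l^{(a)})) = e^{i\phi^{(a)}}$.

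Next I would translate condition (i) into an equation on the semi-flat coordinates $Z_{\beta_j}$. By the lemma immediately preceding the theorem, $b=([p],q)$ has $p\in\mu_\Delta(L)$ if and only if $\sum_{j=0}^{m-1} l_j^{(a)} \int_{\beta_j(L_G)}\omega = c^{(a)}$ for $a=1,\dots,k$. Exponentiating, this reads $\prod_{j=0}^{m-1} \exp\!\big({-l_j^{(a)}\int_{\beta_j(L_G)}\omega}\big) = e^{-c^{(a)}}$ after a sign adjustment, i.e. $\prod_{j=0}^{m-1}\big(\exp(-\int_{\beta_j}\omega)\big)^{l_j^{(a)}} = e^{-c^{(a)}}$, which is the ``radial part'' of the asserted equation.

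Then I would combine the two conditions. Recall $Z_{\beta_j}(L_G,\nabla) = \exp\!\big(-\int_{\beta_j}\omega\big)\,\mathrm{Hol}_\nabla(\partial\beta_j)$, and that under the identification $H_1(L_G)\cong N$ the boundary classes $\partial\beta_j$ correspond to the $v_j$, so that $\partial\big(\sum_j l_j^{(a)}\beta_j\big)$ corresponds to $\sum_j l_j^{(a)} v_j = \beta(l^{(a)})$ in $N$. Hence
\[
\prod_{j=0}^{m-1} Z_{\beta_j}(L_G,\nabla)^{-l_j^{(a)}}
= \Big(\prod_{j=0}^{m-1}\exp(-\textstyle\int_{\beta_j}\omega)^{-l_j^{(a)}}\Big)\cdot \mathrm{Hol}_\nabla\big(-\beta(l^{(a)})\big)
= e^{c^{(a)}}\cdot e^{i\phi^{(a)}},
\]
where the first factor uses the area equation from step two (with the appropriate sign convention) and the second uses the holonomy equation from step one together with multiplicativity of $\mathrm{Hol}_\nabla$ on $\pi_1(L_G)$. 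Conversely, a point of $\MM_0$ satisfying $\prod_j Z_{\beta_j}^{-l_j^{(a)}} = e^{c^{(a)}+i\phi^{(a)}}$ for all $a$ forces, by taking absolute values, the area condition (hence $b\in I$), and then the phases force the holonomy condition (hence membership in $\BB_b$). This establishes both inclusions and gives the stated formula for $\BB_0$.

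The main obstacle I anticipate is purely bookkeeping: getting all the signs and the $2\pi$-normalizations consistent between $\mathrm{Hol}_\nabla$ (valued in $U(1)$, written multiplicatively) and the additive $\int\omega$ terms, and making sure the identification of $\partial\big(\sum l^{(a)}_j\beta_j(L_G)\big)$ with the class $\beta(l^{(a)})\in\pi_1(L_G)$ — rather than its inverse — is used consistently with the sign in the exponent $-l_j^{(a)}$. One must also be slightly careful that when $L\cap\mu_G^{-1}(b)$ has two connected components, the triviality condition on $L_G\cap L$ is imposed on each component but yields the same holonomy constraint (since both components are translates of the same subtorus $\RR\langle l^{(1)},\dots,l^{(k)}\rangle\cdot x$), so the formula is unaffected. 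Once these conventions are pinned down the argument is a direct combination of the cited lemmas.
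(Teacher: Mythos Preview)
Your proposal is correct and follows essentially the same route as the paper's proof: translate the base condition into the area equation $\sum_j l_j^{(a)}\int_{\beta_j}\omega=c^{(a)}$ via the preceding lemma, translate the fiberwise holonomy condition using $\beta(l^{(a)})=\sum_j l_j^{(a)}\partial\beta_j(L_G)$, and then combine the two into the single product $\prod_j Z_{\beta_j}^{-l_j^{(a)}}=e^{c^{(a)}+i\phi^{(a)}}$. Your explicit treatment of the converse (via modulus/phase separation) and your remark on the two-component case are a bit more careful than the paper's version, but the argument is the same.
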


\begin{proof}
Before we compute the mirror brane $\BB_0$, we first consider the condition $\mu^{(1)}_G(L_G) \in \mu_\Delta(L) (mod\ \RR u)$. By the previous lemma, we have that $\mu^{(1)}_G(L_G) \in \mu_\Delta(L) (mod\ \RR u)$ if and only if 
\begin{align}
e^{-c^{(a)}}
	= exp (-\sum_{j=0}^{m-1} l^{(a)}_j \int_{\beta_j(L_G)}\omega)
    = \prod_{j=0}^{m-1} exp ( -\int_{\beta_j(L_G)}\omega)^{l^{(a)}_j }
\end{align}
Moreover, we observe that the 1-cycle (orbit) $\beta(l^{(a)})$ is just $\sum_{j=0}^{m-1}l^{(a)}_j\partial\beta_j(L_G)$ in $H_1(L_G;\ZZ)$. We then compute that $$Hol_\nabla(\beta(l^{(a)}))= Hol_\nabla(\sum_{j=0}^{m-1}l^{(a)}_j\partial\beta_j(L_G))
=\prod_{j=0}^{m-1}Hol_\nabla(\partial\beta_j(L_G))^{l^{(a)}_j}.$$
We are now ready to compute $\BB_0$ according to the SYZ transformation as follows:
\begin{align}
\BB_0
&:=\left\{(L_G,\nabla)\in \MM_0 :
	\begin{array}{l}
		\mu^{(1)}_G(L_G) \in \mu_\Delta(L) (mod\ \RR u)\\
        Hol_\nabla(\beta(l^{(a)}))=e^{-i\phi^{(a)}}
	\end{array}
    \4 a=1,...,k \right\} \\
&=	\left\{(L_G,\nabla)\in \MM_0 :
	\begin{array}{l}
		\prod_{j=0}^{m-1} exp ( -\int_{\beta_j(L_G)}\omega)^{l^{(a)}_j } =e^{-c^{(a)}} \\
        \prod_{j=0}^{m-1}Hol_\nabla(\partial\beta_j(L_G))^{l^{(a)}_j} =e^{-i\phi^{(a)}}
	\end{array}
    \4 a=1,...,k \right\}\\
&=	\left\{(L_G,\nabla)\in \MM_0 : \prod_{j=0}^{m-1} Z_{\beta_j}(L_G,\nabla)^{-l_j^{(a)}} = e^{c^{(a)}+i\phi^{(a)}}\4 a=1,...,k \right\}\label{eqt Z to q is cnst}
\end{align}
which completes the proof.

\end{proof}

We recall that the mirror moduli $\MM_0$ has a geometric realization as a complex variety $X^\vee$, that is $\varphi : \MM_+\sqcup \MM_- \longrightarrow X^\vee $ as in the previous chapter. For an A-brane $(L,\nabla)$, we now have the semi-flat mirror brane $\BB_0$ at hand. It is natural to look for a subvariety $L^\vee$ in $X^\vee$ that represent the geometric realization of $\BB_0$ via the map $\varphi$. This will be investigated in the next section.

\section{Mirror Brane Subvariety}
Let $(L,\nabla)$ be an A-brane as defined in Definition \ref{definition of L in X} which depends on the physical charges $l^{(1)},...,l^{(k)}\in \ZZ^m $. Based on physical arguments, physicist predicts that its mirror $L^\vee\subset X^\vee$ is given by the equations
\begin{align} \label{eqt z to power -q equals cnst}
\prod_{j=1}^{m-1} z_j^{-l_j^{(a)}} = e^{c^{(a)}+i\phi^{(a)}}
\end{align}
where $z_i=z_i(z_1,...,z_{n-1})=Q_{i-n+1}\prod_{j=1}^{n-1} z_j^{<v_j^*,v_i>}$
for $a=1,...,k$ and $i=n,...,m-1$. With the above preparation, we are now ready to verify this prediction via SYZ mirror symmetry.

We first give a lemma which relates the equations given in \eqref{eqt z to power -q equals cnst} and \eqref{eqt Z to q is cnst} via the map $\varphi$ as in Theorem \ref{def thm of BB0 semiflat B brane}:

\begin{lem}
Let $\varphi:\MM_0 \longrightarrow X^\vee$ be given by $\varphi(L,\nabla) = (u,v,z_1,...,z_{n-1})$ as defined in Theorem \ref{varphi map moduli to X check}. Then $$\varphi^*(\prod_{j=1}^{m-1} z_j^{-l_j^{(a)}})=\prod_{j=0}^{m-1} Z_{\beta_j}(L,\nabla)^{-l_j^{(a)}}$$ where $z_i=z_i(z_1,...,z_{n-1})=Q_{i-n+1}\prod_{j=1}^{n-1} z_j^{<v_j^*,v_i>}$ for $i=n,...,m-1$.
\end{lem}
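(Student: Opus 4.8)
The plan is to reduce the identity to the explicit description of the pullback $\varphi^*$ recorded in Theorem \ref{varphi map moduli to X check}, together with two bookkeeping facts: the Calabi--Yau relation $\sum_{j=0}^{n-1}\langle v_j^*,v_i\rangle = 1$ (which comes from $\langle u,v_i\rangle = 1$) and the special Lagrangian constraint $\sum_{j=0}^{m-1} l_j^{(a)} = 0$ supplied by Theorem \ref{L is special lagrangian}. No quantum-correction data enters, because the functions involved will be honest quotients of semiflat coordinates.

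First I would pull back the ``small'' coordinates. By Theorem \ref{varphi map moduli to X check} and Proposition \ref{z hat definition}, $\varphi^*(z_j) = \hat z_j(L,\nabla) = Z_{\beta_j}(L,\nabla)/Z_{\beta_0}(L,\nabla)$ for every $j = 1,\dots,n-1$. This uses only $\hat z_1,\dots,\hat z_{m-1}$, which are globally defined on $\MM_0$; only $\hat z_0$ is piecewise on $\MM_+$ and $\MM_-$, so no case distinction is needed here.

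Next I would extend this to the ``large'' indices $i = n,\dots,m-1$. On $X^\vee$ the symbol $z_i$ is by definition the Laurent monomial $z_i = Q_{i-n+1}\prod_{j=1}^{n-1} z_j^{\langle v_j^*,v_i\rangle}$, so since $\varphi^*$ is a ring homomorphism, $\varphi^*(z_i) = Q_{i-n+1}\prod_{j=1}^{n-1}\hat z_j^{\langle v_j^*,v_i\rangle}$. On the other hand, writing $v_i = \sum_{j=0}^{n-1}\langle v_j^*,v_i\rangle v_j$ and passing to relative classes gives $\beta_i = \sum_{j=0}^{n-1}\langle v_j^*,v_i\rangle\beta_j + \alpha_i$ with $\alpha_i\in H_2(X;\ZZ)$ and $Q_{i-n+1} = \exp(-\int_{\alpha_i}\omega)$, whence (exactly as in \eqref{compute zr-1 in terms of z1 to zn-1}) $Z_{\beta_i} = Q_{i-n+1}\prod_{j=0}^{n-1} Z_{\beta_j}^{\langle v_j^*,v_i\rangle}$. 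Dividing by $Z_{\beta_0}$ and using $\langle v_0^*,v_i\rangle - 1 = -\sum_{j=1}^{n-1}\langle v_j^*,v_i\rangle$ (the Calabi--Yau relation) to absorb the $Z_{\beta_0}$ power into the remaining factors,
\[
\frac{Z_{\beta_i}}{Z_{\beta_0}} = Q_{i-n+1}\prod_{j=1}^{n-1}\Big(\frac{Z_{\beta_j}}{Z_{\beta_0}}\Big)^{\langle v_j^*,v_i\rangle} = Q_{i-n+1}\prod_{j=1}^{n-1}\hat z_j^{\langle v_j^*,v_i\rangle} = \varphi^*(z_i).
\]
Hence $\varphi^*(z_j) = Z_{\beta_j}/Z_{\beta_0}$ holds uniformly for all $j = 1,\dots,m-1$.

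Finally I would assemble the monomial: pulling back the product and extracting the $Z_{\beta_0}$ factors,
\[
\varphi^*\Big(\prod_{j=1}^{m-1} z_j^{-l_j^{(a)}}\Big) = \prod_{j=1}^{m-1}\Big(\frac{Z_{\beta_j}(L,\nabla)}{Z_{\beta_0}(L,\nabla)}\Big)^{-l_j^{(a)}} = Z_{\beta_0}(L,\nabla)^{\sum_{j=1}^{m-1} l_j^{(a)}}\prod_{j=1}^{m-1} Z_{\beta_j}(L,\nabla)^{-l_j^{(a)}},
\]
and since $\tilde L$ is special, $\sum_{j=0}^{m-1} l_j^{(a)} = 0$ (Theorem \ref{L is special lagrangian}), i.e. $\sum_{j=1}^{m-1} l_j^{(a)} = -l_0^{(a)}$; reinserting the $j=0$ factor gives $\prod_{j=0}^{m-1} Z_{\beta_j}(L,\nabla)^{-l_j^{(a)}}$, which is the claim. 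The only mildly delicate point is the exponent bookkeeping in the middle step --- arranging the Calabi--Yau identity so that the absence of $z_0$ on the $X^\vee$ side is correctly matched by the $Z_{\beta_0}$ denominators; once that is in place everything is a direct substitution, so I do not expect a genuine obstacle.
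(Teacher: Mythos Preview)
Your proposal is correct and follows essentially the same route as the paper's proof: both establish $\varphi^*(z_j)=Z_{\beta_j}/Z_{\beta_0}$ first for $j=1,\dots,n-1$ directly from Theorem \ref{varphi map moduli to X check}, then for $j=n,\dots,m-1$ via the monomial relation and \eqref{compute zr-1 in terms of z1 to zn-1}, and finally invoke the special Lagrangian constraint $\sum_j l_j^{(a)}=0$ to absorb the $Z_{\beta_0}$ exponent. Your write-up is in fact slightly more explicit than the paper's about the Calabi--Yau identity $\sum_{j=0}^{n-1}\langle v_j^*,v_i\rangle=1$ used in the middle step, which the paper leaves implicit in its citation of \eqref{compute zr-1 in terms of z1 to zn-1}.
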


\begin{proof}
According to Theorem \ref{varphi map moduli to X check} we have $\varphi^*(z_j)=\hat{z}_j(L,\nabla)=Z_{\beta_j}(L,\nabla)/Z_{\beta_0}(L,\nabla)$.
And for $i=n,...,r-1$, we also have $$\varphi^*(z_i)=Q_{i-n+1}\prod_{j=1}^{n-1} \varphi^*(z_j)^{<v_j^*,v_i>}.$$ By \eqref{compute zr-1 in terms of z1 to zn-1}, we can express $Z_{\beta_n},..., Z_{\beta_{m-1}}$ in terms of the K$\ddot{a}$hler parameters $Q_1,...,Q_{m-n}$ and $Z_{\beta_0},..., Z_{\beta_{n-1}}$. Thus we have $$\varphi^*(z_i)=Z_{\beta_i}(L,\nabla)/Z_{\beta_0}(L,\nabla)$$ for $i=n,...,m-1$.

We now compute that
\begin{align*}
	\varphi^*(\prod_{j=1}^{m-1} z_j^{-l_j^{(a)}})
    	&= \prod_{j=1}^{m-1} \varphi^*(z_j)^{-l_j^{(a)}} \\
        &= \prod_{j=1}^{m-1} (\frac{Z_{\beta_j}(L,\nabla)}{Z_{\beta_0}(L,\nabla)})^{-l_j^{(a)}}) \\
        &= Z_{\beta_0}(L,\nabla)^{-(-l^{(a)}_1-\cdots-l^{(a)}_{m-1})}\prod_{j=1}^{m-1} Z_{\beta_j}(L,\nabla)^{-l_j^{(a)}}) \\
\end{align*}
By assumption that $L$ is a special Lagrangian. Equivalently, the charges that defines $L$ satisfies $l^{(a)}_0+\cdots+l^{(a)}_{m-1}=0$ for $a=1,...,k$. Immediately we obtained that
$$ \varphi^*(\prod_{j=1}^{m-1} z_j^{-l_j^{(a)}}) = \prod_{j=0}^{m-1} Z_{\beta_j}(L,\nabla)^{-l_j^{(a)}} $$ for $a=1,...,k$. Thus the proof is completed.
\end{proof}

With this lemma, we are ready to compute the image of $\BB_0$ via the map $\varphi:\MM_0 \longrightarrow X^\vee$ which represents the mirror B-brane in the mirror manifold (variety) $X^\vee$.

\begin{thm} \label{main thm B}
Let $\varphi:\MM_0 \longrightarrow X^\vee$ as defined in Theorem \ref{varphi map moduli to X check} and $\BB_0\subset \MM_0$ be the semi-flat mirror brane as in Theorem \ref{def thm of BB0 semiflat B brane}. Then
$$ \varphi(\BB_0) = \{(u,v,z_1,...,z_{n-1})\in \varphi(\MM_0) :  \prod_{j=1}^{m-1} z_j^{-l_j^{(a)}} = e^{c^{(a)}+i\phi^{(a)}} \4 a=1,...,k \}$$ where $z_i=z_i(z_1,...,z_{n-1})=Q_{i-n+1}\prod_{j=1}^{n-1} z_j^{<v_j^*,v_i>}$ for $i=n,...,m-1$.
\end{thm}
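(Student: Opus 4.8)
## Proof proposal for Theorem \ref{main thm B}

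The plan is to combine the explicit description of the semi-flat mirror brane $\BB_0$ from Theorem \ref{def thm of BB0 semiflat B brane} with the explicit formula for the pull-back map $\varphi^*$ from Theorem \ref{varphi map moduli to X check}, using the lemma immediately preceding this statement as the bridge. Since $\varphi : \MM_+ \sqcup \MM_- \longrightarrow X^\vee$ is constructed (in Proposition \ref{R injection to O(M)} and Theorem \ref{varphi map moduli to X check}) as the pull-back of an injective ring homomorphism, it is injective on its domain, so computing $\varphi(\BB_0)$ amounts to checking that a point $(L_G,\nabla) \in \MM_0$ satisfies the defining equations of $\BB_0$ if and only if its image $\varphi(L_G,\nabla) = (u,v,z_1,\dots,z_{n-1})$ satisfies the equations $\prod_{j=1}^{m-1} z_j^{-l_j^{(a)}} = e^{c^{(a)}+i\phi^{(a)}}$ (with $z_i = z_i(z_1,\dots,z_{n-1}) = Q_{i-n+1}\prod_{j=1}^{n-1} z_j^{<v_j^*,v_i>}$ for $i \geq n$).

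First I would recall from Theorem \ref{def thm of BB0 semiflat B brane} that
$$\BB_0 = \{(L_G,\nabla) \in \MM_0 \mid \textstyle\prod_{j=0}^{m-1} Z_{\beta_j}(L_G,\nabla)^{-l_j^{(a)}} = e^{c^{(a)}+i\phi^{(a)}},\ a=1,\dots,k\}.$$
Then I would invoke the preceding lemma, which states precisely that
$$\varphi^*\!\Big(\textstyle\prod_{j=1}^{m-1} z_j^{-l_j^{(a)}}\Big) = \textstyle\prod_{j=0}^{m-1} Z_{\beta_j}(L,\nabla)^{-l_j^{(a)}},$$
where the substitution $z_i = Q_{i-n+1}\prod_{j=1}^{n-1} z_j^{<v_j^*,v_i>}$ for $i=n,\dots,m-1$ is understood on the left-hand side; this identity uses the Calabi-Yau/special-Lagrangian balancing condition $l^{(a)}_0 + \cdots + l^{(a)}_{m-1} = 0$ to absorb the $Z_{\beta_0}$ factors. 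Applying this to the defining relation of $\BB_0$ shows immediately that $(L_G,\nabla) \in \BB_0$ if and only if $\varphi^*$ of the function $\prod_{j=1}^{m-1} z_j^{-l_j^{(a)}}$, evaluated at $(L_G,\nabla)$, equals $e^{c^{(a)}+i\phi^{(a)}}$ for every $a$. Since evaluation of $\varphi^*(F)$ at $(L_G,\nabla)$ is by definition $F(\varphi(L_G,\nabla))$, this is exactly the condition that the coordinates $(u,v,z_1,\dots,z_{n-1}) = \varphi(L_G,\nabla)$ satisfy $\prod_{j=1}^{m-1} z_j^{-l_j^{(a)}} = e^{c^{(a)}+i\phi^{(a)}}$. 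Running this equivalence in both directions, together with the fact that $\varphi(\MM_0)$ (more precisely $\varphi(\MM_+ \sqcup \MM_-)$, landing in $\mathrm{Spec}\,R \subset X^\vee$) is the ambient set being cut, yields the claimed description of $\varphi(\BB_0)$.

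The routine parts are the bookkeeping of restricting to $\MM_+ \sqcup \MM_-$ (where the $\hat z_j$, hence $\varphi$, are genuinely holomorphic — see Remark \ref{rmk on holo of z-tilde}) and checking that the substituted expressions $z_i(z_1,\dots,z_{n-1})$ for $i \geq n$ match $Z_{\beta_i}/Z_{\beta_0}$ under $\varphi^*$, which was already done via \eqref{compute zr-1 in terms of z1 to zn-1} in the preceding lemma. The main obstacle, such as it is, is conceptual rather than computational: one must be careful that $\BB_0$ a priori lives in $\MM_0$ while $\varphi$ is only defined on $\MM_+ \sqcup \MM_-$, so strictly speaking the theorem computes $\varphi(\BB_0 \cap (\MM_+ \sqcup \MM_-))$, and one should note that the defining equations of $\BB_0$ involve only the semi-flat coordinates $Z_{\beta_j}$, which extend holomorphically, so no information is lost in this restriction. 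With that caveat addressed, the proof is a direct substitution using the bridging lemma and the explicit form of $\varphi^*$, and I expect it to be short.
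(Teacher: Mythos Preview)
Your proposal is correct and follows essentially the same approach as the paper: both arguments reduce the claim to the identity $\varphi^*\big(\prod_{j=1}^{m-1} z_j^{-l_j^{(a)}}\big) = \prod_{j=0}^{m-1} Z_{\beta_j}^{-l_j^{(a)}}$ and then verify the two inclusions by direct substitution. The only difference is cosmetic---you invoke the preceding lemma explicitly, whereas the paper redoes that computation inline---and your remark about the restriction to $\MM_+\sqcup\MM_-$ is a useful clarification that the paper leaves implicit.
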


\begin{proof}
According to Theorem \ref{def thm of BB0 semiflat B brane}, we have $\BB_0=\{(L,\nabla)\in \MM_0 : \prod_{j=0}^{m-1}Z_{\beta_j}(L,\nabla)^{l_j^{(a)}}=exp(c^{(a)}+i\phi^{(a)}) \4 a=1,...,k \}$.

We first show the "$\subset$" part. Let $(L,\nabla) \in \BB_0$, we have $(u,v,z_1,...,z_{n-1})=\varphi(L,\nabla)$, that is $z_j=Z_{\beta_j}(L,\nabla)/Z_{\beta_0}(L,\nabla)$ for $j=1,...,n-1$. We consider, for $a=1,...,k$,
\begin{align}
	\prod_{j=1}^{m-1} z_j^{-l_j^{(a)}}
    	&= \prod_{j=1}^{m-1} (\frac{Z_{\beta_j}(L,\nabla)}{Z_{\beta_0}(L,\nabla)})^{l_j^{(a)}} \\
        &= \prod_{j=0}^{m-1} Z_{\beta_j}(L,\nabla)^{l_j^{(a)}} \\
        &= exp(c^{(a)}+i\phi^{(a)}).
\end{align}
Thus we have
$$ \varphi(\BB_0) \subset \{(u,v,z_1,...,z_{n-1})\in \varphi(\MM_0) :  \prod_{j=1}^{m-1} z_j^{-l_j^{(a)}} = e^{c^{(a)}+i\phi^{(a)}} \4 a=1,...,k \}$$

To show the "$\supset$" part, we let $(u,v,z_1,...,z_{n-1})= \varphi(L,\nabla)\in \varphi(\MM_0)$ for some $(L,\nabla)\in \MM_0$ such that $\prod_{j=1}^{m-1} z_j^{-l_j^{(a)}} = e^{c^{(a)}+i\phi^{(a)}}$ for each $a=1,...,k$. We consider
\begin{align}
\prod_{j=0}^{m-1} Z_{\beta_j}(L,\nabla)^{l_j^{(a)}}
	&= \prod_{j=1}^{m-1} (\frac{Z_{\beta_j}(L,\nabla)}{Z_{\beta_0}(L,\nabla)})^{l_j^{(a)}} \\
    &= \prod_{j=1}^{m-1} z_j^{-l_j^{(a)}} \\
    &= exp(c^{(a)}+i\phi^{(a)}).
\end{align}
for $a=1,...,k$. Thus we have $(L,\nabla)\in \BB_0$ and
$$ \varphi(\BB_0) \supset \{(u,v,z_1,...,z_{n-1})\in \varphi(\MM_0) :  \prod_{j=1}^{m-1} z_j^{-l_j^{(a)}} = e^{c^{(a)}+i\phi^{(a)}} \4 a=1,...,k \}.$$

Overall, we have completed the proof.

\end{proof}

We have now completed the entire SYZ transformation procedure for such A-branes as defined in Definition \ref{definition of L in X}. The above theorem shows that the mirror brane is a subvariety in $X^\vee$ which is given by the equations in \eqref{eqt z to power -q equals cnst} as we desire. Given any A-brane $(L,\nabla)$ that is defined as in Definition \ref{definition of L in X}, we are now ready to formulate a definition for its corresponding (naive) SYZ mirror that is exactly defined by the defining equation of $\varphi(\BB_0)\subset\varphi(\MM_0)$ as in Theorem \ref{main thm B}.

\begin{defn} \label{main def B}
Let $(L,\nabla)$ be an A-brane as defined in Definition \ref{definition of L in X} which depends on the physical charges $l^{(1)},...,l^{(k)}\in \ZZ^m $. Then we define
$$L^\vee_{naive} \trieq  \{(u,v,z_1,...,z_{n-1})\in X^\vee :  \prod_{j=1}^{m-1} z_j^{-l_j^{(a)}} = e^{c^{(a)}+i\phi^{(a)}} \4 a=1,...,k \} $$ where $z_i=z_i(z_1,...,z_{n-1})=Q_{i-n+1}\prod_{j=1}^{n-1} z_j^{<v_j^*,v_i>}$ for $i=n,...,m-1$. Moreover, we denote it as the SYZ mirror brane  $L^\vee_{naive}$ of the A-brane $(L,\nabla)$.
\end{defn}

\begin{rmk}
The definition of $L^\vee_{naive}$ is motivated by the computation of $\varphi(\BB_0)$ as in Theorem \ref{main thm B} which is the most natural definition for the SYZ transform. In the sense that $L^\vee_{naive}$ and $\varphi(\BB_0)$, obviously, share the same defining equations in $X^\vee$ and, more importantly, $L^\vee_{naive}$ is the minimal subvaiety in $X^\vee$ containing $\varphi(\BB_0)$ as an open subset in the classical topology. In other words, $L^\vee_{naive}$ can be viewed as the completion of $\varphi(\BB_0)$ in the B-model variety $X^\vee$.
\end{rmk}

\begin{rmk}
Notice that the mirror brane that is constructed via the SYZ transformation is denoted by $L^\vee_{naive}$ instead of $L^\vee_{SYZ}$. We observe that the SYZ mirror brane we have constructed do not necessarily give the correct mirror which we will see when we study A-branes of Aganagic-Vafa type in the next chapter. For this reason, we will call such B-brane the naive SYZ mirror brane, and therefore we can leave this name, the SYZ mirror brane, for the B-brane after we have correctly modified our SYZ transformation. We will denote such B-brane subvariety by $L^\vee_{naive}$ throughout the rest of the thesis. The main reason for the failure of the SYZ transformation is due to the fact that discs counting data of the A-brane itself has not yet been captured in the construction process which will be investigated in more detail in later chapters.
\end{rmk}

We have completed the naive SYZ transformation of any A-brane given by physical charges as considered in \cite{Aganagic Vafa,Lerche Mayr Warner,Fang Liu}. As desired, we has explicitly constructed the naive SYZ mirror brane mathematically and rigorously that explains the principle behind physicists' predictions as in \cite{Aganagic Vafa}.

Next, we are going to study an even more specific type of A-branes than what we have studied in this chapter, namely, the Aganagic-Vafa A-branes.


\chapter{SYZ transform for Aganagic-Vafa A-branes}

In this chapter, we narrow our focus to the investigation a special type of A-branes for toric Calabi-Yau A-model of complex dimension three which was first studied by Aganagic and Vafa. These Aganagic-Vafa A-branes has a special B-brane as a subvariety of $X^\vee=\{uv=W(x,y)\}$. The Aganagic-Vafa mirror B-brane is the subvariety defined by the equations $uv=0,\ x=x_0$ and $y=y_0$ where the pair $(x_0,y_0)$ is a root of the polynomial $W(x,y)$.
\par
According to recent study of these A-branes in Fang-Liu\cite{Fang Liu}, the coordinate $y_0$ of the B-brane encodes open Gromov-Witten invariants of the Aganagic-Vafa A-brane. And according to another recent development in SYZ, Chan-Cho-Lau-Tseng\cite{Cho Chan Lau Tseng} gave enumerative meaning of the inverse mirror map. As a result, we can relate the coordinate $x_0$ of the B-brane to the open Gromov-Witten invariants of the Lagrangian fibers. 
With the explicit defining equations of $L^\vee$ and $L^\vee_{naive}$ at hand, we would naturally question whether the two subvarieties coincide or how far do they differ from each other if they do not.
We will see that the naive SYZ transform $L_{naive}^\vee$ does not coincide with the predicted mirror brane $L^\vee$.
The main reason lies in the fact that the naive SYZ transform $L_{naive}^\vee$ fails to capture the information of the counting discs of the Aganagic-Vafa A-brane $L$. However, we observe that $L_{naive}^\vee$ and $L^\vee$ are similar in some sense which we will discuss in detail later. More precisely, we observe that $L^\vee$ can be viewed as a "further quantum corrected" version of $L_{naive}^\vee$. This indicates that the SYZ transformation described in \cite{Leung Yau Zaslow,Chan} is not perfect yet in the sense that it cannot give the correct mirror in the special case of Aganagic-Vafa branes.
By comparing the defining equations of $L^\vee$ and $L^\vee_{naive}$, we are suggested to modify the naive SYZ transform using the disc counting data of the Aganagic-Vafa A-brane.
In the last part of this chapter, we will give a natural definition for the modified SYZ mirror construction that involves further quantum correction which, by construction, transforms every Aganagic-Vafa A-branes to its desired mirror partner.

\section{Aganagic-Vafa A-branes}
Aganagic-Vafa A-brane is a (degenerate) type of Lagrangian in a toric Calabi-Yau three-fold which is determined by a line which meets the dimension 1 edges of the polytope
\begin{equation}
\Delta=\{\nu\in M| <\nu,v_i> \geq -\lambda_i, \forall i=0,...,m-1\}.
\end{equation}
where $M\simeq \RR^3$.
\begin{defn} \label{def AV line}
$\mathcal{L}\in \Delta$ is called an Aganagic-Vafa line if
\begin{equation}
\lambda+\beta^\vee(\LL)=\{y\in \mathds{R}^\vee | <y,l^{(1)}>=c,<y,l^{(2)}>=0\}\cap \Delta,
\end{equation}
for some
$$l^{(1)}=-1e_{i_0}+1e_{i_1}+0e_{i_2}\in \mathds{R}^m$$
$$l^{(2)}=-1e_{i_0}+0e_{i_1}+1e_{i_2}\in \mathds{R}^m$$
where $i_0,i_1,i_2=0,...,m-1$ and $c \neq 0$.
\end{defn}

Using the same notation and definition of Lagrangian $L$ as in Definition \ref{definition of L in X}, we now focus on a smaller class of such L which we denote as Aganagic-Vafa A-brane as follows:

\begin{defn} \label{def AV brane}
$L\subset X_\Delta$ is called an Aganagic-Vafa A-brane if
$L$ is a special Lagrangian and $ \mu_\Delta(L)\in \Delta$ is a line that  intersects a unique dimension 1 edge of the polytope $\Delta(1)$. 
\end{defn}

With the definition of an Aganagic-Vafa line and Aganagic-Vafa brane, we formulate a relationship between the 2 definitions via the moment map $\mu_\Delta$.

\begin{prop} \label{prop AV brane vs AV line}
Let $L\subset X_\Delta$ be a Lagrangian as defined in Definition \ref{definition of L in X}.
Then its moment map image  $\mu_\Delta(L)$ is an Aganagic-Vafa line if $L$ is an Aganagic-Vafa A-brane.
\end{prop}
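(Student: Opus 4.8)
The plan is to translate both definitions into conditions on the charge vectors $l^{(a)}\in\ZZ^m$ and the constants $c^{(a)}$ that define $L$, and to show that the hypotheses of Definition~\ref{def AV brane} force these data to be equivalent --- up to the ambiguities inherent in the presentation of $L$ --- to the data appearing in Definition~\ref{def AV line}. First I would recall that, by Definitions~\ref{define L tilde in C n} and~\ref{definition of L in X}, $L$ is cut out inside $X_\Delta\simeq\CC^m\sslash_t G_\RR$ by $\sum_{j=0}^{m-1}l^{(a)}_jr_j^2=c^{(a)}$, $a=1,\dots,k$ (the phase conditions are irrelevant here, as $\mu_\Delta$ depends only on the $|z_j|^2$); that by Theorem~\ref{L is special lagrangian} and its descent to $X_\Delta$ the specialness of $L$ is equivalent to $l^{(a)}\cdot\mathds{1}=0$, which, since $\mathds{1}=\beta^\vee(u)$ by the Calabi--Yau relation of Lemma~\ref{CY condition on polytope}, says $\beta(l^{(a)})\in u^\perp\cap N$; and that, from the proof of Proposition~\ref{proposition 2nd projection of lagrangian image}, in the coordinates $y_j:=\langle\nu,v_j\rangle+\lambda_j\ge0$ on $\Delta$ one has $\mu_\Delta(L)\cong\{y\in\Delta:\langle l^{(a)},y\rangle=c^{(a)},\ a=1,\dots,k\}$.

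Next I would run a dimension count. Since $M_\RR\cong\RR^3$ for a toric Calabi--Yau threefold and $\beta^\vee$ is injective with $3$-dimensional image in $\RR^m$, the requirement that $\mu_\Delta(L)$ be one-dimensional forces the $l^{(a)}$ to restrict to a two-dimensional space of functionals on $\beta^\vee(M_\RR)$; after discarding redundant charges this gives $k=2$ with $\beta(l^{(1)}),\beta(l^{(2)})$ independent in $u^\perp\cap N$. I would then record the two moves that do not alter $L$ (hence not $\mu_\Delta(L)$): replacing $l^{(a)}$ by $l^{(a)}+\iota(g)$ with $g\in G$, which on $\mu^{-1}(t)$ only adds the constant $\langle g,t\rangle$ and so merely shifts $c^{(a)}$; and acting by $GL_2(\ZZ)$ to recombine $(l^{(1)},l^{(2)})$ (one may also rescale a charge to be primitive). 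It therefore suffices to produce indices $i_0,i_1,i_2$ and a representative of the equivalence class of $(l^{(1)},l^{(2)})$ of the form $l^{(1)}=-e_{i_0}+e_{i_1}$, $l^{(2)}=-e_{i_0}+e_{i_2}$ with $c^{(1)}=c\ne0$ and $c^{(2)}=0$.

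The geometric core is to read off $i_0,i_1,i_2$ from the hypothesis that the line $\mu_\Delta(L)$ meets a unique $1$-edge $E$ of $\Delta$. Such an edge is the locus where two of the defining inequalities, say those indexed $i_0$ and $i_2$, are active, i.e.\ $y_{i_0}=y_{i_2}=0$, and the meeting point $p=\mu_\Delta(L)\cap E$ lies in the relative interior of $E$ precisely because this is the only edge met. Matching the direction of $\mu_\Delta(L)$ against the local toric chart of $X_\Delta$ over a neighbourhood of $E$ --- which is $\CC^3$ or the resolved conifold $\OO_{\PP^1}(-1)\oplus\OO_{\PP^1}(-1)$ --- identifies $\mathrm{span}_\ZZ\{\beta(l^{(1)}),\beta(l^{(2)})\}$ with $\mathrm{span}_\ZZ\{v_{i_1}-v_{i_0},v_{i_2}-v_{i_0}\}$, where $v_{i_1}$ is the third ray completing that chart; since $\beta(-e_{i_0}+e_{i_j})=v_{i_j}-v_{i_0}$ and $\ker\beta=G$, a suitable $GL_2(\ZZ)$ followed by a $G$-move now puts the charges in the stated form. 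Finally, with the charges standard, $c^{(2)}=-y_{i_0}(p)+y_{i_2}(p)=0$ while $c^{(1)}=-y_{i_0}(p)+y_{i_1}(p)=y_{i_1}(p)$, which is nonzero exactly because $i_1$ is not active at the interior point $p$; a value $c=0$ would instead drive $\mu_\Delta(L)$ through a vertex of $E$, a codimension-$3$ toric stratum, contradicting the uniqueness clause of Definition~\ref{def AV brane}.

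The step I expect to be the main obstacle is exactly this last identification --- turning ``$\mu_\Delta(L)$ touches one and only one $1$-edge of the non-compact Calabi--Yau polytope $\Delta$'' into the two-term normal form of the charges, while keeping careful track of the combinatorics of $\Delta$ and of the difference between the genuine freedom in choosing the $l^{(a)}$ and the equivalence under which $L$ is defined. I would handle it by localising at the endpoint vertex of $E$ and invoking the standard $\CC^3$ (respectively resolved-conifold) local toric model, in which the matching of the charge lattice with $\mathrm{span}_\ZZ\{v_{i_1}-v_{i_0},v_{i_2}-v_{i_0}\}$ is an explicit finite computation, and then checking that these local charges assemble correctly to the global ones.
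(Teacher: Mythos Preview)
Your approach differs from the paper's, and the step you yourself flag as the main obstacle does contain a real gap.

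The paper never touches the original charges defining $L$. It works directly with the image: from the hypotheses that $L$ is special and that $\mu_\Delta(L)$ is a line meeting a unique $1$-edge $F_{i_0,i_1}$ at a point $m_0$, specialness forces the direction to be $u$, so $\mu_\Delta(L)=m_0+\RR_{\ge0}u$. The paper then simply \emph{writes down} fresh charges $l^{(1)}=e_{i_1}-e_{i_0}$, $l^{(2)}=e_{i_2}-e_{i_0}$ (with $i_2$ any third index) and a constant $c$, and checks by a straightforward dimension count that the affine locus $\{y:\langle y,l^{(1)}\rangle=0,\ \langle y,l^{(2)}\rangle=c,\ \iota^\vee(y)=t\}$ and $\vec\lambda+\beta^\vee(\mu_\Delta(L))$ are both lines sharing the point $\vec\lambda+\beta^\vee(m_0)$ and the direction $\beta^\vee(u)$, hence coincide. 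Definition~\ref{def AV line} only asks that \emph{some} charges of the stated shape cut out the image, so no manipulation of the given charges is needed.

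Your plan instead tries to transform the given $(l^{(1)},l^{(2)})$ into the two-term normal form via $G$-shifts, $GL_2(\ZZ)$-recombinations, and rescaling-to-primitive. These moves are not enough. If $e_1,e_2$ is a $\ZZ$-basis of $u^\perp\cap N$ and the given charges happen to satisfy $\beta(l^{(1)})=e_1+e_2$, $\beta(l^{(2)})=e_1-e_2$, then both vectors are primitive and $\RR$-independent but generate an index-$2$ sublattice of $u^\perp\cap N$; $G$-shifts leave $\beta(l^{(a)})$ unchanged, $GL_2(\ZZ)$ preserves the $\ZZ$-span, and there is nothing to rescale, so you can never reach a pair whose $\beta$-images form a $\ZZ$-basis such as $(v_{i_1}-v_{i_0},\,v_{i_2}-v_{i_0})$. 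Such charges are nevertheless perfectly legitimate inputs for Definition~\ref{definition of L in X}: as your own angular analysis shows, $L$ depends only on the $\RR$-span of the $l^{(a)}$ modulo $\iota(G_\RR)$, not on their $\ZZ$-span. The clean fix is to use that observation directly --- replace the given charges by \emph{any} pair with the same $\RR$-span, in particular by the standard two-term ones --- which is precisely the paper's shortcut. Your localisation at a vertex of $E$ and the $\CC^3$/resolved-conifold computation would only confirm the $\RR$-span identification, not the $\ZZ$-span claim you wrote down.
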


\begin{proof}
Let $L$ be an Aganagic-Vafa brane. By definition, we have $L$ is special and $\mu_\Delta\cup F_{i_0,i_1}=\{m_0\}$ for some unique $m_0\in M$ and $F_{i_0,i_1}$ is a 1 dimensional ( codimension 2 ) edge of the polytope as defined in Definition \ref{definition of F I polytope faces} for some unique $i_0,i_1=0,...,r-1$. Then $\mu_\Delta(L)$ can be represented by $m_0+\RR_{\geq o}v$ for some unique vector $v\in T_\cdot M \simeq M$. Notice that $L$ is special that forces $v=u$. In other words, we have
$$ \mu_\Delta = m_0 + \RR_{\geq 0}u \subset M .$$

Our goal is to find 2 charges $l^{(1)},l^{(2)}\in \ZZ^m$ that defines our $L$ which satisfies Definition \ref{def AV line}. We now set
$$l^{(1)}:=e_{i_1}-e_{i_0}\ \ ; \ \ l^{(2)}:=e_{i_2}-e_{i_0}$$ for some fixed $i_2=0,...,m-1$ different from  $i_0,i_1$. We also set
$c:= + \lambda_{i_2}-\lambda_{i_0}+<m_0,v_{i_2}>-<m_0,v_{i_0}>$. Notice that $c\in\RR$ has to be non-zero, otherwise $\mu_\Delta(L)$ intersects $F_{i_0,i_2}$ non-trivially at $m_0$ that contradicts $L$ being Agnatic-Vafa.

It suffices to prove the claim that
$$\{y\in (\RR^m)^\vee: <y,l^{(1)}>=0,<y,l^{(1)}>=c,\iota^\vee(y)=t\}=\vec{\lambda} +\beta^\vee(\mu_\Delta(L)).$$
We first observe that the vector subspaces $\RR<l^{(1)},l^{(2)}>$ and $\iota(G)$ has trivial intersection in $\RR^m$. By dimension counting, we have $\RR<l^{(1)},l^{(2)}>\oplus\iota(G)\subset \RR^m$ is codimension $(m-3)+2$ which is a line of the form $p_0+\RR v \subset (\RR^m)^\vee$ for some unique $p_0 \in \partial (\RR_{\geq 0}^m)^\vee $ and $v\in T_\cdot (\RR^m)^\vee \simeq (\RR^m)^\vee $ such that $p_0+\RR_{\geq 0}v \subset (\RR^m_{\geq 0})^\vee$. We can directly compute that $<l^{(1)},\vec{\lambda}+\beta^\vee(m_0)>=(<m_0,v_{i_1}>+ \lambda_{i_1})-(<m_0,v_{i_0}>+\lambda_{i_0})=0$ since $m_0$ is a point in $F_{i_0,i_1}=F_{i_0}\cap F_{i_1}$. And $<l^{(2)},\vec{\lambda}+\beta^\vee(m_0)>=c$ since this is how we set the value $c\in\RR^\times$. Thus $p_0=\vec{\lambda}+\beta^\vee(m_0)$ by uniqueness of the choice $p_0$. Moreover, we can compute directly that
$<l^{(1)},\beta^\vee(u)>=<v_{i_1},u>-<v_{i_0},u>=0$ and so as for $l^{(2)}$. Then $v=\beta^\vee(u)$ by uniqueness of the choice of $v$. Overall, we have $v=\beta^\vee(u)$ and $p_0=\vec{\lambda}+\beta^\vee(m_0)$. Thus we have proved the claim and also the proposition.
\end{proof}

\begin{rmk} \label{remarks WLOG define l1 and l2}
With out loss of generality, we assume that $i_0=0,i_1=1$ and $i_2=2$ which can be achieved by simply re-ordering the indexes of the rays $v_0,v_1,...,v_{m-1}\in\Sigma_\Delta(1)$. As a consequence, all Aganagic-Vafa A-branes can be viewed as special Lagrangian in $X_\Delta$ that is determined by the charges
$$ l^{(1)}=-e_0+e_1=(-1,1,0,...,0)\in \ZZ^m $$
$$l^{(2)}=-e_0+e_2=(-1,0,1,0...,0) \in \ZZ^m $$
after the re-ordering the indexes as described Definition \ref{def AV line}, Definition \ref{def AV brane} and Proposition \ref{prop AV brane vs AV line}.
\end{rmk}

\begin{rmk}
From the topological point of perspective, all Aganagic-Vafa branes are diffeomorphic to $\SSS^1 \times \CC$. By tracing the long exact sequence of homology, we obtain that
\begin{align}
\label{compute dim of H2 s}
\begin{CD}
 H_{2}(L)    @>>>     H_{2}(X)          @>>> H_2(X,L) @>>> H_1(L) @>>> H_1(X)   \\
 @VV\simeq V             @VV\simeq V    @VV\simeq V   @VV\simeq V       @VV\simeq V       \\
 0   @>>>   \ZZ^{m-3}    @>>> \ZZ^{m-2}  @>>> \ZZ @>>>0     \\
\end{CD}
\end{align} since the manifold $X_\Delta$ we are considering is a threefold.
\end{rmk}


According to physics literature and based on physical considerations, physicists predict that a pair of mirror branes $L$ and $L^\vee$ exhibits surprising relationships between the open Gromov-Witten generating function on the A-side and the Abel-Jacobi map on the B-side. We re-formulate such surprising mirror prediction as the conjecture below:

\begin{conj} \label{mirror conjecture}
Let $L=L_{AV}$ in a Calabi-Yau 3-fold $X=X_\Delta^3$ be an Aganagic-Vafa A-brane as defined in Definition \ref{def AV brane}. Then its corresponding mirror B-brane is given by
\begin{align}
L^\vee = \left\{ (u,v,z_1,z_2)\in X^\vee \subset  (\CC^\times)^2\times\CC^2 :
\begin{array}{rl}
uv&=0\\
z_1&=q_0\\
z_2&=z_2(q_0)
\end{array}
\right\}
\end{align}
where $z_2(q_0)$ is the implicit function of $W(q_0,-)$ such that $W(q_0,z_2(q_0))=0$ and $q_0\in\CC^\times$ is the parameter of the deformation space of the subvariety $L^\vee$ in $X^\vee$. Moreover, $L$ is mirror to $L^\vee$ is the sense that the mirror theorem is satisfied, that is, $$F(\vec{Q})=\WW(\vec{q}) \text{  up to the (open/close) mirror map  } $$ where $\vec{Q}=(Q_0,Q_1,...,Q_{m-3})$ are the (open/close) K$\ddot{a}$hler parameter and $\vec{q}=(q_0,q_1,...,q_{m-3})$ are the (open/close) complex structure parameter. And we define $F(\vec{Q})$ and $\WW(\vec{q})$ by
\begin{align}
F(Q_0,Q_1,...,Q_{m-3})&:= \sum_{p=1}^\infty \sum_{b_0,...,b_{m-3}\geq 0} \frac{N_{b_0,...,b_{m-3}}}{p^2}Q_0^{pb_0}\cdots Q_{m-3}^{pb_{m-3}} \label{eqt F define}\\
\WW(q_0,q_1,...,q_{m-3}) &:= \int_{\Gamma_{q_0,q_1,...,q_{m-3}}} \Omega_{q_1,...,q_{m-3}}
\end{align}
with $\Omega_{q_1,...,q_{m-3}}$ be the volume form of $X^\vee$, $\Gamma_{q_0,q_1,...,q_{m-3}}$ be some open 3-chain in $X^\vee$ and $N_{b_0,...,b_{m-3}}$ be instanton number of the effective relative class $(b_0,...,b_{m-3})\in \ZZ^{m-2}\simeq H_2(X,L;\ZZ)$ as in \eqref{compute dim of H2 s}.
\end{conj}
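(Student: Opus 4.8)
The plan is to split Conjecture \ref{mirror conjecture} into its two assertions — the description of $L^\vee$ as a subvariety of $X^\vee$, and the enumerative identity $F(\vec{Q}) = \WW(\vec{q})$ up to the (open/closed) mirror map — and to prove each by combining the SYZ computation of the previous chapter with the recent results of Fang--Liu \cite{Fang Liu} and Cho--Chan--Lau--Tseng \cite{Cho Chan Lau Tseng}. I do not expect to reprove the enumerative content from scratch; the work lies in assembling these inputs and matching conventions.

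First I would pin down the geometry of $L^\vee$. By Proposition \ref{prop AV brane vs AV line} and Remark \ref{remarks WLOG define l1 and l2}, an Aganagic--Vafa A-brane is the special Lagrangian determined by the charges $l^{(1)} = -e_0 + e_1$ and $l^{(2)} = -e_0 + e_2$, so Theorem \ref{main thm B} gives its naive SYZ transform as the locus $z_1 = e^{-(c^{(1)}+i\phi^{(1)})}$, $z_2 = e^{-(c^{(2)}+i\phi^{(2)})}$ inside $X^\vee$. One then observes that the degeneracy condition in Definition \ref{def AV brane} — that $\mu_\Delta(L)$ meets a unique dimension-$1$ edge of $\Delta$ — forces the brane to sit over $\partial B$, i.e. on the locus $uv = 0$, which on $X^\vee$ is equivalent to $W(z_1,z_2) = 0$. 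Imposing $W = 0$ removes one of the two equations: writing $q_0 \in \CC^\times$ for the surviving modulus $z_1 = q_0$, the constraint $W(q_0, z_2) = 0$ determines $z_2 = z_2(q_0)$ as the branch of the mirror curve through the brane, which is exactly the stated form of $L^\vee$. The extra datum $q_0$ is then the parameter of the deformation space of $L^\vee$ in $X^\vee$.

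Next I would set up the B-side superpotential $\WW(\vec{q})$. Choosing the open $3$-chain $\Gamma_{q_0,\dots}$ so that its boundary lies on $L^\vee$, a residue computation along the $uv = W$ fibration collapses $\WW$ to a chain integral on the mirror curve, of the shape $\WW(q_0, \vec{q}) = \int^{q_0} \log z_2(z_1, \vec{q})\, \tfrac{dz_1}{z_1}$ up to an explicit elementary term; differentiating in $q_0$ recovers the Abel--Jacobi description used by the physicists. One must track normalizations — factors of $2\pi i$, the choice of base point, the branch of $\log$ — carefully, since these are precisely what the mirror map absorbs.

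Finally I would invoke the two input theorems. Fang--Liu's open mirror theorem \cite{Fang Liu} states that $F(\vec{Q})$, the genus-zero open Gromov--Witten generating function defined in \eqref{eqt F define}, equals $\WW$ after substituting the closed mirror map $q_i = q_i(\vec{Q})$ and an open mirror map $q_0 = q_0(Q_0, \vec{Q})$; and Cho--Chan--Lau--Tseng \cite{Cho Chan Lau Tseng} identify these mirror maps with the (inverse) SYZ maps, giving them enumerative meaning in terms of open Gromov--Witten invariants of the Gross fibers. Combining these, and matching the $q_0$-modulus of $L^\vee$ with the open mirror map, yields $F(\vec{Q}) = \WW(\vec{q})$ up to the (open/closed) mirror map, and in passing furnishes the explicit A-side expression for $L^\vee$ recorded in Theorem \ref{main theorem Lv=z1,z2}. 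The main obstacle I anticipate is the careful identification of the B-side $3$-chain and the reduction of $\WW$ to a curve integral with the correct normalization, together with the bookkeeping needed to align the $q_0$ parametrizing $L^\vee$ with the open mirror map appearing in Fang--Liu's statement — the enumerative heart is already supplied by the cited works, so the real content is the compatibility of conventions.
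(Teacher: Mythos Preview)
Your first step contains a genuine misreading of the paper's logic. The form of $L^\vee$ in the conjecture is not something the paper \emph{derives} from the SYZ transform; it is the physicists' prediction, taken as input. In fact the thesis later (Section~4.4) computes the naive SYZ mirror of an Aganagic--Vafa brane explicitly and finds $z_1 = Q_0$, $z_2 = 1$, with $uv = W(Q_0,1)$ generically \emph{nonzero}. Your claim that the degeneracy condition in Definition~\ref{def AV brane} forces the SYZ brane onto the locus $uv=0$ is therefore not correct: the naive SYZ image does not lie on the mirror curve, and this discrepancy between $L^\vee_{naive}$ and the predicted $L^\vee$ is precisely the point of the whole chapter. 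So you cannot argue that imposing $W=0$ ``removes one of the two equations'' and recovers the conjectured $L^\vee$ from the SYZ side --- that would erase the very gap the thesis is trying to highlight and then repair via further quantum correction.

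For the enumerative identity $F(\vec Q)=\WW(\vec q)$, the paper's treatment is much lighter than your plan. It does not set up $\WW$ via a residue computation or reduce it to a curve integral; it simply states Fang--Liu's result as Theorem~\ref{mirror theorm F=W by Fang} (which gives $F_0=\WW_0$ up to the mirror map), and then proves a short lemma that $F=F_0$ by assuming the conjectural open multiple cover formula of \cite{Lin}. The remark following that lemma is the paper's entire justification for Conjecture~\ref{mirror conjecture}: it is conditional on the multiple cover formula and otherwise defers wholly to \cite{Fang Liu}. The result of \cite{Cho Chan Lau Tseng} is not used here at all --- it enters only later, in the proof of Theorem~\ref{main theorem Lv=z1,z2}(2), to express $z_1$ in terms of $K\ddot a$hler parameters. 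Your invocation of it for the conjecture itself is unnecessary.
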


This conjecture is proven mathematically in Fang-Liu\cite{Fang Liu}. With this fact, we now turn our focus to the explicit expression of the B-brane $L^\vee$. In other word, we want to express the coordinates $z_1,z_2$ of $L^\vee$ solely in terms of the symplectic information of its mirror A-side.

\begin{thm} \label{main theorem Lv=z1,z2}
Let $L$ be a Lagrangian in a Calabi-Yau 3-fold $X=X_\Delta^3$ as defined in Definition \ref{def AV brane}. Then its corresponding mirror B-brane is given by the coordinates:
\begin{enumerate}
	\item
\begin{align}
z_2=exp \Big( -\sum_{\beta\in H^{eff}_2(X,L;\ZZ)} n_\beta [\partial\beta] Q^\beta \Big)
\end{align}
	\item
\begin{align}
z_1&=Q_0exp \Big( \sum_{\alpha>0}E_1(\alpha)Q^\alpha\prod_{j=1}^{m-1}(\sum_{\alpha'>0}n_{\beta_j+\alpha'}Q^{\alpha'})^{\sum_{a=1}^{m-1}\alpha_a\iota_j^{(a)}} \Big)
\end{align}
\end{enumerate}
where $\alpha,\alpha'$ runs over $H^{eff}_2(X;\ZZ)$ and
	$Q^\beta:=Q_0^{b_0}\cdots Q_{m-3}^{b_{m-3}}$ for $(b_0,...,b_{m-3})\simeq \ZZ^{m-2}$ represents $\beta\in  H_2(X,L;\ZZ) \simeq \ZZ \oplus H_2(X;\ZZ)\simeq \ZZ^{m-2}$, 		
    $[\partial\beta]\in \ZZ$ represents $\partial \beta \in H_1(L;\ZZ)\simeq \ZZ$, 	
    $n_\beta\in \QQ$ is the open Gromov-Witten invariant with respect to the class $\beta\in  H_2(X,L;\ZZ)$ as defined in chapter (3.4), (3.5) of \cite{Fang Liu} and chapter (2.5) of \cite{Cho Chan Lau Tseng}. And $E_i(\alpha):=((-1)^{-<D_i,\alpha>-1}(-<D_i,\alpha>-1)!/\prod_{j\neq i}<D_j,\alpha>!)$  is a rational number as defined in Section 4.3 of \cite{Fang Liu} and Section 6.3 of \cite{Cho Chan Lau Tseng}.

In other words, we obtain the explicit defining equations for the mirror B-brane in terms of the A-side data as follows:
\begin{align*}
L^\vee = \left\{ 
\begin{array}{rl}
&(u,v,z_1,z_2)\in X^\vee \subset  (\CC^\times)^2\times\CC^2 :\\
&uv=0\\
&z_1= Q_0 \cdot exp\Big( \sum_{\alpha} E_1(\alpha)
    \vec{Q}^\alpha
    \prod_{j=1}^{m-1} (1+\sum_{\alpha'} n_{\beta_j+\alpha'}\vec{Q}^{\alpha'})^{\sum_{a=1}^{m-3} \iota_j^{(a)} \alpha_a }  \Big) \\
&z_2=exp \Big( -\sum_{\beta>0} n_\beta [\partial\beta] \vec{Q}^\beta \Big)
\end{array}
\right\}
\end{align*}
\end{thm}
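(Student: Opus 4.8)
The plan is to deduce the two defining equations for $L^\vee$ from two recent results used as black boxes: the theorem of Fang--Liu \cite{Fang Liu} establishing the mirror conjecture for Aganagic--Vafa branes, and the identification by Cho--Chan--Lau--Tseng \cite{Cho Chan Lau Tseng} of the inverse mirror map with the SYZ map. The point is that Conjecture \ref{mirror conjecture} already characterizes $L^\vee$ as $\{uv=0,\ z_1=q_0,\ z_2=z_2(q_0)\}$ in terms of the \emph{B-model} open complex parameter $q_0$ and the implicit function $z_2(q_0)$ solving the mirror curve equation, so the whole task is to rewrite these two B-model quantities purely in terms of the A-model data of $(X_\Delta^3, L,\nabla)$ --- the K\"ahler parameters $Q_0,\dots,Q_{m-3}$ and the open Gromov--Witten invariants. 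I would therefore begin by quoting Conjecture \ref{mirror conjecture} verbatim as the starting point:
\begin{align*}
L^\vee = \{(u,v,z_1,z_2)\in X^\vee : uv=0,\ z_1=q_0,\ z_2=z_2(q_0)\},\qquad W(q_0,z_2(q_0))=0.
\end{align*}

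For the $z_2$-coordinate I would invoke the open mirror theorem contained in \cite{Fang Liu}: after applying the closed mirror map, the solution $z_2(q_0)$ of the mirror curve equation is identified with the A-model open Gromov--Witten generating function of the Aganagic--Vafa brane $L$, so that
\begin{align*}
z_2 = \exp\Big(-\sum_{\beta\in H^{eff}_2(X,L;\ZZ)} n_\beta\,[\partial\beta]\,\vec{Q}^{\beta}\Big),
\end{align*}
where $[\partial\beta]\in\ZZ$ records the winding of $\partial\beta$ around the $\SSS^1$-factor of $L\cong\SSS^1\times\CC$, using the computation of $H_2(X,L;\ZZ)$ in \eqref{compute dim of H2 s}, and $n_\beta$ is the open Gromov--Witten invariant. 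Here I would have to reconcile the normalization of the mirror curve $W$ used in Chapter 2 of this thesis --- the one carrying the quantum-corrected coefficients $1+\delta_i$ from Theorem \ref{thm z tilde with defn of delta} --- with the normalization of the B-model periods in \cite{Fang Liu}; concretely this reduces to checking that the $\delta_i$-corrections are exactly absorbed into the closed mirror map, leaving no residual factor in front of the disc generating function.

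For the $z_1$-coordinate, I would use that $z_1=q_0$ is itself the open complex structure parameter and express it through the inverse mirror map. By Cho--Chan--Lau--Tseng \cite{Cho Chan Lau Tseng} the SYZ mirror map of $X_\Delta$ coincides with its inverse mirror map, and its open component has the closed form
\begin{align*}
z_1 = Q_0\,\exp\Big(\sum_{\alpha>0} E_1(\alpha)\,\vec{Q}^{\alpha}\prod_{j=1}^{m-1}\big(1+\sum_{\alpha'>0} n_{\beta_j+\alpha'}\,\vec{Q}^{\alpha'}\big)^{\sum_{a} \iota_j^{(a)}\alpha_a}\Big),
\end{align*}
where $E_1(\alpha)$ is the hypergeometric coefficient recalled in the statement, the $\iota_j^{(a)}$ are the entries of the charge matrix, and $n_{\beta_j+\alpha'}$ are the open Gromov--Witten invariants of the \emph{Lagrangian torus fibers} computed in Proposition \ref{n beta} (note that $1+\sum_{\alpha'>0}n_{\beta_j+\alpha'}\vec{Q}^{\alpha'}=1+\delta_j$ in the notation of \eqref{definition of delta i}). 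Substituting the boxed expressions for $z_1$ and $z_2$ back into the description of $L^\vee$ above then yields the stated defining equations.

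The main obstacle I anticipate is bookkeeping rather than a new idea: one must line up three distinct normalization and indexing conventions --- the mirror curve $W$ as normalized here (with the $\delta_i$'s), the B-model periods and open mirror map of Fang--Liu, and the SYZ / inverse mirror map of Cho--Chan--Lau--Tseng --- and verify that the hypergeometric factors $E_1(\alpha)$ and the products $\prod_j(1+\delta_j)^{\sum_a \iota_j^{(a)}\alpha_a}$ generated by inverting the closed mirror map are precisely the ones that become attached to the coordinate $z_1=q_0$ under the change of variables, while the $z_2$-equation receives the genuinely \emph{open} invariants of $L$. A related delicate point is keeping the two families of invariants apart: the disc invariants $n_\beta$ of the Aganagic--Vafa brane $L$, which control $z_2$, versus the disc invariants $n_{\beta_j+\alpha}$ of the Lagrangian torus fibers, which control $z_1$, since the same symbol $n_{(-)}$ is used for both.
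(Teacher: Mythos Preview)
Your overall plan---$z_2$ from Fang--Liu, $z_1$ from Cho--Chan--Lau--Tseng---is exactly the paper's, and your treatment of part~(2) matches the paper's proof essentially verbatim (the paper states the open mirror map in Definition~\ref{define mirror map Mir}, quotes Theorem~\ref{thm SYZ is inv of Mir map} to identify its inverse with the SYZ map of Definition~\ref{defn of SYZ mirror map}, and substitutes one into the other).

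For part~(1), however, you treat the Fang--Liu theorem as a bigger black box than the paper does, and this hides the one nontrivial step. The paper's quoted form of Fang--Liu (Theorem~\ref{mirror theorm F=W by Fang}) is an equality of \emph{superpotentials}, $F_0(\vec Q)=\WW_0(\vec q)$ under the mirror map, not a direct statement about $z_2$. To extract $z_2$ the paper first proves Lemma~\ref{lemma Qd/dQ = qd/dq}: because the open mirror map has the special form $Q_0=q_0\cdot(\text{function of }q_1,\dots,q_{m-3}\text{ only})$, the vector fields $q_0\,\partial/\partial q_0$ and $Q_0\,\partial/\partial Q_0$ correspond under $\MM ir_*$. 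Only then does applying $q_0\,\partial/\partial q_0$ to both sides of $F_0=\WW_0$ give the result: on the B-side $q_0\,\partial_{q_0}\WW_0=-\log z_2$ by the definition of the Abel--Jacobi integral, while on the A-side $Q_0\,\partial_{Q_0}F_0$ pulls down precisely the winding number $b_0=[\partial\beta]$ from each monomial $\vec Q^\beta$. Your formula already contains that $[\partial\beta]$ factor, so you are implicitly using this differentiation, but you neither state it nor justify why it is compatible with the change of variables---and without Lemma~\ref{lemma Qd/dQ = qd/dq} one cannot simply differentiate an identity that holds only after substituting the mirror map. This is the one concrete piece your proposal is missing; the ``normalization bookkeeping'' you flag as the main obstacle is by comparison routine.
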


The proof of this theorem will be separated into 2 parts which will be presented in the next 2 sections immediately.


\section{Mirror prediction}
As in the Mirror Conjecture \ref{mirror conjecture}, it is predicted that the open Gromov-Witten invariants of the Aganagic-Vafa A-brane $L$ in a Calabi-Yau 3-fold $X_\Delta^3$ can be computed by considering the superpotential $\WW$ of its corresponding mirror B-brane $L^\vee$ in the mirror manifold $X^\vee$. 

Recently, this mirror conjecture has been proven by Fang and Liu in \cite{Fang Liu}. We first state their result, then move on to express the equations that governs $L^\vee$. In other words, we make use of the mirror theorem to compute the coordinates $z_1,z_2$ of $L^\vee$ as in Theorem \ref{main theorem Lv=z1,z2}.

\begin{thm} \label{mirror theorm F=W by Fang}
Let $X$ and $X^\vee$ be a pair of mirror Calabi-Yau 3-folds. Suppose $L$ be an Aganagic-Vafa A-brane as in Definition \ref{def AV brane} with mirror brane $L^\vee$ as Conjecture \ref{mirror conjecture}. Then
$$F_0(Q_0,Q_1,...,Q_{m-3})=\WW_0(q_0,q_1,...,q_{m-3}) \text{  up to the (open/close) mirror map}$$ 
where
\begin{align}
F_0(\vec{Q})&:=\sum_{\beta\in H^{eff}_2(X,L;\ZZ)} n_\beta\vec{Q}^\beta \label{eqt define F0}\\
\WW_0(\vec{q})&:= \WW(\vec{q}) - \sum_{i,j=0}^{m-3} a_{ij} log(q_i)log(q_j)
\end{align}
such that $F_0\in \CC[[\vec{Q}]],\WW_0\in \CC[[\vec{q}]]$ are both power series with $\WW_0$ be the power series part of the Laurent series $\WW$, and $\vec{Q}^\beta=Q_0^{b_0}\cdots Q_{m-3}^{b_{m-3}}$ where $\beta$ corresponds to $(b_0,...,b_{m-3})$ under the identification $H_2(X,L;\ZZ)\simeq \ZZ^{m-2}$ as in \eqref{compute dim of H2 s}.
\end{thm}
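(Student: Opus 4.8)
The plan is to exhibit both sides of the claimed identity as members of a single space of solutions of a hypergeometric (GKZ / Picard--Fuchs) system of PDEs determined by the fan $\Sigma$, and then to identify the two particular solutions by comparing leading coefficients; this is the strategy of Fang--Liu \cite{Fang Liu}. First I would treat the B--model side. The chain integral $\WW(\vec q)=\int_{\Gamma_{q_0,\dots,q_{m-3}}}\Omega_{q_1,\dots,q_{m-3}}$ on $X^\vee=\{uv=W(x,y)\}$ reduces, after integrating out $u,v$ and using $\Omega=\tfrac{du}{u}\wedge\tfrac{dx}{x}\wedge\tfrac{dy}{y}$, to a one--dimensional Abel--Jacobi integral of the form $\int \log y\,\tfrac{dx}{x}$ over a relative $1$--chain on the mirror curve $\{W(x,y)=0\}\subset(\CC^\times)^2$ with one endpoint at the brane point $z_1=q_0$. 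Differentiating in the closed parameters $q_1,\dots,q_{m-3}$ kills the multivaluedness coming from $\log y$ and reduces derivatives of $\WW$ to honest periods of the family of mirror curves, which are governed by the reduced Picard--Fuchs operators — precisely the restriction of the GKZ system of the toric data to the curve. One then checks that $\WW$ differs from a single--valued power series solution only by the quadratic pluri--logarithm $\sum a_{ij}\log q_i\log q_j$, so that $\WW_0$ is the distinguished power--series solution of the system with the expected leading term.

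Second, the A--model side. Apply virtual localization (Katz--Liu, Graber--Zaslow) to the moduli spaces $\bar{\MM}_1(L;\beta)$ using the Hamiltonian torus action on $X_\Delta$ that preserves $L$. The fixed loci decompose as a holomorphic disc mapping onto the unique toric $1$--orbit meeting $L$, glued to a forest of closed stable maps into $X_\Delta$; the disc factor is an elementary binomial expression in the charge/framing data $l^{(1)},l^{(2)}$, while the closed contributions reassemble into Givental's $J$--function (equivalently the $I$--function) of $X_\Delta$. Consequently $F_0(\vec Q)=\sum_\beta n_\beta\vec Q^\beta$, after substituting the closed mirror map $\vec q=\vec q(\vec Q)$, is expressible through the same hypergeometric $I$--series and is annihilated by the identical GKZ operators; here the closed mirror theorem for toric Calabi--Yau threefolds (Givental, Lian--Liu--Yau) is what converts the equivariant $I$--function into the non--equivariant closed Gromov--Witten data that feed the open invariants $n_\beta$.

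Finally, match the two solutions. Both $F_0$ composed with the mirror map and $\WW_0$ lie in the one--dimensional space of solutions of the open GKZ system that are power series with the prescribed leading monomial, so it remains only to compare constant and linear terms: the closed mirror map of $X_\Delta$ identifies $q_1,\dots,q_{m-3}$ with $Q_1,\dots,Q_{m-3}$ to leading order, and the open mirror map (the relation between $q_0$ and $Q_0$, read off from the $O(Q_0)$ term of $z_1$) identifies the remaining variable; with these normalisations the leading terms of $F_0$ and $\WW_0$ agree, forcing equality of the full series. The main obstacle is the A--side step: carrying out the localization on the \emph{open} moduli space $\bar{\MM}_1(L;\beta)$ rigorously — defining the virtual fundamental chain in the presence of the Lagrangian boundary, controlling the disc--bubble contributions and the framing ambiguity, and then recognising the resulting combinatorial sum as a hypergeometric series matching the B--side residue expansion. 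The careful bookkeeping of the mirror map, so that the equality holds on the nose rather than merely up to a further reparametrisation, is the other delicate point.
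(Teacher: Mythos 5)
The paper does not give its own proof of this theorem but simply cites Fang--Liu \cite{Fang Liu}, where it is the main result. Your sketch is an accurate high-level account of exactly that argument — Abel--Jacobi/GKZ analysis on the B-side, virtual localization reducing open invariants to the equivariant $I$-function on the A-side, and matching the two as distinguished power-series solutions via the open and closed mirror maps — so you and the paper are pointing to the same proof.
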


\begin{proof}
The proof can be found in \cite{Fang Liu} which is the main result of the paper.
\end{proof}

Notice that the A-model generating functions appear in the mirror conjecture in Conjecture \ref{mirror conjecture} and the mirror theorem above Theorem \ref{mirror theorm F=W by Fang} has different definitions.However, we can show that $F_0$ and $F$ are the same via the multiple cover formula:

\begin{lem}
The series $F_0$ and $F$, as defined in \eqref{eqt define F0} and \eqref{eqt F define} respectively, are the same in $\CC[[Q_0,Q_1,...,Q_{m-3}]]$.
\end{lem}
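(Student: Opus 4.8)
The plan is to compare the two generating series coefficient by coefficient, using the fact that both are built out of the same open Gromov--Witten data but packaged differently. Recall that $F$, as defined in \eqref{eqt F define}, is the ``BPS'' or integer-invariant form
$$F(\vec{Q}) = \sum_{p=1}^\infty \sum_{b_0,\dots,b_{m-3}\geq 0} \frac{N_{b_0,\dots,b_{m-3}}}{p^2} Q_0^{pb_0}\cdots Q_{m-3}^{pb_{m-3}},$$
whereas $F_0$, as in \eqref{eqt define F0}, is the ``rational'' form $F_0(\vec{Q}) = \sum_{\beta} n_\beta \vec{Q}^\beta$ where $n_\beta$ is the genus-zero open Gromov--Witten invariant of the class $\beta$. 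First I would recall the Aspinwall--Morrison-type multiple cover formula for discs: the open Gromov--Witten invariant of a class $\beta$ decomposes as $n_\beta = \sum_{k \mid \beta} \frac{1}{k^2} N_{\beta/k}$, where the sum runs over positive integers $k$ such that $\beta/k$ is still an integral effective class and $N_{\beta/k}$ is the corresponding integer (BPS disc) invariant. This is the open-string analogue of the usual multiple cover contribution and is exactly the relation that makes the two packagings interchangeable.

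The key steps, in order, are: (i) substitute the multiple cover formula $n_\beta = \sum_{k\mid\beta} k^{-2} N_{\beta/k}$ into the definition of $F_0$; (ii) reindex the resulting double sum by writing $\beta = k\alpha$ with $\alpha = (b_0,\dots,b_{m-3})$ ranging over primitive-or-arbitrary effective classes and $k \geq 1$, so that $\vec{Q}^\beta = \vec{Q}^{k\alpha} = Q_0^{kb_0}\cdots Q_{m-3}^{kb_{m-3}}$; (iii) observe that after this reindexing the sum becomes $\sum_{k\geq 1}\sum_{\alpha} k^{-2} N_\alpha \vec{Q}^{k\alpha}$, which is term-for-term the defining expression for $F$ with $p$ playing the role of $k$. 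Since the correspondence between $H_2(X,L;\ZZ)$ and $\ZZ^{m-2}$ in \eqref{compute dim of H2 s} identifies $\beta$ with $(b_0,\dots,b_{m-3})$ compatibly with the grading used in both \eqref{eqt F define} and \eqref{eqt define F0}, this identification is an equality in $\CC[[Q_0,\dots,Q_{m-3}]]$.

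The main obstacle I expect is justifying the multiple cover formula itself in this open-string, toric Calabi-Yau threefold setting with the specified Aganagic--Vafa boundary condition --- one must cite the appropriate structural result (e.g.\ from Fang--Liu \cite{Fang Liu} or the Fukaya--Oh--Ohta--Ono framework \cite{FOOO} governing the moduli spaces $\bar{\MM}_k(L;\beta)$, as already invoked in the definition of $n_\beta$) rather than re-derive it, since it relies on the virtual class computation for multiply-covered disc configurations. A secondary, purely bookkeeping point is to check that the range of summation matches: in $F_0$ the sum is over all effective $\beta \in H_2^{eff}(X,L;\ZZ)$, while in $F$ one sums over $p \geq 1$ and over tuples $(b_0,\dots,b_{m-3}) \geq 0$; one must confirm that every effective $\beta$ with nonzero $n_\beta$ does lie in the nonnegative orthant under the chosen identification (which follows from effectivity together with the explicit description of $H_2(X,L;\ZZ)$ for Aganagic--Vafa branes), so that no terms are lost or double-counted. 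Once these two points are in place the proof is a one-line reindexing.
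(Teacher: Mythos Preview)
Your proposal is correct and follows essentially the same route as the paper: both arguments invoke the open multiple cover formula $n_\beta = \sum_{p\mid\beta} p^{-2} N_{\beta/p}$ and then perform the obvious reindexing $\beta = p\alpha$ to identify the two series. The only notable difference is that the paper treats the multiple cover formula as an \emph{assumed conjecture} (citing \cite{Lin}) rather than as a consequence of \cite{Fang Liu} or \cite{FOOO}, so your expectation that it can simply be cited as an established structural result is slightly more optimistic than what the paper itself commits to.
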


\begin{proof}
By the multiple cover formula and analogues to the genus 0 closed Gromov-Witten invariants multiple cover formula, we assume the conjecture as in \cite{Lin} for genus 0 open Gromov-Witten invariants multiple cover formula, that is,
\begin{align}
n_\beta = \sum_{0< p|\beta} \frac{N_{\beta/p}}{p^2}
\end{align}
summing over integer $p\in\ZZ$ such that $(\beta/p)\in H_2^{eff}(X,L;\ZZ)$  where $\beta$ is an effective relative 2-cycle in $H_2^{eff}(X,L;\ZZ)$,
$n_\beta\in \QQ$ is the open Gromov-Witten invariants and $N_\beta\in \ZZ$ is the integral open instanton number. With this relation between $n_\beta$ and $N_\beta$. We now show that $F=F_0$. We first define $\tilde{N}:H_2^{eff}(X,L;\QQ) \longrightarrow \ZZ$ define by
\begin{align}
\tilde{N}_\beta =
\begin{cases}
	N_\beta, & \text{if $\beta\in H_2^{eff}(X,L;\ZZ)$} \\
    0  , & \text{if otherwise}
\end{cases}
\end{align}
According to definitions of the series $F_0$ and $F$ in $\CC[[Q_0,Q_1,...,Q_{m-3}]]$, we compute that
$$F_0:=	\sum_{p>0} \sum_{\beta>0} \frac{N_\beta}{p^2}\vec{Q}^{p\beta}=	\sum_{p>0} \sum_{\beta>0, m|\beta} \frac{N_{\beta/p}}{p^2}\vec{Q}^{\beta}=\sum_{p>0} \sum_{\beta>0} \frac{\tilde{N}_{\beta/p}}{p^2}\vec{Q}^{\beta}$$
and on the other hand
$$F:=\sum_{\beta>0} n_\beta \vec{Q}^{\beta}=\sum_{\beta>0} (\sum_{p>0,p|\beta}  \frac{N_{\beta/p}}{p^2}) \vec{Q}^{\beta}=\sum_{\beta>0} \sum_{p>0}  \frac{\tilde{N}_{\beta/p}}{p^2} \vec{Q}^{\beta}$$
Thus we have $F=F_0\in\CC[[Q_0,Q_1,...,Q_{m-3}]]$ and hence the proof is completed.
\end{proof}

\begin{rmk}
Assume that the (open) multiple cover formula is true for Aganagic-Vafa branes in toric Calabi-Yau manifolds, then the above lemma implies that the mirror prediction as in Conjecture \ref{mirror conjecture} is true.
\end{rmk}

We observe that the Mirror Theorem \ref{mirror theorm F=W by Fang} involves the mirror map. In order to express $L^\vee$ as in Theorem \ref{main theorem Lv=z1,z2}, we need the explicit expression of the mirror map. In terms of the (open/closed) K$\ddot{a}$hler parameters $(Q_0,Q_1,...,Q_{m-3})$ and the (open/closed) complex structure parameters $(q_0,q_1,...,q_{m-3})$, we define the(open/closed) mirror map as follows:

\begin{defn} 
\label{define mirror map Mir}
With notations above, we define the (open/closed) mirror map
\begin{align*}
\MM ir:
\left\{
\begin{array}{c}
\text{Complex}\\
\text{Deformation}\\
\text{of $(X^\vee,L^\vee)$}
\end{array}\right\}
\longrightarrow
\left\{
\begin{array}{c}
\text{Symplectic}\\
\text{Deformation}\\
\text{of $(X,L)$}
\end{array}\right\}
\end{align*}
by $\MM ir:(q_0,q_1,...,q_{m-3})\longmapsto(Q_0,Q_1,...,Q_{m-3})$ such that
\begin{align} \label{definition QA in terms of qB}
Q_a=q_aexp\Big( \sum_{j=0}^{m-1}l_j^{(a)} A_j(q_1,...,q_{m-3}) \Big)
\end{align}
for each $a=0,1,2$ where $A_j\in \QQ[[q_1,...,q_{m-3}]]$ is a power series defined by $A_j(\vec{q}):= \sum_{\beta>0} E_j(\beta) \vec{q}^\beta$, $\vec{l}_0$ is an extra charge defined by $\vec{e}_1-\vec{e}_0\in \ZZ^m$ and $\vec{l}_1,\vec{l}_2\in\ZZ^m$ are defined as in Remarks \ref{remarks WLOG define l1 and l2}.
\end{defn}

With this particular form of the (open/closed) mirror map $\MM ir$, we observe that there is an equality between $q_0\partial/\partial q_0$ and $Q_0\partial/\partial Q_0$ via the push-forward map $\MM ir_*$. This equality will eventually help us the part (1) of our main Theorem \ref{main theorem Lv=z1,z2}.

\begin{lem} \label{lemma Qd/dQ = qd/dq}
Let $\MM ir$ be the mirror map as above Definition \ref{define mirror map Mir}. Then
$$\MM ir_*(q_0\frac{\partial}{\partial q_0})=Q_0\frac{\partial}{\partial Q_0}$$
where $\MM ir_*$ is the push-forward map of $\MM ir$ between the tangent bundles.
\end{lem}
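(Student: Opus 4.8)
Proof proposal for Lemma \ref{lemma Qd/dQ = qd/dq}. The plan is to reduce the identity to a one-line application of the chain rule, using the very rigid shape of the mirror map recorded in Definition \ref{define mirror map Mir}. First I would pass to logarithmic derivatives: write $\theta_a := q_a\,\partial/\partial q_a$ and $\Theta_a := Q_a\,\partial/\partial Q_a$, so that the claim becomes $\MM ir_*(\theta_0)=\Theta_0$. Taking logarithms of \eqref{definition QA in terms of qB} gives
$$\log Q_a \;=\; \log q_a \;+\; \sum_{j=0}^{m-1} l_j^{(a)}\, A_j(q_1,\dots,q_{m-3}),$$
and the decisive structural observation is that each $A_j(\vec q)=\sum_{\beta>0}E_j(\beta)\vec q^{\,\beta}$ is a power series in the \emph{closed} complex parameters $q_1,\dots,q_{m-3}$ only, with no dependence on the open parameter $q_0$. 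Consequently $Q_0$ is the only component of $\MM ir$ whose expression involves $q_0$ (and it does so in the trivial multiplicative way $Q_0 = q_0\cdot f(q_1,\dots,q_{m-3})$), while the closed mirror map $q_i\mapsto Q_i$ for $i\geq 1$ involves $q_1,\dots,q_{m-3}$ alone.

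Next I would compute the differential of $\MM ir$ in the $q_0$-direction. Since $\MM ir$ is a local biholomorphism near the large-complex-structure point, $\MM ir_*\bigl(\partial/\partial q_0\bigr)\big|_Q = \sum_a \frac{\partial Q_a}{\partial q_0}\,\frac{\partial}{\partial Q_a}$. From the displayed formula, $\frac{\partial}{\partial q_0}\log Q_a = \frac{\partial}{\partial q_0}\log q_a$ because the $A_j$-terms are $q_0$-independent; hence $\frac{\partial Q_a}{\partial q_0} = Q_a\cdot\frac{\partial}{\partial q_0}\log q_a$, which equals $Q_0/q_0$ for $a=0$ and vanishes for every $Q_a$ with $a\geq 1$. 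Multiplying through by $q_0$ yields $\MM ir_*(\theta_0) = q_0\,\frac{\partial Q_0}{\partial q_0}\,\frac{\partial}{\partial Q_0} = Q_0\,\frac{\partial}{\partial Q_0} = \Theta_0$, which is exactly the asserted equality.

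I do not anticipate a genuine obstacle: the content is entirely bookkeeping. The one point that must be stated with care is the $q_0$-independence of the power series $A_j$ together with its corollary that $Q_0$ is the unique Kähler parameter whose mirror-map image carries a $q_0$; both are immediate from Definition \ref{define mirror map Mir}. A minor secondary remark worth including, to justify that $\MM ir_*$ is well defined as a map of tangent bundles in the first place, is that the Jacobian of $\MM ir$ is unipotent and triangular with respect to the filtration by powers of $\vec q$ near the base point, hence invertible there.
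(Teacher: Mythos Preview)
Your proposal is correct and follows essentially the same approach as the paper's proof: both apply the chain rule to $\MM ir_*(\partial/\partial q_0)$, use the key observation that the $A_j$ depend only on the closed parameters $q_1,\dots,q_{m-3}$ so that $\partial Q_a/\partial q_0=0$ for $a\geq 1$, and compute $\partial Q_0/\partial q_0=Q_0/q_0$. Your logarithmic-derivative framing is a minor stylistic variation, and the remark on invertibility of the Jacobian is extra but harmless.
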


\begin{proof}
By definition of the mirror map $\MM ir:(q_0,q_1,...,q_{m-3})\longmapsto(Q_0,Q_1,...,Q_{m-3})$, we compute the push forward of $(\partial/\partial q_0)$ by chain rule, that is,
$$ \MM ir_*(\frac{\partial}{\partial q_0}) = \sum_{j=0}^{m-3} \frac{\partial Q_j}{\partial q_0}\cdot \frac{\partial}{\partial Q_j} $$
According to \eqref{definition QA in terms of qB}, we have
$$\frac{\partial Q_1}{\partial q_0} = \cdots = \frac{\partial Q_{m-3}}{\partial q_0} = 0 $$
since $A_1,...,A_{m-3}$ are functions of $(q_1,...,q_{m-3})$ and also
$$\frac{\partial Q_0}{\partial q_0} =
\frac{\partial}{\partial q_0}\Big(q_0exp \sum_{j=0}^{m-1}l_j^{(0)} A_j(q_1,...,q_{m-3}) \Big)
=exp\Big( \sum_{j=0}^{m-1}l_j^{(0)} A_j(q_1,...,q_{m-3}) \Big)=\frac{Q_0}{q_0}.$$
Then we have
$$\MM ir_*(q_0\frac{\partial}{\partial q_0}) = Q_0 \frac{\partial}{\partial Q_0}.$$
Thus the proof is completed.
\end{proof}

With above preparations, we are now ready to prove the first part of Theorem \ref{main theorem Lv=z1,z2}, that is, the mirror $L^\vee$ can be expressed as
\begin{align*}
L^\vee =
\left\{
(u,v,z_1,z_2)\in X^\vee \subset  (\CC^\times)^2\times\CC^2 :
\begin{array}{rllll}
&uv=0 \ \ ;\ \
z_1=q_0\ \ ;\\
&z_2=exp \Big( -\sum_{\beta\in H^{eff}_2(X,L;\ZZ)} n_\beta [\partial\beta] Q^\beta \Big)
\end{array}
\right\}
\end{align*}

\begin{proof}[Proof of Theorem \ref{main theorem Lv=z1,z2} part(1)]

Let $L$ and $L^\vee$ be mirror pair as defined in Conjecture \ref{mirror conjecture}. According to the Mirror Theorem \ref{mirror theorm F=W by Fang}, we have $$F(\vec{Q}(\vec{q}))=\WW_0(\vec{q})$$ where $\vec{Q}(\vec{q})$ is the (open/close) mirror map form the complex structure moduli to the K$\ddot{a}$hler moduli.
And by the previous Lemma \ref{lemma Qd/dQ = qd/dq}, we then have
$$Q_0\frac{\partial}{\partial Q_0}F(Q_0,Q_1,...,Q_{m-3})=q_0\frac{\partial}{\partial q_0}\WW_0(q_0,q_1,...,q_{m-3}).$$
By expressing $F(Q)$ and $W(q)$, we have
$$Q_0\frac{\partial}{\partial Q_0}\sum_{\beta>0}\sum_{p>0} \frac{N_\beta}{p^2}\vec{Q}(\vec{q})^{p\beta}=-log\ z_2(q_0,q_1,...,q_{m-3}).$$
By the multiple cover formula for open Gromov-Witten invariants, we compute that
\begin{align*}
z_2(q_0,q_1,...,q_{m-3})
	&= exp(-Q_0\frac{\partial}{\partial Q_0}\sum_{\beta>0} n_\beta \vec{Q}^\beta) \\
    &= exp(-Q_0\frac{\partial}{\partial Q_0}\sum_{\beta>0} n_\beta Q_0^{b_0}\cdots Q_{m-3}^{b_{m-3}}) \\
    &= exp(-\sum_{\beta>0} n_\beta b_0 Q_0^{b_0}\cdots Q_{m-3}^{b_{m-3}}) \\
    &= exp(-\sum_{\beta>0} n_\beta [\partial \beta] Q_0^{b_0}\cdots Q_{m-3}^{b_{m-3}})
\end{align*}
Thus the proof is completed.


\end{proof}

Instead of expressing the coordinate $z_2$ of $L^\vee$ by the implicit function $z_2(z_1)$ of the mirror curve $W(z_1,z_2)=0$ as in Proposition \ref{well defined in R}. By Theorem \ref{main theorem Lv=z1,z2} part (1), we now have more explicit formula for $z_2$ in terms of the (open/closed) K$\ddot{a}$hler parameters $\vec{Q}$. A natural question that follows is to look for similar expression of the coordinate $z_1$ in terms of $\vec{Q}$ which will be investigated in the next section.

\section{SYZ mirror map}

The goal of this section is to present a proof of Theorem \ref{main theorem Lv=z1,z2} part (2) and thus complete the proof of the theorem. The main reasoning involves the use of the SYZ mirror map introduced recently  in \cite{Chan Lau Leung}. The main result of that paper \cite{Cho Chan Lau Tseng} is that SYZ mirror map is nothing but the inverse of the mirror map $\MM ir$.

\begin{defn} \label{defn of SYZ mirror map}
Using the notations as we defined the mirror map $\MM ir$ in Definition \ref{define mirror map Mir}, we define the SYZ mirror map $\MM ir^{SYZ}$:
\begin{align*}
\MM ir^{SYZ}:
\left\{
\begin{array}{c}
\text{Symplectic}\\
\text{Deformation}\\
\text{of $(X,\omega)$}
\end{array}\right\}
\longrightarrow
\left\{
\begin{array}{c}
\text{Complex}\\
\text{Deformation}\\
\text{of $(X^\vee,J^\vee)$}
\end{array}\right\}
\end{align*}
by $\MM ir^{SYZ}:(Q_1,...,Q_{m-3})\longmapsto(q_1,...,q_{m-3})$ such that
$$q_a=Q_a \prod_{j=0}^{m-1}(1+\delta_j)^{\iota_{j}^{(a)} }$$
for each $a=1,...,m-3$ where $(1+\delta_j)$ is the generation functions of genus 0 open Gromov-Witten invariants of $(X,L_+)$ as defined in \eqref{definition of delta i} and $(\iota^{(a)}_j)^{1\leq a \leq m-n}_{0\leq j \leq m-1}$ represents the matrix of the linear map $\iota:\ZZ^{m-n}\simeq G \Rightarrow \ZZ^m$ as in \eqref{toric exact sequence}.
\end{defn}

Notice that the definition of the SYZ mirror map $\MM ir^{SYZ}$ involves the open Gromov-Witten invariants for SYZ fibers. The below theorem which relates $\MM ir^{SYZ}$ and $\MM ir$ implies that the mirror map $\MM ir$, that is the series $$Q_a=q_aexp\Big( \sum_{j=0}^{m-1}l_j^{(a)} A_j(q_1,...,q_{m-3}) \Big), $$ surprisingly encodes the information of the open Gromov-Witten invariants $n_\beta$ for SYZ fibers.

\begin{thm} \label{thm SYZ is inv of Mir map}
Let $X$ be a toric Calabi-Yau 3-fold. Then the SYZ mirror map $\MM ir^{SYZ}$ is inverse to the mirror map $\MM ir$ locally around $(Q_1,...,Q_{m-3})=\vec{0}$, that is $\MM ir^{SYZ}=(\MM ir)^{-1}$ on some neighborhood of $(Q_1,...,Q_{m-3})=\vec{0}$.
\end{thm}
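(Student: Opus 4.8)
The plan is to deduce Theorem \ref{thm SYZ is inv of Mir map} by comparing two explicit power-series expressions: the mirror map $\MM ir$ given by $Q_a = q_a \exp\bigl(\sum_{j=0}^{m-1} l_j^{(a)} A_j(q_1,\dots,q_{m-3})\bigr)$ from Definition \ref{define mirror map Mir}, and the SYZ mirror map $\MM ir^{SYZ}$ given by $q_a = Q_a \prod_{j=0}^{m-1}(1+\delta_j)^{\iota_j^{(a)}}$ from Definition \ref{defn of SYZ mirror map}. The key input is the identity, established in Cho--Chan--Lau--Tseng \cite{Cho Chan Lau Tseng}, relating the open Gromov--Witten generating functions $1+\delta_j$ of the SYZ fibers to the hypergeometric-type series $A_j$: namely that, after the change of variables, $\log(1+\delta_j) = -A_j$ (equivalently $1+\delta_j = \exp(-\sum_{\alpha>0} E_j(\alpha)\, q^\alpha)$), so that the factors $\prod_j (1+\delta_j)^{\iota_j^{(a)}}$ and $\exp(\sum_j l_j^{(a)} A_j)$ are mutually inverse once one matches the charge vectors $\iota^{(a)}$ with $l^{(a)}$. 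First I would recall from \cite{Cho Chan Lau Tseng} this enumerative identification of the inverse mirror map coefficients with the $n_\beta$'s for SYZ fibers, and record the precise matching of indices between the lattice map $\iota:\ZZ^{m-n}\to\ZZ^m$ and the charges appearing in the $A_j$.

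Next I would verify that $\MM ir^{SYZ}$ is a well-defined formal/analytic map near the large-volume limit $(Q_1,\dots,Q_{m-3})=\vec 0$: each $1+\delta_j$ is a power series in the $Q$'s with constant term $1$ (by Proposition \ref{n beta}, the leading contribution is $n_{\beta_j}=1$ and all $\alpha>0$ corrections are higher order), hence $\prod_j (1+\delta_j)^{\iota_j^{(a)}}$ is a unit in $\CC[[Q_1,\dots,Q_{m-3}]]$ and $q_a = Q_a\cdot(1+O(Q))$. Symmetrically, $\MM ir$ has the form $Q_a = q_a\cdot(1+O(q))$ since each $A_j$ vanishes at $\vec q = \vec 0$. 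Both maps are therefore tangent to the identity at the origin, so each is a local isomorphism by the formal inverse function theorem, and it suffices to check $\MM ir \circ \MM ir^{SYZ} = \mathrm{id}$ as formal power series.

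The substitution step is then a direct computation: plugging $q_a = Q_a\prod_{j}(1+\delta_j)^{\iota_j^{(a)}}$ into $Q_a' := q_a \exp(\sum_j l_j^{(a)} A_j(q))$ and using $A_j = -\log(1+\delta_j)$ together with the linear-algebra identity $\sum_j l_j^{(a)}\iota_j^{(b)}$-type orthogonality/duality coming from the exact sequence \eqref{toric exact sequence}, the correction factors cancel and one recovers $Q_a' = Q_a$. I would carry this out coordinate by coordinate for $a=1,\dots,m-3$, being careful that the $A_j$ are evaluated at the image point $q = \MM ir^{SYZ}(Q)$ and that this nested substitution converges in the $(Q)$-adic topology (guaranteed by the $O(Q)$ estimates above). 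The main obstacle I anticipate is not the formal bookkeeping but pinning down precisely the identification from \cite{Cho Chan Lau Tseng} — i.e.\ that the coefficients $E_j(\alpha)$ in $A_j$ are exactly the SYZ-fiber open Gromov--Witten data packaged in $\delta_j$, with the correct signs and the correct matching of the dual bases $\{v_i^*\}$, the charges $l^{(a)}$, and the columns $\iota^{(a)}$ of $\iota$; all of the rest is a routine manipulation of units in a formal power series ring. Once that dictionary is fixed, the theorem follows by the inverse-function-theorem argument above, and the local neighborhood of $(Q_1,\dots,Q_{m-3})=\vec 0$ on which the inversion is valid is exactly the common domain of convergence of the series $\delta_j$ and $A_j$.
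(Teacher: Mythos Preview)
The paper does not give an independent proof of this theorem: its entire proof is a one-line citation to \cite{Cho Chan Lau Tseng}, Section 7.2, treating the result as a black box imported from the literature. Your proposal is therefore not so much an alternative to the paper's argument as an attempt to unpack what that reference actually does.

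Your outline is broadly accurate as a description of the strategy in \cite{Cho Chan Lau Tseng}, and the formal power-series bookkeeping you describe (tangent-to-identity, inverse function theorem, cancellation of unit factors) is correct and routine. However, you should be aware that the step you flag as ``the key input'' --- the identity $1+\delta_j = \exp(-A_j)$ after the change of variables --- is not a preliminary lemma one cites on the way to the theorem; it \emph{is} the theorem. Once that identity is known, the statement $\MM ir^{SYZ} = (\MM ir)^{-1}$ is an immediate unwinding of the two definitions, exactly as you say. The substantive content of \cite{Cho Chan Lau Tseng} lies in proving that identity, which requires computing the open Gromov--Witten invariants $n_{\beta_j+\alpha}$ of the SYZ fibers (via toric degeneration and comparison with closed invariants) and matching them against the explicit hypergeometric coefficients $E_j(\alpha)$. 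Your proposal correctly anticipates this (``the main obstacle \ldots\ is pinning down precisely the identification from \cite{Cho Chan Lau Tseng}''), but you should not present the argument as if the enumerative identity were an available tool and the inversion the real work; it is the other way around.

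One minor point: you mention an ``orthogonality/duality'' relation $\sum_j l_j^{(a)}\iota_j^{(b)}$ coming from the exact sequence. No such pairing is needed here. For the closed parameters $a=1,\dots,m-3$, the charge vectors $l^{(a)}$ appearing in Definition \ref{define mirror map Mir} and the columns $\iota^{(a)}$ of $\iota$ in Definition \ref{defn of SYZ mirror map} are literally the same vectors (both describe the map $G\hookrightarrow\ZZ^m$), so the cancellation is tautological once $A_j = -\log(1+\delta_j)$ is in hand.
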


\begin{proof}
The proof can be found in Cho-Chan-Lau-Tseng\cite{Cho Chan Lau Tseng}, Section 7.2 which is one of the main results of the paper.
\end{proof}

With the definitions of the 2 mirror maps in Definition \ref{define mirror map Mir} and \ref{defn of SYZ mirror map}, we immediately obtain some relationships between the closed complex parameters $q_1,...,q_k$ and the closed K$\ddot{a}$hler parameters $Q_1,...,Q_k$ and we also obtain an equality between the open K$\ddot{a}$hler parameter $Q_0$ and the open and closed complex parameters $q_0,...,q_k$.

\begin{lem}
With the notations as in the definition of the mirror map and the SYZ mirror map in Definition \ref{define mirror map Mir} and \ref{defn of SYZ mirror map}, we have
\begin{enumerate}
	\item
    	$q_a=Q_a \prod_{j=1}^{m-1}\Big(1+\sum_{\alpha >0} n_{\beta_j+\alpha}\vec{Q}^\alpha)\Big)^{\iota_j^{(i)}}$
        for $a=1,...,m-3$, and
	\item
    	$Q_0=q_0 \cdot exp \Big( -\sum_{\alpha>0}  E_1(\alpha)   q_1^{a_1}\cdots q_{m-3}^{a_{m-3}} \Big)$.
\end{enumerate}
\end{lem}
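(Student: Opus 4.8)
The plan is to read off both identities from the two mirror-map formulas already available — Definition~\ref{define mirror map Mir} for $\MM ir$ and Definition~\ref{defn of SYZ mirror map} for $\MM ir^{SYZ}$ — after invoking Theorem~\ref{thm SYZ is inv of Mir map} to guarantee that the parameters $(q_1,\dots,q_{m-3})$ appearing in the two definitions are one and the same, so that part~(1) (an expression for $q_a$ in terms of $\vec Q$) and part~(2) (an expression for $Q_0$ in terms of $\vec q$) really concern a single mirror map and its inverse. Neither statement needs new geometric input; the work is to substitute the enumerative identity \eqref{definition of delta i} and to keep the conventions straight.

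For part~(1) I would start from Definition~\ref{defn of SYZ mirror map}, which already records $q_a=Q_a\prod_{j=0}^{m-1}(1+\delta_j)^{\iota^{(a)}_j}$ for $a=1,\dots,m-3$, and then rewrite each factor using \eqref{definition of delta i}: identifying the closed Kähler parameters with $\vec Q^{\alpha}=\exp(-\int_\alpha\omega)$ gives $1+\delta_j=1+\sum_{\alpha>0}n_{\beta_j+\alpha}\vec Q^{\alpha}$, hence $q_a=Q_a\prod_{j=0}^{m-1}\big(1+\sum_{\alpha>0}n_{\beta_j+\alpha}\vec Q^{\alpha}\big)^{\iota^{(a)}_j}$. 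The one delicate point is that the $j=0$ factor must be trivial, so that the product may be started at $j=1$; I would derive this either from $\delta_0=0$ in the Gross-fibration setting (the basic disc in class $\beta_0$ carrying no relevant quantum correction, cf.\ Theorem~\ref{thm z tilde with defn of delta}) or, failing that, from $\iota^{(a)}_0=0$ in the chosen presentation of $\iota\colon\ZZ^{m-3}\simeq G\to\ZZ^m$.

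For part~(2) I would specialize \eqref{definition QA in terms of qB} to the open coordinate $a=0$. With the auxiliary charge $\vec l_0=\vec e_1-\vec e_0\in\ZZ^m$ of Definition~\ref{define mirror map Mir}, only the $j=0$ and $j=1$ summands survive, giving $Q_0=q_0\exp\big(A_1(\vec q)-A_0(\vec q)\big)$ with $A_j(\vec q)=\sum_{\alpha>0}E_j(\alpha)\vec q^{\alpha}$. Since $E_j(\alpha)$ is nonzero only when $\langle D_j,\alpha\rangle<0$, and $D_0=(\mu')^{-1}(\partial B')_0=\{\chi^u=\epsilon\}$ is disjoint from every compact rational curve (a compact rational curve sits in the toric boundary $\{\chi^u=0\}$), one has $\langle D_0,\alpha\rangle=0$ on effective curve classes, so $A_0\equiv0$, leaving $Q_0=q_0\exp\big(\sum_{\alpha>0}E_1(\alpha)\,q_1^{a_1}\cdots q_{m-3}^{a_{m-3}}\big)$; the sign/normalization in the displayed statement is then a matter of matching the conventions for $E_i$ and for $\vec l_0$ used in \cite{Fang Liu,Cho Chan Lau Tseng}.

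I expect the main obstacle to be bookkeeping rather than anything conceptual: keeping consistent the three labelings in play — the toric charge matrix $\iota^{(a)}_j$, the Aganagic-Vafa charges $l^{(1)},l^{(2)}$ together with the auxiliary $\vec l_0$, and the non-toric divisors $D_0,\dots,D_{m-1}$ — and verifying the two triviality facts flagged above (vanishing of the $j=0$ factor in part~(1) and of $A_0$ in part~(2)). I would isolate these as short preliminary claims, and I would also double-check that the closed Kähler parameters $Q_1,\dots,Q_{m-3}$ entering $\delta_j$ via \eqref{definition of delta i} are literally those appearing in the Aganagic-Vafa mirror theorem, since that identification is exactly what lets Theorem~\ref{thm SYZ is inv of Mir map} be applied verbatim.
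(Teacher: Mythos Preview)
Your overall strategy---read off both formulas from Definitions~\ref{define mirror map Mir} and~\ref{defn of SYZ mirror map} and substitute \eqref{definition of delta i}---is exactly the paper's approach, and part~(1) is handled just as the paper does it (the paper's proof actually leaves the product running from $j=0$, so the shift to $j=1$ in the statement is a cosmetic discrepancy you need not agonize over).

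For part~(2), however, there is a genuine slip. You took $\vec l_0=\vec e_1-\vec e_0$ from Definition~\ref{define mirror map Mir}, but that is a typo: the proof of the lemma (and Definition~\ref{def SYZ with further quantum correction}) use $l^{(0)}=\vec e_2-\vec e_1$, consistent with $\vec l_0=\vec l_2-\vec l_1$. With this convention the specialization \eqref{definition QA in terms of qB} at $a=0$ produces only the $j=1$ and $j=2$ terms, and the paper simply writes the resulting combination as ``$E_1(\alpha)$'' without ever needing an $A_0\equiv 0$ claim. So your detour through $A_0$ is unnecessary once the charge is corrected.

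More importantly, the argument you give for $A_0\equiv 0$ is based on the wrong object. You invoke the divisor $D_0=(\mu')^{-1}(\partial B')_0=\{\chi^u=\epsilon\}$ from the modified Gross fibration (Definition~2.11), but the $D_i$ appearing in the formula $E_i(\alpha)=(-1)^{-\langle D_i,\alpha\rangle-1}(-\langle D_i,\alpha\rangle-1)!/\prod_{j\neq i}\langle D_j,\alpha\rangle!$ are the \emph{toric} divisors, as in \cite{Fang Liu,Cho Chan Lau Tseng}. The non-toric $D_0$ is indeed disjoint from compact curves in the toric boundary, but that tells you nothing about the intersection of the toric divisor $\{z_0=0\}$ with effective curve classes, which is what governs whether $E_0(\alpha)$ vanishes. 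So even if you wished to keep your version of $l^{(0)}$, this step would need a different justification.
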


\begin{proof}
To prove part (1), we let $a=1,...,{m-3}$. Under the SYZ mirror map, we consider
$$q_a=(\MM ir^{SYZ}(Q_1,...,Q_{m-3}))_a=Q_a \prod_{j=0}^{m-1}(1+\delta_j)^{\iota_{j}^{(a)} }.$$
According to definition of the $\delta_j$'s in \eqref{definition of delta i}, we have $\delta_j=\sum_{\alpha>0} n_{\beta_j+\alpha}exp(-\int_\alpha \omega)$. We recall the definition of the K$\ddot{a}$hler parameters $Q_j=exp(-\int_{\alpha_j} \omega)$ for $j=1,...,{m-3}$ where we choose the 2-cycles $\alpha_j$ by the identification $\bigoplus_{j=1}^{m-3}\ZZ\cdot \alpha_j = H_2(X;\ZZ) \simeq \ZZ^{m-3}$. Hence, for $\alpha = a_1\alpha_1+\cdots+a_{m-3}\alpha_{m-3} \in H_2(X;\ZZ)$, we have $exp(-\int_\alpha \omega)=Q_1^{a_1}\cdots Q_{m-3}^{a_{m-3}}$. Then we obtain
$$q_a=Q_a\prod_{j=0}^{m-1}\Big( 1+\sum_{\alpha>0}n_{\beta_j+\alpha}Q_1^{a_1}\cdots Q_{m-3}^{a_{m-3}} \Big) ^ {\iota_j^{(a)}}$$ which completes part (1).

To prove part (2), we directly follow the definition of $\MM ir$ and we have
$$Q_0=\big(\MM ir(q_0,q_1,...,q_{m-3})\big)_0=q_0 exp\big( \sum_{j=0}^{m-1} l_j^{(a)} A_j(q_1,...,q_{m-3}) \big).$$
We recall that $A_j(q_1,...,{m-3})=\sum_{\alpha>0}E_j(\alpha)q_1^{a_1}\cdots q_{m-3}^{a_{m-3}}$ and $E_j(\alpha)\in\QQ$ according to their definitions in Definition \ref{define mirror map Mir}. And by definition of the extra charge $l^{(0)}\in \ZZ^m$, we have $l^{(0)}=\vec{e_2}-\vec{e_1}$. Hence we obtain
\begin{align*}
Q_0
	&= q_0\cdot exp \Big( \sum_{j=0}^{m-1} l_j^{(0)} \big( \sum_{\alpha>0} E_j(\alpha) q_1^{a_1}\cdots q_{m-3}^{a_{m-3}} \big) \Big)\\
    &=q_0 \cdot exp \Big( \sum_{\alpha>0} \big( \sum_{j=0}^{m-1}   l_j^{(0)} E_j(\alpha)  \big) q_1^{a_1}\cdots q_{m-3}^{a_{m-3}} \Big) \\
    &=q_0 \cdot exp \Big(\sum_{\alpha>0}  E_1(\alpha) q_1^{a_1}\cdots q_{m-3}^{a_{m-3}} \Big)
\end{align*}

\end{proof}
This lemma is important in proving part (2) of Theorem \ref{main theorem Lv=z1,z2} since that involves expressing the open complex parameter $q_0$ in terms of the open and closed K$\ddot{a}$hler parameters $Q_0,Q_1,...,Q_{m-3}$. With above preparations, we are now ready to prove it, that is
$$z_1=Q_0exp \Big( \sum_{i=1}^{m-1}\sum_{\alpha}l^{(0)}_iE_i(\alpha)Q^\alpha\prod_{j=1}^{m-1}(\sum_{\alpha'}n_{\beta_j+\alpha'}Q^{\alpha'})^{\sum_{a=1}^{m-1}\alpha_a\iota_j^{(a)}} \Big)$$

\begin{proof}[Proof of Theorem \ref{main theorem Lv=z1,z2} part(2)]
We first define a new quantity $E_0(\alpha):=E_1(\alpha)-E_2(\alpha)$ for any $\alpha\in H^{eff}_2(X;\ZZ)$ which is analogous to the definition of the extra charge $l^{(0)}=e_{i_1}-e_{i_2}$.
Using the above lemma, we substitute (1) into (2) to get
\begin{align}
	q_0
	&= Q_0 \cdot exp\Big( \sum_{\alpha>0} E_1(\alpha) q_1^{\alpha_1}\cdots q_{m-3}^{\alpha_{m-3}} \Big)\\
	&= Q_0 \cdot exp\Big( \sum_{\alpha>0} E_1(\alpha) \prod_{a=1}^{m-3}\big(Q_a\prod_{j=1}^{m-1}(1+\delta_j)^{\iota_j^{(a)}} \big) ^{\alpha_a } \Big)\\
    &= Q_0 \cdot exp\Big( \sum_{\alpha>0} E_1(\alpha)
    \big(\prod_{a=1}^{m-3} Q_a \big) ^{\alpha_a }
    \big(\prod_{a=1}^{m-3}\prod_{j=1}^{m-1}(1+\delta_j)^{\iota_j^{(a)}} \big) ^{\alpha_a }
    \Big)\\
    &= Q_0 \cdot exp\Big( \sum_{\alpha>0} E_1(\alpha)
    \vec{Q}^\alpha
    \prod_{j=1}^{m-1} (1+\delta_j) ^{\sum_{a=1}^{m-3} \iota_j^{(a)} \alpha_a }  \Big)\\
    &= Q_0 \cdot exp\Big( \sum_{\alpha>0} E_1(\alpha)
    \vec{Q}^\alpha
    \prod_{j=1}^{m-1} (1+\sum_{\alpha >0} n_{\beta_j+\alpha}\vec{Q}^\alpha) ^{\sum_{a=1}^{m-3} \iota_j^{(a)} \alpha_a }  \Big)
\end{align}
This completes the proof of part (2) Theorem \ref{main theorem Lv=z1,z2}.
\end{proof}

Together with the proof of part (1) in the previous chapter, we have completed the proof of the theorem. Thus we obtain the expression of $L^\vee$ in term of the symplectic geometric information of $(X,L)$ to be
\begin{align*}
L^\vee = \left\{ 
\begin{array}{rl}
&(u,v,z_1,z_2)\in X^\vee \subset  (\CC^\times)^2\times\CC^2 :\\
&uv=0\\
&z_1= Q_0 \cdot exp\Big( \sum_{\alpha>0} E_1(\alpha)
    \vec{Q}^\alpha
    \prod_{j=1}^{m-1} (1+\sum_{\alpha >0} n_{\beta_j+\alpha}\vec{Q}^\alpha) ^{\sum_{a=1}^{m-3} \iota_j^{(a)} \alpha_a }  \Big) \\
&z_2=exp \Big( -\sum_{\beta>0} n_\beta [\partial\beta] \vec{Q}^\beta \Big)
\end{array}
\right\}
\end{align*}

Throughout this section and the previous chapter, we have examined $L^\vee$ and $L^\vee_{naive}$ of a given Aganagic-Vafa A-brane $L\subset X_\Delta$ respectively. Ideally, our best scenario is to construct $L^\vee_{naive}$ under naive SYZ transformation which gives the physicists' expected mirror $L^\vee$. However, $L^\vee_{naive}$ seems to be a bit different from $L^\vee$ by examining their defining equations. This suggests that the naive SYZ transformation needs to be modified which will be discussed in the next two sections.

\section{Mirrors comparison}
In this section, we apply our naive SYZ mirror brane construction as in Definition \ref{main def B} of Chapter 3 to the special case of Aganagic-Vafa A-branes $L\subset X_\Delta$. Then, we will be ready to compare the defining equations between $L_{naive}^\vee$ and $L^\vee$.

\begin{thm}
Let $L\in X_\Delta$ be an Aganagic-Vafa brane defined by the 2 charges $l^{(1)}=(-1,1,0,...,0)$ and $l^{(1)}=(-1,0,1,0,...,0)$ in $\ZZ^m$ and $X^\vee$ be its mirror variety defined by the equation $uv=W(z_1,z_2)$.
Then we have an explicit expression of the mirror brane $L_{naive}^\vee$ as follows:
\begin{align}
L_{naive}^\vee = \left\{ (u,v,z_1,z_2)\in X^\vee:
    \begin{array}{lll}
        z_1=Q_0\\
        z_2=1
    \end{array} \right\}
\end{align}
where $Q_0$ is the complexified open K$\ddot{a}$hler parameter.
\end{thm}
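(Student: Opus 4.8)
The plan is to specialize the general description of the naive SYZ mirror brane, Definition \ref{main def B}, to the Aganagic--Vafa situation and to observe that its two defining equations collapse to the two displayed ones. Recall that for an A-brane cut out by charges $l^{(1)},\dots,l^{(k)}\in\ZZ^m$ one has, by Definition \ref{main def B},
\[
L^\vee_{naive}=\Big\{(u,v,z_1,\dots,z_{n-1})\in X^\vee\ :\ \prod_{j=1}^{m-1}z_j^{-l^{(a)}_j}=e^{c^{(a)}+i\phi^{(a)}}\ \text{ for }a=1,\dots,k\Big\},
\]
where for $i=n,\dots,m-1$ the symbol $z_i$ is shorthand for $Q_{i-n+1}\prod_{j=1}^{n-1}z_j^{\langle v_j^*,v_i\rangle}$. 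In our case $X_\Delta$ is a toric Calabi--Yau threefold, so $n=3$ and $X^\vee=\{uv=W(z_1,z_2)\}$, and by Definition \ref{def AV brane} and Remark \ref{remarks WLOG define l1 and l2} an Aganagic--Vafa A-brane is determined, after the reordering of rays described there, by the $k=2$ charges $l^{(1)}=-e_0+e_1=(-1,1,0,\dots,0)$ and $l^{(2)}=-e_0+e_2=(-1,0,1,0,\dots,0)$.

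First I would substitute these charges into the two defining equations. Among the indices $j=1,\dots,m-1$ the only nonzero component of $l^{(1)}$ is $l^{(1)}_1=1$, and the only nonzero component of $l^{(2)}$ is $l^{(2)}_2=1$; in particular none of the dependent coordinates $z_3,\dots,z_{m-1}$ enters either product. Hence the product in the $a=1$ equation telescopes to $z_1^{-1}$ and that in the $a=2$ equation to $z_2^{-1}$, so that
\[
L^\vee_{naive}=\{(u,v,z_1,z_2)\in X^\vee\ :\ z_1=e^{-(c^{(1)}+i\phi^{(1)})},\ z_2=e^{-(c^{(2)}+i\phi^{(2)})}\}.
\]
(Equivalently one can arrive here by pushing $\BB_0$ of Theorem \ref{def thm of BB0 semiflat B brane} through $\varphi$ and using $\varphi^*(z_j)=Z_{\beta_j}/Z_{\beta_0}$ from Theorem \ref{varphi map moduli to X check}.)

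It then remains to identify the two constants in terms of the A-side data. For the first, $c^{(1)}$ is the symplectic area of the basic disc bounded by the generating circle of the Aganagic--Vafa brane --- this is the modulus $c$ of Definition \ref{def AV line} --- and $e^{i\phi^{(1)}}$ is the holonomy of $\nabla$ around that circle; by definition the complexified open K\"ahler parameter is $Q_0=e^{-(c^{(1)}+i\phi^{(1)})}$, whence $z_1=Q_0$. For the second, the Aganagic--Vafa condition itself is the requirement $\langle\,\cdot\,,l^{(2)}\rangle=0$ of Definition \ref{def AV line}, i.e.\ that $\mu_\Delta(L)$ meets the distinguished one-dimensional edge of $\Delta$, so $c^{(2)}=0$ (cf.\ the proof of Proposition \ref{prop AV brane vs AV line}); and since an Aganagic--Vafa brane is diffeomorphic to $\SSS^1\times\CC$, with $\pi_1(L)\cong\ZZ$ generated by $\beta(l^{(1)})$, the $1$-cycle $\beta(l^{(2)})$ is null-homotopic in $L$, so $Hol_\nabla(\beta(l^{(2)}))=1$ and $\phi^{(2)}=0$. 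Therefore $z_2=1$, which completes the proof.

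The telescoping of the two products is routine; the substantive point --- and the step I expect to require the most care --- is the last paragraph: matching the paper's sign and normalization conventions so that the constant attached to $l^{(1)}$ is literally $\log Q_0$, and verifying that the second equation genuinely degenerates, which rests on both $c^{(2)}=0$ (the edge condition built into Definition \ref{def AV brane}) and $\phi^{(2)}=0$ (the degenerate topology $\SSS^1\times\CC$ of the Aganagic--Vafa brane, which collapses one of the two would-be holonomy parameters).
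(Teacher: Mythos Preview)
Your proposal is correct and follows essentially the same approach as the paper: specialize Definition \ref{main def B} to the two Aganagic--Vafa charges, obtain $z_1=e^{-(c^{(1)}+i\phi^{(1)})}$ and $z_2=e^{-(c^{(2)}+i\phi^{(2)})}$, then identify the constants.

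The only notable difference is in how $\phi^{(2)}=0$ is argued. You invoke the global topology $L\cong\SSS^1\times\CC$ and assert that $\beta(l^{(1)})$ generates $\pi_1(L)\cong\ZZ$ while $\beta(l^{(2)})$ is null-homotopic. The paper instead gives the explicit geometric reason behind this: at the endpoint $m_0$ of the moment-map ray (which lies on the edge $F_0\cap F_2$) one has $r_0=r_2=0$, and since $l^{(2)}=e_2-e_0$ acts only on $\theta_0,\theta_2$, the $l^{(2)}$-orbit there is a single point; thus the loop $\beta(l^{(2)})$ contracts inside $L$. Your topological statement is exactly what this degeneration produces, so the arguments are two views of the same fact, but the paper's version supplies the justification for \emph{which} of the two circles collapses --- a point you assert rather than prove. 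For $z_1=Q_0$ the paper is likewise more explicit, citing Cho--Oh to compute $\int_{\beta_0}\omega=r_1^2-r_0^2=c^{(1)}$, whereas you appeal directly to the definition of $Q_0$; again these are the same identification.
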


\begin{proof}
Using above notations of the Aganagic-Vafa brane $(L,\nabla)$, we recall from the previous chapter that we can explicitly express the naive SYZ mirror brane $L^\vee_{naive}$ as follows:
\begin{align}
	L^\vee_{naive}:=\left\{
    (u,v,z_1,z_2) \in X^\vee :
		\begin{array}{lll}
        uv=W(z_1,z_2)\\
        z_1=exp(-c^{(1)}-i\phi^{(1)})\\
        z_2=exp(-c^{(2)}-i\phi^{(2)})
		\end{array}
        \right\}
\end{align}
where $l^{(1)},l^{(2)}\in \ZZ^m$ are charges as given in the statement, $c^{(1)}\neq 0,c^{(2)}=0\in \RR$ are constants that defines the Aganagic-Vafa brane $L$ as in Definition \ref{definition of L in X} and $e^{i\phi^{(1)}},e^{i\phi^{(2)}} \in U(1)$ are also constants that defines the flat $U(1)$-connection $\nabla$ via the identification between $U(1)=Hom(\pi_1(L),U(1))$ and the moduli of isomorphism class of flat $U(1)$-connections.

To show that $z_2=1$, we consider the constants $c^{(2)}$ and $\phi^{(2)}$. By assumption of Aganagic-Vafa branes, we can assume that $c^{(2)}=0$ using above notations. On the other hand, we let $\mu_\Delta(L)=m_0 + \RR\cdot u \in \Delta$ for some unique $m_0\in \partial \Delta$ as in the proof of Proposition \ref{prop AV brane vs AV line}. We observe that
$$ L \cap \mu_\Delta (m_0) \subset
\{(r_0e^{i\theta_0},...,r_{m-1}e^{i\theta_{m-1}})\in \CC^m: r_2=r_0=0 \} $$ since $m_0$ is the intersection of the moment map Lagrangian image$\mu_\Delta(L)$, faces $F_2$ and $F_0$. We also notice that $\RR\cdot l^{(2)}$ acts on the subset $\{(r_0e^{i\theta_0},...,r_{m-1}e^{i\theta_{m-1}})\in \CC^m: r_2=r_0=0 \} $ trivially since $l^{(2)}=(-1,0,1,0,...,0)=e_2-e_0$ which acts on the angle coordinates $r_0e^{i\theta_0}$ and $r_2e^{i\theta_2}$ only. Let $\gamma$ be a $\RR\cdot l^{(2)}$ orbit, that is $\gamma=(\SSS\cdot l^{(2)})\cdot x$ for some $x\in L$. Immediately, we obtain that $\gamma$ is contractible. According to the definition of $\phi^{(2)}$, we have $$e^{i\phi^{(2)}}=Hol_\Delta(\gamma)=Hol_\Delta(point)=1\in U(1)$$ since the connection $\Delta$ is flat by assumption. Thus we have that $$z_2=exp(0+i0)=1.$$

To show that $z_1=Q_0$, we consider definition of the complexified open K$\ddot{a}$hler parameter $Q_0$. By definition $$Q_0=exp(-\int_{\beta_0}\omega)\cdot Hol_\Delta(-\partial\beta_0)$$ where $\beta_0$ is a disc class in $H_2(X,L;\ZZ)$ which is isomorphic to $H_1(L;\ZZ)\oplus H_2(X;\ZZ)=\ZZ<\partial\beta_0>\oplus \ZZ<
\alpha_1,...,\alpha_{m-3}>$. By the result of \cite{Cho Oh}, we can compute that  $\int_{\beta_0}\omega =r_1^2-r_0^2=c^{(1)}$ which is a constant according the definition of the Aganagic-Vafa brane $L$. On the other hand, we see from definition that $exp(i\phi^{(1)})=Hol_\Delta(\partial\beta_0).$ Hence we obtain that
$$z_1=exp(-c^{(1)}-i\phi^{(1)})=exp(-\int_{\beta_0}\omega)\cdot Hol_\Delta(-\partial\beta_0)=Q_0.$$

Combining above computations, we obtain the defining equation of $L^\vee$ as
\begin{align}
	L^\vee_{naive}:=\left\{
    (u,v,z_1,z_2) \in X^\vee :
		\begin{array}{lll}
        uv=W(z_1,z_2)\\
        z_1=exp(-c^{(1)}-i\phi^{(1)})=Q_0\\
        z_2=exp(-c^{(2)}-i\phi^{(2)})=1
		\end{array}
        \right\}.
\end{align}
Thus the proof is completed.
\end{proof}


\begin{rmk}
Ideally, we would expect that the naive SYZ mirror brane $L_{naive}^\vee$ should coincide with the mirror B-brane $L^\vee$ as predicted by physicists in the Conjecture \ref{mirror conjecture} . However, the two mirror are not necessarily the same. Consider the example $X_\Delta=\OO_{\PP^1}(-1)\oplus\OO_{\PP^1}(-1)$ as in \cite{Aganagic Vafa}, its mirror $X^\vee$ is given by the equation $uv=W(z_1,z_2)=1+z_1+z_2+q_1z_2z^{-1}_1$. We consider $L\subset X_\Delta$ to be an Aganagic-Vafa brane as considered in \cite{Aganagic Vafa}. Its predicted mirror $L^\vee$ is given by
\begin{align}
L^\vee = \left\{ (u,v,z_1,z_2)\in X^\vee:
    \begin{array}{lll}
    	uv=1+z_1+z_2+q_1z_2z^{-1}_1\\
        z_1=q_0\\
        z_2=-(1+q_0)/(1+q_1q_0^{-1})
    \end{array} \right\}
\end{align}
After direct computation, the SYZ mirror $L^\vee_{naive}$ is given by
\begin{align}
L^\vee_{naive} = \left\{ (u,v,z_1,z_2)\in X^\vee:
    \begin{array}{lll}
    	uv=1+z_1+z_2+q_1z_2z^{-1}_1\\
        z_1=Q_0\\
        z_2= 1
    \end{array} \right\}
\end{align}
In this particular example $Q_0=q_0$ is the open mirror map, so $z_1$ coordinates of both mirror coincide. However $z_2$ coordinates of both mirrors are obviously different as the open and closed complex parameter $q_0,q_1$ varies. Moreover, $z_1,z_2$ coordinates of $L^\vee$ solves the mirror curve equation $W(z_1,z_2)=0$, but that of $L^\vee_{naive}$ do not.
\end{rmk}
\begin{rmk}
Although $L^\vee_{naive}$ does not coincide with $L^\vee$ as we have expected, we observe that $L^\vee_{naive}$ and $L^\vee$ are similar in the sense that they coincide up to higher order terms of the closed K$\ddot{a}$hler parameter $Q_1,...,Q_{m-3}$. More precisely, we compare that

\begin{align*}
L^\vee = &\left\{ 
\begin{array}{rl}
&(u,v,z_1,z_2)\in X^\vee \subset  (\CC^\times)^2\times\CC^2 :\\
&uv=0\\
&z_1=Q_0exp \Big( \sum_{\alpha}E_1(\alpha)Q^\alpha\prod_{j=1}^{m-1}(\sum_{\alpha'}n_{\beta_j+\alpha'}Q^{\alpha'})^{\sum_{a=1}^{m-1}\alpha_a\iota_j^{(a)}} \Big)\\
&z_2=exp \Big( -\sum_{\beta\in H^{eff}_2(X,L;\ZZ)} n_\beta [\partial\beta] Q^\beta \Big)
\end{array}
\right\} \\
&= \left\{ (u,v,z_1,z_2)\in X^\vee:
    \begin{array}{lll}
    	uv=W(z_1,z_2)=0\\
        z_1=Q_0(1+O(Q_1,...,Q_{m-3}))\\
        z_2= 1+O(Q_1,...,Q_{m-3})
    \end{array} \right\}
\end{align*}
and
\begin{align}
L^\vee_{naive} = \left\{ (u,v,z_1,z_2)\in X^\vee:
    \begin{array}{lll}
    	uv=W(z_1,z_2)\\
        z_1=Q_0\\
        z_2= 1
    \end{array} \right\}
\end{align}
\end{rmk}

We now explicitly see the differences between the defining equations for the naive SYZ mirror brane $L^\vee_{naive}$ and the predicted mirror brane $L^\vee$. We observe that  $L^\vee_{naive}$ and $L^\vee$ coincide up to higher order term of the closed K$\ddot{a}$hler parameters $Q_1,...,Q_{m-3}$ with coefficient involving the open Gromov-Witten invariants of the A-brane itself. Therefore, we are now ready to introduce the modified SYZ mirror brane construction with further quantum correction in the next section.

\section{Further quantum correction}
In this section, we modify the naive SYZ transform exactly according to the differences between the predicted Aganagic-Vafa mirror brane $L^\vee$ and the naive SYZ mirror brane $L^\vee_{naive}$ which has been investigated in the previous section. After comparing the two mirror branes, we now understand how the naive SYZ mirror brane $L^\vee_{naive}$ fails to capture the information contributed by the discs counting of the A-brane. To modify the naive SYZ transformation, we exactly introduce the missing higher order terms that involves discs counting data of the A-brane to our naive SYZ transformation which we will refer it as 'further quantum correction'. We are now ready to define the SYZ transformation with further quantum correction as follows:

\begin{defn}\label{def SYZ with further quantum correction}
Using notations as before, suppose that $(L,\nabla)$ is an Aganagic-Vafa A-brane defined by the charges $\vec{l}_1=(-1,1,0,...,0)$ and $\vec{l}_2=(-1,0,1,0...,0)$ in $\ZZ^m$ as defined in Definition \ref{def AV brane}. And we also suppose that $X_\Delta$ and $X^\vee$ be mirror partners with $X_\Delta$ being a toric Calabi-Yau threefold A-model and $X^\vee$ being its mirror variety B-model $X^\vee$ as described in Definition \ref{main def A}. Then we define the SYZ transformation with further quantum correction as follows:
\begin{align}
L^\vee_{SYZ} = &\left\{ 
\begin{array}{rl}
&(u,v,z_1,z_2)\in X^\vee \subset  (\CC^\times)^2\times\CC^2 :\\
&z_1=Q_0exp \Big( \sum_{i=1}^{m-1}\sum_{\alpha}l^{(0)}_iE_i(\alpha)Q^\alpha\prod_{j=1}^{m-1}(\sum_{\alpha'}n_{\beta_j+\alpha'}Q^{\alpha'})^{\sum_{a=1}^{m-1}\alpha_a\iota_j^{(a)}} \Big)\\
&z_2=exp \Big( -\sum_{\beta\in H^{eff}_2(X,L;\ZZ)} n_\beta [\partial\beta] Q^\beta \Big)
\end{array}
\right\}
\end{align}
where $Q_0,Q_1,...,Q_{m-3}\in \CC^\times$ are the open and closed K$\ddot{a}$hler parameters, $n_\beta$ is the open Gromov-Witten invariant with respect to some relative homology class $\beta$ and $\vec{l}_0:=\vec{l}_2-\vec{l}_1$ is an auxiliary charge vector in $\ZZ^m$.
\end{defn}
By construction, we simply see that the SYZ transformation with further quantum correction give correct prediction to A-branes of Aganagic-Vafa type. Automatically, the coordinates $(z_1,z_2)$ will be a root of the mirror curve equation $W$. Moreover, we should expect our definition of SYZ transformation with further quantum correction continues to work well in other examples. The following remark illustrates that the SYZ transformation with further quantum correction correctly explains mirror phenomenon in the case of $A_n$-resolutions.
\begin{rmk}
In another special case, namely the case of $A_n$-resolutions, Chan has proved that the naive SYZ transformation induces an equivalence between the derived Fukaya category of the A-model and the derived category of the B-model as formulated in \cite{Chan,Chan Ueda,Chan Ueda Pomerleano}. One can check that the open Gromov-Witten invariants $n_\beta$ for every A-brane $(L,\nabla)$ are trivial. In a rough sense, having trivial open Gromov-Witten invariants of every A-branes implies that the SYZ transformation for A-branes has trivial further quantum correction. In other words, the naive SYZ transformation is suffice to explain mirror phenomenon, for example the HMS conjecture.
\end{rmk}
Instead of verifying our SYZ transformation in few explicit examples, we should also try to explore its behavior in general cases. However, not much is known yet.
\begin{rmk}
In general situations, it is still unknown that our definition of the SYZ transformation with further quantum correction can give correct prediction for mirror B-branes. In this very moment, not much is known and we only know that the SYZ transformation with further quantum correction works well in the case of A-branes of Aganagic-Vafa type and A-branes in A-models of $A_n$-resolution type. How well the further quantum correction is formulated as in Definition \ref{def SYZ with further quantum correction} is still unknown in general cases other than the special cases we have just mentioned.
\end{rmk}
Next, we are going to discuss possible direction for future development concerning the SYZ transform.

\chapter{Future development}
Ideally, we would expect that the modified SYZ transformation is a general procedure that transforms Dirichlet branes correctly. To be more aggressive, we also expect that the modified SYZ transformation with further quantum correction should induce a mirror functor which is an equivalence between the derived Fukaya category $\mathcal{DF}uk(X)$ and the derived category $\mathcal{D}^b\mathcal{C}oh(X^\vee)$. In other words, we expect that the geometry of the \textit{Homological Mirror Symmetry} can be revealed by the \textit{SYZ Mirror Symmetry}.

To begin with, we first give an evidence that the SYZ transformation explains the phenomenon of the homological mirror conjecture in certain situations.

\begin{thm}[Chan\cite{Chan}, Chan-Ueda\cite{Chan Ueda}, Chan-Ueda-Pomerleano\cite{Chan Ueda Pomerleano}] \label{chan's thm on HMS}
Let $$Y=\{(u,v,z)\in \CC^2 \times \CC^\times : uv=W(z)\}$$ with the symplectic form $\omega=(-i/2)(du\wedge d\bar{u} + dv\wedge d\bar{v} + dlog(z)\wedge dlog\bar{z}) $ for some nontrivial polynomial $W(z)\in \CC[z]$ and $Y^\vee$ be the SYZ mirror of $Y$ which can be computed to be $$\check{Y} = X_\Delta - D $$ for some toric Calabi-Yau $X_\Delta$ and some divisor $D$ in it.
Then the (naive) SYZ transformation $\mathcal{F}^{SYZ}$ induces an equivalence of triangulated categories
$$ \mathcal{DF}uk_0(Y) \simeq \mathcal{D}^b\mathcal{C}oh_0 (\check{Y}). $$
\end{thm}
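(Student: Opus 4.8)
The plan is to exhibit both triangulated categories as the derived category of modules over one and the same finitely generated algebra $A$, and then to check that the naive SYZ transform $\mathcal{F}^{SYZ}$ realizes this common description object-by-object and morphism-by-morphism. First I would make the SYZ picture on $Y$ completely explicit, in the spirit of Chapter 2: equip $Y$ with the Gross-type fibration $\mu : (u,v,z) \mapsto (\log|z|,\, |u|^2 - |v|^2)$, identify its base $B$ (an affine plane) and its discriminant locus (one line segment per root of $W$), and single out the distinguished Lagrangian sections $L_0, \dots, L_k$ over $B$ that interpolate between the chambers cut out by the discriminant. Dually, unwinding Definition \ref{SYZ definition of M_0}, the mirror moduli is $\check{Y} = X_\Delta - D$; Lemma \ref{toy model mirror brane} computes $\mathcal{F}^{SYZ}(L_i)$ fiberwise, and gluing over $B$ identifies each $\mathcal{F}^{SYZ}(L_i)$ with an explicit holomorphic line bundle $\mathcal{L}_i$ on $\check{Y}$. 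These line bundles are the B-side generators.

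Second, I would compute both sides of the putative equivalence on these generators. On the B-side, the $\mathcal{L}_i$ form a full strong exceptional collection (equivalently, $T = \bigoplus_i \mathcal{L}_i$ is a tilting object) for $\mathcal{D}^b\mathcal{C}oh_0(\check{Y})$, with $\mathrm{Ext}^\bullet(\mathcal{L}_i,\mathcal{L}_j)$ and the Yoneda products encoded by an explicit algebra $A = \mathrm{End}(T)$ — a path algebra of a quiver with relations read off from the toric data of $X_\Delta$ and the divisor $D$. On the A-side, I would show that $HF^\bullet(L_i, L_j)$ is computed by counting holomorphic strips in $Y$ with boundary on $L_i \cup L_j$, that on cohomology these counts reproduce $A$, and then — the technical heart — that the $A_\infty$-operations $m_k$ on $\bigoplus_{i,j} CF^\bullet(L_i,L_j)$, given by signed counts of holomorphic polygons, are formal and reproduce exactly the relations of $A$. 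Here the SYZ fibration does the work: the relevant polygons are constrained by the fibration, concentrate on the toric boundary strata, and can be enumerated combinatorially, with bubbling controlled by the convexity/exactness built into $Y$.

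Third, with both categories presented as $\mathcal{D}^b(\mathrm{mod}\,A)$, I would verify that $\mathcal{F}^{SYZ}$ intertwines the two presentations: it sends $L_i \mapsto \mathcal{L}_i$, and by the fiberwise holonomy computation of Lemma \ref{toy model mirror brane} together with the match of higher products above, it sends each generating Floer morphism to the corresponding element of $\mathrm{Ext}$. Finally I would invoke generation — the sections $L_i$ split-generate $\mathcal{DF}uk_0(Y)$ (via a Lefschetz-fibration or Viterbo-restriction argument adapted to the conifold/$A_n$ geometry), and $T$ tilts $\mathcal{D}^b\mathcal{C}oh_0(\check{Y})$ — so that the induced functor, being fully faithful on a split-generating set, is an equivalence of triangulated categories.

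The main obstacle I anticipate is precisely the $A_\infty$-level comparison: establishing formality of the Fukaya $A_\infty$-algebra of $\{L_i\}$ and identifying its products and relations with those of $\mathrm{End}(T)$ demands a careful, sign-correct count of holomorphic polygons together with a compactness/transversality package, and it is there — rather than in the cohomology-level bijection or the generation statements — that the bulk of the work lies. A secondary difficulty is pinning down exactly which subcategories the subscripts "$0$" designate on each side (compactly supported versus wrapped objects, and the matching condition on the coherent side) and confirming that $\mathcal{F}^{SYZ}$ preserves them.
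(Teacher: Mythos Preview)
Your proposal sketches a plausible proof strategy, but the paper does not prove this theorem at all: its entire ``proof'' is a one-line citation, ``The proof can be found in Theorem 4.1 and Corollary 4.1 of Chan \cite{Chan} which is the main result of the paper.'' The theorem is quoted here as an external result from \cite{Chan,Chan Ueda,Chan Ueda Pomerleano}, serving only as motivational evidence in the final chapter on future development; no argument is given or expected in this thesis.

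That said, your outline is broadly in the spirit of how those cited papers actually proceed: identify generating objects on both sides (Lagrangian sections on the A-side, line bundles forming a tilting object on the B-side), match their endomorphism algebras, and conclude via generation. So your sketch is not wrong as a description of the literature --- it simply goes far beyond what the thesis itself attempts. If your task was to reproduce the paper's own proof of this statement, the correct answer is that there is nothing to reproduce beyond the citation.
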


\begin{proof}
The proof can be found in Theorem 4.1 and Corollary 4.1 of Chan \cite{Chan} which is the main result of the paper.
\end{proof}


Our goal is to modify the naive SYZ transformation for A-branes $(L,\Delta)$ in general case other than just the case of Aganagic-Vafa. From the comparison of $L_{naive}^\vee$ and $L^\vee$, we observe that $L^\vee$ is different to $L_{naive}^\vee$ by 'further quantum correction' by open Gromov-Witten invariants. Analogues to the construction of the B-model $X^\vee$ as in Chapter 2 that we need to modify the semi-flat mirror $(\MM_0,J_0)$ to obtain $(X^\vee,J)$ by 'quantum correction', we have to perform 'further quantum correction' procedure on $L_{naive}^\vee$ to obtain the desired correct mirror $L^\vee$ in general.
At least, at this moment, we see that the modified SYZ transformation works well in the Aganagic-Vafa's case and the $A_n$-resolution case.
By comparing the explicit computation of $L_{naive}^\vee$ and $L^\vee$ in the Aganagic-Vafa's case, we expect that the explicit formula of the defining equations of $L^\vee$ should give some hints on how the SYZ transformation should be modified in general.

Ideally, if such modified SYZ transformation do exist, we would expect that this procedure will give the mirror partner $L^\vee$ for any given A-brane $(L,\nabla)$ in some mirror pair $(X,X^\vee)$. We would also expect the mirror pair $L$ and $L^\vee$ exhibits certain dual properties as the Mirror Conjecture \ref{mirror conjecture} which equates the open Gromov-Witten generating function on the A-side to the B-model superpotential up to the mirror map.

More aggressively, we would expect the modified SYZ transformation with further quantum correction would induce an equivalence mirror functor from $\mathcal{DF}uk(X)$ to $\mathcal{D}^b\mathcal{C}oh(X^\vee)$ for any mirror pair $X$ and $X^\vee$. In other words, we would expect that the homological mirror conjecture can be verified using the principle behind the SYZ Conjecture.
This expectation makes sense since certain cases has been verified as the above Chan's Theorem \ref{chan's thm on HMS}. It is natural to request for such statements that works in more general testing grounds which we would believe to be true.

\addcontentsline{toc}{chapter}{Bibliography}

\end{document}